\def\texorpdfstring#1#2{#1}
\newcommand{\ass}{\quad\mbox{as}\quad}
\newcommand{\EE}{{\mathcal E}  }
\newcommand{\inn}{{\quad\hbox{in } }}
\newcommand{\onn}{{\quad\hbox{on } }}
\newcommand{\ttt}{\tilde }
\newcommand{\TT}{{\mathcal T}  }
\newcommand{\nn}{ {\nabla}  }
\newcommand{\pp}{ {\partial} }
\newcommand{\vp}{\varphi}
\newcommand{\RR}{{{\mathcal R}}}
\newcommand{\N}{\mathbb{N}}
\newcommand{\C}{{\mathbb C}}
\newcommand{\R} {\mathbb R}
\newcommand{\Z} {\mathbb Z}
\newcommand{\cuad}{{\sqcap\kern-.68em\sqcup}}
\newcommand{\DD}{{\mathcal D}}
\newcommand{\Rem}{{\mathcal R}_0}
\newcommand{\KK}{{\mathcal K}}
\newcommand{\foral}{\quad\mbox{for all}\quad}
\newcommand{\ve}{\varepsilon}
\newcommand{\be}{\begin{equation}}
\newcommand{\ee}{\end{equation}}
\newcommand{\la}{\lambda}
\newcommand{\equ}[1]{(\ref{#1})}
\newcommand{\p}{{p}}
\renewcommand{\Re}{\mathop{\rm Re}}
\renewcommand{\Im}{\mathop{\rm Im}}
\renewcommand{\div}{\mathop{\rm div}}
\newcommand{\curl}{\mathop{\rm curl}}
\newtheorem{lemma}{Lemma}[section]
\newtheorem{prop}{Proposition}[section]
\newtheorem{theorem}{Theorem}
\newtheorem{corollary}{Corollary}[section]
\newtheorem{remark}{Remark}[section]
\newtheorem{ch}{Check}
\newcommand{\bremark}{\begin{remark} \em}
\newcommand{\eremark}{\end{remark} }
\long\def\hide#1{}
\long\def\noanot#1{}
\definecolor{redd}{RGB}{200,0,0}
\definecolor{mo1}{rgb}{0.8,0,0.8}
\definecolor{green1}{rgb}{0.1,0.7,0.1}
\definecolor{g2}{rgb}{0,0.5,0}
\def\cb{\color{black}}
\long\def\elim#1{{\color{red} ELIMINAR\\ #1}}
\long\def\elim#1{}
\numberwithin{equation}{section}
\title[Singularity formation in the two-dimensional harmonic map flow]{Singularity formation for the two-dimensional  harmonic map flow into $S^2$}
\author[J. Davila]{Juan Davila}
\address{\noindent
Instituto de Matem\'aticas, Universidad de Antioquia, Calle 67, No. 53--108, Medell\'\i n, Colombia,
and  Departamento de
Ingenier\'{\i}a  Matem\'atica-CMM   Universidad de Chile,
Santiago 837-0456, Chile}
\email{jdavila@dim.uchile.cl}
\author[M. del Pino]{Manuel del Pino}
\address{\noindent   Department of Mathematical Sciences University of Bath,
Bath BA2 7AY, United Kingdom \\
and  Departamento de
Ingenier\'{\i}a  Matem\'atica-CMM   Universidad de Chile,
Santiago 837-0456, Chile}
\email{m.delpino@bath.ac.uk}
\author[J. Wei]{Juncheng Wei}
\address{\noindent
Department of Mathematics,
University of British Columbia, Vancouver, B.C., Canada, V6T 1Z2}
\email{jcwei@math.ubc.ca}
\begin{document}

\begin{abstract}
We construct finite time blow-up solutions to the 2-dimensional harmonic map flow into the sphere $S^2$,
\begin{align*} u_t & = \Delta u + |\nabla u|^2 u \quad \text{in } \Omega\times(0,T)
\\
u &= \varphi \quad \text{on } \partial \Omega\times(0,T)
\\
u(\cdot,0) &= u_0 \quad  \text{in } \Omega  ,
\end{align*}
where $\Omega$ is a bounded, smooth domain in $\mathbb{R}^2$, $u: \Omega\times(0,T)\to S^2$, $u_0:\bar\Omega \to S^2$ is smooth, and $\varphi = u_0\big|_{\partial\Omega}$. Given any $k$ points $q_1,\ldots, q_k$ in the domain, we find initial and boundary data so that the solution blows-up precisely at those points. The profile around each point is close to an asymptotically singular scaling of a 1-corrotational harmonic map.
We build a continuation after blow-up as a $H^1$-weak solution with a finite number of discontinuities in space-time by ``reverse bubbling'', which preserves the homotopy class of the solution after blow-up.
Furthermore, we prove the codimension one stability of the one point blow-up phenomenon.

%Finite time singularity formation and its stability  in the harmonic map flow into $S^2$.
\end{abstract}
%%%%%%%%%%%%%%%%%%%%%%%%%%%%%%%%%%%%%%%%

\maketitle

%\tableofcontents

\section{Introduction and main result}

Let $\Omega$ be a bounded domain  in $\R^2$ with smooth boundary $\pp\Omega$.  We denote by $S^2$ the standard 2-sphere.  We consider the {\em harmonic map flow}
for maps from $\Omega$ into $S^2$, given by the semilinear parabolic equation
\begin{align}
\label{har map flow0}
u_t = \Delta u + |\nabla u|^2 u \quad &\text{in } \Omega\times(0,T)\\
\label{har map flow01}
u = \vp  \quad &\text{on } \pp\Omega\times(0,T)\\
\label{har map flow02}
u(\cdot,0) = u_0  \quad & \text{in } \Omega
\end{align}
for a function $u:\Omega \times [0,T)\to S^2$. Here $u_0:\bar \Omega \to S^2$ is a given smooth map and  $\vp= u_0\big|_{\pp\Omega}$.  Local existence and uniqueness of a classical solution follows from the works \cite{chang,Ells,Struwe}. Equation \equ{har map flow0} formally corresponds to the negative $L^2$-gradient flow for the Dirichlet energy $\int_\Omega |\nabla u|^2 dx$. This energy is decreasing
along smooth solutions $u(x,t)$:
$$
\frac {\pp}{\pp t} \int_\Omega |\nabla u(\cdot, t)|^2  = -\int_\Omega |u_t(\cdot, t)|^2.
$$
Struwe \cite{Struwe} established the existence of an $H^1$-weak solution, where just for a finite number of points in space-time loss of regularity occurs.  This solution is unique within the class of weak solutions with decreasing energy, see Freire \cite{Freire} {\cb and also Lin-Wang \cite{Lin-Wang3.5} for another proof.}

\medskip
If $T>0$ designates the first instant at which smoothness is lost, we must have
$$
 \|\nabla u(\cdot, t)\|_\infty \, \to \,  +\infty \ass t\uparrow T.
$$
Several works have clarified the possible blow-up profiles as $t\uparrow T$.
The following fact follows from results by  Ding-Tian \cite{Ding-Tian}, Lin-Wang \cite{Lin-Wang1}, Qing \cite{Qing1},  Qing-Tian \cite{Qing-Tian}, Struwe \cite{Struwe}, Topping \cite{Topping2} and Wang \cite{Wang}:

\medskip
Along a sequence $t_n\to T$ and points $q_1,\ldots, q_k\in \Omega$, not necessarily distinct, $u(x,t_n)$ blow-up occurs at exactly those $k$ points in the form of {\em bubbling}.
More precisely, under some technical assumptions we have
\be\label{aa}
u(x,t_n)\, -\,   u_*(x)  -  \sum_{i=1}^k  [\, U_i \left ( \frac{ x-q_{i}^n} {\la_{i}^n} \right )   - U_i(\infty) \, ]  \to 0 \inn H^1(\Omega)
\ee
where $u_*\in H^1(\Omega)$,  $q_{i}^n\to q_i$,  $0<\la_{i}^n\to 0$,  satisfy for $i\ne j$,
$$
\frac{\la_i^n}{\la_j^n} + \frac{\la_j^n}{\la_i^n}  + \frac{|q_i^n - q_j^n|^2}{ \la_i^n\la_j^n}\to\ +\infty  .
$$
The $U_i$'s are entire, finite energy harmonic maps,
namely solutions $U :\R^2 \to S^2$ of the equation
\begin{align*}
%\label{harmonic map eq0}
\Delta U + |\nabla U|^2 U = 0 \inn \R^2, \quad \int_{\R^2} |\nabla U|^2 <+\infty .
\end{align*}
After stereographic projection, $U$ lifts to a smooth map in $S^2$, so that its value $U (\infty)$ is well-defined. It is known that $U$ is in correspondence with a complex rational function or its conjugate. Its energy corresponds to the absolute value of the degree of that map times the area of the unit sphere, and hence
\be\label{aa1}
\int_{\R^2} |\nabla U|^2 = 4\pi m, \quad m\in \N,
\ee
  see Topping \cite{Topping2}.

\medskip
In particular, $u(\cdot, t_n) \rightharpoonup u_*$ in $H^1(\Omega)$ and  for some positive integers $m_i$, we have
%$$
%\int_\Omega |\nabla u(\cdot, t_n)|^2 \to  \int_\Omega |\nabla u_*|^2 + 4\pi m .
%$$
\be \label{aa2}
|\nabla u(\cdot, t_n)|^2
\ \rightharpoonup  \ |\nabla u_*|^2 +   \sum_{i=1}^k 4\pi m_i\,\delta_{q_i}
\ee
in the measures sense, were $\delta_q$ denotes the unit Dirac mass at $q$.

\medskip
Topping \cite{Topping3} estimated the blow-up rates as $\la_i^n = o ( (T-t_n)^{\frac 12})$ (also valid for more general targets), a fact that tells that the blow-up is of ``type II'', namely it does not occur at a self-similar rate.

\medskip
A decomposition similar to \equ{aa} holds if blow-up occurs in infinite time, $T=+\infty$.
In such a case one has the additional information that $u_*$ is a harmonic map, and the convergence in
 \equ{aa} also holds uniformly in $\Omega$ (the latter is called the ``no-neck property''), see Qing and Tian  \cite{Qing-Tian}.  Finer properties of the bubble-decomposition have been found by Topping \cite{Topping2}.

\medskip
A  {\em least energy} entire, non-trivial harmonic map is given by
\begin{equation}
\label{U00}
W(x)  =    \frac 1{1+ |x|^2} \left (\begin{matrix}    2x   \\  |x|^2 -1 \end{matrix}    \right ) , \quad   x\in \R^2 ,
\end{equation}
which satisfies
$$
\int_{\R^2} |\nabla W |^2 = 4\pi, \quad  W (\infty) = \left (\begin{matrix}   0 \\ 0\\   1 \end{matrix}    \right ).
$$
Very few examples are known of solutions, which exhibit the singularity formation phenomenon \equ{aa2}, and all of them concern single-point blow-up in radially symmetric {\em corrotational} classes. When $\Omega$ is a disk or the entire space, a $1$-corrotational solution of \equ{har map flow0} is one of the form
\begin{align}
\label{1corrot}
u(x,t)  =  \left (\begin{matrix}    e^{ i\theta} \sin v(r,t)   \\  \cos v(r,t) \end{matrix}    \right ) , \quad x= re^{i\theta}  .
\end{align}
Within this class, \equ{har map flow0} reduces to the scalar, radially symmetric problem
\be\label{11}
v_t = v_{rr} + \frac{v_r}r  -  \frac {\sin v\cos v} {r^2}.
\ee
We observe that the function
%The {\em standard $d$-corrotational harmonic map} is the (degree $d$) solution of \equ{harmonic map eq0} corresponding to the steady state of \equ{11}
$$
w(r) = \pi - 2\arctan (r)
$$
is a steady state of \equ{11} which  corresponds precisely
to the harmonic map $W $ in \equ{U00}. Indeed,
$$
W (x) =  \left (\begin{matrix}    e^{ i\theta} \sin {w(r)}   \\  \cos {w(r)} \end{matrix}    \right ).
$$
Chang, Ding and Ye \cite{Chang-Ding-Ye} found the first example of a blow-up solution of problem  \equ{har map flow0}-\equ{har map flow02}  (which was previously conjectured not to exist).
{\cb They obtained the result in the 1-corrotational class in a disk $D$ by finding appropriate sub-super solutions to \eqref{11}.
Assuming that the initial energy satisfies $\int_D |\nabla u_0|^2<8\pi$, the decomposition \eqref{aa} implies that
\be\label{qq}
u(x,t) =   W \Bigl ( \frac x{\la(t)}\Bigr ) + u_* + o(1) ,
\ee
with $u_* \in H^1$, $o(1)\to 0$ in $H^1$-norm, and $0<\la(t)\to 0$ as $t\to T$. }
No information on the precise blow-up rate $\la(t)$ is obtained. Angenent, Hulshof and Matano \cite{ahm} estimated the blow-up rate of 1-corrotational maps as
$\la(t) = o(T-t)$. Using matched asymptotics formal analysis for problem \equ{11},
van den Berg, Hulshof and  King  \cite{bhk} demonstrated that this rate for 1-corrotational maps should generically be given by
\be
\la(t) \approx  \kappa \frac{ T-t}{|\log (T-t)|^2} ,
\label{rate0}\ee
for some $\kappa >0$. Raphael and Schweyer \cite{rs1} succeeded to rigorously construct an entire 1-corrotational solution with this blow-up rate.

\medskip

In this paper we deal with the general, nonsymmetric case in \equ{har map flow0}-\equ{har map flow02}. Our first result asserts that for any  given finite set of points of $\Omega$ and suitable initial and boundary values,
a solution with a simultaneous blow-up at those points exists, with a  profile resembling a translation and rotation of that in \equ{qq} around each bubbling point.

\medskip
To state our result, we observe that the functions
%\be
\[
U_{\la, q, Q} (x)\,  :=\,  Q W  \left (\frac {x-q} {\la} \right )
\]
%\label{zzz}\ee
with $\la>0$, $q\in \R^2$ and $Q$ an orthogonal matrix in $\R^3$ do solve problem \equ{aa1}, and all share the least energy property:
$$
\int_{\R^2} |\nn U_{\la, q, Q} |^2 = 4\pi .
$$
Let us consider the $\alpha$-rotation matrix around the third axis  given by
$$e^{J \alpha  }  =   \left [ \begin{matrix}  \cos\alpha  &  -  \sin \alpha  & 0  \\  \sin\alpha  &  \cos\alpha & 0  \\  0 & 0 & 1   \end{matrix} \right ]   ,
\quad J = \left [ \begin{matrix}  0 &  -  1  & 0  \\  1  & 0  & 0  \\  0 & 0 & 0   \end{matrix} \right ]  .
$$

\medskip
In all what follows, we consider problem \equ{har map flow0}-\equ{har map flow02} with the  boundary condition \equ{har map flow01} given by the constant
\be\label{bcc0}
\vp(x) = \mathbf{e_3}   .\ee
Here and in what follows we denote
\begin{align}
\label{e123}
\mathbf{e_1}  = \left [\begin{matrix} 1\\ 0\\ 0 \end{matrix}  \right ], \quad \mathbf{e_2}  = \left [ \begin{matrix} 0\\ 1\\ 0 \end{matrix}  \right ] , \quad \mathbf{e_3}  = \left [ \begin{matrix} 0\\ 0\\ 1 \end{matrix}  \right ].
\end{align}
The constant boundary value $\mathbf{e_3}$ precisely corresponds to $ W  (\infty)$ where  $ W $ is the standard 1-corrotational  harmonic map \eqref{U00}.
This choice is made for convenience, in fact any sufficiently small perturbation of it is also admissible.
 In the radial $1$-corrotational equation \equ{11}, this boundary condition
in the disk $\Omega = D(0,R)$ simply corresponds to $v(R,t)=0$. All results below do apply to a boundary condition which slightly perturbs \equ{bcc0}, or in the case of entire space $\R^2$ where this value is set as a condition at infinity.

\begin{theorem}\label{teo1}
Given  points $q= (q_1,\ldots, q_k)\in \Omega^k$ and any sufficiently small $T>0$, there exist $u_0$ such the solution $u_q(x,t)$ of problem \equ{har map flow0}-\equ{har map flow02}, for $\vp$ given by \equ{bcc0},
blows-up at exactly those $k$ points as $t\uparrow  T$. More precisely, there exist numbers $\kappa_i^*>0$, $\alpha_i^*$ and a function $u_*\in H^1(\Omega)\cap C (\bar \Omega)$ such that
\be \label{blow-up}
u_q(x,t) - u_*(x) -  \sum_{j=1}^k  e^{ J\alpha_i^* }\big [\, W  \left (\frac {x- q_i} {\la_i} \right ) - W  (\infty) \,\big ]\ \to\  0 \ass t\uparrow  T, \ee
in the $H^1$ and uniform senses in $\Omega$ where
\be\label{rate}
\la_i(t) = \kappa_i^* \frac {T-t}{|\log(T-t)|^2} \big (1+ o(1)   \big )  \ass t\uparrow  T.
\ee
%$ x_i(t) \to q_i, \ \alpha_i(t) \to \alpha_i^* \in \R,
%$
%and  for some numbers $\kappa_i^*>0$, we have
In particular, we have
$$
 |\nn u(\cdot, t)|^2 \rightharpoonup  |\nn u_*|^2 + 4\pi \sum_{j=1}^k \delta_{q_j} \ass t\uparrow  T.
$$

\end{theorem}

{ \cb
%In Theorem~\ref{teo1}  $u_*$ is continuous and there are no necks in the solution we construct (uniform convergence).
%In this setting, for finite time blow up, it is not known in general
%if $u_*$ is continuous or whether ``necks'' do not develop, although these properties  have been obtained under additional assumptions, see e.g. Lin, Wang \cite{Lin-Wang3}.

The blow-up solution we constructed in Theorem~\ref{teo1} has {\em no necks}. By the results of Qing-Tian \cite{Qing-Tian}, (see also Lin-Wang \cite{Lin-Wang1,Lin-Wang3}), this follows from the directly checked fact that the $L^2$ norm of the tension field $ \tau:= u_t $ is bounded as $t\uparrow T$. Our construction suggests that no necks should be present in planar solutions with isolated least energy blow-up points.

%(In fact $ \int u_t^2 \sim \dot{\lambda}^2  |\log \lambda | + O(1) < \frac{  |\log (T-t)|}{ |\log (T-t)|^4}+\infty $.)

}

\medskip

In the next result we analyze the stability of the solutions constructed in Theorem~\ref{teo1}.
We recall that in the 1-corrotational class  in a disc, Chang-Ding-Ye \cite{Chang-Ding-Ye} provided robust conditions on  initial and boundary data that  guarantee finite time blow-up. Raphael-Schweyer \cite{rs1}  established stability {\em within} the  1-corrotational
class in entire space for a solution blowing-up with the rate \equ{rate0}.
Merle-Raphael-Rodnianski \cite{MRR} and  Raphael-Schweyer \cite{rs1}  conjectured instability
outside the 1-corrotational class.
Van der Berg and Williams \cite{BW} provided formal and numerical evidence  that blow-up  may indeed be destroyed by small non-radial perturbations of a 1-corrotational singularity.

%On the other hand, Merle,  Rapha\"el and Rodnianski \cite{MRR}, based on the instability they observed in equivariant blow up solutions of the critical Schr\"odinger map problem,  expected that the stability of blow up for the harmonic map flow could be affected by the presence of rotations. Formal and numerical evidence of instability was later found by  van den Berg and Williams \cite{BW}, and led them to conjecture that radial bubbling {\em loses its stability} if special perturbations off the radially symmetric class are made to the initial data.
%{\crr
%The proof of Theorem~\ref{teo1} provides strong evidence that this is the case, while recovering it in symmetry classes which include the 1-corrotational class one.  Moreover, the proof yields that without any symmetries a weak form of {\em finite codimension stability} of the blow-up phenomenon holds.
%We elaborate on these issues in the last section.}

\medskip
Our proof of Theorem \ref{teo1}  yields  {\em codimension-one stability}
of the predicted  blow-up phenomenon in the case of a single blow-up point when no symmetries are assumed.
The meaning of this form of stability is as follows:

\medskip
\begin{theorem} \label{stability}
Let $u(x,t)$ be the solution predicted in Theorem \ref{teo1} of the problem  \equ{har map flow0}-\equ{har map flow02}  that blows-up at a point $q\in \Omega$ and a time $T>0$.
Then there exists a $C^1$ manifold $\mathcal M$  in $C^1(\bar \Omega, S^2)$ with codimension one that contains $u_0 $   such that for any $\ttt u_0\in \mathcal M$ close to $ u_0$, the solution $\ttt u(x,t)$ of problem \equ{har map flow0}-\equ{har map flow02} with initial datum $\ttt u_0$ blows-up at a point
$\ttt q\in \Omega$ and a time $\ttt T$ which are close respectively to $ q$ and $T$.
\end{theorem}

{\cb
We discuss the general reason for the codimension-1 stability in Remark~\ref{remarkStability} in \S 2.
The generalization of the previous theorem to the solution with $k$ blow-up points of Theorem~\ref{teo1} is that
there is a manifold in $C^1$ of codimension $2k-1$ of initial data that leads to  $k$ simultaneous  blow-up points at a time $T$.
}

\medskip

The solutions in Theorems \ref{teo1} are classical in $[0,T)$. Our next result concerns the continuation of the solution after blow-up.
As we have mentioned Struwe \cite{Struwe} defined a global $H^1$-weak solution of  $\equ{har map flow0}$-$\equ{har map flow02}$. Struwe's solution is obtained
by just dropping the bubbles appearing at the blow-up time and then restarting the flow. The energy has jumps at each blow-up time generated by this procedure
 and it is decreasing. Decreasing energy suffices for uniqueness of the weak solution, as proven in \cite{Freire, Lin-Wang3.5}. On the other hand the bubble-dropping procedure modifies in time the topology of the image of the solution map. Topping \cite{Topping3} showed a different way to construct a continuation after blow up in the symmetric 1-corrotational class.  The solution in \cite{Chang-Ding-Ye} is continued after blow-up by attaching a bubble with opposite orientation,  which unfolds continuously the energy. The solution referred to is a  {\em reverse bubbling solution}. As emphasized in \cite{Topping3}, this continuation has the advantage that, unlike  Struwe's solution, it preserves the homotopy class of the map after blow-up. Formal asymptotic rates for 1-corrotational reverse bubbling were found in \cite{bhk}. In \cite{Bertsch} other forms of continuation of radial solutions were found.

 \medskip
 We establish that Topping's continuation can be made without symmetry assumptions, with exact asymptotics, for the solution in Theorem \ref{teo1}.
 We define  the bubble $\bar w$  with reverse orientation to that of $W $ as
 \be\label{barW}
\bar W (x) \ =\   e^{ J\pi } W (x) =  \frac 1{1+ |x|^2} \left (\begin{matrix}    -2x   \\  |x|^2 -1 \end{matrix}    \right )\  =\ \left (\begin{matrix}    -e^{ i\theta} \sin { w(r)}   \\  \cos { w(r)} \end{matrix}    \right ). \quad% \bar w(r) =  -w(r) .
\ee

\begin{theorem}\label{teo2}
Let $u_q(x,t) $ be the solution in Theorem \ref{teo1}. Then $u_q$ can be continued as an $H^1$-weak solution  in  $\Omega \times (0, T+\delta )$, which is continuous except at the points
$(q_i,T)$, with the property
that, besides expansion \equ{blow-up}, we have $u_q(x,T) = u_*(x)$
\[
u_q(x,t) - u_*(x) -  \sum_{j=1}^k  e^{ J\alpha_i^{*} } \big [\, \bar W  \left (\frac {x- q_i} {\la_i(t)} \right ) - \bar W  (\infty) \,\big ]\ \to\  0 \ass t\downarrow  T,
\]
in the $H^1$ and uniform senses in $\Omega$, where
\be
\lambda_i(t) =\kappa_i^* \frac{t-T}{|\log(t-T)|^2}.
\label{blow-up6}\ee

\end{theorem}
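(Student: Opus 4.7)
My plan is to run the same inner–outer gluing scheme used for Theorem~\ref{teo1}, now on the forward time interval $(T, T+\delta)$ and with the ansatz built from $\bar\omega$ in place of $\omega$. The profile $u_* \in H^1(\Omega)\cap C(\bar\Omega)$ produced by Theorem~\ref{teo1} is taken as the initial datum at $t=T$, and I look for the continuation in the form
\[
u(x,t) = u_*(x) + \sum_{i=1}^k Q_{\alpha_i^{**}}\bigl[\bar\omega\bigl(\tfrac{x-q_i-\xi_i(t)}{\lambda_i(t)}\bigr) - \bar\omega(\infty)\bigr] + \phi^{\rm out}(x,t) + \sum_{i=1}^k \phi^{\rm in}_i(x,t),
\]
where $\phi^{\rm in}_i$ is supported in a ball around $q_i$, takes values tangent to $S^2$ along the bubble, and $\lambda_i(t)\to 0^+$ as $t\to T^+$ at the prescribed rate. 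Derive, as before, the inner and outer linearized equations coupled through gluing conditions, together with the modulation equations for $(\lambda_i,\xi_i,\alpha_i^{**})$.

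The crucial observation that lets me recycle the analysis is that $\bar\omega$ lies in the same $SO(3)$–orbit as $\omega$ (in fact $\bar\omega(x)=\omega(-x) = Q_\pi\omega(x)$), so the linearization of $\Delta u+|\nabla u|^2 u$ around $\bar\omega$ is unitarily equivalent to the one around $\omega$. Therefore the inversion theory for the inner linearized operator on weighted parabolic spaces, the adjusted–kernel orthogonality conditions, and the solvability of the outer heat problem all transfer verbatim from the proof of Theorem~\ref{teo1}. I then solve the inner problems modulo the orthogonality conditions, solve the outer problem using the regularity of $u_*$, and close a contraction mapping argument in the same functional topology.

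The modulation system reduces to an ODE for $\lambda_i$ of the same structure as in Theorem~\ref{teo1}, but now initiated at $\lambda_i(T)=0$ with the opposite sign of $\dot\lambda_i$. The leading balance yields $\dot\lambda_i\sim \kappa_i^{*}/|\log(t-T)|^2$, and integration gives the asserted profile $\lambda_i(t)=\kappa_i^{*}(t-T)/|\log(t-T)|^2$; the prefactor $\kappa_i^{*}$ and angle $\alpha_i^{**}$ are forced to match those of Theorem~\ref{teo1} by requiring that the outer remainder $\phi^{\rm out}$ start from zero at $t=T$ and that the inner corrections decay consistently. Once this is accomplished, concatenation with the solution from Theorem~\ref{teo1} on $[0,T)$ yields a map on $\Omega\times(0,T+\delta)$ that is smooth off the finite set $\{(q_i,T)\}$ and satisfies the stated expansions. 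The $H^1$–weak solution property across $t=T$ is then verified by testing against compactly supported test functions and cutting off around the finitely many isolated space–time singularities, whose parabolic capacity is zero; continuity of the energy across $T$ is automatic since both one–sided limits equal $\int_\Omega |\nabla u_*|^2 + 4\pi k$.

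The main obstacle is the precise matching at $t=T$: one must show that the inner–outer decomposition on $(T,T+\delta)$ admits $u_*$ as genuine initial datum (in $L^2$ and pointwise off the $q_i$), that the outer correction $\phi^{\rm out}$ can be started at zero with compatible regularity inherited from the terminal trace of Theorem~\ref{teo1}, and that the parameters $(\kappa_i^{*},\alpha_i^{**})$ selected by the post–blow-up ODE system are the same ones produced by the pre–blow-up construction. This coupling is what makes the overall map an actual weak solution rather than two disconnected pieces, and it is the step where the structure of reverse bubbling, rather than a generic Struwe–type restart, is genuinely used.
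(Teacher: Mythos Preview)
The paper does not contain a separate proof of this theorem; after the proof of Theorem~\ref{teo1} in Section~7 and the technical appendices, the continuation result is left without its own argument. So there is no ``paper's proof'' to compare against, and your plan --- rerun the inner--outer gluing on $(T,T+\delta)$ with the reversed bubble --- is exactly the route the authors implicitly intend.

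That said, two points in your sketch deserve correction or more care. First, your remark that ``continuity of the energy across $T$ is automatic'' is wrong as stated: the paper notes explicitly that the energy has a jump at $t=T$ (it drops to $\int_\Omega|\nabla u_*|^2$ at the instant $T$ and returns to $\int_\Omega|\nabla u_*|^2+4\pi k$ immediately after). Both one-sided limits agree, but the value at $T$ does not; this is precisely why the continuation is not Freire-unique. Second, the claim that the analysis ``transfers verbatim'' via $\bar\omega=Q_\pi\omega$ is correct for the linearized inner operator, but the modulation system does not transfer by symmetry alone: in the forward construction $\lambda$ decreases to $0$ and the nonlocal ODE in Section~6 is solved with that monotonicity and with the sign condition $\div \tilde z_0(q_0)<0$; after $T$ you need $\lambda$ to \emph{grow} from $0$, and you must check that the structure of $u_*$ produced by Theorem~\ref{teo1} (in particular the first-order behaviour of its tangential part at each $q_i$) supplies the correct sign for the reversed system. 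This is what pins down $\kappa_i^*$ as the \emph{same} constant and selects $\alpha_i^{**}$; it does not come for free from requiring $\phi^{\rm out}(\cdot,T)=0$. The matching obstacle you flag is real, and it is exactly here that the argument has content beyond a formal replay of Theorem~\ref{teo1}.
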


We observe that the energy in this continuation fails to be decreasing: it has a jump exactly at time $T$ and it goes back to its previous level immediately after.

\bigskip
We consider a question related to Theorem \ref{teo2}  treated in the 1-corrotational symmetric class in \cite{Topping3} and in \cite{Bertsch}:
the occurrence of perfectly smooth solutions which spontaneously develop a singularity in finite time by the addition of an infinitely concentrated bubble which instantaneously raises the energy in a multiple of $4\pi$. We find that the typical rate for this backward bubbling is $\dot \la(t)$ of order $\frac {t-T}{|\log(t-T)|}$ rather than \equ{blow-up6}. This was formally derived in \cite{bhk}.

\begin{theorem}
\label{teo3}
Given points $q_1,\ldots, q_k$ in $\Omega$ and any sufficiently small $T>0$      there exists an $H^1$-weak solution $u(x,t)$ of  problem $\equ{har map flow0}$-$\equ{har map flow02}$ in  $\Omega \times (0, T+\delta )$ which is continuous except at the points
$(q_i,T)$,  it is
 smooth in $\Omega\times (0,T]$ and has spontaneous reverse bubbling at the points $q_i$ in the form
\[
u(x,t) - u(x,T) -  \sum_{j=1}^k  \big [\, W  \left (\frac {x- q_i} {\la_i(t)} \right ) -  W  (\infty) \,\big ]\ \to\  0 \ass t\downarrow  T,
\]
in the $H^1$ and uniform senses in $\Omega$, where for some positive numbers $\kappa_i$
\be
\lambda_i(t) =\kappa_i \frac{t-T}{|\log(t-T)|}.
\label{lambdai}\ee

\end{theorem}

\bigskip
Before proceeding into the proof we make some further comments. It is plausible that the solutions of the form described in Theorem \ref{teo1} represent a form of ``generic''
bubbling phenomena for the two-dimensional harmonic map flow.
%{\crr still too general in the form  \equ{aa}}.
For instance, it is reasonable to think that the limits
along any sequence should have the same elements in the bubble decomposition.  On the other hand, evidence in the literature suggests that
typically only simple blow-up is present, having as a profile scalings of the 1-corrotational maps $W$ and $\bar W$. Higher degree  maps are represented by the $d$-corrotational symmetry class, $d\ge 1$,
$$
u(x,t)  =  \left (\begin{matrix}    e^{ di\theta} \sin v(r,t)   \\  \cos v(r,t) \end{matrix}    \right ) , \quad x= re^{i\theta}.
$$
 Steady states in this class correspond to scalings of  $v= w_d(r) = \pi -  2\arctan(r^d)$. It turns out that blow-up is not present in this class
 for $d\ge 4$. See Guan-Gustafson-Tsai \cite{Guan-Gustafson-Tsai}. It is conjectured that no blow-up exists also for $d=2,3$. This essentially discards higher degree blow-up.
On the other hand,  no multiple blow-up  (bubble trees) in the 1-corrotational class exists. See Van der Hout
\cite{vdh}. Infinite time multiple bubbling was found by Topping  \cite{Topping2} in a target different from $S^2$.
{\cb Bubbling rates faster than \equ{rate}}
do exist in the 1-corrotational case, but they are not stable, see Rapha\"el and Schweyer \cite{rs2}.
Many other results on bubbling phenomena, and regularity for harmonic maps and the harmonic map flow are available in the literature, we refer the reader to the book by Lin and Wang \cite{Lin-Wang4}.

\medskip
In bubbling phenomena in this and related problems very little is known in nonradial situations.
The method in \cite{rs1,rs2}, was successfully applied to very related blow-up phenomena in dispersive equations in symmetric classes. See for instance Rodnianski-Sterbenz \cite{Rod1}
 Merle-Rapha\"el-Rodnianski \cite{MRR},  Rapha\"el\cite{Rap1},  Rapha\"el-Rod\-nians\-ki \cite{RR1}.
Our results share a flavor with finite time multiple blow-up in the subcritical semilinear heat equation,
as in the results by Merle and Zaag \cite{MZ}. Bubbling associated to the critical exponent has been recently studied in \cite{DDS,CDM}.
Our approach is parabolic in nature. It is based on the construction of a good approximation and then linearizing inner and outer problems. An appropriate inverse for the inner equation
is then found (which works well if the parameters of the problems are suitably adjusted) which makes it possible the application of fixed point arguments.
The general approach, which we call inner-outer gluing, has already been applied to various singular perturbation elliptic problems, see for instance \cite{dkw,dkw1}.  A major difficulty we have to overcome is the coupled nonlocal ODE satisfied by the scaling and rotation parameter. We now explain in more details below.

\section{The 1-corrotational harmonic maps and the ansatz for a blowing-up solution}

The harmonic map equation for functions $U:\R^2\to S^2$ is the elliptic problem
\begin{align}
\label{hm0}
\Delta U + |\nn U|^2 U =0 \inn \R^2, \quad |U|=1 .
\end{align}
For $\xi\in \R^2$,  $\omega\in \R$, $\la>0$, we consider the family of solutions of \equ{hm0} given by the following 1-corrotational harmonic maps
\[
 U_{\la, \xi , \omega}  (x)  :=
 Q_\omega\, W (  y ) ,\quad y= \frac {x- \xi }{\la}
\]
 where $ W(y)  $ is the canonical 1-corrotational harmonic map
\[
 W  (y) \ =\    \frac 1{1+ |y|^2} \left (\begin{matrix}    2y   \\  |y|^2 -1 \end{matrix}    \right ) ,
 \quad  y\in \R^2 ,
\]
 and $Q_\omega$ is the $\omega$-rotation matrix
\[
Q_\omega:=   \left [ \begin{matrix}  \cos\omega  &  -  \sin \omega  & 0  \\  \sin\omega  &  \cos\omega & 0  \\  0 & 0 & 1   \end{matrix} \right ].
\]

 % of the scaled variable $y = \frac{x-\xi}{\lambda}$.
Our purpose is to build a smooth blowing-up solution  $u:\bar \Omega \times [0,T)\to S^2$ of the problem
\be\label{har map flow} \left \{ \begin{aligned}
u_t  &= \Delta u + |\nabla u|^2 u \quad  \text{in } \Omega\times(0,T)\\
u  &= \mathbf{e_3}  \quad   \text{on } \pp\Omega\times(0,T)
\\
u(\cdot,0)  &= u_0 \quad   \text{in } \Omega .
\end{aligned}\right.\ee

\medskip
In order to keep the notation to a minimum, we shall do this in the case $k=1$ of a single given bubbling point $q\in \Omega$. The changes needed for the general case of Theorem \ref{teo1} are minor. More precisely
for any sufficiently small number $T>0$
we look for an initial datum $u_0$ such that the solution $u(x,t)$ of problem \equ{har map flow}
looks at main order like
\begin{align}
\label{formAnsatz}
U (x,t) :=    U_{\la(t), \xi(t) , \omega(t)}  (x)  =
Q_{\omega(t)}\,  W(  y), \quad y= \frac {x- \xi(t) }{\la(t)} ,
\end{align}
for certain functions  $\xi(t)$, $\la(t)$ and $\omega (t)$ of class $C^1([0,T])$ such that
\[
\xi(T) =q, \quad \la(T)=0 .
\]
 We shall find values for these functions
so that for a small remainder $v(x,t)$ we have that
$ u  = U + v $ solves \equ{har map flow}.  The condition $|U+ v|=1$
tells us that $u$ can be written as
\be\label{v}
u(x,t)= U+  \Pi_{U^\perp}\varphi +  a(\Pi_{U^\perp}\varphi) U,
\ee
where $\vp$ is a small function with values into $\R^3$ and we denote
$$
\Pi_{U^\perp} \vp :=   \vp - (\vp\cdot U) U, \quad  a(\zeta)  :=   \sqrt{1 - |\zeta|^2}-1  .
$$
The term $a(\Pi_{U^\perp}\varphi)$ has  a quadratic size in $\vp$ so it is of smaller order. We choose to decompose the remainder $\vp(x,t)$ in \equ{v} as the addition
of an ``outer'' part, better expressed in the global variable $x$, and an ``inner'' part which supported near the singularity and it is naturally  expressed as function of the slow variable $y$. More precisely, we let
\be\label{aris}
\vp(x,t) \  = \  \vp^{out} (x,t)  +   \vp^{in} (y,t),  \quad y=\frac{x-\xi(t)}{\la(t)}
\ee
where
$$\begin{aligned}
 \vp^{in} (y,t) \ = &  \  \eta_{R(t)}\left (y  \right) Q_{\omega(t)} \phi(y,t), \quad \phi(y,t)\cdot W(y) \equiv 0 \end{aligned} $$ and  $ \eta_R(y) :=  \eta\left (\frac {|y|} R  \right)$
 with $\eta(s)$ a smooth cut-off function so that
$$\eta(s)= \begin{cases} 1 & \hbox{  for $s<1$,}\\ 0 &\hbox{  for $s>2$}. \end{cases}$$
 The function $\phi(y,t)$ is defined for $|y|< 3R(t)$ where $R(t)\to +\infty$ and $\la(t) R(t)\to 0$ as $t\to T$.
With these definitions we see that $\Pi_{U^\perp} \vp^{in}= \vp^{in}$.

\medskip
We choose to the  decompose the outer part $\vp^{out}(x,t)$  in \equ{aris} as
\begin{align}
\label{outerSol}
\vp^{out}(x,t) \  =\   \Phi^0[\omega,\la, \xi] +  Z^*(x,t)  \, + \,  \psi(x,t),
\end{align}
where $ \Phi^0$ and  +  $Z^*(x,t)$ are explicit functions chosen as follows:
$\Phi^0[\omega,\la, \xi]$ is a function (which will be precisely described in the next section) that at main order eliminates the largest slow-decaying  part of the error of approximation $U_t$ in \equ{har map flow}.
Writing  $p(t) := \la(t) e^{i\omega(t)}$ and using polar coordinates $x= \xi(t)+ re^{i\theta}$,  we require
$$
 \pp_t \Phi^0  - \Delta_x \Phi^0  \approx   \frac 2r \left [\begin{matrix} \dot p(t) e^{i\theta} \\ 0   \end{matrix}\right ] \approx U_t  .
 $$
 On the other hand, we let $Z^*:\Omega\times (0,\infty) \to \R^3$ satisfy
\begin{align}
\label{heatZ*}
\left\{
\begin{aligned}
Z_{t}^* &= \Delta Z^*  \inn \Omega\times(0,\infty), \\
Z^*(\cdot ,t)  &=  0\inn  \pp\Omega \times (0,\infty),\\
Z^*(\cdot ,0)  &=  Z^*_0   \inn  \Omega ,
\end{aligned}
\right.
\end{align}
where
\begin{align}
\label{notationZ0star}
Z_0^*(x) =  \left [ \begin{matrix}z_0^*(x) \\  z_{03}^* (x) \end{matrix}   \right ] , \quad z_0^*(x) = z^*_{01}(x)  + i z^*_{02}(x)
\end{align}
is a small, sufficiently regular function essentially satisfying
 $$Z_0^*(q)= 0, \quad  \div  z^*_0(q)  + i \curl  z^*_0(q) \ne 0 . $$
In summary, we make the ansatz
\be
u\  = \  U \,  +\, v, \quad v =  \Pi_{U^\perp} \big(    \Phi^0[\omega, \la , \xi] + Z^* + \psi\big ) \, + \, \eta_R Q_\omega \phi  + a U    \label{canave}\ee
for a blowing-up solution $u(x,t)$ of \equ{har map flow},
where $\Phi$ and $\psi$ are lower order corrections.
Our task is to find  functions $\omega(t), \la(t) , \xi(t)$, $\psi(x,t)$ and $\phi(y,t)$ as described above, such that the remainder $v$ remains uniformly small.

\medskip
We will define a system of equations that we call the
{\em inner-outer gluing system}, essentially  of the form
\be\label{gluing0}
%\label{inner01}
\left\{
\begin{aligned}
\la^2 \phi_t & =  L_ W  [\phi ]\ +\ H[p,\xi, \psi,\phi],  \quad
\phi\cdot  W   =  0 \inn     \R^2\times (0,T)
%\la^2 \phi_t  & =  L_ W  [\phi ]
%+
 % \inn \DD_{2R}
\\
\psi_t  &=  \Delta_x \psi \ +\  G[p,\xi, \psi,\phi]\qquad \qquad \qquad\ \,   \inn \Omega \times (0,T)
%\phi(\cdot, 0)  & = 0=\phi(\cdot, T),
\end{aligned}
\right.
\ee
where
\[
L_W[ \phi] = \Delta_y \phi + |\nn_y W|^2 \phi + 2(\nn_y\phi \cdot \nn_y W)W , \quad \phi\cdot W = 0
\]
is the linearized operator for equation \equ{hm0} around $U= W$,
so that if the pair of functions $(\phi(y,t),\psi(x,t))$ solves it then $u$ given by \equ{canave}
is a solution of \equ{har map flow}.
 The point is to adjust the parameter functions $\omega, \la,\xi$ such that the inner problem can be solved for $\phi(y,t)$ which decays as $|y|\to \infty$. To fix the idea, let us consider the approximate elliptic equation, where time is regarded just as a parameter,
 $$
  L_ W  [\phi ]\ +\ H[p,\xi, 0,0] = 0 \inn \R^2
  $$
As we will discuss, a space-decaying solution $\phi(y,t)$ to this problem exists if a set of orthogonality conditions of the form
\begin{align}
\label{orthCond}
\int_{\R^2} H[p,\xi, 0,0](y,t)\, Z(y)\, dy = 0 \foral Z\in \mathcal Z
\end{align}
where $\mathcal Z $  is a  4-dimensional space constituted by decaying functions $Z(y)$ with $L_W[Z]=0$. These solvability conditions lead to an essentially explicit system
of equations for the parameter functions which will tell us in particular that for some small $\sigma>0$
\be\label{jofre}
\begin{aligned}
p(t) &= \cb - (\div z^*_0(q)  + {i\curl z_0^* (q)} ) \frac {|\log T| (T-t)}{\log^2 (T-t)} (1+ O( |\log T|^{-1+\sigma} )) ,
\\
\xi(t) & =   q + O((T-t)^{1+\sigma}),\end{aligned}  \ee
%{\crr $(T-t)$ was missing}
and we recall that we are consistently asking $ \div z^*_0(q)  + i{\curl z_0^* (q)} \ne 0$.

{\cb

\begin{remark}
\label{remarkStability}{\em
%We discuss here the reason for codimension-1 blow up in Theorem~\ref{stability}.
%The codimension-1 stability result comes from the properties of the linearized harmonic map flow around the degree 1 bubble \eqref{U00}, whose stationary version has four independent decaying elements in its kernel.
In the case of  blow-up at a single point, our codimension-1 stability result is directly connected to the solvability conditions \eqref{orthCond}. Indeed, the solution we construct depends at main order on four parameters functions: a scaling $\lambda(t)>0 $, a rotation angle $\omega(t)\in \R$, and the concentration point $\xi(t) \in \Omega$, see formula \eqref{formAnsatz}.
The presence of decaying functions in the kernel of the operator $L_W$ limits the decay of solutions to the inner linearized evolution. Too slow decay could make the contribution to the error in the remote regime too large. Sufficient decay in the linearized evolution can only be achieved if the right hand side satisfies four solvability conditions at all times $t\in [0,T)$.
These are conditions \eqref{orthCond}, which translate into a system of integro-differential equations for
$\lambda(t)$, $\omega(t)$, $\xi(t)$.
For $\xi(t)$ the equation is almost a first order ODE, which imposes a constraint between $\xi(0)$ and $\xi(T)$.
%If we do not want constraints in $\xi(0)$, which is part of the initial condition, then $\xi(T)$ is determined by the problem.
The equations for $\lambda(t)$ and $\omega(t)$ are better expressed for the combined quantity $p(t)=\lambda(t)e^{i\omega(t)}$. It is an integro-differential equation, whose solution has the expansion \eqref{jofre}.
This relation evaluated at time $t=0$ says that $\lambda(0)e^{i\omega (0)} = - (\div z^*_0(q)  + {i\curl z_0^* (q)} ) \frac { T}{|\log T|} (1+ O( |\log T|^{-1+\sigma} )) $.
Considering $z_0^*$ as fixed, this equation links $\lambda(0)$ with $T$ and determines $\omega(0)$ uniquely in $[0,2\pi)$. In other words in the initial condition we lose the freedom to choose $\omega(0)$.
We also loose the  freedom of choosing $\lambda (0)$ if $T$ was fixed, but this is recovered by letting  $T$ vary.
In the 1-corrotational case, the symmetries imply that $\curl z_0^*(0)=0$ and $\omega\equiv 0$, and therefore there is no loss of stability in this situation.
The argument above considers $z_0^*$ as fixed, but the analysis with all variables taken into consideration  is detailed in \S~\ref{Stability}.
}
\end{remark}
}

{\cb

\begin{remark}{\em
%We discuss here the asymptotic expansion \eqref{jofre}
 Let us explain why the numbers $ \div z^*_0(q)$ and $\curl z_0^* (q)$ appear in expression \eqref{jofre}. Let us restrict the analysis to the 1-corrotational ansatz \eqref{1corrot} so that the harmonic map flow reduces to \eqref{11}.
 We look for a solution that approximately looks like the superposition of a bubble \eqref{formAnsatz} with $\xi(t)\equiv 0$, $\omega(t)\equiv 0$ perturbed by \eqref{outerSol} consisting only of a term $Z^*$ of the form
\[
Z^*(r,t) = \left[
\begin{matrix}
e^{i\theta}f(r,t)\\0
\end{matrix}
\right]
\]
with $f$ satisfying $\partial_t f= \partial_{rr}f+\frac{1}{r} \partial_r f - \frac{1}{r^2}f$ and $f(0,t)=0$,
namely we propose an approximate solution $v(r,t) = w(\frac{r}{\lambda}) + f(r,t)$  of \eqref{11}.
With the notation \eqref{notationZ0star}, we have that $\div z_0^*(0)= 2 \partial_r f(0,0)$, $\curl z_0^*(0)=0$.
Expanding $f(r,t) \approx \partial_r f(0,0) r$ we get that
\begin{align*}
-\partial_t v + \Delta v - \frac{\sin(2v)}{2r^2}
&\approx
\rho  w_\rho \frac{\dot\lambda}{\lambda}
-\frac{1}{\lambda} \frac{w_\rho}{\rho} \partial_r f(0,0)
,\quad \rho = \frac{r}{\lambda}.
\end{align*}
Imposing that the right hand side above is $L^2$-orthogonal to the kernel of the linearized equation  in a ball of radius $ \sqrt{T-t}$ suggests that
\[
|\log (T-t)| \dot \lambda(t)  \approx  c f_r(0,0),
\]
for a positive universal constant $c$.
This derivation is not correct because significant boundary terms appear in the integration. This issue is solved by the addition of the
nonlocal term $\Phi^0$. On the other hand, this suggests the role played by $\div z_0^*(0)$ in the expression for $\lambda$. The term  $\curl z_0^*(0)$ appears when we introduce the rotation angle $\omega$, which is needed outside the $1$-corrotational regime. }
\end{remark}
}

\medskip
In the next sections we will carry out in detail the program for the construction sketched above. In \S \ref{lin}
we will set up several facts about the elliptic linearized operator that will be needed in all subsequent computations.
In \S \ref{phi00} we will compute in precise way the error of approximation and define the function $\Phi^0$ mentioned. We also introduce the precise terms appearing in the inner-outer gluing system \equ{gluing0}. In \S \ref{informal} we will perform the computations of the orthogonality conditions which lead to expressions \equ{jofre}. In \S \ref{sectInnerOuter} we will carry out the full construction setting up the system as a fixed point problem. We make precise statements of
the necessary (major) steps needed, in particular a subtle linear theory for the parabolic inner problem that mimics the Fredholm alternative for the elliptic equation mentioned above, which is developed in \S \ref{sectLinearTheory}.
Related Lipschitz estimates and linear bounds for the outer problem are performed in \S \ref{lips} and \S \ref{lips1}. The adjustment of the parameters to solve the full system is the purpose of \S \ref{secLambda}. The stability statement is proved in \S \ref{Stability}. Finally, we discuss the continuation and reverse bubbling results in \S \ref{reverse}.

\section{The linearized operator around the bubble}\label{lin}

The linearized operator for \equ{hm0} around  $U=U_{\la, \xi , \omega}$ is the elliptic operator
\[
L_U[ \vp] = \Delta \vp + |\nn U|^2 \vp + 2(\nn\vp \cdot \nn U)U .
\]
Differentiating $U$ with respect to each of its parameters we obtain functions that annihilate this operator, namely solutions of $L_U[\vp]=0$.
Setting  $ y= \frac {x-\xi } {\la}$, these functions are
\begin{align*}
 \pp_\la U_{\la, \xi , \omega}(x)   = & \frac {1}{\la} Q_\omega \nn  W  (y)\cdot y ,    \\
     \pp_\omega U_{\la, \xi , \omega} (x)   =  & (\pp_\omega  Q_\omega)  W  (y), \\
             \pp_{\xi_j } U_{\la, \xi , \omega}(x)  =&  \frac 1\la Q_\omega \pp_{y_j}  W   (y).
\end{align*}
We observe that
\[
(\pp_\omega  Q_\omega)  =    Q_\omega J_0, \quad \mbox{where} \ \  J_0 = \left [ \begin{matrix}  0  &  -  1  & 0  \\  1  &  0 & 0  \\  0 & 0 & 0   \end{matrix} \right ] .
\]

We can represent
 $ W  (y)$ in polar coordinates,
$$
 W  (y) =  \left (\begin{matrix}    e^{ i\theta} \sin {w(\rho )}   \\  \cos {w(\rho )} \end{matrix}    \right ), \quad w(\rho ) = \pi - 2\arctan (\rho ), \quad y= \rho e^{i\theta}.
$$
We notice that
$$
w_\rho = - \frac 2{1+\rho^2} , \quad \sin w = -\rho w_\rho = \frac {2\rho} {1+\rho^2}, \quad \cos w = \frac {\rho^2 -1}{1+\rho^2} ,
$$
and derive the alternative expressions
\begin{align}
\nonumber
\pp_\la U_{\la, \xi , \omega}(x)   & = \frac {1}{\la} Q_\omega Z_{01}(y) ,
& Z_{01}(y) & = \rho w_\rho(\rho)\, E_1(y),
\\
\nonumber
\pp_\omega U_{\la, \xi , \omega} (x)  &  =   \quad  Q_\omega Z_{02}(y) ,
&   Z_{02}(y) &= \rho w_\rho(\rho)\, E_2(y),
\\
\nonumber
\pp_{\xi_j } U_{\la, \xi , \omega}(x)  &= \frac 1\la Q_\omega Z_{11}(y),
&  Z_{11}(y) & = w_\rho(\rho)\,  [ \cos\theta \, E_1(y) + \sin\theta\,  E_2(y)  ],
\\
\label{ZZ}
\pp_{\xi_j } U_{\la, \xi , \omega}(x)  & = \frac 1\la Q_\omega Z_{12}(y),
& Z_{12}(y)  & =w_\rho(\rho)\,  [ \sin\theta  \, E_1(y)  - \cos \theta\,  E_2(y)  ] ,
\end{align}
where
\[
E_1(y) =    \left (\begin{matrix}    e^{ i\theta} \cos w(\rho )   \\ - \sin {w(\rho )} \end{matrix}    \right ), \quad E_2(y) =
\left (\begin{matrix}    ie^{ i\theta}    \\  0 \end{matrix}    \right ) .
\]
The relation $|U_{\la, \xi , \omega}|=1$ implies that all the functions $Z_{ij}$ are pointwise orthogonal to $U_{\la, \xi , \omega}$. In fact the vectors $E_1(y)$, $E_2(y)$
constitute an orthonormal basis of the tangent space to $S^2$ at the point $ W  (y)$.

We have
$L_ W  [ Z_{lj}] =0 $ where for a function $\phi(y)$ we define
\[
%\label{Lomega}
L_ W  [\phi] =   \Delta_y \phi  + |\nn  W (y)|^2  \phi  + 2(\nn  W (y)\cdot \nn\phi) {  W (y) }.
\]
In addition to the elements \eqref{ZZ} in the kernel of $L_W$ there are also two other relevant functions in the kernel, namely
\begin{equation}
\label{Zmode-1}
Z_{-1,1} = \rho^2 w_\rho(\rho)(\cos \theta E_1 -\sin \theta E_2),  \ Z_{-1,2} =  \rho^2 w_\rho (\rho)(\sin \theta E_1 + \cos \theta E_2) .
\end{equation}

It is worth noticing the connection between this operator and $L_U$ which is given by
\[
%\label{connection}
L_U[ \vp]  = \frac 1{\la^2} Q_\omega L_ W  [\phi], \ \quad \vp(x) =\phi(y), \quad y=\frac{x-\xi}{\la}.
\]

\medskip

\subsection*{The linearized operator at functions orthogonal to \texorpdfstring{$U$}{U}}
It will be especially significant to compute  the action of $L_U$ on functions with values pointwise orthogonal to $U$.
In what remains of this section we will derive various formulas that will be very useful later on.

\medskip
For an arbitrary function
$\Phi(x)$ with values in $\R^3$ we denote the projection
$$
\Pi_{U^\perp} \Phi :=   \Phi - (\Phi\cdot U) U.
$$
A direct computation shows the validity of the following:
\begin{equation}
%\label{Ltilde0}
\nonumber
L_U[\Pi_{U^\perp}\Phi] = \Pi_{U^\perp} \Delta \Phi  + \ttt L_U[\Phi ]
\end{equation}
where
\[
%\label{Ltilde}
\ttt L_U[ \Phi ] := |\nn U|^2 \Pi_{U^\perp} \Phi - 2\nn (\Phi \cdot U ) \nn U,
\]
and
$$
\nn (\Phi \cdot U ) \nn U =  \pp_{x_j} (\Phi \cdot U )\, \pp_{x_j} U .
$$
A very convenient expression for $\ttt L_U[ \Phi ]$ is obtained if we use polar coordinates. Writing in complex notation
$$
\Phi(x) = \Phi(r,\theta), \quad x= \xi + r e^{i\theta},
$$
we find
\begin{align}
\label{Ltilde}
 \ttt L_U[\Phi] =  - \frac 2{\la} w_\rho(\rho)\,  [ (\Phi_r \cdot U)Q_\omega E_1  - \frac 1{r} (\Phi_\theta \cdot U) Q_\omega E_2 ], \quad \rho = \frac r\lambda.
\end{align}

\bigskip
We single out two consequences of formula \equ{Ltilde} which will be crucial for later purposes.
Let us assume that $\Phi(x)$ is a  $C^1$ function $\Phi : \Omega \to \C \times \R$, which we express in the form
\begin{align}
\label{notation-Phi}
\Phi(x)\ =\ \left ( \begin{matrix} \vp_1 (x) + i \vp_2(x)  \\ \vp_3 (x)  \end{matrix} \right ).
\end{align}
We also denote $$\vp = \vp_1 + i \vp_2 , \quad \bar \vp = \vp_1  - i \vp_2 $$ and define the operators
$$
\div \vp   = \pp_{x_1}\vp_1 + \pp_{x_2}\vp_2, \quad \curl \vp = \pp_{x_1}\vp_2 - \pp_{x_2}\vp_1 . $$
We have the validity of the following formula
\be
 \ttt L_U [\Phi ] =  \ttt L_U [\Phi ]_0  +  \ttt L_U [\Phi ]_1 +  \ttt L_U [\Phi ]_2\ ,
 \label{Ltilde2} \ee
 where
\begin{align}
\label{Ltilde-j}
\left\{
\begin{aligned}
  \ttt L_U [\Phi ]_0 & =    \la^{-1} \rho w_\rho^2\, \big [\, \div ( e^{-i\omega} \vp)\, Q_\omega  E_1    + \curl ( e^{-i\omega} \vp)\, Q_\omega E_2
  \, \big ]\,
 \\
\ttt L_U [\Phi ]_1 & =
   -
  2\la^{-1} w_\rho  \cos w \, \big [\,(\pp_{x_1} \vp_3) \cos \theta +    (\pp_{x_2} \vp_3) \sin  \theta \, \big ]\,Q_{\omega} E_1
\\
& \quad - 2\la^{-1}  w_\rho  \cos w  \, \big [\, (\pp_{x_1} \vp_3) \sin \theta -    (\pp_{x_2} \vp_3) \cos  \theta \, \big ]\, Q_{\omega}E_2\ ,
\\
\ttt L_U [\Phi ]_2 &=
\la^{-1} \rho w_\rho^2 \, \big [\, \div (e^{i\omega}\bar \vp)\, \cos 2\theta  -   \curl ( e^{i\omega}\bar \vp)\, \sin 2\theta  \, \big ]\, Q_{\omega} E_1
\\
&  \quad
+  \la^{-1} \rho w_\rho^2 \, \big [\,  \div ( e^{i\omega}\bar \vp)\, \sin 2\theta +   \curl  ( e^{i\omega}\bar \vp)\,  \cos 2\theta   \, \big ]\,Q_{\omega}E_2 .
\end{aligned}
\right.
\end{align}

%Finally, we observe that
%\begin{align*}
%\div ( e^{-i\omega} \vp) \ = &\  \div ( \cos\omega \vp_1 - \sin\omega \vp_2 + i(  \sin\omega \vp_1 + \cos \omega \vp_2))\\
%\ =& \ \cos\omega\,  \div \vp  -  \sin\omega \,\curl \vp.
%\end{align*}

Another corollary of formula \equ{Ltilde} that we single out is the following:
assume that
$$
\Phi (x) =  \left ( \begin{matrix} \phi(r) e^{i\theta} \\ 0  \end{matrix}  \right) , \quad x = \xi + re^{i\theta} , \quad \rho =\frac r\la
$$
where $\phi(r)$ is complex valued.
Then
\be\label{uii}
\ttt L_U [\Phi] =    \frac 2\la  w_\rho(\rho)^2   \left [  {\rm Re } \,( e^{-i\omega} {\partial_r \phi(r) } )  Q_\omega E_1   +
 \frac 1r {\rm Im}  \,( e^{-i\omega} \phi(r))  Q_\omega E_2  \right ].
\ee

A final result in this section is a computation (in polar coordinates) of the operator $L_U$ acting on a function of the form
\[
\Phi (x) =  \vp_1(\rho, \theta) Q_\omega E_1 + \vp_2(\rho, \theta)Q_\omega E_2 ,  \quad x= \xi + \la \rho e^{i\theta}.
\]
We have:
\begin{align}
L_U[\Phi ] & =   \la^{-2}  \left ( \pp^2_\rho \vp_{1} +  \frac  { \pp_\rho\vp_{1} } {\rho }  +  \frac { \pp^2_\theta \vp_{1} } {\rho^2  }+    (2w_\rho ^2 - \frac 1{\rho^2} )\vp_1 - \frac{2}{\rho^2}  \pp_\theta \vp_{2}\cos w   \right ) \,  Q_\omega E_1
\nonumber\\ & \quad
+   \la^{-2}  \left ( \pp^2_\rho \vp_{2} +  \frac  { \pp_\rho\vp_{2} } {\rho }  +  \frac { \pp^2_\theta \vp_{2} } {\rho^2  }+    (2w_\rho  ^2 - \frac 1{\rho^2} )\vp_2  + \frac{2}{\rho^2}  \pp_\theta \vp_{1}\cos w \right ) \, Q_\omega E_2 .
%\label{aaaa33}
\nonumber
\end{align}

\section{The error and the inner-outer gluing system}
\label{phi00}

The linearized operator for \equ{hm0} around  $U=U_{\la, \xi , \omega}$ is the elliptic operator
\[
L_U[ \vp] = \Delta_x \vp + |\nn_x U|^2 \vp + 2(\nn_x\vp \cdot \nn_x U)U ,
\]
where $\varphi=\varphi(x,t)$. Consistently we denote for a function $\phi = \phi(y,t)$.
Let us denote
$$
S(u) :=  -u_t + \Delta u + |\nn u|^2 u %\quad |u|=1  \inn \Omega\times (0,T) .
$$
%so that Equation \equ{har map flow} amounts to $S(u)=0$.
A useful observation that we make is that as long as the constraint $|u|=1$ is kept at all times and $u= U+ v$ with $|v|\le \frac 12 $ uniformly, then  for $u$ to solve  equation \equ{har map flow} it suffices that
\be \label{bU} S(U+v) = b(x,t) U \ee  for some scalar function $b$. Indeed, we observe that since $|u|\equiv 1 $ we have
\[
b\, (U\cdot u) = S(u) \cdot u = -  \frac 12 \frac d{dt} {|u|^2} +  \frac 12 \Delta {|u|^2} = 0 ,
\]
and since  $U\cdot u \ge  \frac 12 $, we find that  $b\equiv 0$.

\medskip
Using that
 $$
 \Delta U + |\nn U|^2 U =0
 $$
 we find the following expansion for $S(U+v)$ with $v$ given by \equ{v}:
%\begin{align}
\[
S( U + \Pi_{U^\perp} \varphi + aU )
=
-U_t  - \pp_t \Pi_{U^\perp} \varphi+  L_U(\Pi_{U^\perp} \varphi) +  N_U( \Pi_{U^\perp} \vp ) + c(\Pi_{U^\perp} \vp) U   \nonumber
\]
% \label{Upsi} \end{align}
where for $\zeta =\Pi_{U^\perp} \vp $, $a = a(\zeta )$,
\begin{align}
\nonumber
L_U(\zeta )
&= \Delta \zeta + |\nabla U|^2 \zeta + 2(\nabla U\cdot  \zeta ) U
\\
\nonumber
N_U( \zeta )
&=
\big[
2 \nn (aU)\cdot \nn (U+ \zeta  )  + 2 \nabla U \cdot \nabla \zeta   + |\nabla \zeta  |^2
+ |\nabla (a U ) |^2 \,
\big] \zeta
- aU_t
\\
& \quad
%\label{NU}
\nonumber
+ 2\nn a \nn U  ,
\\
\nonumber
c(\zeta )
& =  \Delta a - a_t  + ( |\nn (U + \zeta  + aU)|^2
- |\nn U|^2 )(1 + a)  -   2\nn U\cdot \nn \zeta
\end{align}
Since we just need to have an equation of the form \equ{bU} satisfied,  we find that
$$ u = U + \Pi_{U^\perp} \varphi + a(\Pi_{U^\perp} \varphi)U $$
solves \equ{har map flow}
if and only if $\vp$ satisfies
\be\label{ecuacion}
0=
-U_t  - \pp_t \Pi_{U^\perp} \varphi+  L_U(\Pi_{U^\perp} \varphi) +  N_U(\Pi_{U^\perp}\vp ) + b(x,t) U ,
\ee
for some scalar function $b$. The logic of the construction goes like this:
We decompose $\varphi$ into the sum of two functions $\vp = \vp^i + \vp^o$, the ``inner'' and ``outer'' solutions and reduce equation \equ{ecuacion}   to solving a  system of two equations in $(\vp^i, \vp^o)$ that we call the inner and outer problems.

\medskip
The inner function $\vp^i(x,t)$ will be assumed supported only near $x=\xi(t)$ and better read as a function of the scaled space variable $y= \frac{x-\xi(t)}{\la(t)} $ with zero initial condition and such that $\vp^i\cdot U=0$, so that $\Pi_{U^\perp} \vp^i = \vp^i$.
The outer function   $\vp^o(x,t)$ will be made out of several pieces and its role is essentially to satisfy \equ{ecuacion} far away from the concentration point $x=\xi(t)$.
%Crucial for the computation of  $L_U(\Pi_{U^\perp} \varphi^o)$ is the validity of
%the following expression for a function $\Phi(x)$ with values in $\R^3$.

We write equation \equ{ecuacion} in the following way:
\begin{align}
\label{eqsys0}
0 & = -\pp_t \varphi^i+  L_U[\vp^i] + \ttt L_U [\vp^o]
-\Pi_{U^\perp} [\pp_t \varphi^o -\Delta \vp^o  + U_t ]
\\
\nonumber
& \quad +  N_U( \vp^i + \Pi_{U^\perp}\vp^o ) + (\vp^o\cdot U)U_t + b U .
\end{align}
For the outer problem, we consider a function $\Phi^0$ that depends explicitly on the parameter functions chosen in such a way that $\Pi_{U^\perp} [\pp_t \Phi^0 -\Delta \Phi^0  + U_t ]$ gets concentrated near $x=\xi(t)$ by elimination of the
terms in the first error $U_t$ associated to dilation and rotation.
Then we  write
\be\label{outer01}
\vp^o (x,t)=  \Phi^0(x,t) + \Psi^*(x,t) .
\ee
For the inner solution,  we consider a smooth smooth cut-off function $\eta_0(s)$ with $\eta_0(s)=1$ for $s<1$ and $=0$ for $s>\frac 32$.
We also consider a positive, large smooth function
$
R(t) \to +\infty
$ as $t\to T$
that we will later specify.
We define
$$
\eta(x,t):=  \eta_0 \left ( R(t)^{-1} |y|    \right ) , \quad y= \frac {x- \xi(t) }{\la(t) } $$
and let
$$
\vp^i(x,t)  = \eta(x,t) Q_\omega \phi (y, t)   , \quad y= \frac {x- \xi(t) }{\la(t) }
$$
for a function  $\phi(y,t)$ with initial condition
$ \phi(\cdot , 0 ) = 0$ that satisfies $\phi(\cdot ,t) \cdot  W  \equiv  0 $, defined for
$|y|\le 2R(t)$ and that vanishes as $t\to T$.
%\comment{\crr before: defined for  $|y|\le 5R(t)$}
Then we have
\begin{align*}
Q_{-\omega} L_U[\vp^i]
&=   \la^{-2} \eta   L_ W  [\phi]   +  (\Delta_x \eta) \phi + 2 \la^{-1} \nn_x \eta \nn_y \phi
\\
Q_{-\omega} \vp^i_t
& =  \eta \bigl( \phi_t  - \la^{-1}\dot\la y\cdot \nn_y \phi
- \la^{-1}\dot\xi \cdot\nn_y \phi
+ \dot\omega   Q_{-\omega} \pp_\omega Q_\omega \phi  \bigr) + \eta_t \phi .
\end{align*}
Equation \equ{eqsys0} then becomes
\begin{align}
 \label{eqsys1}
0 & =  \la^{-2} \eta Q_\omega  [- \la^2 \phi_t +  \ L_ W  [\phi ] + \la^2 Q_{-\omega} \ttt L_U [\Psi^*] ]
\\
\nonumber
& \quad
+ \eta Q_\omega( \la^{-1}\dot\la  y\cdot \nn_y \phi
+ \la^{-1} \dot\xi \cdot\nn_y \phi  - \dot\omega J \phi
)
\\
\nonumber
& \quad
+      \ttt L_U  [ \Phi^0  ]  +
 \Pi_{U ^\perp} [\pp_t \Phi^0 -\Delta_x \Phi^0  + U_t ]
\\
\nonumber
& \quad - \pp_t \Psi^* +\Delta \Psi^*  +  (1-\eta) \ttt L_U [\Psi^*] +  Q_\omega[(\Delta_x \eta) \phi + 2  \nn_x \eta \nn_x \phi - \eta_t  \phi]
\\
\nonumber
& \quad +
N_U( \eta Q_\omega \phi  + \Pi_{U^\perp}( \Phi^0 +\Psi^*) ) + ((\Psi^*+ \Phi^0)\cdot U)U_t + b U .
\end{align}
Next we will define precisely the operator $\Phi^0$ and estimate the quantity
 \be\label{eqs} \ttt L_U  [ \Phi^0  ]  +\Pi_{U ^\perp} [\pp_t \Phi^0 -\Delta_x \Phi^0  + U_t ] . \ee
The idea is to choose $\Phi^0$ such that  $\pp_t \Phi^0 -\Delta_x \Phi^0  + U_t \approx 0 $  whenever $|x-\xi|\gg \la$,
so that in particular the last error term in the outer equation \equ{outer01} is of smaller order.

\medskip
Invoking formulas \eqref{ZZ}  to compute $U_t$ we get
\begin{align*}
 U_t =   \dot\la \pp_\la U_{\la, \xi , \omega}  + \dot\omega  \pp_\omega U_{\la, \xi , \omega} +   \pp_{\xi } U_{\la, \xi , \omega}\cdot \dot \xi
 =   \EE_{0}   + \EE_{1} ,
 \end{align*}
where, setting $  y =  \frac{x-\xi }{\la}= \rho e^{i\theta}  $, we have
\begin{align*}
\EE_{0} (x,t)
& =  - Q_\omega [  \frac{\dot \la}{\la} \rho w_\rho(\rho)\,  E_1(y) \, + \,   {\dot \omega } \rho w_\rho(\rho)\,   E_2(y)\, ]
%\label{EE0}
\\
\EE_{1} (x,t)
& = -\frac{\dot \xi_{1}}{\la} \, w_\rho(\rho)\, Q_\omega [\ \cos\theta \, E_1(y) + \sin\theta\,  E_2(y)  ]\,
%\nonumber
\\
& \quad
- \frac{\dot \xi_{2}}{\la}\,w_\rho(\rho) \,  Q_\omega[ \sin\theta  \, E_1(y)  - \cos \theta\,  E_2(y) \, ].
%\label{EE1}
\end{align*}
Since $\EE_1$ has faster space decay in $\rho $ than $\EE_0$ we will choose $\Phi^0$ to be an approximate solution of
\begin{align}
\label{xx}
\Phi^0_t -\Delta_x \Phi^0  + \EE_0 =0 .
\end{align}
For $ x = \xi + re^{i\theta}$ and $r\gg \la$ we have
\begin{align*}
\EE_0 (x,t) & = - \frac {2 r } {r^2+\la^2}\left [  \dot \la Q_\omega E_1 + \la\dot \omega   Q_\omega E_2 \right]
\approx  - \frac {2 r } {r^2+\la^2}\left [\begin{matrix} (\dot\lambda+  i \lambda \dot\omega) e^{i(\theta+\omega)} \\ 0   \end{matrix}\right ] .
\end{align*}
Here and in what follows we let
\begin{align*}
%\label{defP}
p(t) = \lambda (t) e^{i\omega(t)} .
\end{align*}
Then
\begin{align*}
 - \frac {2 r } {r^2+\la^2}\left [\begin{matrix} (\dot\la+  i \la \dot\omega) e^{i(\theta+\omega)} \\ 0   \end{matrix}\right ]  =  - \frac {2 r } {r^2+\la^2}\left [\begin{matrix} \dot p(t)e^{i\theta} \\ 0   \end{matrix}\right ] =: \ttt \EE_0 (x,t) .
\end{align*}
With the aid of Duhamel's formula for the standard heat equation, we find that the following function is a good approximate solution of $ \Phi^0_t -\Delta_x \Phi^0  + \ttt \EE_0 =0 $ and hence of \equ{xx}.
We define
\begin{align}
%\label{formPhi0}
\nonumber
\Phi^0[\omega,\la,\xi]  & :=   \left [ \begin{matrix}  \vp^0(r,t) e^{i\theta }  \\ 0 \end{matrix}   \right ]
\\
%\label{phi00}
\nonumber
\vp^0(r,t)
&
= -\int_{-T}^t  \dot p(s) r k(z(r),t-s) \, ds
\\
\nonumber
z(r) & = \sqrt{ r^2+ \la^2} ,\quad k(z,t) = 2\frac{1-e^{-\frac{z^2}{4 t}}}{z^2} ,
\end{align}
where for technical reasons that will be made clear later on, $p(t)$ is also assumed to be defined for negative values of $t$.

A direct computation yields
$$
\Phi^0_t + \Delta_x \Phi^0  + \ttt \EE_0    =
\ttt \RR_0 +\ttt \RR_1 ,  \quad \ttt \RR_0 =  \left ( \begin{matrix} \RR_0   \\ 0 \end{matrix}   \right ),\quad
\ttt\RR_1 =  \left ( \begin{matrix} \RR_1   \\ 0 \end{matrix}   \right )
$$
where
\[
\RR_0:=  - re^{i\theta}   \frac {\la^2}{z^4} \int_{-T}^t  \dot p(s)  ( z{k_z} - z^2 k_{zz}) (z(r),t-s) \, ds
\]
%\comment{\crr before:$\RR_0:=  re^{i\theta}   \frac {\la^2}{z^4} \int_{-T}^t  p(s)  ( z{k_z} - z^2 k_{zz}) (z(r),t-s) \, ds $}
and
\begin{align*}
\RR_1 & :=
- e^{ i\theta}  {\rm Re}\,( e^{-i\theta} \dot \xi(t))
 \int_{-T}^t  \dot p(s) \, k(z(r),t-s) \, ds
\\
&\qquad
+  \frac r{z^2} e^{i\theta} \, (\la\dot\la(t)  -  {\rm Re}\,( re^{i\theta} \dot\xi(t)) )
\int_{-T}^t  \dot p(s) \ {zk_z}(z(r),t-s)\,  ds.
\end{align*}
We observe that $\RR_1$ is actually a term of smaller order.
Using formulas \equ{Ltilde2},  \equ{uii} and the facts
 $$ \frac {\la^2r }{z^4} = \frac 1{4\la} \rho w_\rho^2, \quad  \frac r {z^2} (1-\cos w) = \frac 1{2\la}  \rho w_\rho^2 ,  $$
we derive an expression for the quantity \equ{eqs}:
%which we label as $\mathcal K[p,\xi]$. We have
\begin{align*}
%\mathcal K[p,\xi]
%&:=
&
\ttt L_U[\Phi^0]  + \Pi_{U^\perp} [ -U_t + \Delta \Phi^0 -\Phi^0_t ]
\\
 & =   \ttt L_U[\Phi^0]  -\EE_1 + \Pi_{U^\perp} [\ttt \EE_0] - \EE_0 +
\Pi_{U^\perp} [\ttt \RR_0] +  \Pi_{U^\perp} [\ttt \RR_1]
\\
&=  \KK_{0}[p,\xi]  + \KK_1[p,\xi] +\Pi_{U^\perp} [\ttt \RR_1]
\end{align*}
where
\begin{align*}
\KK_{0}[p,\xi] =  \KK_{01}[p,\xi] + \KK_{02}[p,\xi]
\end{align*}
with
\begin{align}
\nonumber
\KK_{01}[p,\xi]
:= - \frac {2}{\la} \rho w_\rho^2
\int_{-T} ^t  \left [ {\rm Re  } \,( \dot p(s) e^{-i\omega(t)} )   Q_\omega E_1+
 {\rm Im  } \,( \dot p(s) e^{-i\omega(t)} ) Q_\omega E_2   \right ]
\\
\label{K01}
\cdot  k(z,t-s)  \, ds
\end{align}
\begin{align}
\nonumber
\KK_{02}[p,\xi]
&
:=  \frac 1{\la} \rho w_\rho^2  \left [  {\dot\la}
-
\int_{-T} ^t  {\rm Re  } \,( \dot p(s) e^{-i\omega(t)} ) r k_z(z,t-s) z_r \, ds\, \right]  Q_\omega E_1
\\
\nonumber
&\quad
-   \frac{1}{4\lambda} \rho w_\rho^2 \cos w  \left [  \int_{-T}^t   {\rm Re}\, ( \dot p(s)e^{-i\omega(t) }  )
\, ( z{k_z} - z^2 k_{zz}) (z,t-s)\, ds\, \right ]  Q_\omega E_1
\\
\label{K02}
&\quad
-    \frac{1}{4\lambda} \rho w_\rho^2  \left [  \int_{-T}^t   {\rm Im }\, ( \dot p(s)e^{-i\omega(t) }  )
\, ( z{k_z} - z^2 k_{zz}) (z,t-s)\, ds\,  \right ]  Q_\omega E_2  ,
\end{align}
\begin{align}
\label{K1}
\KK_{1}[p,\xi]
& :=
\frac 1\la  w_\rho \, \big [
\Re \big (  (\dot  \xi_1 - i \dot \xi_2)  e^{i\theta } \big ) Q_\omega E_1
+ \Im \big(  (\dot  \xi_1 - i \dot \xi_2)  e^{i\theta } \big ) Q_\omega E_2       \big ].
\end{align}
We insert this decomposition in equation \equ{eqsys1}
and see that we  will have a solution to the equation if the pair $(\phi,\Psi^*)$ solves
the {\em inner-outer gluing system}
\begin{align}
\label{inner1}
\left\{
\begin{aligned}
%\la^2 \phi_t  & =  L_ W  [\phi ] + h[p,\xi, \Psi^*]   \inn \DD_{2R}
\la^2 \phi_t  & =  L_ W  [\phi ]
+ \la^2  Q_{-\omega} \left[ \ttt L_U  [\Psi^* ]
+ \KK_{0}[p,\xi]+ \KK_{1}[p,\xi]
\right]
  \inn \DD_{2R}
\\
\phi\cdot  W  & =  0   \inn \DD_{2R}
\\
\phi(\cdot, 0)  & = 0=\phi(\cdot, T),
\end{aligned}
\right.
\end{align}
\smallskip
\be
\label{outer1}
\Psi^*_t  =  \Delta_x \Psi^*  +  g[p,\xi, \Psi^*,\phi]\inn \Omega \times (0,T)
\ee
where
%\begin{align}
%\label{HH}
%\crr
%h[p,\xi, \Psi^*]
%:= \la^2  Q_{-\omega}\ttt L_U  [\Psi^* ]  +
%\la^2  Q_{-\omega} [ \KK_{0}[p,\xi]+ \KK_{1}[p,\xi]] ,
%\end{align}
%(see also \eqref{hParam}) and
\begin{align}
\label{GG}
 g[p,\xi, \Psi^*,\phi] & := (1-\eta) \ttt L_U [\Psi^*] + (\Psi^*\cdot U ) U_t
\\
\nonumber
& \quad  +
Q_\omega
\bigl( (\Delta_x \eta) \phi + 2  \nn_x \eta \nn_x \phi - \eta_t  \phi
\bigr)
\\
\nonumber
& \quad +  \eta Q_\omega\bigl( - \dot\omega J \phi  +  \la^{-1}\dot\la  y\cdot \nn_y \phi + \la^{-1} \dot\xi \cdot\nn_y \phi \bigr)
\\
\nonumber
& \quad + (1-\eta)[ \KK_{0}[p,\xi]+ \KK_{1}[p,\xi]] + \Pi_{U^\perp}[ \ttt \RR_1] + ( \Phi^0\cdot U)U_t
\\
\nonumber
& \quad   +  N_U( \eta Q_\omega \phi  + \Pi_{U^\perp}( \Phi^0 +\Psi^*) ) ,
\end{align}
%{\crr OLD!!
%\begin{align*}
%%H[p,\xi, \Psi^*] \, :=  &\la^2  Q_{-\omega}\ttt L_U  [\Psi^* ]  + \la^2  Q_{-\omega} [ \KK_{0}[p,\xi]+ \KK_{1}[p,\xi]] ,\\ \\
% g[p,\xi, \Psi^*,\phi] & := (1-\eta) \ttt L_U [\Psi^*] + (\Psi^*\cdot U ) U_t  \nonumber\\
%& \quad  +  Q_\omega[(\Delta_x \eta) \phi + 2  \nn_x \eta \nn_x \phi+ \eta_t  \phi]
% \nonumber  \\
%& \quad +  \eta Q_\omega( \la^{-1}\dot\la  y\cdot \nn_y \phi  - \dot\omega I_0 \phi + \la^{-1} \dot\xi \cdot\nn_y \phi ) \nonumber\\
%& \quad + (1-\eta)[ \KK_{0}[p,\xi]+ \KK_{1}[p,\xi]] + \Pi_{U^\perp}[ \ttt \RR_1] + ( \Phi^0\cdot U)U_t  \nonumber \\
%& \quad  +  N_U( \eta Q_\omega \phi  + \Pi_{U^\perp}( \Phi^0 +\Psi^*) ) ,
%\end{align*}
%}
and we denote
$$
\DD_{\gamma R} = \{(y,t)\in \R^2\times (0,T) \ /\ |y|< \gamma R(t) \}.
$$
Indeed if $(\phi,\Psi^*)$ solves this system, then we have that
\be\label{upa}
u(x,t) =  U +  \Pi_{U^\perp} [ \Phi^0 + \Psi^* + \eta Q_\omega \phi ] + a(\Pi_{U^\perp} [ \Phi^0 + \Psi^* + \eta Q_\omega \phi ] ) U
\ee
solves equation \equ{har map flow}.
The boundary condition
$u=  {\bf e}_3$  amounts to
$$
\Pi_{U^\perp} [ \Phi^0+  \Psi^*  ] +  a(\Pi_{U^\perp} [U+ \Phi^0+  \Psi^*  ]) U  =  ({\bf e}_3 - U)
$$
and then it suffices that we take the boundary condition for \equ{outer1}
\be
  \Psi^*\big|_{\pp\Omega}  =  {\bf e}_3 - U -\Phi^0 .
\label{bcpsi}\ee
Since we want  $u(x,t)$ to be a small perturbation of $U(x,t)$ when we stand close to $(q,T)$,  it is natural to require that $\Psi^*$ satisfies the final condition
\[
\Psi^*\big(q,T)  = 0.
%\label{finalpsi}
\]
 This constraint amounts to three Lagrange multipliers when we solve the problem, which we choose to put in the initial condition. Then we assume
\[
\Psi^*\big(x,0)  = Z_0^*(x)  +  c_1{\mathbf e_1} +  c_2{\mathbf e_2} + c_3{\mathbf e_3} ,
%\label{initialpsi}
\]
 where $c_1,c_2,c_3$ are undetermined constants and $Z_0^*(x)$ is a small function for which specific assumptions will later be made.

\section{The reduced equations}
\label{informal}
In this section we will informally discuss the procedure to achieve our purpose in particular deriving the order of vanishing of the scaling parameter $\la(t)$ as  $t\to T$.

The main term that couples equations \equ{inner1} and \equ{outer1} inside the second equation is
the linear expression
\[
Q_\omega[(\Delta_x \eta) \phi + 2  \nn_x \eta \nn_x \phi+ \eta_t  \phi],
\]
%\comment{\crr lo puse en una linea separada}
which is supported in $|y|= O(R)$.
This motivates the fact that we want $\phi$ to exhibit some type of space decay in $|y|$ since in that way $\Psi^*$ will eventually be smaller and in turn that would make the two equations at main order {\em uncoupled}.
Equation \equ{inner1} has the form
\begin{align*}
\la^2 \phi_t  & =  L_ W  [\phi ] +   h[p,\xi, \Psi^*] (y,t)  \inn \DD_{2R}
\\
\phi\cdot  W  & =  0   \inn \DD_{2R}
\\
\phi(\cdot, 0) & = 0 \inn B_{2R(0)}  ,
\end{align*}
where, for convenience we assume that $h(y,t)$ is defined for all $y\in \R^2$ extending outside $\DD_{2R}$ as
\begin{equation}
\label{HH2}
h[p,\xi, \Psi^*] = \la^2  Q_{-\omega}\ttt L_U  [\Psi^* ] \chi_{\DD_{2R} }
+ \lambda^2  Q_{-\omega} \KK_{0}[p,\xi]
+ \lambda^2  Q_{-\omega}  \KK_{1}[p,\xi]  \chi_{\DD_{2R} }    ,
\end{equation}
%(see also \eqref{HH}),
where $\chi_A$ designates characteristic function of a set $A$,
$\KK_0$ is defined in \eqref{K01}, \eqref{K02} and $\KK_1$ in \eqref{K1}.
If $\lambda	(t)$ has a relatively smooth vanishing as $t\to T$ it seems natural that the term $\la^2 \phi_t $ be of smaller order and then the equation is
approximately represented by the elliptic problem
\begin{align}
\label{linearized-elliptic}
L_ W  [\phi ] +   h[p,\xi, \Psi^*]=0, \quad \phi\cdot W  =0  \inn \R^2 .
\end{align}
%where $h(y,t)$ is extended by zero for $|y|> 2R(t)$.

Let us consider the decaying functions $Z_{lj}(y)$ defined in formula \eqref{ZZ}, which satisfy $L_ W [Z_{lj}]=0$.
%Since for large $|y|$ the operator $L_ W  $ is a small perturbation of the Laplacian $\Delta_y$, we expect that a decaying solution $\phi(y,t)$ of \equ{linearized-elliptic} %behaves %like $ \frac{\la(t) }{|y|}$.
If $\phi(y,t)$ is a solution of \equ{linearized-elliptic} with sufficient decay, then necessarily
\be\label{ww1}
\int_{\R^2 }   h[p,\xi, \Psi^*](y,t)\cdot Z_{lj} (y)\, dy = 0  \quad \foral t\in (0,T) ,
\ee
for $l=0,1$, $j=1,2$.
These relations amount to an integro-differential system of equations for $p(t)$, $\xi(t)$, which, as a matter of fact, {\em detemine} the correct values of the parameters so that the solution $(\phi,\Psi^*)$ with appropriate asymptotics exists.

\medskip
We derive next useful expressions for relations \equ{ww1}.   Let us first compute the quantities
\begin{align}
\label{defB0j}
\mathcal B_{0j} [p] (t) :=  \frac{\la}{2\pi} \int_{\R^2}   Q_{-\omega} [ \KK_{0}[p,\xi]+ \KK_{1}[p,\xi]] \cdot Z_{0j} (y)\, dy.
\end{align}
Using \eqref{K01}, \eqref{K02} the following expressions for $\mathcal B_{01}$, $\mathcal B_{02}$ are readily obtained:
\begin{align*}
%\label{defB1}
\mathcal B_{01} [p](t)
&=
  \int_{-T} ^t  {\rm Re  } \,(\dot p(s) e^{-i\omega(t)} )\,
\Gamma_1 \left ( \frac {\la(t)^2}{t-s}   \right )  \,\frac{ ds}{t-s}\,  -2 \dot\la (t)
\\
%\label{defB2}
 \mathcal B_{02}[p](t)
& =
 \int_{-T} ^t  {\rm Im  } \,(\dot p(s) e^{-i\omega(t)} )\,
\Gamma_2 \left ( \frac {\la(t)^2}{t-s}   \right )  \,\frac{ ds}{t-s}\,
\end{align*}
where  $\Gamma_j(\tau)$, $j=1,2$  are the smooth functions
defined as follows:
\begin{align*}
\Gamma_1 (\tau)
&
=  - \int_0^{\infty} \rho^3 w^3_\rho \left [  K ( \zeta )
+ 2 \zeta K_\zeta (\zeta ) \frac {\rho^2} { 1+ \rho^2}
-4\cos(w) \zeta^2 K_{\zeta\zeta} (\zeta)
\right ]_{\zeta = \tau(1+\rho^2)}   \, d\rho
\\
%k_2^\la ( t-s)\ =&\   \int_0^\infty \rho^3 w^3_\rho (zk_z) ( \la \sqrt{1+\rho^2} ,t-s) \frac {\rho^2} { 1+ \rho^2}  \, d\rho \,
%\\
\Gamma_2 (\tau) & =
- \int_0^{\infty} \rho^3 w^3_\rho  \left [K(\zeta)   - \zeta^2 K_{\zeta\zeta}(\zeta) \right ]_{\zeta = \tau(1+\rho^2)}
\, d\rho\,
\end{align*}
where
\[
 K(\zeta)  =  2\frac {1- e^{-\frac{\zeta}4}} {\zeta} ,
\]
and we  have used that $\int_0^{\infty} \rho^3 w_\rho^3 d\rho=-2$. Using these expressions we find that
\begin{align}
\label{GammaNear0}
| \Gamma_l (\tau)- 1|
& \le   C \tau(1+  |\log\tau|) \quad  \hbox{ for }\tau<1 ,
\\
\nonumber
|\Gamma_l (\tau)|
& \le  \frac C\tau\qquad \qquad\hbox{ for }\tau> 1, l=1,2.
\end{align}
Let us define
\begin{align}
\label{defB0-new}
\mathcal B_0[p ] :=
\frac{1}{2}e^{i \omega(t) }
\left(
\mathcal B_{01}[p ]
+  i\mathcal B_{02}[p ] \right)
\end{align}
%Next we estimate the contribution to relations \equ{ww1} by the first quantity in \eqref{HH2}. For this we define
and
\begin{align}
%\label{defA0j}
\nonumber
a_{0j}[p,\xi, \Psi^*]
&:=
- \frac{ \la}{2\pi} \int_{B_{2R}}   Q_{-\omega} \ttt L_U  [\Psi^* ]  \cdot Z_{0j} (y)\, dy, j=1,2,
\\
\label{defA0}
a_{0}[p,\xi, \Psi^*]
& :=
\frac{1}{2}
e^{i \omega(t)} \left( a_{01}[p,\xi, \Psi^*] + i a_{02}[p,\xi, \Psi^*] \right).
\end{align}

\noanot{ % anot vvvvvvvvvvvvvvvvvvvv
\begin{ch}
\cb
Maybe change to
\begin{align}
\label{defA0-new}
a_{0}[p,\xi, \Psi^*]
=-\frac{\lambda}{4\pi} e^{i \omega(t)}
\int_{B_{2R} }
\left(
Q_{-\omega(t)} \tilde L_U[\Psi^*] \cdot Z_{01}
+ i Q_{-\omega(t)} \tilde L_U[\Psi^*] \cdot Z_{02}
\right)\,dy .
\end{align}
\end{ch}
} % anot ^^^^^^^^^^^^^^^^^^^^

Similarly, we let
\begin{align*}
\mathcal B_{1j} [\xi ] (t)  & :=  \frac{\la}{2\pi} \int_{\R^2}   Q_{-\omega} [ \KK_{0}[p,\xi]+ \KK_{1}[p,\xi]] \cdot Z_{1j} (y)\, dy, j=1,2, \\
\mathcal B_{1} [\xi ] (t) & :=   \mathcal B_{11}[\xi](t) + i \mathcal B_{12}[\xi](t) .
\end{align*}
Using \equ{K1}, \eqref{ZZ} and the fact that  $\int_0^{\infty} \rho w_\rho^2 d\rho\, =2$ we get
\[
% \label{B1}
\mathcal B_{1} [\xi ](t)\,  =  \, 2[\, \dot \xi_1(t) + i\dot \xi_2(t)\,]  .
\]
At last, we set
\begin{align*}
a_{1j} [p,\xi, \Psi^* ] &:= \frac{\lambda}{2\pi}
\int_{B_{2R}} Q_{-\omega} \tilde L_U[\Psi^*]\cdot Z_{1j}(y)  \,dy, j=1,2,
\\
a_1[p,\xi, \Psi^* ] & := - e^{i \omega(t) } ( a_{11}[p,\xi, \Psi^* ] + i a_{12} [p,\xi, \Psi^* ] ) .
\end{align*}

We get that
the four conditions \equ{ww1}  reduce to the system of two complex equations
%\comment{\crr before: approximately reduce}
\begin{align}
\mathcal B_0[p ] & = a_0[p,\xi,\Psi^* ],\label{eqB0}\\
\mathcal B_1[\xi ] & =    a_1[p,\xi,\Psi^* ].\label{eqB1}
\end{align}

At this point we will make some preliminary considerations on this system that will allow us to find a first guess of the parameters $p(t)$ and $\xi(t)$.
First, we observe that
\begin{align*}
\mathcal B_0[p ]
=   \int_{-T} ^{t-\la^2}    \frac{\dot p(s)}{t-s}ds\, + O\big( \|\dot p\|_\infty \big).
%\label{B0}
\end{align*}

To get an approximation for $a_0$, we analyze the operator $\tilde L_U$ in $a_0$. For this let us write
$$
\Psi^* =  \left [ \begin{matrix}\psi^* \\  \psi^*_3  \end{matrix}   \right ] , \quad \psi^* = \psi^*_1 + i \psi^*_2 .
$$
From formula \equ{Ltilde2} we find that
\[
\ttt L_U [\Psi^* ](y)  =  [\ttt L_U]_0 [\Psi^* ]   +  [\ttt L_U]_1 [\Psi^* ]+ [\ttt L_U]_2 [\Psi^* ] ,
\]
 where
\begin{align*}
\la Q_{-\omega} [\ttt L_U]_0 [\Psi^* ]& =   \ \   \rho w_\rho^2\, \big [\, \div ( e^{-i\omega} \psi^*)\,  E_1   + \curl ( e^{-i\omega} \psi^* )\, E_2
  \, \big ]\,
 \\
\la Q_{-\omega}[\ttt L_U]_1 [\Psi^* ] & =
   -\,
  2 w_\rho  \cos w \,  \big [\,(\pp_{x_1} \psi^*_3) \cos \theta +    (\pp_{x_2} \psi^*_3) \sin  \theta \, \big ]\, E_1
\\
& \quad - 2 w_\rho  \cos w  \, \big [\, (\pp_{x_1} \psi^*_3) \sin \theta -    (\pp_{x_2} \psi^*_3) \cos  \theta \, \big ]\,  E_2\ ,
\\
 \la Q_{-\omega}[\ttt L_U]_2 [\Psi^* ] &=   \quad
 \rho w_\rho^2 \,  \big [\, \div (e^{i\omega}\bar \psi^*)\, \cos 2\theta  -   \curl ( e^{i\omega}\bar \psi^*)\, \sin 2\theta  \, \big ]\,  E_1
\\
&  \quad
+   \rho w_\rho^2 \, \big [\,  \div ( e^{i\omega}\bar \psi^*)\, \sin 2\theta +   \curl  ( e^{i\omega}\bar \psi^*)\,  \cos 2\theta   \, \big ]\,E_2 ,
\end{align*}
and the differential operators in $\Psi^*$  on the right hand sides
are evaluated at $(x,t)$ with   $x= \xi(t)+ \la(t) y$,  $y = \rho e^{i\theta}$ while $E_l= E_l(y)$, $l=1,2$.

\medskip
From the above decomposition, assuming that $\Psi^*$ is of class $C^1$ in space variable, we find that
\[
a_{0}[p,\xi, \Psi^*] =   [   \div \psi^*+  i\curl \psi^*](\xi,t )  + o(1) ,
\]
where $o(1)\to 0$ as $t\to T$.

Similarly, we have that
\begin{align*}
a_1(p,\xi) & =   2  ( \pp_{x_1} \psi^*_3 + i\pp_{x_2} \psi^*_3) (\xi, t) \int_{0}^\infty \cos w \,w_\rho^2 \rho \, d\rho  + o(1)
\\
&  =  o(1) \ass t\to T,
\end{align*}
since  $\int_0^\infty w_\rho^2 \cos w \rho \, d\rho = 0 $.

 \medskip

Let us discuss informally how to handle  \equ{eqB0}-\equ{eqB1}.
For this we simplify this system in the form
\begin{align}
%\label{equB0}
\nonumber
\int_{-T} ^{t-\la^2}     \frac{\dot p(s)}{t-s}ds
& =
[ \div \psi^*+  i\curl \psi^*](\xi(t),t )  + o(1) + O(\|\dot p\|_\infty)  \\
\dot \xi(t) &  =  o(1)\ass t\to T. \label{equB1}
 \end{align}

\medskip
We assume for the moment that the function $\Psi^*(x,t)$ is fixed, sufficiently regular, and we regard $T$ as a parameter that will always be taken smaller if necessary. We recall that we want $\xi(T)=q$ where $q\in \Omega$ is given, and $\la(T)=0$.  Equation  \equ{equB1} immediately suggests us
to take  $\xi(t) \equiv q$ as a first approximation.
Neglecting lower order terms, we arrive at the ``clean'' equation for $p(t)= \lambda (t) e^{i\omega(t)}$,
\begin{align}
\label{kuj}
\int_{-T} ^{t-\la(t)^2} \frac{ \dot p(s)}{t-s}ds   =
\div \psi^*(q,0 ) + i\curl \psi^*(q,0 ) =: a_0^*
\end{align}
At this point we make the following assumption:
\begin{align}
\label{negativeDiv}
\div \psi^*(q,0 ) <0.
\end{align}
This implies that
$a_0^*  =  -|a_0^*| e^{i\omega_0}$ for a unique $\omega_0\in (-\frac \pi2 , \frac \pi 2)$. Let us take $\omega(t)\equiv \omega_0$. Then equation \equ{kuj} becomes
\be
 \int_{-T} ^{t-\la^2} \frac{ \dot \la(s)}{t-s}ds   =
 - |a_0^*| .
\label{cccc4}\ee
 We claim that a good approximate solution of \equ{cccc4} as $t\to T$  is given by
\[
\dot \la(t) =  -\frac {\kappa} {\log^2(T-t)}
\]
for a suitable $\kappa>0$. In fact, substituting, we have
\begin{align}
\int_{-T}^{t-\la^2} \frac {\dot\la(s)}{t-s}\, ds \,  =& \
\int_{-T}^{t-  (T-t)  } \frac{ \dot\la  (s)}{t-s} \, ds +     \, \dot \la (t)\left [ \log (T-t)   - 2\log (\la(t)) \right ]\nonumber \\  + &  \int_{t-(T-t)   } ^{ t- \la(t)^2}\frac{\dot \la(s)-\dot \la(t)}{t-s} ds    \nonumber \\
  \approx & \
\int_{-T}^{t } \frac{ \dot\la  (s)}{T-s}\, ds\,  - \, \dot \la (t) \log (T-t) \, =: \beta(t)
\label{formal}
\end{align}
as $t\to T$. We see that
$$
\log(T-t) \frac {d\beta} {dt}(t)  =
   \frac d{dt} (\log^2(T-t) \, \dot\la(t))= 0
$$
from the explicit form of  $\dot\la(t)$. Hence $\beta(t)$ is constant. As a conclusion, equation \equ{cccc4}
is approximately satisfied if $\kappa$ is such that
$$
\kappa \int_{-T}^{T} \frac{ \dot\la  (s)}{T-s}\ =\ -|a_0^*| .
$$
And this finally gives us the approximate expression
%\be\label{la*0}
$$
\dot\la (t)= -  |\div \psi^*(q,0) + i \curl\psi^* (q,0) |\, \dot \la_* (t) ,
$$
where
\[
 \dot \la_* (t) = -\frac { |\log T|}{\log^2(T-t)}.
\]
Naturally imposing $\la_*(T) =0$ we then have
\[
 \la_* (t) =  \frac { |\log T|}{\log^2(T-t)}(T-t)\, (1+ o(1)) \ass t\to T.
\]

%%%%%%%%%%%%%%%%%%%%%%%%%%%%%%%%%%%%%%%%

%%%%%%%%%%%%%%%%%%%%%%%%%%%%%%%%%%%%%%%%
%
% file: gluing2b.tex
%

\section{Solving the inner-outer gluing system}
%\section{Solving the inner-outer gluing system: the general scheme}

\label{sectInnerOuter}

Our purpose is to determine, for a given $q\in \Omega$  and a sufficiently small $T>0$, a solution $(\phi,\Psi^*)$ of system \equ{inner1}-\equ{outer1} with a boundary condition of the form
\equ{bcpsi} such that  $u(x,t)$ given by \equ{upa} blows up with $U(x,t)$ as its main order profile.
This will only be possible for adequate choices
of the  parameter functions $\xi(t)$ and $p(t)= \la(t) e^{i\omega(t)}$.
These functions
will eventually be found by fixed point arguments, but a priori we need to make some assumptions regarding their behavior.
For some  positive numbers $a_1,a_2,\sigma$  independent of $T$ we will assume that
\begin{align}
a_1 |\dot \la_* (t)| \le    |\dot p (t)| &  \le    a_2 |\dot \la_* (t)| \foral t\in (0,T),
\label{cotin}
\\
 |\dot \xi(t) |  & \le   \la_*(t)^{\sigma}\quad\ \foral t\in (0,T).
\label{cotin1}
\end{align}
We also take
\begin{align}
\label{RRR}
R(t)  =\la_*(t)^{-\beta},
\end{align}
where  $\beta \in ( 0,\frac 12)$.

\medskip
To solve the outer equation \eqref{outer1} we will decompose $\Psi^*$ in the form
%\comment{\crr before: $ \Psi^* =  Z^* + \Psi $ }
\[
\Psi^* =  Z^* + \psi
\]
where we let $Z^*:\Omega\times (0,\infty) \to \R^3$ satisfy
\eqref{heatZ*}
with $Z_0^*(x)$  a function satisfying certain conditions to be described below.
Since we would like that  $u(x,t)$ given by \equ{upa} has a blow-up behavior given at main order by that of $U(x,t)$, we will require
\[
\Psi^*(q,T)=0 .
\]
This constraint has three parameters. Therefore we need three ``Lagrange multipliers'' which we include in the initial datum.

\subsection{Assumptions on  \texorpdfstring{$Z_0^*$}{Z0star}}\label{asss}

To describe the assumptions on $Z_0^*$, let us write
\begin{align}
%\label{notationZ0star}
\nonumber
Z_0^*(x) =  \left [ \begin{matrix}z_0^*(x) \\  z_{03}^* (x) \end{matrix}   \right ] , \quad z_0^*(x) = z^*_{01}(x)  + i z^*_{02}(x)  .
\end{align}
A first condition that we require, consistent with \eqref{negativeDiv},  is
$ \div  z^*_0(q) <0 $.
In addition we require that $Z_0^*(q)\approx 0 $ in a non-degenerate way.

We want also $Z^*$ to be sufficiently small, but independently of $T$, so that the heat equation \eqref{heatZ*} is a good approximation of the linearized harmonic map flow far from the singularity.
In order to achieve later the desired stability property, it is convenient to split $Z_0^*$ into two parts
\begin{align*}
Z_0^* = Z_0^{*0} + Z_0^{*1} ,
\end{align*}
where $Z_0^{*0}$ is sufficiently smooth and $ Z_0^{*1}$ allows more irregular perturbations.
More precisely, for $Z_0^{*0}$ we assume that
for some $\alpha_0>0$ small and some $\alpha_1, \alpha_2>0$, all independent of $T$, we have
\begin{align}
\label{conditionsZ0a}
\left\{
\begin{aligned}
\|Z_0^{*0}\|_{C^3(\overline \Omega)}  & \le  \alpha_0, \\
|Z_0^{*0}(q)|  & \le  5T,  \\
|(Dz_0^{*0}(q))^{-1}|  & \le \alpha_1 ,  \\
-\alpha_1 & \leq \div z_0^{*0}(q)   \le   - \alpha_2 .
\end{aligned}
\right.
\end{align}
(The notation here is analogous to \eqref{notationZ0star}.)

To describe $Z_0^{*1}$ we introduce the following norm
\begin{align}
\label{normZ0}
\| Z_0^{*1} \|_* & =
\sup_\Omega |Z_0^{*1}(x) |
+  \frac{1}{|\log \varepsilon_*|} \sup_\Omega | \nabla_x Z_0^{*1} (x)|
\\
\nonumber
& \quad
+  \frac{1}{|\log \varepsilon_*|^{1/2}} \sup_\Omega \left(  |x-q_0| +\varepsilon_* \right)  |D^2_x Z_0^{*1}(x)| ,
\end{align}
where
\begin{align}
\label{defEpsilon}
\varepsilon_*  = \lambda_*(0) .
\end{align}
%We always assume that $\varepsilon$ is in the range
%\begin{align}
%\label{hypEpsilon}
%\frac{1}{C} \frac{T}{|\log T|} \leq \varepsilon \leq C  \frac{T}{|\log T|}  ,
%\end{align}
%for some $C>1$.
Then we assume that for some $\sigma>0$ fixed we have
\begin{align}
\label{conditionsZ0b}
\|Z_0^{*1} \|_* \leq T^\sigma.
\end{align}

In summary, the conditions on $Z_0^*$ are the following:
\begin{align}
\label{condZ0}
\text{$Z_0^*=Z_0^{*0} + Z_0^{*1}$ with $Z_0^{*0}$, $Z_0^{*1}$  satisfying \eqref{conditionsZ0a} and \eqref{conditionsZ0b}.}
\end{align}

\medskip
\noanot{ % anot  vvvvvvvvvvvvvvvvvvvvvvvvvv
\crr

OLD ASSUMPTIONS

More precisely, we consider positive numbers $\alpha_0$, $\alpha_1$, $\alpha_2$, all of them  independent of $T$, with $\alpha_0$ sufficiently small
\begin{align}
\label{conditionsZ0}
\left\{
\begin{aligned}
\|Z_0^*\|_{C^1(\Omega)}  +  T|\log T|^{-1} \|D^2 Z_0^*\|_{L^\infty (\Omega)}
& \le  \alpha_0,
\\
|Z_0^*(q)|
& \le  5T,
\\
|(Dz_0(q))^{-1}|
& \le -\alpha_1,
\\
\div z_0^*(q)  & \le   - \alpha_2.
\end{aligned}
\right.
\end{align}
}  % anot ^^^^^^^^^^^^^^^^^^^^^^^^^^^^^^^

\medskip

\subsection{Linear theory for the inner problem}
The inner problem \equ{inner1} is written as
\begin{align*}
\left\{
\begin{aligned}
\la^2 \pp_t \phi  & =  L_ W  [\phi ] +   h[p,\xi, \Psi^*]   \inn \DD_{2R}
\\
\phi  \cdot  W  & =  0   \inn \DD_{2R}
\\
\phi (\cdot, 0) & = 0 \inn B_{2R(0)}
\end{aligned}
\right.
\end{align*}
where  $h[p,\xi, \Psi^*] $ is given by \equ{HH2}.
To find a good solution to this problem we would like that $ h[p,\xi, \Psi^*] $ satisfies the orthogonality conditions \eqref{ww1}.

We split  the right hand side $h[p,\xi, \Psi^*] $  and the inner solution  into components with different roles regarding these orthogonality conditions.

%We recall  the definition of  $h[p,\xi, \Psi^*] $ in \equ{HH2}
Recall that
\begin{equation*}
h[p,\xi, \Psi^*] = \lambda^2  Q_{-\omega} \tilde L_U  [\Psi^* ] \chi_{\DD_{2R} }
+ \lambda^2  Q_{-\omega} \KK_{0}[p,\xi]
+ \lambda^2  Q_{-\omega}  \KK_{1}[p,\xi]  \chi_{\DD_{2R} }    ,
\end{equation*}
the decomposition of $\tilde L_U$ given in \eqref{Ltilde2}:
\begin{align}
%\label{Ltilde2}
\nonumber
\ttt L_U [\Psi^* ]
=  \tilde L_U [\Psi^* ]_0  +  \tilde L_U [ \Psi^* ]_1 +  \tilde L_U [ \Psi^* ]_2\ ,
\end{align}
with $\tilde L_U[\Phi]_j$ defined in \eqref{Ltilde-j}.
%and the explicit formula:
%\begin{align*}
%\tilde L_U [\Phi ]_1 & =
%   -\,
%  2\la^{-1} w_\rho  \cos w \, \big [\,(\partial_{x_1} \varphi_3) \cos \theta +    (\partial_{x_2} \varphi_3) \sin  \theta \, \big ]\,Q_{\omega} E_1
%\\
%& \quad - 2\la^{-1}  w_\rho  \cos w  \, \big [\, (\partial_{x_1} \varphi_3) \sin \theta -    (\partial_{x_2} \varphi_3) \cos  \theta \, \big ]\, Q_{\omega}E_2\ .
%\end{align*}
Using the notation \eqref{notation-Phi}, we then define
\begin{align}
\nonumber
\tilde L_U [\Phi ]_1^{(0)} & =
   -\,
  2\la^{-1} w_\rho  \cos w \, \big [\,(\partial_{x_1} \varphi_3(\xi(t),t)) \cos \theta +    (\partial_{x_2} \varphi_3(\xi(t),t))) \sin  \theta \, \big ]\,Q_{\omega} E_1
\\
%\label{Ltilde-1-0}
\nonumber
& \quad - 2\la^{-1}  w_\rho  \cos w  \, \big [\, (\partial_{x_1} \varphi_3(\xi(t),t))) \sin \theta -    (\partial_{x_2} \varphi_3(\xi(t),t))) \cos  \theta \, \big ]\, Q_{\omega}E_2\ .
\end{align}
%That is, we  freeze the derivatives of $\phi_3$ in the definition of the operator.
We then decompose
\[
h = h_1+ h_2 + h_3
\]
where
\begin{align}
%\label{def-h1}
\nonumber
h_1[p,\xi, \Psi^*]
&=
\lambda^2  Q_{-\omega} (
\tilde L_U  [\Psi^* ]_0
+\tilde L_U  [\Psi^* ]_2 ) \chi_{\DD_{2R} }
%\\
%& \quad
+ \lambda^2  Q_{-\omega} \KK_{0}[p,\xi] ,
\\
%\label{def-h2}
\nonumber
h_2[p,\xi, \Psi^*] &=
\lambda^2  Q_{-\omega} \tilde L_U  [\Psi^* ]_1^{(0)} \chi_{\DD_{2R} }
+ \lambda^2  Q_{-\omega}  \KK_{1}[p,\xi]  \chi_{\DD_{2R} },
\\
%\label{def-h3}
\nonumber
h_3[p,\xi, \Psi^*] &=   \lambda^2  Q_{-\omega}
( \tilde L_U [\Psi^* ]_1 - \tilde L_U  [\Psi^* ]_1 ^{(0)})  \chi_{\DD_{2R} }     .
\end{align}

Next we decompose  $\phi = \phi_1+ \phi_2 + \phi_3+\phi_4$. The function $\phi_1$ will solve the inner problem with right hand side $  h_1[p,\xi, \Psi^*]   $ projected so that it satisfies essentially \eqref{ww1}.
The advantage of doing this is that $h_1$ has faster spatial decay, which gives better bounds for the solution.
For this we let, for any function $h(y,t)$ defined in $\R^2 \times (0,T)$ with sufficient decay,
\begin{align}
\label{defCij}
c_{lj}[h](t)  :=     \frac 1 { \int_{\R^2} w_\rho^2 |Z_{lj}|^2  } \int_{\R^2} h (y  ,t)\cdot Z_{lj}(y)\, dy .
\end{align}
Note that  $h[p,\xi, \Psi^*] $  is defined in $\R^2\times (0,T)$, and for simplicity we will assume that the right hand sides appearing in the different linear equations are always defined  in $\R^2\times (0,T)$.

We would like that  $\phi_1$ solves
\begin{align*}
\lambda^2 \pp_t \phi_1  &=  L_ W  [\phi_1 ] +   h_1[p,\xi, \Psi^*]
- \sum_{l=-1}^1
\sum_{j=1}^2 c_{lj}[h_1(p,\xi, \Psi^*)] w_\rho^2 Z_{lj}  \inn \DD_{2R} ,
\end{align*}
but the estimates for $\phi_1$ are better if the projections $c_{0j}[h(p,\xi, \Psi^*)] $ are modified slightly.

Here is the precise result that we will use later.
We define the norms
\begin{align}
\label{norm-h}
\|h\|_{\nu,a}  =
\sup_{\R^2 \times (0,T)} \  \frac{ |h(y,t)| }{ \lambda_*^\nu (1+|y|)^{-a}} ,
\end{align}
and
\begin{align}
\label{norm-phi1}
\| \phi \|_{*,\nu,a,\delta}
=
\sup_{\DD_{2R}}
\frac{| \phi(y,t) | + (1+|y|) |\nabla_y \phi(y,t)|}{ \lambda_*^\nu \max(\frac{R^{\delta(5-a)}}{(1+|y|)^3} , \frac{1}{(1+|y|)^{a-2} })} .
\end{align}

\begin{prop}
\label{prop1.0}
Let $a \in (2,3)$, $\delta \in (0,1)$, $\nu>0$.
Assume $\| h \|_{\nu,a}<\infty$. Then there is a solution $\phi =  \TT_{\lambda,1} [h]$,   $\tilde c_{0j}[h]$  of
\begin{align}
\nonumber
\left\{
\begin{aligned}
\lambda^2 \partial_t \phi
& =  L_ W  [\phi ] +   h
-  \sum_{  j=1,2} \tilde c_{0j}[h] Z_{0j} \chi_{B_1}
-  \sum_{ \substack{l=-1,1\\ j=1,2}} c_{lj}[h] Z_{lj} \chi_{B_1}
\quad \text{in } \DD_{2R}
\\
\phi\cdot  W  & =  0   \quad \text{in } \DD_{2R}
\\
\phi(\cdot, 0) & = 0 \quad \text{in } B_{2R(0)}
\end{aligned}
\right.
\end{align}
where $c_{lj}$ is defined in \eqref{defCij}, which is  linear in $h$, such that
\begin{align*}
\| \phi \|_{*,\nu,a,\delta}
\leq C \|h\|_{\nu,a}
\end{align*}
and such that
\begin{align}
%\label{est-diff-c0-c0tilde}
\nonumber
|c_{0j}[h]  - \tilde c_{0j}[h]| \leq
C  \lambda_*^{\nu}  R^{-\frac{1}{2}\delta(a-2)} \| h \|_{\nu,a}.
\end{align}
\end{prop}

\medskip

The function $\phi_2$ solves the equation with right hand side  $h_2[p,\xi,\Psi^*]$, which is in {\em mode 1}, a notion that we define next (this is basically motivated by the analysis of section~\ref{sectLinearTheory}, where we consider the linearized parabolic equation and use a Fourier decomposition of the right hand side and the solution).

Let $h(y,t)\in \R^3$, be defined in $\R^2 \times (0,T)$ or $\DD_{2R}$ with $h\cdot W = 0$. We say that $h$ is a mode $k\in \Z$ if $h$ has the form
\[
h(y,t)= \Re ( \tilde h_k(|y|,t) e^{ik\theta}) E_1 + \Re ( \tilde h_k(|y|,t) e^{ik\theta}) E_2 ,
\]
for some complex valued function $\tilde h_k(\rho,t)$.

Consider then
\begin{align}
\label{1.11-mode1}
\left\{
\begin{aligned}
\lambda^2 \partial_t \phi
& =  L_ W  [\phi ] +   h
-  \sum_{  j=1,2} c_{1j}[h] w_\rho^2 Z_{1j}
\quad \text{in } \DD_{2R}
\\
\phi\cdot  W  & =  0   \quad \text{in } \DD_{2R}
\\
\phi(\cdot, 0) & = 0 \quad \text{in } B_{2R(0)}
\end{aligned}
\right.
\end{align}

\begin{prop}
%\label{prop1}
Let $a \in (2,3)$, $\delta \in (0,1)$, $\nu>0$.
Assume that $h$ is in mode 1 and $\| h \|_{\nu,a}<\infty$. Then there is a solution $\phi  =\TT_{\lambda,2} [h]$  of \eqref{1.11-mode1}, which is  linear in $h$, such that
\begin{align*}
\| \phi \|_{\nu,a-2}
\leq C \|h\|_{\nu,a} .
\end{align*}
\end{prop}
In the above statemen the norm $\|\phi\|_{\nu,a-2}$ analogous to the one in \eqref{norm-h}, but the supremum is taken in $\DD_{2R}$.

Another piece of the inner solution, $\phi_3$, will handle  $h_3[p,\xi,\Psi^*]$, which does not satisfy  orthogonality conditions in mode 0.
We will still project it to satisfy the orthogonality condition in mode 1.
Let us consider then \eqref{1.11-mode1} without any orthogonality conditions on $h$ in mode 0.
We define
\begin{align}
\label{norm-starstar}
\|\phi\|_{**,\nu}
= \sup_{\DD_{2R}} \
\frac{ |\phi(y,t)| + (1+|y|)\left |\nn_y \phi(y,t)\right | }
{ \la_*(t)^{\nu}  R(t)^{2} ( 1+|y| )^{-1}  } .
\end{align}

\begin{prop}
\label{prop02}
Let  $1<a<3$ and $\nu>0$. There exists a $C>0$ such that if  $\|h\|_{a,\nu} <+\infty$ there is a solution $ \phi = \TT_{\lambda,3} [h]$ of \eqref{1.11-mode1}, which is linear in $h$ and satisfies the estimate
$$
\|\phi\|_{**,\nu} \ \le\ C \|h\|_{a,\nu} .
$$
\end{prop}

Note that we allow $a$ to be less than 2 in the previous proposition.

Next we have a variant of Proposition~\ref{prop02} when  $h$ is in mode -1.

\begin{prop}
%\label{prop03}
Let  $2<a<3$ and $\nu>0$. There exists a $C>0$ such that for any $h$ in mode -1   with  $\|h\|_{a,\nu} <+\infty$, there is a solution $\phi = \TT_{\lambda,4} [h]$ of problem \eqref{1.11-mode1}, which is linear in $h$ and satisfies the estimate
$$
\|\phi\|_{***,\nu}  \leq C \|h\|_{a,\nu}  ,
$$
where
\begin{align}
%\label{norm***}
\nonumber
\|\phi\|_{***,\nu}
= \sup_{\DD_{2R}} \
\frac{ |\phi(y,t)| + (1+|y|)\left |\nn_y \phi(y,t)\right | }
{ \la_*(t)^{\nu}  \log(R(t))  } .
\end{align}
\end{prop}

All propositions stated here are corollaries of Proposition~\ref{prop2} and proved in section~\ref{sectLinearTheory}.

\subsection{The equations for \texorpdfstring{$p = \lambda e^{i\omega}$}{}}
We need to choose the free parameters $p$, $\xi$ so that  $c_{lj}[h(p,\xi, \Psi^*)]=0$ for $l=-1,0,1$, $j=1,2$. This will be easy to do for $l=1$ (mode 1), but mode $l=0$ is more complicated.

%Since we do not have parameters to achieve  $c_{-1,j}[h(p,\xi, \Psi^*)]=0$, we introduce $\phi_3$ and require that it satisfies
%\begin{align*}
%\la^2 \pp_t \phi_3  & =  L_ W  [\phi_3 ] +  \sum_{j=1}^2
%c_{-1,j}[h(p,\xi, \Psi^*)]  w_\rho^2 Z_{-1,j}   \inn \DD_{2R}
%\end{align*}
%without any orthogonality conditions imposed.
%The linear theory in this case provides a rough bound, but we will see later that $\phi_3$ is under control because $c_{-1,j}[h(p,\xi, \Psi^*)]  $ is small compared to $h(p,\xi, \Psi^*)$.

To handle $c_{0j}$ we note that by definitions \eqref{HH2}, \eqref{defB0j}, \eqref{defA0}
\begin{align*}
c_{0,j}[h(p,\xi,\Psi^*)]  = \frac{2\pi \lambda }{\int_{\R^2} w_\rho^2 |Z_{0j}|^2}\left(
\mathcal B_{0j}[p] - a_{0j}[p,\xi,\Psi^*]
\right)
\end{align*}
where $B_0$, $a_0$ are defined in \eqref{defB0-new}, \eqref{defA0} and we recall that $p = \lambda e^{i \omega}$.
\noanot{ % anot vvvvvvvvvvvvvvvvvvvvvvvvv
\cb
By the definition of $h[ p,\xi,\Psi^*]$ \eqref{HH2}
\[
h[p,\xi, \Psi^*] = \la^2  Q_{-\omega}\ttt L_U  [\Psi^* ] \chi_{\DD_{2R} }   + \la^2  Q_{-\omega} [ \KK_{0}[p,\xi]+ \KK_{1}[p,\xi]] ,
\]
and then
\begin{align*}
c_{0j} [  h[p,\xi, \Psi^*]   ]
&=  \frac 1 { \int_{\R^2} w_\rho^2 |Z_{lj}|^2  } \int_{\R^2} h[p,\xi, \Psi^*]   \cdot Z_{0j}(y)\, dy
\\
%&=
%\frac 1 { \int_{B_{2R}} w_\rho^2 |Z_{lj}|^2  } \int_{\R^2} \Big[
%  \la^2  Q_{-\omega}\ttt L_U  [\Psi^* ] \chi_{\DD_{2R} }   + \la^2  Q_{-\omega} [ \KK_{0}[p,\xi]+ \KK_{1}[p,\xi]]
%\Big]  \cdot Z_{0j}(y)\, dy
%\\
&=
\frac{\lambda^2}{ \int_{\R^2} w_\rho^2 |Z_{lj}|^2  }
\int_{B_{2R}}    Q_{-\omega}\ttt L_U  [\Psi^* ] \cdot Z_{0j}(y)\, dy
\\
& \quad +
\frac{\lambda^2}{ \int_{\R^2} w_\rho^2 |Z_{lj}|^2  }
\int_{\R^2}Q_{-\omega} [ \KK_{0}[p,\xi]+ \KK_{1}[p,\xi]] \cdot Z_{0j}(y)\, dy
\\
&=\frac{2\pi \lambda }{\int_{\R^2} w_\rho^2 |Z_{0j}|^2}\left(
\mathcal B_{0j}[p] - a_{0j}[p,\xi,\Psi^*] \right).
\end{align*}
} % anot  ^^^^^^^^^^^^^^^^^^^^^^^^^

So  to achieve $c_{0j}[h(p,\xi, \Psi^*)]=0$ we should solve
\begin{align}
\label{eqAbc}
\mathcal B_0[p ](t)  = a_0[p,\xi,\Psi^* ](t), \quad t\in [0,T],
\end{align}
adjusting   the parameters $\lambda(t)$ and $\omega(t)$.
This equation is delicate and we will instead impose a modified version of this condition.
The modification of \eqref{eqAbc} consists in introducing another term in the equation, essentially modifying the operator $\mathcal B_0$.

To make this precise we define the following norms.
Let $I$ denote either the interval $[0,T]$ or $[-T,T]$.
For  $\Theta\in (0,1)$, $l\in \R$ and a continuous function $g:I\to \C$ we let
\begin{align}
\label{normG0}
\|g\|_{\Theta,l} = \sup_{t\in I} \, (T-t)^{-\Theta} |\log(T-t)|^{l} |g(t)| ,
\end{align}
and for  $\gamma \in (0,1)$, $m \in (0,\infty) $, and $l \in \R$ we let
\begin{align}
\label{normG1}
[ g]_{\gamma,m,l} = \sup \, (T-t)^{-m}  |\log(T-t)|^{l} \frac{|g(t)-g(s)|}{(t-s)^\gamma} ,
\end{align}
where the supremum is taken over $s \leq t$ in $ I$  such that $t-s \leq \frac{1}{10}(T-t)$.

We have then the following result, whose proof is in section~\ref{secLambda}.

\begin{prop}
\label{propIntegralOp}
Let $\alpha ,  \gamma \in (0,\frac{1}{2})$, $l\in \R$, $C_1>1$.
There is $\alpha_0>0$ such that if $\Theta \in (0,\alpha_0)$ and
$m \leq \Theta - \gamma$,
then for  $a:[0,T]\to \C$ is such that
\begin{align}
\label{hypA00}
\left\{
\begin{aligned}
& \frac{1}{C_1} \leq | a(T) | \leq C_1 ,
\\
& T^\Theta |\log T|^{1+\sigma-l} \| a(\cdot) - a(T) \|_{\Theta,l-1}
+ [a]_{\gamma,m,l-1}
\leq C_1 ,
\end{aligned}
\right.
\end{align}
for some $\sigma>0$,
then, for $T>0$ small enough there are two operators $\mathcal P $ and $\Rem$ so that $p = \mathcal P[a]: [-T,T]\to \C$ satisfies
\begin{align}
\label{eq-modified0}
\mathcal B_0[p](t)
= a(t) + \Rem[a](t) , \quad t \in [0,T],
\end{align}
with
\begin{align}
\nonumber
& |\Rem[a](t) |
\\
\label{ineqL1b-1}
& \leq  C
\Bigl( T^{\sigma}
+ T^\Theta  \frac{\log |\log T|}{|\log T|}  \| a(\cdot) - a(T) \|_{\Theta,l-1}
+ [a]_{\gamma,m,l-1} \Bigr)
\frac{(T-t)^{m+(1+\alpha ) \gamma}}{  |\log(T-t)|^{l}} ,
\end{align}
for some $\sigma>0$.
\end{prop}

We have additional properties of the solution to this problem.

\begin{prop}
Let us make the same assumptions as in Proposition~\ref{propIntegralOp}.
Then $\mathcal P[a]$ can be written as
\[
\mathcal P[a] = p_{0,\kappa[a]} + \mathcal P_1[a] + \mathcal P_2[a]
\]
where $p_{0,\kappa}$ is defined in \eqref{p0kappa} and each
term
\[
\kappa = \kappa[a] ,
\quad
p_1 = \mathcal P_1[a] ,
\quad
p_2 = \mathcal P_2[a] ,
\]
has the following bounds:
\begin{align*}
\kappa & = |a(T)|( 1 + O(\frac{1}{|\log T|}),
\\
|\dot p_1(t) - \dot \p_{0,\kappa}(t)|  & \leq  C \frac{  |\log T |^{1-\sigma} \log(|\log T |)^2 }{|\log(T-t)|^{3-\sigma}},
\\
|\ddot\p_1(t)| &\leq  C \frac{  |\log T| }{|\log(T-t)|^{3} (T-t)} , \\
\\
\| \dot\p_2 \|_{\Theta,l}  & \leq C \big(T^{\frac{1}{2}+\sigma-\Theta} +  \| a(\cdot) - a(T) \|_{\Theta,l-1} ),
\\
[ \dot \p_2 ]_{\gamma,m	,l}
& \leq C(
|\log T|^{l-3} T^{\alpha_0-m-\gamma}
+ T^\Theta \frac{\log |\log T|}{|\log T|}\| a(\cdot) - a(T) \|_{\Theta,l-1}
+ [ a]_{\gamma,m,l-1}) ,
\end{align*}
where $\alpha_0>0$ is some fixed some constant and $\sigma>0$ is arbitrary (with $C$ depending on $\sigma$).
%
%term is Lipschitz with an adequate norm
%\begin{align*}
% |\kappa[a_1]-\kappa[a_2]| & \leq C
%\big(
%|a_1(T)-a_2(T)| +  \|  a_1(\cdot) - a_1(T) - ( a_2(\cdot) - a_2(T)  ) \|_{\mu,l-1}
%\\
%& \quad +  [ a_1-a_2 ]_{\gamma,m,l-1}
%\big)
%\end{align*}
%\begin{align*}
%&
%\|  \mathcal P_1[a_1]  -  \mathcal P_1[a_2]  \|_{\sigma}
%+
%\|  \mathcal P_2[a_1]  -  \mathcal P_2[a_2]  \|_{\mu,l}
%\\
%& \leq C
%\big(
%  \|  a_1(\cdot) - a_1(T) - ( a_2(\cdot) - a_2(T)  ) \|_{\mu,l-1}
%+  [ a_1-a_2 ]_{\gamma,m,l-1}  \big)
%\end{align*}
%where
%\[
%\| p_1 \|_{\sigma}
%= \sup_{t\in [-T,T]}
%\frac{|\log(T-t)|^{3-\sigma}}{|\log T|^{1-\sigma} \log( |\log T| ) ^2}
%|\dot p_1(t) |.
%\]
%
\end{prop}

%
%
%{\crr
%\begin{prop}
%Let $C_1>1$ be fixed.
%Then for $T>0$ small there exist two functions $\mathcal P $ and $\Rem$ such that for $A \in \C$ with
%\begin{align}
%%\label{restrictionsA}
%\frac{1}{C_1} \leq |A|\leq C_1, \quad
%\Re(A)<0 .
%\end{align}
%$p = \mathcal P [A]$ and $\Rem[A]$ are functions
%\begin{align*}
%& \mathcal P [A]:[-T,T] \to C \\
%& \Rem[A]:[0,T] \to \C
%\end{align*}
%such that  \eqref{modified0} is satisfied and
%\begin{align*}
%|\Rem[A](t) | \leq  C (T-t)^{\alpha_0} ,
%\end{align*}
%for some $\alpha_0>0$.
%
%We have also a Lipschitz estimate
%\begin{align}
%%\label{RemLipschitz}
%\| \Rem[A_1] - \Rem[A_2] \|_{\alpha_0} \leq C |A_1 -A_2|
%\end{align}
%for $A_1$, $A_2$ satisfying \eqref{restrictionsA}.
%\end{prop}
%}

Roughly speaking, to obtain the modified equation \eqref{eq-modified0} we notice that the main term in $p$ in $ \mathcal B_0[p ] $ is the integral operator
\[
\int_{-T} ^{t-\la_*(t)^2}     \frac{\dot p(s)}{t-s}ds.
\]
Thus we define
\[
\tilde{\mathcal B}_0[p ]  =  \mathcal B_0[p ] - \int_{-T} ^{t-\la_*(t)^2}     \frac{\dot p(s)}{t-s}ds.
\]
It will be sufficient  to solve approximately equations \eqref{ww1} replacing in part this integral operator
 by a ``regularized'' version of it following the logic of the formal derivation of the rate \equ{formal}.
For  $\alpha >0 $ let us write
\begin{align*}
%\label{intOp}
\int_{-T} ^{t-\la_*(t)^2}     \frac{\dot p(s)}{t-s}ds
=  S_\alpha  [ \dot p]  + R_\alpha  [\dot p]
\end{align*}
where
\begin{align}
\label{defSalpha}
S_\alpha  [ g]
& :=
g(t) [ - 2\log \lambda_*(t) + (1+\alpha  ) \log (T-t)]
+
\int_{-T} ^{t-(T-t)^{1+\alpha }}  \frac{g(s)}{t-s}ds  ,
\\
\label{defRem}
R_{\alpha }[g]
& :=
-\int_{t-(T-t)^{1+\alpha } } ^{t-\la_*^2}  \frac  {g(t) -g(s)}{t-s} ds.
\end{align}
Thus equation \eqref{eqAbc} can be written in the form
\[
S_{\alpha}  [\dot p]  +
R_{\alpha}[\dot p]  +
 \tilde{\mathcal B}_0[p ]  = a(t), \quad \text{in } [0,T],
\]
for some function $a(t)$.
The modified equation is
\[
S_{\alpha}  [ \dot p]   +
 \tilde{\mathcal B}_0[p ]  = a(t) \quad \text{in } [0,T],
\]
and the remainder $\Rem$ is essentially $R_{\alpha}[\dot p]$.
This is a sketch of how we obtain the modified equation and remainder.  For more details see section~\ref{secLambda}.

Another modification to equations \eqref{eqAbc} that we introduce is to replace $a_0[p,\xi,\Psi^*]$ by its main term.
To do this we write
\begin{align*}
a_0[p,\xi,\Psi]
=a_0^{(0)}[p,\xi,\Psi] + a_0^{(1)}[p,\xi,\Psi] + a_0^{(2)}[p,\xi,\Psi]
\end{align*}
where
\begin{align}
%\label{def-a0l}
\nonumber
a_0^{(l)}[p,\xi,\Psi]
= -\frac{\lambda}{4\pi}
e^{i\omega}
\int_{B_{2R}}
\left(
Q_{-\omega}\tilde L_U[\Psi]_l \cdot Z_{01} +
i Q_{-\omega}\tilde L_U[\Psi]_l \cdot Z_{02}
\right)\,dy
\end{align}
for $l=0,1,2$.

We define
\begin{align}
\nonumber
c_0^*[p,\xi,\Psi^*](t)
& :=
\frac{ 4 \pi \lambda}{\int_{\R^2} w_\rho^2 |Z_{01}|^2  }
e^{- i\omega}
\Bigl(
\Rem\left[  a_0^{(0)}[p,\xi,\Psi^*]  \right](t)
+a_0^{(1)}[p,\xi,\Psi^*](t)
\\
%\label{c0star}
\nonumber
& \qquad
+ a_0^{(2)}[p,\xi,\Psi^*](t)
\Bigr)  - (c_0[  h[ p,\xi,\Psi^* ]]-\tilde c_0[  h_1[ p,\xi,\Psi^* ] ] ) ,
\end{align}
and
\begin{align*}
c_{01}^* :=  \Re(c_{0}^*) , \quad
c_{02}^* :=
\Im(c_{0}^*) ,
\end{align*}
where  $\Rem$ is the operator given Proposition~\ref{propIntegralOp} and $\tilde c_0 = \tilde c_{01} + i \tilde c_{02}$ are the operators defined in Proposition~\ref{prop1.0}.

\subsection{The system of equations}

We transform the system \eqref{inner1}-\eqref{outer1} in the problem of finding functions $\psi(x,t)$,  $\phi_1,\ldots,\phi_4$,  parameters $p(t) = \la(t)e^{i\omega(t)} $, $\xi(t)$ and constants $c_1,c_2,c_3$
such that the following system is satisfied:
\begin{align}
\label{eq-psi}
\left\{
\begin{aligned}
\psi_t &=  \Delta_x \psi  +  g(p,\xi, Z^*+ \psi,\phi_1+ \phi_2+\phi_3+\phi_4)
\inn \Omega \times (0,T)
\\
\psi &= ({\bf e}_3 - U) -\Phi^0 \qquad\qquad\ \  \onn \pp\Omega\times (0,T)
\\
\psi(\cdot ,0)
&=  (c_1 \,  \mathbf{e_1}  + c_2 \, \mathbf{e_2}  + c_3\,  \mathbf{e_3})\chi +
\ (1-\chi) ({\bf e}_3 - U -\Phi^0)       \inn  \Omega
\\
\psi(q,T) &  =  - Z^* (q,T)
\end{aligned}
\right.
\end{align}
\begin{align}
\left\{
\begin{aligned}
\lambda^2  \partial_t \phi_1
&= L_W [\phi_1] + h_1[p,\xi, \Psi^*]
%-  \sum_{ \substack{i=-1,0,1\\ j=1,2}} c_{ij}[  h_1[p,\xi, \Psi^*]    ] Z_{ij} \chi_{B_1}
-  \sum_{  j=1,2} \tilde c_{0j}[ h_1[p,\xi, \Psi^*]  ] w_\rho^2 Z_{0j}
\\
\label{eqphi1}
& \qquad
-  \sum_{ \substack{l=-1,1\\ j=1,2}} c_{lj}[ h_1[p,\xi, \Psi^*]  ] w_\rho^2  Z_{lj}
\inn \DD_{2R}
\\
\phi_1\cdot  W  &=  0   \inn \DD_{2R}
\\
\phi_1(\cdot, 0) &=0 \inn B_{2R(0)}
\end{aligned}
\right.
\end{align}

\begin{align}
\left\{
\begin{aligned}
\label{eqphi2}
\lambda^2 \partial_t \phi_2
&= L_W [\phi _2]  + h_2[p,\xi, \Psi^*]
 -  \sum_{  j=1,2} c_{1j}[  h_2[p,\xi, \Psi^*]    ] w_\rho^2  Z_{1j}    \inn \DD_{2R}
\\
\phi_2\cdot  W  & = 0   \inn \DD_{2R}
\\
\phi_2(\cdot, 0) & =0 \inn B_{2R(0)}
\end{aligned}
\right.
\end{align}

\begin{align}
\label{eqphi3}
\left\{
\begin{aligned}
\lambda^2  \partial_t  \phi_3 &= L_W [\phi_3] +
h_3  -  \sum_{  j=1,2} c_{1j}[  h_3[p,\xi, \Psi^*]    ] w_\rho^2 Z_{1j}
\\
& \quad
+ \sum_{j=1,2} c_{0j}^*[p,\xi,\Psi^*] w_\rho^2 Z_{0j} \inn \DD_{2R}
\\
\phi_3\cdot  W  & = 0   \inn \DD_{2R}
\\
\phi_3(\cdot, 0) & =0 \inn B_{2R(0)}
\end{aligned}
\right.
\end{align}

\begin{align}
\label{eqphi4}
\left\{
\begin{aligned}
\lambda^2  \partial_t  \phi_4 &= L_W [\phi_4 ]
+  \sum_{j=1,2}  c_{-1,j}[   h_1[p,\xi, \Psi^*]   ] w_\rho^2 Z_{-1j}
\\
\phi_4\cdot  W  & = 0   \inn \DD_{2R}
\\
\phi_4(\cdot, t) & = 0 \onn \pp B_{2R(t)}
\\
\phi_4(\cdot, 0) & =0 \inn B_{2R(0)}
\end{aligned}
\right.
\end{align}

\begin{align}
\label{1.3}
c_{0j}[h(p,\xi, \Psi^*)](t) - \tilde c_{0j}[p,\xi,\Psi^*] (t) &= 0 \foral t\in (0,T), \quad j=1,2,
\\
\label{1.4}
c_{1j}[h(p,\xi, \Psi^*)](t)  &=  0 \foral t\in (0,T), \quad j=1,2.
\end{align}
In \eqref{eq-psi}  $\chi$ is a smooth cut-off function with compact support in $\Omega$ which is identically 1 on a fixed neighborhood of $q$ independent of $T$ and the function $ g(p,\xi, \Psi^*,\phi)$ is given by \equ{GG}.

\medskip
We see that if $(\phi_1,\phi_2,\phi_3,\phi_4,\psi,p,\xi)$ satisfies system \equ{eq-psi}--\equ{1.4}  then the functions
$$
\phi= \phi_1 + \phi_2+\phi_3+\phi_4, \quad \Psi^* = Z^*+ \psi
$$
solve the outer-inner gluing system  \equ{inner1}--\equ{outer1}.

%\medskip
%XXX
% the inner problem \equ{1.0}. On the other hand, we check from the derivation of the equations for the parameters that
%relations \equ{1.3} and \equ{1.3} hold if and only if equations \equ{equuuB0} and \equ{equuuB1} are respectively satisfied.

\medskip
The way in which we will proceed to solve the full problem \equ{eq-psi}--\equ{1.4}  is the following. For given functions $\phi_1,\ldots,\phi_4$ and parameters $p$, $\xi$  in a suitable class, we solve first the outer problem  \eqref{eq-psi} in the form of an operator $\psi = \Psi[\phi_1+\phi_2+\phi_3+\phi_4, p,\xi] $ and denote $\Psi^*[\phi_1+\phi_2+\phi_3, p,\xi] = Z^* + \Psi[\phi_1+\phi_2+\phi_3+\phi_4, p,\xi]$.
Then we substitute $\Psi^*[\phi_1+\phi_2+\phi_3+\phi_4, p,\xi] $ in \equ{eqphi1}--\equ{eqphi4} and solve for $\phi_1$, $\phi_2$, $\phi_3$, $\phi_4$ as operators of the pair $(p,\xi)$. Finally, we solve for $p$ and $\xi$ the remaining equations. All this will be done by suitable control on the linear parts of the equation
and contraction mapping principle.

\subsection{Choice of constants}

We state here the constraints we impose in the parameters involved in the different norms. The values assumed will be sufficient
for the inner-outer gluing scheme to work.

\medskip
\begin{itemize}
\item
$\beta \in (0,\frac 12)$ is so that  $ R(t) = \la_*(t)^{-\beta}$.

\item
$\alpha \in (0,\frac{1}{2})$ appears in Proposition~\ref{propIntegralOp}.
It is the parameter used to define the remainder $\mathcal R_\alpha$ in \eqref{defRem}.

\item
We use the norm $\| \ \|_{*,\nu_1,a_1,\delta}$ \eqref{norm-phi1} to measure the solution $\phi_1$ in \eqref{eqphi1}. Here we will ask that $
\nu_1 \in (0,1)$, $ a_1 \in (2,3)$,  and $\delta > 0$ small and fixed.

\item
We use the norm $\| \ \|_{\nu_2,a_2-2}$ \eqref{norm-h} to measure the solution $\phi_2$ in \eqref{eqphi2}, with   $\nu_2 \in (0,1) $, $a_2 \in (2,3)$.

\item
We use the norm $\| \ \|_{**,\nu_3}$ \eqref{norm-starstar} for the solution $\phi_3$ of \eqref{eqphi3}, with $\nu_3>0$.

\item We use the norm $\| \ \|_{***,\nu_4}$ for the solution $\phi_4$ of \eqref{eqphi4}, with $\nu_4>0$.

\item
We are going to use the norm $\| \ \|_{\sharp,\Theta,\gamma}$ with a parameters $\Theta$, $\gamma$ satisfying some restrictions given below.

\item
We have parameters $m$, $l$ in Proposition~\ref{propIntegralOp}.
We work with $m$ given by
\begin{align*}
 m= \Theta -2\gamma(1-\beta).
\end{align*}
and $l$ satisfying $l<1+2m$.
\end{itemize}

We will assume that
\[
\alpha-1+2\beta>0
\]
which ensures that $m+(1+\alpha)\gamma > \Theta$.

To get the estimates for the outer problem \eqref{eq-psi}, we  need \eqref{assumpPar1} and
\begin{align*}
%\label{ThetaUpper}
\Theta <   \min \Bigl( \beta , \frac{1}{2} - \beta ,
\nu_1-1+\beta(a_1-1) ,  \nu_2-1+\beta(a_2-1) , \nu_3-1,\nu_4-1+\beta  \Bigr)
\end{align*}
\begin{align*}
\Theta < \min
\Bigl(  \nu_1 - \delta \beta (5-a_1) -\beta , \nu_2 - \beta , \nu_3-3\beta , \nu_4 - \beta  \Bigr)
\end{align*}
and
\begin{align*}
%\label{ThetaLower1}
\Theta>0.
\end{align*}
Also to control the nonlinear terms in \eqref{eq-psi} we need $\delta>0$ in  $\| \ \|_{*,\nu_1,a_1,\delta}$ to be small.
%\begin{align}
%\label{condBetaNonlinear}
%\crr
%4\beta < \nu .
%\end{align}
%{\crr REVISAR}

%In summary, on $\Theta$ we assume that
%\begin{align}
%\label{defTheta}
%0
%<\Theta < \min \Bigl(   \beta, \nu-1+\beta ,  \nu - \beta \frac{7-a}{2} , \nu_2-1 ,\nu_2-3\beta ,  \frac{1}{2}-\beta \Bigr) .
%\end{align}
To find $\Theta$ in the range above we need
\begin{align}
%\label{nu1Lower}
\nonumber
\nu_1 & > \max\Bigl(1-\beta (a_1-1),\delta  \beta (5-a_1) - \beta \Bigr)
\\
%\label{nu2Lower}
\nonumber
\nu_2 & >  \max\Bigl(1-\beta (a_2-1), \beta \Bigr)
\\
\nonumber
\nu_3 & > \max ( 1 , 3\beta)
\\
\nonumber
\nu_4 & > \max(1-\beta,\beta) .
\end{align}
%But we also need \eqref{condBetaNonlinear} and for $a\in (2,3)$ \eqref{nuLower} is replaced by
%\begin{align}
%\label{nuLower2}
%\nu > \max(1-\beta,4\beta).
%\end{align}
%To have  \eqref{condNu1-0} and \eqref{nuLower2}  compatible we need
%\begin{align}
%\label{betaUpper}
%\beta < \frac{1}{a+2}
%\end{align}
%which implies $\beta<\frac{1}{4}$ and then \eqref{nu2Lower} becomes
%\begin{align*}
%\nu_2 > 1.
%\end{align*}

To solve the inner system given by equations
\eqref{eqphi1},
\eqref{eqphi2},
\eqref{eqphi3}, and
\eqref{eqphi4}
we will need
\begin{align}
\nonumber
\nu_1 & < 1 , \\
\nonumber
\nu_2 & < 1-\beta(a_2-2) ,  \\
\nonumber
\nu_3 &<  \min\Bigl(
1+\Theta+\sigma_1  ,
1+\Theta + 2\gamma \beta ,
\nu_1 + \frac{1}{2} \delta \beta ( a_1-2 ) \Bigr) , \\
\nonumber
\nu_4 & < 1,
\end{align}
where $\sigma_1  \in (0,\gamma(\alpha-1+2\beta))$.

\medskip

\subsection{The outer problem}
Our main result for problem \eqref{eq-psi} is the existence of a small solution for all small $T$, with certain precise absolute and Lipschitz estimates satisfied. To obtain this result we need a suitable norm that we define next.

\medskip
Given $\Theta>0$, $\gamma \in (0,\frac{1}{2})$ we define
%$\| \Psi\|_{\sharp, a,\nu,\gamma}$ to be the least number $K>0$ such that the following inequalities hold:
\begin{align}
\nonumber
\| \psi\|_{\sharp, \Theta,\gamma}
&:=
\lambda_*(0)^{-\Theta}
\frac{1}{|\log T|  \lambda_*(0) R(0) }\|\psi\|_{L^\infty(\Omega\times (0,T))}
+ \lambda_*(0)^{-\Theta} \|\nabla_x \psi\|_{L^\infty(\Omega\times (0,T))}
\\
\nonumber
&\quad
+
\sup_{\Omega\times (0,T)}   \lambda_*(t)^{-\Theta-1} R(t)^{-1}
\frac{1}{|\log(T-t)|} |\psi(x,t)-\psi(x,T)|
\\
\nonumber
&\quad
+ \sup_{\Omega\times (0,T)} \, \lambda_*(t)^{-\Theta}
|\nabla_x \psi(x,t)-\nabla_x \psi(x,T) |
\\
\label{normPsi}
& \quad
+ \sup_{}
\lambda_*(t)^{-\Theta}
(\lambda_*(t) R(t))^{2\gamma}  \frac {|\nn_x \psi(x,t) -\nn_x \psi(x',t') |}{ ( |x-x'|^2 + |t-t'|)^{\gamma   }} ,
\end{align}
where the last supremum in taken in the region
\[
x,x'\in \Omega,\quad  t,t'\in (0,T), \quad |x-x'|\le 2 \la_*R(t), \quad  |t-t'| < \frac 14 (T-t) .
\]

We define the spaces
\begin{align*}
E_1 &= \{ \phi_1 \in L^\infty(\DD_{2R}) :
\nabla_y \phi_1  \in L^\infty(\DD_{2R}), \
\|\phi_1\|_{*,\nu_1,a_1,\delta} <\infty \}
\\
E_2 &= \{ \phi_2 \in L^\infty(\DD_{2R}) :
\nabla_y \phi_2  \in L^\infty(\DD_{2R}), \
\|\phi_2\|_{\nu_2,a_2}<\infty \}
\\
E_3 &= \{ \phi_3 \in L^\infty(\DD_{2R}) :
\nabla_y \phi_3  \in L^\infty(\DD_{2R}), \
\|\phi_3\|_{**,\nu_3} < \infty \}
\\
E_4 &= \{ \phi_4 \in L^\infty(\DD_{2R}) :
\nabla_y \phi_4  \in L^\infty(\DD_{2R}), \
\|\phi_4\|_{***,\nu_4} <\infty \}
\end{align*}
and use the notation
\[
E = E_1\times E_2 \times E_3 \times E_4,
\]
\begin{align*}
\Phi &= ( \phi_1,\phi_2,\phi_3,\phi_4) \in E
\\
\|\Phi\|_E &=
\|\phi_1\|_{*,\nu_1,a_1,\delta}
+\|\phi_2\|_{\nu_2,a_2-2}
+\|\phi_3\|_{**,\nu_3}
+\|\phi_4\|_{***,\nu_4}
\end{align*}
We define the closed ball
\[
\mathcal B = \{  \Phi \in E : \| \Phi\|_E \leq 1 \} .
\]

\begin{prop} \label{propi1}
Assume $Z_0^*$ satisfies \eqref{condZ0}.
Let $p(t)= \la(t) e^{i\omega(t)}$ and $\xi(t)$ satisfy estimates \eqref{cotin}, \eqref{cotin1}, $\Phi \in \mathcal B$.
Then there exists $C>0$  such that if  $T>0$ is  sufficiently small then there exists a solution $\psi = \Psi (p,\xi,\Phi,Z_0^*)$ to equation \eqref{eq-psi} such that
\be\begin{aligned}
\|  \Psi (p,\xi,\Phi,Z_0^*) \|_{\sharp,\Theta,\gamma}
& \leq C T^\sigma
(\| \Phi \|_E
+ \| \dot p \|_{L^\infty(-T,T)}
+  \| \dot \xi \|_{L^\infty(0,T)}
+  \| Z_0^* \|_*
)  .
\end{aligned}\label{A1}\ee
\end{prop}

\begin{proof}
The proof consists in writing problem \equ{eq-psi}  in a fixed point form involving an inverse for the inhomogeneous
 linear heat equation
 \begin{align}
 \label{heat-eq0}
 \left\{
 \begin{aligned}
 \psi_t  & = \Delta_x \psi + f(x,t) \inn \Omega \times (0,T) \\
 \psi  & = 0 \onn \pp \Omega \times (0,T) \\
 \psi(q,T)  & = 0  \\
 \ \psi(x,0)  & =  (c_1 \,  \mathbf{e_1}  + c_2 \, \mathbf{e_2}  + c_3\,  \mathbf{e_3}) \eta_1    \inn  \Omega
 \end{aligned}
 \right.
 \end{align}
 for suitable constants $c_1,c_2,c_3$, where $\mathbf{e_1}$, $\mathbf{e_1}$, $\mathbf{e_1}$ are defined in \eqref{e123},
 %\begin{align*}
 %\mathbf{e_1}  = \left ( \begin{matrix} 1\\ 0\\ 0 \end{matrix}  \right ),\quad
 %\mathbf{e_2}  = \left ( \begin{matrix} 0\\ 1\\ 0 \end{matrix}  \right ), \quad
 %\mathbf{e_3}  = \left ( \begin{matrix} 0\\ 0\\ 1 \end{matrix}  \right ) ,
 %\end{align*}
and   $q\in \Omega$ and $T>0$ is sufficiently  small.
The fixed smooth cut-off  $\eta_1$ has compact support in $\Omega$ and is such that $\eta_1\equiv 1$ in a neighborhood of $q$.
The right hand side is assumed to satisfy $\| f\|_{**}<\infty$ where
 \begin{align}
% \label{defNormRHSpsi}
\nonumber
 \|f\|_{**} : =   \sup_{ \Omega \times (0,T)}  \Big ( 1 + \sum_{i=1}^3 \varrho_i(x,t)\, \Big )^{-1}  {|f(x,t)|} .
 \end{align}
 and the weights are defined by
\begin{align*}
 %\label{defVarrho}
 \left\{
 \begin{aligned}
 %\varrho_1 & :=  \crr \lambda_*^{\nu-2}  R^{-a}  \chi_{ \{ r \leq 2R\la_* \} }
 %\\
 \varrho_1 & :=   \lambda_*^{\Theta}  (\lambda_* R)^{-1}  \chi_{ \{ r \leq 3R\la_* \} }
 \\
 \varrho_2 & := T^{-\sigma_0}  \frac{\lambda_*^{1-\sigma_0}}{r^2}  \chi_{ \{ r \geq  R\la_* \} }
 %\\
 %\varrho_3 & :=  \frac{ \lambda_*^{1/2+\sigma_2}}{r+\lambda_*}
 \\
 \varrho_3 & := T^{-\sigma_0} ,
 \end{aligned}
 \right.
 \end{align*}
 where $r= |x-q|$, $\Theta>0$ and  $\sigma_0>0$ is  small.
 (The factor $T^{\sigma_0}$ in front of $\varrho_2$ and $\varrho_3$ is a simple way to have parts of the error small in the outer problem.)
 These weights  naturally adapt to the form of the outer error $g$  in \eqref{GG}.
In Proposition \ref{prop3} a solution of Problem \eqref{heat-eq0} is built as a linear operator of $f$ with the estimate
\begin{align*}
\| \psi\|_{\sharp, \Theta ,\gamma}
%+ \frac{\lambda_*(0)^{-\nu} R(0)^{a-2} }{ |\log T| }( |c_1| +|c_2| +|c_3| )  \leq C \|f\|_{**} ,
+ \frac{\lambda_*(0)^{-\Theta}  ( \lambda_*(0) R(0) )^{-1} }{ |\log T| }( |c_1| +|c_2| +|c_3| )  \leq C \|f\|_{**} ,
\end{align*}
This fact and direct estimates for the outer error
 make the the contraction mapping principle applicable in a suitable region, producing an operator as in \equ{A1}. To illustrate some of these estimates, let us write
$
g = g_1 + g_2 + g_3 + g_4
$
where
\begin{align*}
g_1 & =
Q_\omega
\bigl( (\Delta_x \eta) \phi + 2  \nn_x \eta \nn_x \phi - \eta_t  \phi
\bigr)
\\
& \quad
+ \eta Q_\omega\bigl( - \dot\omega J \phi  +  \la^{-1}\dot\la  y\cdot \nn_y \phi + \la^{-1} \dot\xi \cdot\nn_y \phi \bigr)
\\
g_2
& =  (1-\eta) \ttt L_U [\Psi^*] + (\Psi^*\cdot U ) U_t
\\
g_3 & =
(1-\eta)[ \KK_{0}[p,\xi]+ \KK_{1}[p,\xi]] + \Pi_{U^\perp}[ \ttt \RR_1] + ( \Phi^0\cdot U)U_t ,
\\
g_4 & = N_U( \eta Q_\omega \phi  + \Pi_{U^\perp}( \Phi^0  + \Psi)^* ) .
\end{align*}
We claim that
\begin{align}
%\label{estg1}
\nonumber
\|g_1\|_{**} \leq C  T^\sigma  \|\Phi\|_E ,
\end{align}
for some $\sigma>0$.
Indeed, we have
\begin{align*}
% phi1
| \Delta_x \eta \phi_1  |
&\leq C \lambda_*^{\nu_1-2} R^{-a_1} \chi_{[ |x-q| \leq 3 \lambda_* R]}
\|\phi_1\|_{*,\nu_1,a_1,\delta}
\\
% phi2
| \Delta_x \eta \phi_2  |
&\leq C \lambda_*^{\nu_2-2} R^{-a_2} \chi_{[ |x-q| \leq 3 \lambda_* R]}
\|\phi_2\|_{\nu_2,a_2-2}
\\
  % phi3
| \Delta_x \eta \phi_3  |
&\leq C \lambda_*^{\nu_3-2} R^{-1} \chi_{[ |x-q| \leq 3 \lambda_* R]}
\|\phi_3\|_{**,\nu_3}
\\
% phi4
| \Delta_x \eta \phi_4  |
&\leq C \lambda_*^{\nu_4-2} R^{-2} \log R \chi_{[ |x-q| \leq 3 \lambda_* R]}
\|\phi_4\|_{***,\nu_4}  .
\end{align*}
The norm $\| \ \|_{**}$ is actually motivated by the weights appearing above.
If
\begin{align*}
%\label{ThetaUpper1}
\Theta < \min( \nu_1-1+\beta(a_1-1) ,  \nu_2-1+\beta(a_2-1) , \nu_3-1,\nu_4-1+\beta ) ,
\end{align*}
we find that  for any $j=1,2,3,4$:
\begin{align*}
| \Delta_x \eta \phi_j |\leq C T^\sigma \lambda_*^{\Theta-1+\beta}
\chi_{[ |x-q| \leq 3 \lambda_* R]}
\| \Phi \|_E,
\end{align*}
for some $\sigma>0$.
Then we have
\[
\| Q_\omega  (\Delta_x \eta) \phi  \|_{**} \leq C
T^\sigma
\|\Phi\|_E
\]
and similarly
\[
\| ( \partial_t \eta)  Q_\omega \phi \|_{**}
+\|   Q_\omega  \la^{-1} \nabla_x\eta\nabla_y \phi   \|_{**}
\leq  C T^\sigma
\|\Phi\|_E .
\]

The other terms $g_2$, $g_3$, $g_4$ can be estimated in the same way.
In the estimate for $g_2$ it is important to have the property that $\Psi^* = Z^* + \psi$ vanishes at $(q,T)$.
Lipschitz properties are proved using similar calculations.
\end{proof}

%The proof of this proposition is given in section~\ref{sectProofPropExt}.

\medskip
The operator $\Psi (p,\xi,\Phi,Z_0^*)$ satisfies Lipschitz properties with respect to its arguments, which are consequence of its construction.
See Corollaries~\ref{coroLipOuter1} and \ref{coroLipOuter2} in the appendix.

\medskip
What we do next is to take $\Phi \in E$ with $\|\Phi\|_E  \leq 1 $ and substitute  $\Psi^*(p,\xi,\Phi,Z_0^*) = Z^*+ \Psi(p,\xi,\Phi,Z_0^*) $  into
\eqref{eqphi1}--\eqref{eqphi4}.
We can then write equations \eqref{eq-psi}--\eqref{eqphi4}
as the fixed point problem
\begin{align}
\label{ptofijo}
\Phi = \mathcal F (\Phi)
\end{align}
where
\begin{align*}
\mathcal F (\Phi) = ( \mathcal F_1(\Phi) ,  \mathcal F_2(\Phi) ,
\mathcal F_3(\Phi)  ,
\mathcal F_4(\Phi) ) , \quad \mathcal F : \bar{\mathcal B}_1\subset E \to E
\end{align*}
%is defined as
%\begin{align*}
%\mathcal F(\Phi) = ( \mathcal F_1(\Phi) ,  \mathcal F_2(\Phi) ,
%\mathcal F_3(\Phi)  , \mathcal F_4(\Phi)
%),
%\end{align*}
wiith
\begin{align*}
\mathcal F_1(\Phi) &=   \mathcal T_{\lambda,1}  (
h_1[p,\xi,  \Psi^*(p,\xi,\Phi,Z_0^*) ] )
\\
\mathcal F_2(\Phi) &=   \mathcal T_{\lambda,2}  (
h_2[p,\xi,  \Psi^*(p,\xi,\Phi,Z_0^*) ] )
\\
\mathcal F_3(\Phi) &=   \TT_{\lambda,3 }
\Bigl(
h_3[p,\xi,  \Psi^*(p,\xi,\Phi,Z_0^*) ]+
\sum_{j=1}^2 c_{0j}^*[p,\xi, \Psi^*(p,\xi,\Phi,Z_0^*) ] w_\rho^2 Z_{0j}
\Bigr)
\\
\mathcal F_4(\Phi) &=   \TT_{\lambda,4 } \Bigl( \sum_{j=1}^2 c_{-1,j}[h_1[p,\xi, \Psi^*(p,\xi,\Phi,Z_0^*) ]] w_\rho^2 Z_{-1,j}  \Bigr)	  .
\end{align*}
Although $\mathcal F$ also depends on $p$, $\xi$, $Z_0^*$ we will omit this dependence from the notation for the moment.

Our next step is to solve problem \eqref{ptofijo}.
%To do so, we fix  $p$ and $\xi$ satisfying estimates \eqref{cotin} and \eqref{cotin1}.

\subsection{The inner problem}
\begin{prop}
\label{innnerSystem}
Assume that $p$ and $\xi$ satisfy estimates \eqref{cotin} and that $Z_0^*$ satisfies \eqref{condZ0}.
Then the system of equations \eqref{ptofijo} for $\Phi= ( \phi_1, \phi_2 , \phi_3,\phi_4)$ has a solution $\Phi(p,\xi,Z_0^*)$ in $\bar{\mathcal B}_1 \subset E$.
\end{prop}
\begin{proof}
We estimate in detail the operator $\mathcal F_1$. The others are handled similarly.
We recall that we have decomposed $Z_0^* = Z_0^{*0} + Z_0^{*1}$ (c.f. \eqref{condZ0}).
We claim that for $\| \Phi\|_E \leq 1$ we have
\begin{align}
\label{estF1-1}
\|  \mathcal F_1(\Phi)  \|_{*,a,_1,\nu_1}
&  \leq C \lambda_*(0)^{\Theta}   T^\sigma
( \| \Phi \|_E
+ \| \dot p \|_{L^\infty(-T,T)}
+  \| \dot \xi \|_{L^\infty(0,T)}  )
+ C T^\sigma \|  Z_0^{*0} \|_{*} ,
\end{align}
and for $\|\Phi_1\|_E, \|\Phi_2\|_E \leq 1$
\begin{align}
\label{estF1-lip}
\|  \mathcal F_1(\Phi_1) - \mathcal F_1(\Phi_2)  \|_{*,a_1,\nu_1}
\leq C T^\sigma
\lambda_*(0)^{\Theta }
\| \Phi_1  - \Phi_2 \|_E  .
\end{align}
To prove \eqref{estF1-1},
we recall that by Proposition~\ref{prop1.0} we have
\begin{align*}
\|  \mathcal F_1(\Phi)  \|_{*,\nu_1,a_1,\delta}
& \leq C \| h_1[p,\xi,  \Psi^*(p,\xi,\Phi,Z_0^*) ] \|_{\nu_1,a_1}.
\end{align*}
From the definition of $h_1$ and recalling that
$
\Psi^*(p,\xi,\Phi,Z_0^*) = Z^* +  \Psi(p,\xi,\Phi,Z_0^*)
$
we get
\begin{align*}
&
\| h_1[p,\xi,  \Psi^*(p,\xi,\Phi,Z_0^*) ] \|_{\nu_1,a_1}
\\
& \quad \leq
\| \lambda^2  Q_{-\omega}   (\tilde L_U  [ \Psi(p,\xi,\Phi,Z_0^*) ]_0
+\tilde L_U  [ \Psi(p,\xi,\Phi,Z_0^*) ]_2  ) \chi_{\DD_{2R} }  \|_{\nu_1,a_1}
\\
& \quad \quad
+\| \lambda^2  Q_{-\omega}
 (\tilde L_U  [ Z^* ]_0
+\tilde L_U  [ Z^* ]_2  )
\chi_{\DD_{2R} }  \|_{\nu_1,a_1}
+  \|  \lambda^2  Q_{-\omega}  \KK_{0}[p,\xi] \|_{\nu_1,a_1} .
\end{align*}
We claim that for $j=0$ and $j=2$:
\begin{align}
\nonumber
\| \lambda^2  Q_{-\omega} \tilde  L_U  [ \Psi(p,\xi,\Phi,Z_0^*)  ]_j \, \chi_{\DD_{2R} }
 \|_{\nu_1,a_1}
&\leq
C T^\sigma
\lambda_*(0)^{ \Theta  }
( \| \Phi \|_E + \| \dot p \|_{L^\infty(-T,T)}
\\
\label{phi1RHS1-0}
& \quad
+  \| \dot \xi \|_{L^\infty(0,T)}
+  \| Z_0^* \|_*  )  .
\end{align}
Indeed,  let $\psi = \Psi(p,\xi,\Phi,Z_0^*)$.
From \eqref{Ltilde-j} we get, for $j=0$ and $j=2$:
\begin{align*}
| \lambda^2  Q_{-\omega} \tilde  L_U  [ \psi ]_j|
& \leq C \frac{\lambda_*}{(1+|y|)^3} \|\nabla_x \psi  \|_{L^\infty} .
\end{align*}
We use  $ \nu_1 <1 $  and $a_1<3$ to estimate for $|y|\leq 2 R$
\begin{align}
%\label{zzz}
\nonumber
\frac{\lambda_*}{(1+|y|)^3}
& \leq
\frac{\lambda_*^{\nu_1}}{(1+|y|)^{a_1}}  \lambda_*(0)^{1-\nu_1}  .
\end{align}
Then for $|y|\leq 2 R$ and $j=0,2$:
\begin{align*}
| \lambda^2  Q_{-\omega} \tilde  L_U  [ \psi ]_j |
& \leq C \frac{\lambda_*^{\nu_1}}{(1+|y|)^{a_1}}
\lambda_*(0)^{1-\nu_1}
\|\nabla_x \psi \|_{L^\infty} .
\end{align*}
By the definition of the norm $\| \ \|_{\sharp,\Theta,\gamma}$ (c.f. \eqref{normPsi}) and Proposition~\ref{propi1} we have
\begin{align*}
\| \nabla_x  \psi \|_{L^\infty}
& \leq C \lambda_*(0)^{\Theta}
\|  \Psi(p,\xi,\Phi,Z_0^*) \|_{\sharp,\Theta,\gamma}
\\
& \leq C \lambda_*(0)^{\Theta}   T^\sigma
( \| \Phi \|_E
+ \| \dot p \|_{L^\infty(-T,T)}
+  \| \dot \xi \|_{L^\infty(0,T)}
+  \| Z_0^* \|_*
)  .
\end{align*}
Hence for $j=0,2$
\begin{align*}
| \lambda^2  Q_{-\omega} \tilde  L_U  [ \psi ]_j |
&\leq C \frac{\lambda_*^{\nu_1}}{(1+|y|)^{a_1}}
T^\sigma
\lambda_*(0)^{ \Theta }
( \| \Phi \|_E + \| \dot p \|_{L^\infty(-T,T)}
+  \| \dot \xi \|_{L^\infty(0,T)}
\\
& \quad
+  \| Z_0^* \|_*  ),
\end{align*}
and therefore we see that \eqref{phi1RHS1-0} is valid.
Next we claim that
\begin{align}
\label{phi1RHS2}
\| \lambda^2  Q_{-\omega} \tilde  L_U  [Z^* ]_j \chi_{\DD_{2R} }  \|_{\nu_1,a_1}
\leq
C T^\sigma \| Z_0 \|_*,
\end{align}
for $j=0,2$ and some $\sigma>0$.
Indeed, we use estimate \eqref{estGrad} of Lemma~\ref{lemmaGradEst} to obtain
for $j=0,2$:
\begin{align*}
|  \la^2  Q_{-\omega}\ttt L_U  [ Z^* ]_j \,  \chi_{\DD_{2R} } |
\leq C \frac{\lambda_*}{(1+\rho)^3} |\log \varepsilon|
\| Z_0 \|_* ,
\end{align*}
where $\varepsilon>0$ is given by  \eqref{defEpsilon}.
Since $\nu_1<1$, we get
\begin{align*}
\|  \la^2  Q_{-\omega}\ttt L_U  [  Z^*    ]_j \, \chi_{\DD_{2R} }  \|_{\nu_1,a_1}
& \leq C \lambda_*(0)^{1-\nu_1}  |\log \lambda_*(0)|  \| Z_0 \|_* .
\end{align*}
This implies \eqref{phi1RHS2}.
Next we estimate $ \lambda^2  Q_{-\omega} \KK_{0}[p,\xi] $.
We claim that
\begin{align}
\label{phi1RHS3}
\|  \lambda^2  Q_{-\omega}  \KK_{0}[p,\xi] \|_{\nu_1,a_1}
& \leq C T^\sigma \| \dot p \|_{L^\infty(-T,T)}.
%\\
%\label{phi1RHS4}
%\|    \lambda^2  Q_{-\omega}  \KK_{1}[p,\xi]] \|_{a,\nu}
%& \leq C T^\sigma \| \dot \xi \|_{L^\infty(0,T)}.
\end{align}
Indeed, consider $\KK_{01}$ given in \eqref{K01}.
We have
\begin{align*}
|\lambda^2 Q_{-\omega} \KK_{01}[p,\xi] |
&
\leq C \frac{\lambda_*}{(1+\rho)^3} \int_{-T} ^t  |  \dot p(s)    k(z,t-s) | \, ds .
\end{align*}
A direct computation shows that
\begin{align*}
\|  \la^2  Q_{-\omega} \tilde  L_U  [  \KK_{01}[p,\xi]   ] \chi_{\DD_{2R} }  \|_{\nu_1,a_1}
& \leq C \lambda_*(0)^{1-\nu_1} \| \dot p \|_{L^\infty(-T,T)}
\\
& \leq CT^\sigma \| \dot p \|_{L^\infty(-T,T)} ,
\end{align*}
for some $\sigma>0$.
The estimate for $\KK_{02}$ is similar, and we obtain \eqref{phi1RHS3}.
Combining \eqref{phi1RHS1-0},  \eqref{phi1RHS2},  and \eqref{phi1RHS3} we finally obtain
\begin{align}
%\label{est-h1}
\nonumber
& \| h_1[p,\xi,  \Psi^*(p,\xi,\Phi,Z_0^*) ] \|_{\nu_1,a_1}
\leq C T^\sigma( \| \Phi \|_E + \| \dot p \|_{L^\infty(-T,T)}
+  \| Z_0^* \|_*  ) .
\end{align}
Then thanks to Proposition~\ref{prop1.0} we get
\eqref{estF1-1}.
The proof of estimate \eqref{estF1-lip} is  similar.
\end{proof}

\medskip

Let $\Phi (p,\xi,Z_0^*)$ be the solution of \eqref{ptofijo} constructed in Proposition~\ref{innnerSystem}.
As a consequence of the construction above and the Lipschitz estimates for the inner problem in \S \ref{lips}  $\Phi$  is Lipschitz in the parameters $p,\xi,Z_0^*$ in the following sense.
%More precisely we have.

\begin{corollary}
%\label{propLipschitzInner}
Assume that $p_1,p_2$ and $\xi_1,\xi_2$ satisfy estimates \eqref{cotin} and that $Z_{0,1}^*$, $Z_{0,2}^*$ have the form
\[
Z_{0,l}^* = Z_0^{*0} +  Z_{0,l}^{*1} , \quad l=1,2 ,
\]
with $ Z_0^{*0} $ satisfying \eqref{conditionsZ0a} and
$
\| Z_{0,l}^{*1}  \|_* \leq T^\sigma.
$
Let us write $p_j = \lambda_j e^{i\omega_j}$ for $j=1,2$.
for some $\sigma>0$.
Then
\begin{align*}
\| \Phi (p_1,\xi_1, Z_{0,1}^* ) - \Phi (p_2,\xi_2, Z_{0,2}^* ) \|_E
&
\leq \lambda_*(0)^\sigma
\Bigl[
\|\lambda_* (\dot \omega_1 - \dot \omega_2)\|_\infty
+\Bigl\| \frac{\lambda_1-\lambda_2}{\lambda_*} \Bigr\|_{L^\infty}
\\
& \quad
+ \| \dot\lambda_1 - \dot\lambda_2\|_{L^\infty}
+ \Bigl\| \frac{\xi_1-\xi_2}{\lambda_* R} \Bigr\|_{L^\infty}
+ \Bigl\| \frac{ \dot\xi_1 - \dot\xi_2}{R} \Bigr\|_{L^\infty}
\\
& \quad
+ \| Z_{0,1}^{*1}  - Z_{0,2}^{*1} \|_*
\Bigr] ,
\end{align*}
for some possibly smaller $\sigma>0$.
\end{corollary}
%The proof is in Section~\ref{sec-lip-gluing}.

With this we can now state the following result.
Let $\Phi(p,\xi,Z_0^*)$ denote the solution of \eqref{ptofijo} constructed in Proposition~\ref{innnerSystem}.

\begin{prop}
\label{propSolPar}
Given $Z_0^*$ of the form \eqref{condZ0} there exists $p= \lambda e^{i\omega}$ and $\xi$ such that \eqref{1.3} and \eqref{1.4} are satisfied.
\end{prop}

The proposition above yields the existence of a blow-up solution.
The proof is given in Section~\ref{sectSolPar}.

\section{Linear theory for the inner problem}
\label{sectLinearTheory}

At the very heart of capturing  the bubbling structure is the construction of an inverse for
the linearized heat operator around the basic harmonic map.
We consider the linear equation
\begin{align} \la^2 \pp_t  \phi   &=     L_{ W  }  [\phi]       + h(y,t)\inn  \DD_{2R} \label{eqL00}\\
 \phi(\cdot,0 ) &=   0 \inn B_{2R( 0)}
\nonumber
\\
\nonumber
\phi \cdot  W  &=  0 \inn   \DD_{2R}
\end{align}
where
\[
\DD_{2R} =  \{ (y,t) \ /\  t \in ({0}, T) ,\  y \in  B_{2R(t)}(0) \} .
\]
We assume that $h(y,t)$ is defined for all $(y,t)\in \R^2 \times (0,T)$ and   satisfies
\[
 h \cdot  W  =   0  , \quad
|h(y,t)| \leq C \frac{\lambda_*^\nu}{(1+|y|)^a} ,
\]
where $\nu>0$ and $a\in (2,3)$ (so that $\|h\|_{a,\nu}<\infty$ with the norm defined in \eqref{norm-h}).

The parameter $R$ is given by \eqref{RRR}, that is $R(t) = \lambda_*(t)^{-\beta}$, $\beta\in (\frac{1}{4},\frac{1}{2})$.
Also, we assume that the parameter function $\la(t)$ satisfies
we have that
$$
a\la_* (t)  \le \la (t)\le b \la_* (t)  % |\la_1(t)| \le c \la_*(t)
\foral t\in (0,T)
$$
for some positive numbers $a,b,c$  independent of $T$.

We observe that a priori  we are not imposing boundary conditions in problem \equ{eqL00}. Our purpose is to construct a solution $\phi$ that defines a linear operator of $h$ and satisfies uniform bounds in terms of suitable norms. In some sense this is an extension of ''Fredholm" theory for linear parabolic problem \equ{eqL00}.

\medskip
All functions $h(y,t)$ with $h(y, t) \cdot  W (y) \equiv 0$ can be expressed in polar form as
\begin{equation}
\label{hh1}
h(y,t ) \ =\   h^1(\rho,\theta,t )E_1(y) +
h^2(\rho,\theta,t )E_2(y),    \quad y= \rho e^{i\theta}.
\end{equation}
We can also expand in Fourier series
\begin{equation}
\label{fourierH}
\ttt h(\rho,\theta,t)   := h^1 + i h^2  \ =\  \sum_{k=-\infty}^{\infty} \ttt h_k (\rho,t)e^{ik\theta}, \quad  \ttt h_k =  \ttt h_{k1} + i\ttt h_{k2}
\end{equation}
so that
\begin{equation}
\label{hh3}
h(y,t )  = \sum_{k=-\infty}^{\infty} h_k(y,t) =:h_0(y,t)+  h_1(y,t) + h_{-1}(y,t) +h^\perp (y,t) ,
\end{equation}
where
\begin{equation}
\label{hh4}
  h_k(y,t) = {\rm Re}\, (\ttt h_k(\rho,t)e^{ik\theta})\,E_1 +  {\rm Im}\, (\ttt h_k(\rho,t)e^{ik\theta})\,E_2.
\end{equation}

We consider the functions $Z_{kj}(y)$ defined in \eqref{ZZ} and \eqref{Zmode-1} and define for $k=-1,0,1$,
$$
\bar h_{k}(y,t)   :=       \sum_{j=1}^2   \frac {\chi Z_{kj}(y)} { \int_{\R^2} \chi  |Z_{kj} |^2  }  \, \int_{ \R^2 }h(x  , t )  \cdot Z_{kj}(z)\, dz, \quad
$$
where
$$
\chi(y,t) = \begin{cases}
w_\rho^2(|y|)  & \hbox{ if }  |y|< 2R(t),\\
0& \hbox{ if }  |y|\ge 2R(t).
\end{cases}
$$
The main result in this section  is the following, where we use the norm  $\|h\|_{a,\nu}$ defined in \eqref{norm-h}.

\begin{prop}
\label{prop2}
Let $2<a<3$, $\nu>0$ and let $h$ with $\|h\|_{a,\nu} <+\infty$.
Let us write $h = h_0 + h_1 + h_{-1} + h^\perp$ with $h^\perp = \sum_{k\not=0,\pm 1} h_k$.
Then there exists a solution $\phi[h]$ of problem $\equ{eqL00}$, which defines
a linear operator of $h$, and satisfies the following estimate in $\DD_{2R}$:
\begin{align*}
&
(1+|y|)\left |\nn_y \phi(y,t)\right |  +  |\phi(y,t)|
\\
& \lesssim
  \frac { \lambda_*(t)^\nu R(t)^{\frac{5-a}2} } { 1+|y| } \,
 \min\{1,  R^{\frac{5-a}2} |y|^{-2} \}
 \, \| h_0 -\bar h_{0} \|_{a,\nu}
+
\frac{ \lambda_*(t)^\nu  R(t)^{2}} {1+ |y|}  \|\bar h_0\|_{a,\nu}
\\
& \quad
+   \frac{ \lambda_*(t)^\nu }{ 1+ |y|^{a-2} }\, \left \| h_1 - \bar h_1\right  \|_{a,\nu}
+   \frac{ \lambda_*(t)^\nu  R(t)^{4}} {1+ |y|^2} \left \| \bar h_{1} \right  \|_{a,\nu}
\\
& \quad
+
  \frac { \lambda_*(t)^\nu R(t)^{\frac{5-a}2} } { 1+|y| } \,
 \min\{1,  R^{\frac{5-a}2} |y|^{-2} \}
 \, \| h_{-1} -\bar h_{-1} \|_{a,\nu}
 + \lambda_*(t)^\nu \log R(t) \,   \| \bar h_{-1} \|_{a,\nu}
\\
& \quad
+
\frac{ \lambda_*(t)^\nu  }{ 1+ |y|^{a-2} }\,   \|h^\perp \|_{a,\nu} .
\end{align*}
%\comment{\crr
%Before, for $h^\perp$:
%$   \frac{ \lambda_*(t)^\nu \log R(t) }{ 1+ |y|^{a-2} }\,   \|h^\perp \|_{a,\nu} $
%}
\end{prop}
%{\color{red}
%The existence of the operators $\TT_{\la,1}$ and $\TT_{\la,2}$ respectively predicted in Propositions \ref{prop01} and \ref{prop02}  is a direct corollary of this result.
%}

\medskip
The construction of the operator $\phi[h]$ as stated in the proposition will be carried out mode by mode in the Fourier series expansion.
We shall use the convention that $h(y,t)=0$ for $|y|> 2R(t)$.
Let us write
$$
\phi = \sum_{k=-\infty}^\infty \phi_k , \qquad
 \phi_k(y,t) =  {\rm Re}\, (\vp_k(\rho,t)e^{ik\theta})\,E_1 +  {\rm Im}\, (\vp_k(\rho,t)e^{ik\theta})\,E_2.
$$
We shall build a solution of \equ{eqL00} by solving separately each of the equations
\begin{align}\label{eqL00k}
  \la^2\pp_t \phi_k\  = & \   L_ W  [\phi_k]  + h_k(y,t) = 0  \inn \DD_{4R},   \\
  \phi_k(y ,0)\ = & \  0 \inn B_{4R(0)}(0) , \quad  \nonumber\end{align}
which,  are equivalent to the problems
%
% $\phi = \vp^1 E_1 + \vp^2 E_2 $,  $\vp = \vp^1 + i\vp^2$, then similarly as in \equ{x1},
%\equ{eqL00} can be rewritten as
%
%
%
%
%
%
%
%and
%\be\label{fou1}
%  \vp :=  (\vp_1 + i \vp_2)(\rho,\theta,\tau)   \ =\  \sum_{k=-\infty}^{\infty} \ttt \vp _k (\rho,\tau)e^{ik\theta}, \quad  \ttt \vp _k =  \ttt \vp_{k1} + i\ttt \vp_{k2}
%\e
\begin{align*}
%\label{Llx}
\la^2 \pp_t \vp_k  \ =& \ \mathcal L_k[\vp_k]  + \ttt h_k (\rho, t)  \inn  \ttt D_{4R} ,
\\
 \vp_k(\rho,0)  \ = &\ 0 \inn (0, 4R (0)) \nonumber
\end{align*}
with
$$ \ttt D_{4R}= \{ (\rho,t) \ /\  t\in (0,T),\ \rho \in (0,4R(t))  \} $$
and we recall
$$
\mathcal L_k[\vp_k]   := \pp^2_\rho \vp_k   + \frac{ \pp_\rho \vp_k}{\rho}  -  (k^2 + 2k\cos w  + \cos(2w) ) \frac{\vp_k} {\rho^2}.  $$

 We have the validity of the following result.
\begin{lemma} \label{step1} Let  $\nu>0$ and  $0<a < 3 $, $a\ne 1,2$. Assume that
$$\| h_k(y,t)\|_{a,\nu} < +\infty.$$
Then problem  \equ{eqL00k} has a unique bounded solution $\phi_k(y,t)$ of the form
$$
\phi_k(y,t)\ =\  {\rm Re}\, (\vp_k(\rho,t)e^{ik\theta})\,E_1 +  {\rm Im}\, (\vp_k(\rho,t)e^{ik\theta})\,E_2
$$
which in addition satisfies the boundary condition
\be\label{bc}  \phi_k(y,t) = 0 \quad \hbox{for all }  t\in (0,T), \quad y\in \pp B_{R(t)}(0). \ee
These solutions satisfy the estimates
\begin{align*}
|\phi_k(y,t) | \ \le  \  C \|h\|_{a,\nu}\, \la_* ^{\nu} k^{-2} \,\left \{ \begin{matrix}
 R^{2-a}  & \hbox{ if }&  \  a<2,  \\
\ \ (1+\rho)^{2-a}   & \hbox{ if }&   \ a>2 , \\
\end{matrix}\right .\quad\hbox{if } k\ge 2.
\end{align*}
\begin{align*}
|\phi_{-1}(y,t) | \ \le  \  C \|h\|_{a,\nu}\, \la_* ^{\nu} \,\left \{ \begin{matrix}
 R^{2-a}  & \hbox{ if }&  \  a<2,  \\
\ \ \log R   & \hbox{ if }&   \ a>2 , \\
\end{matrix}\right .
\end{align*}
\begin{align*}
|\phi_0(y,t) | \ \le  \  C \|h\|_{a,\nu}   \la_*^{\nu } (1+ \rho)^{-1}  \,\left \{ \begin{matrix}
 R^{2}  & \hbox{ if }&  \  a>1,  \\
\ \ R^{3-a}  & \hbox{ if }&   \ a<1 , \\
\end{matrix}\right .
\end{align*}
\begin{align*}
\ \ |\phi_1(y,t) | \ \le  \  C \|h\|_{a,\nu}  \la_*^{\nu } (1+ \rho)^{-2}  R^{4}
%\,\left \{ \begin{matrix}  R^{4}  & \hbox{ if }&  \  a>0,  \\ \ \  R^{4-a}  & \hbox{ if }&   \ a<0 . \\
%\end{matrix}\right .
\end{align*}
with $C$ independent of $R$ and $k$.
\end{lemma}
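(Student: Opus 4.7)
My plan is to prove Lemma~\ref{step1} by reducing the vector parabolic equation to an $\rho$-dependent radial problem for each Fourier mode $k$, then constructing an explicit supersolution adapted to the spectral structure of $\mathcal{L}_k$. The projection $h \mapsto h_k$ is invariant under $L_\omega$ because rotations in $\theta$ commute with $L_\omega$, so mode-by-mode existence is decoupled. For each $k$, $\phi_k$ written in the $(E_1,E_2)$ frame with complex coefficient $\varphi_k(\rho,\tau)$ satisfies the scalar parabolic equation $\partial_\tau \varphi_k = \mathcal L_k[\varphi_k] + \tilde h_k$ on the parabolic cylinder $\tilde D_{4R}$, with the Dirichlet condition $\varphi_k(R(\tau),\tau)=0$ and zero initial value. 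The potential in $\mathcal L_k$ behaves like $(k-1)^2/\rho^2$ for large $\rho$ and like $(k+1)^2/\rho^2$ for small $\rho$ (using $\cos w \to -1$ and $\cos w \to 1$ respectively), so the operator is coercive exactly when $k\neq -1,0,1$.

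For $|k|\ge 2$ (and symmetrically for negative $k$), coercivity of $\mathcal L_k$ together with a Hardy-type inequality gives a unique bounded solution by standard $L^2$-parabolic theory. I would then construct a time-independent supersolution $\bar\varphi_k(\rho)$ of the stationary problem $-\mathcal L_k[\bar\varphi_k] = (1+\rho)^{-a}$ on $(0,2R)$ with $\bar\varphi_k(2R)=0$, and multiply by $\tau^{-\nu}$ after absorbing the time derivative: since the potential $(k^2+O(1))/\rho^2$ dominates, solving ODE order-of-magnitude gives $\bar\varphi_k \lesssim k^{-2}\rho^{2-a}$ away from the boundary and $\bar\varphi_k\lesssim k^{-2}R^{2-a}$ globally, provided $1<a<2$ so that both the inner and outer balance are integrable. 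Comparison (using that the principal eigenvalue of $-\mathcal L_k$ on $(0,R)$ is positive and bounded below by $c k^2/R^2 \gg 1/\tau$ in our regime) then yields $|\phi_k|\lesssim \|h\|_{a,\nu}\tau^{-\nu}k^{-2}R^{2-a}$.

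The modes $k=0,1$ require a genuinely different construction, and this is the step I expect to be the main obstacle. In both modes the homogeneous operator $\mathcal L_k$ admits a bounded decaying solution, namely $Z_0(\rho) = \rho/(\rho^2+1)$ for $k=0$ and $Z_1(\rho) = 1/(\rho^2+1)$ for $k=1$. Variation of parameters then produces a second linearly independent solution $Z_k^*(\rho)$ which grows polynomially ($Z_0^* \sim \rho$ at infinity, $Z_1^* \sim \rho^2$). Because the inhomogeneity $\tilde h_k$ need not be orthogonal to $Z_k$, the natural stationary ansatz
\[
\bar\varphi_k(\rho) = Z_k(\rho)\int_0^\rho s Z_k^*(s)\tilde h_k(s)\,ds + Z_k^*(\rho)\int_\rho^R s Z_k(s)\tilde h_k(s)\,ds
\]
has to be corrected by adding a multiple $c_k(\tau)Z_k^*$ so as to enforce the Dirichlet condition at $\rho=R$. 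This forces $|c_k|\lesssim \|h\|_{a,\nu}R^{3-a}/R$ for $k=0$ (when $a<1$; the logarithmic case $a>1$ produces instead $R^2$ growth, matching the two branches in the statement) and $|c_k|\lesssim \|h\|_{a,\nu}R^{2-a+2}/R^2$ for $k=1$, which is exactly the origin of the $R^{3-a}$, $R^2$, and $R^4$ factors in the stated bounds. The resulting supersolution then has the pointwise decay $(1+\rho)^{-1}$ for $k=0$ and $(1+\rho)^{-2}$ for $k=1$ inherited from $Z_k$, and comparison concludes the proof.

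The technical heart is therefore the careful ODE analysis for $k=0,1$: balancing contributions of $Z_k^*$, checking that the correction constants $c_k(\tau)$ do not destroy the $\tau^{-\nu}$ time decay (which requires that $\partial_\tau \bar\varphi_k$ can be absorbed into the right-hand side once we multiply by a smooth $\tau$-cutoff), and verifying the non-trivial restriction on $a$ in each mode ($a>1$ vs.\ $a<1$ for $k=0$, $a>0$ for $k=1$). Uniqueness in all modes is automatic from the maximum principle applied to $\varphi_k - \tilde\varphi_k$ combined with the zero initial and boundary data.
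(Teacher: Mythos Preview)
Your overall strategy---reduce to scalar radial equations in each Fourier mode and bound via stationary supersolutions multiplied by $\tau^{-\nu}$---is correct and is the paper's approach as well. There is, however, a genuine gap and a significant simplification you miss.

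\textbf{The gap: $k=-1$.} You correctly note that coercivity fails for $k\in\{-1,0,1\}$, but you then treat only $k=0,1$ explicitly; the mode $k=-1$ is never revisited, yet it must satisfy the same $R^{2-a}$ bound as $|k|\ge 2$. (Incidentally, you have the asymptotics of the potential swapped: since $\cos w\to 1$ as $\rho\to\infty$ and $\cos w\to -1$ as $\rho\to 0$, the potential is $(k+1)^2/\rho^2$ at infinity and $(k-1)^2/\rho^2$ at the origin.) In the paper, $k=-1$ is in fact the \emph{starting point}: the explicit kernel $Z_{-1}(\rho)=2\rho^2/(\rho^2+1)$ gives a one-sided variation-of-parameters barrier
\[
\varphi(\rho)=Z_{-1}(\rho)\int_\rho^{4R}\frac{dr}{r\,Z_{-1}(r)^2}\int_0^r (1+s)^{-a}\,Z_{-1}(s)\,s\,ds\ \lesssim\ R^{2-a},
\]
which automatically vanishes at the outer boundary. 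The bound for all $|k|\ge 2$, $k\ne -2$, is then obtained by reusing this \emph{same} barrier and observing that $(\mathcal L_k-\mathcal L_{-1})[\varphi]\le -c\,k^2\rho^{-2}\varphi$, which produces the $k^{-2}$ factor directly without any mode-by-mode construction. Mode $k=-2$ needs its own kernel $Z_{-2}(\rho)=\rho^3/(\rho^2+1)$ because the sign comparison fails there.

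\textbf{Simplification for $k=0,1$.} Your two-sided Green's-function ansatz plus a correction $c_k(\tau)Z_k^\ast$ to enforce the Dirichlet condition is unnecessary. The same one-sided formula as above, now with $Z_0(\rho)=\rho/(\rho^2+1)$ or $Z_1(\rho)=1/(\rho^2+1)$ in place of $Z_{-1}$, again vanishes at the outer boundary for free. The bounds $(1+\rho)^{-1}R^2$ (for $a>1$), $(1+\rho)^{-1}R^{3-a}$ (for $a<1$), and $(1+\rho)^{-2}R^4$ then follow from direct size estimates on the two nested integrals---no correction constants, no tracking of $\partial_\tau c_k$. The step you flagged as the ``technical heart'' therefore disappears entirely.
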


\proof
Standard parabolic theory yields existence of a unique solution to equation \equ{eqL00k} that satisfies the boundary condition \equ{bc},
for each $k$. Equivalently, the problem
\begin{align}  \label{Llxx}
\la^2 \pp_t \vp_{k}  & = \mathcal L_{k}[\vp_{k}]  + \ttt h_{k}  (\rho, t)  \inn  \ttt D_{4R} ,
\\  \vp_{k} (t , 4R )  & =  0 \foral t\in (0, T) \nonumber\\
 \vp_k(0,\rho)  & = 0 \inn (0, 4R(0)), \nonumber
\end{align}
$$
\mathcal L_k[\vp_k]   = \pp^2_\rho \vp_k   + \frac{ \pp_\rho \vp_k}{\rho}  -  (k^2 + 2k\cos w  + \cos(2w) ) \frac{\vp_k} {\rho^2}  $$
has a  unique solution $\vp_{k}(\rho,t) $
which is bounded in $\rho$ for each $t$.

\medskip
 We use barriers to derive the
desired estimates.  A first observation we make is that for mode $k=-1$  the elliptic equation
$\mathcal L_{-1}[\varphi]+g(\rho)=0$ in $(0,4R)$ with $\varphi(4R)=0$
has a unique bounded solution given by the variation of parameters formula
\begin{align}
\label{vp-1}
 %\mathcal L_{-1}^{-1} [g]
 \vp(\rho) & :=   Z_{-1} (\rho ) \int_\rho^{4R}  \frac {d r}{ \rho Z_{-1} (r)^2} \int_0^r g(s) Z_{-1}(s) s \, ds, \quad
 \\ Z_{-1} (\rho) & =  -\rho^2w_\rho= \frac{2\rho^2}{\rho ^2 + 1}.\nonumber
\end{align}
Here we have used that $\mathcal L_{-1}[Z_{-1}] =0$.
Let us call $\vp_0(\rho)$ the function in \equ{vp-1} with
$ g(\rho) :=   2 (1+\rho)^{-a} $.
We readily estimate
$$
|\vp_0(\rho)|  \ \le \  \begin{cases}   R^{2-a} & \hbox{ if } a<2, \\   (1+\rho)^{2-a} & \hbox{ if } a>2.   \end{cases}     .
$$
Let us call  $\bar \vp (\rho,t) =  \la_*(t)^{\nu}\vp_0(\rho) . $
Then we see
that
\begin{align*}
 - \la^2 \bar \vp_t (\rho,t) +   \mathcal L_{-1} [\bar \vp(\rho,t)  ]   +  \frac{ \la_*^{\nu}}{  (1+\rho)^a}
& \le
c\,\la_*^{\nu+1} |\dot \la_*| \vp_0(\rho) - \frac{ \la_*^{\nu}}{  (1+\rho)^a}
\\
&
\le   -\la_*^{\nu}(1+ \rho)^{-a}
\left [  1   - C \la_*  R^{2-a}(1+\rho)^{a}  \right ]
\\
&
< 0
\end{align*}
in $ \ttt D_{4R}$.
Indeed, since $R(t) \ll  \la_*^{-\frac 12}  $,  the inequality  holds provided that $T$ was chosen sufficiently small.
Thus for $k=-1$ the barrier $\| h\|_{a,\nu} \, \bar \vp(\rho,t)$ dominates both, real and imaginary parts of $\vp_{-1}(\rho, t)$. As a conclusion, we find
$$
|\phi_{-1} (y , t)| \ \le\ C  \| h\|_{a,\nu}  \la_*^{\nu}\,  \begin{cases}   R^{2-a} & \hbox{ if } a<2, \\   (1+\rho)^{2-a} & \hbox{ if } a>2,\end{cases}  \inn \DD_{4R}.
$$
The cases $k=0,1,-2$ can be dealt with in exactly the same manner, by replacing $Z_{-1}$   in Formula \equ{vp-1} respectively by the functions
\begin{align}
\label{Z000}
Z_0(\rho) =  \frac \rho{\rho^2+1} ,\quad   Z_1(\rho) =  \frac 1{\rho^2+1}, \quad Z_{-2} (\rho) =    \frac{\rho ^{3}}{\rho^2+1} .
\end{align}
The estimates for $\phi_k$ predicted in the lemma then readily follow for $k=-2,-1,0,1$.
Finally,
let us now consider $k$ with $|k|\ge 2$ and $k\ne -2$ and the function $\bar \vp(\rho, t)  $ as above. Now we find
\begin{align*}
-\la^2 \bar \vp_t (\rho,t) +   \mathcal L_{k} [\bar \vp(\rho,t)  ]
& \leq
(\mathcal L_{k}- \mathcal L_{-1}) [\bar \vp(\rho,t)  ]
\\
& \le
-  C\la_*^\nu (k^2 -1 + 2(k-1))\frac 1{\rho^2} (1+\rho)^{2-a}
\\
&
<   -  C (k^2 -1 + 2(k-1))\frac { \la_*^{\nu}} {(1+\rho)^{a}}  \inn \ttt \DD_{4R} .
\end{align*}
The latter quantity is negative provided that $|k|\ge 2$ and $k\ne -2$
 and hence we get the estimate
\[
% \label{cotak}
|\phi_{k} (y , t)| \ \le\   \frac{C}{k^2} \| h\|_{a,\nu}  \la_*^{-\nu}\, \begin{cases}   R^{2-a} & \hbox{ if } a<2, \\   (1+\rho)^{2-a} & \hbox{ if } a>2,   \end{cases}   \inn \DD_{4R}.
\]
 The proof is concluded.
\qed

\bigskip
We can get gradient estimates for the solutions built in the above lemma by means of the following result.

\begin{lemma}\label{gradient}
Let $\phi$ be a solution of the equation
\begin{align}
\lambda^2 \pp_t  \phi     &=      L_{ W  }  [\phi]       + h(y, t )\inn  \DD_{4\gamma R}
\label{cv}
\\
\phi(\cdot, 0 ) &=   0 \inn B_{4\gamma R(0)} .
\nonumber\end{align}
Given numbers $a,b,\gamma $, there exists a $C$ such that if for some $M>0$ we have
\be\label{M}
  |\phi (y,t)|    +  (1+ |y|)^{2}  |h(y,t)|   \le  M\,  \lambda_*(t)^b  (1+|y|)^{-a} \inn \DD_{4\gamma R},
\ee
then
\be\label{MMM}
    (1+ |y|)   |\nn_y \phi (y,t)|  \ \le \  C \, M  \lambda_*(t)^b  (1+|y|)^{-a} \inn \DD_{ 3\gamma  R}
\ee
and we recall
$$
\DD_{\gamma R} =   \{(y,t)  \ /\  |y| <  \gamma R(t), \quad t \in (0,T) \} .
$$
If in addition we know that $\phi$ satisfies the boundary condition
$\phi(\cdot ,t) = 0$ on $ \pp B_{4\gamma R(t)} $ for all $ t \in (0,T)$ then estimate \equ{MMM} holds in the entire region $\DD_{ 4\gamma  R}$.
\end{lemma}

\proof
To prove the gradient estimates, we change the time variable, defining
\begin{align}
\label{tauu}
\tau(t) = \int_0^t \frac{ds}{\lambda(s)^2} ,
\end{align}
so that \equ{cv} becomes in the variables $(y,\tau)$
\begin{align}
\pp_\tau  \phi     &=      L_{ W  }  [\phi]       + h(y, \tau )\inn  \DD_{4\gamma R}
%\label{eqL003}
\nonumber\\
\phi(\cdot, 0 ) &=   0 \inn B_{4R(0)}
\nonumber\end{align}

Let $\tau_1>0$ and $y_1\in B_{3\gamma R(\tau_1)}(0)$.
Let $\rho = \frac{|y_1|}{5}+1$ so that $B_{\rho}(y_1) \subset B_{4\gamma R(\tau_1)}(0)$.
%
%We fix  a large positive number $\rho$, a vector $e$ with $|e|= 1$, and a number $\tau_1 > 0$. We assume that $ \rho \le  \frac 72 \gamma R(\tau_1)$ so that the ball
%$B_{\rho/2}( \rho e) $ is contained in $B(0,4 \gamma R(\tau_1) )$.
Let us define
$$
\ttt \phi( z, t) := \phi(  y_1 + \rho z,  \tau_1 +  \rho^2 s ) ,
\quad z\in B_1(0) , \quad s > -\frac{\tau_1}{\rho^2} .
$$

We distinguish two cases. First, when $\tau_1\geq \rho^2$, we use interior estimates for parabolic equations, while for the case  $\tau_1< \rho^2$, we use estimates for a parabolic equation with initial condition.

Assume $\tau_1\geq \rho^2$.
Then $\ttt \phi( z, s) $ satisfies an equation of the form
$$
\ttt \phi_s = \Delta_z \ttt \phi  + % O( \rho^{-2})
A \nn_z\ttt\phi +  B \ttt \phi   +     \ttt  h  (z,s)  \inn B_{1}(0) \times (-1,0] % (\tau_1 +  \rho^2 t )^{-\nu } O(\rho^{-a})     \inn B_1(0) \times (0,2)
$$
with coefficients $A(z,s)$ and $B(z,s)$  uniformly bounded by $O((1+\rho)^{-2})$ in $B_1(0) \times (-1,0]$ and
$$
 \ttt  h  (z,s) =  \rho ^2 h( y_1  + \rho z , \tau_1 + \rho^2 s ) .
$$
Since $\rho \leq C R(\tau_1) $ and $R(\tau_1)^2 \ll \tau_1$ for $\tau_1$ large we get
%We observe that
\begin{align*}
\la_*(\tau_1 )^b \lesssim \la_*( \tau_1+\rho^2 s )^b  \lesssim \ \la_*(\tau_1)^b ,
% \quad
%R(\tau)^b\  \lesssim  \  R(\tau_1)^b  , \foral  \tau \in ( \tau_1, \tau_1 + R(\tau_1)^2 ).
\quad s \in (-1,0] .
\end{align*}
Standard parabolic estimates and assumption \equ{M} yield
  \begin{align*}  \|\nn_z \ttt \phi\|_{L^\infty ( B_{\frac 14 }(0) \times (1,2)) }  & \lesssim  \
  \|\ttt \phi\|_{L^\infty  B_{\frac 12 }(0) \times (0,2) }  + \| \ttt h\|_{L^\infty  (B_{\frac 12 }(0) \times (0,2)) } \\ &   \ \lesssim \  M\,   \la_*( \tau_1)^b  \rho^{2-a},
  \end{align*}
  so that in particular
$$
\rho |\nn_y \phi( y_1 , \tau_1  )|  \ = \  |\nn_z \ttt \phi(0, 1)| \lesssim   M\,  \la_*(\tau_1)^b  \rho^{2-a} .
$$
%
%provided that $\rho \le \frac 72 \gamma  R(\tau_1)$.  Let us now  fix a $\tau >2\tau_0$ and consider a $\rho$ with $\rho <   3 \gamma R(\tau)$.
%By fixing $\tau_0$ sufficiently large we get that
%$$ \tau - 9\gamma^2 R^2(\tau) >  \frac  \tau 2  \foral \tau > \tau_0. $$
%Let us set
%$\tau_1 = \tau -\rho^2.$
%Then $\tau_1 > 2\tau_0$ if $\tau > 4\tau_0$  and   $$\rho\  \le\    3 \gamma R(\tau) \ \le\     \frac {7} 2 \, \gamma R\left ( \frac \tau 2 \right ) \ \le\
% \frac 72 \gamma  R(\tau -  9\gamma^2 R(\tau) ^2)\ \le\  \frac 72  \gamma R(\tau_1) .  $$
%On the other hand, $R(\tau_1) \le R(\tau) $, and
%$$
%\la_*(\tau_1) \,\le \, \la_*( \tau- R(\tau)^2)\, \le \,\la_*\left( \frac \tau 2\right )\, \le\,  4\la_* ( \tau) .
%$$
%Hence, for $\tau >4\tau_0$ and $\rho \le  3 \gamma R(\tau)$ we have
%$$
%\rho |\nn_y \phi(\rho e , \tau )| \, \lesssim \,  M\,  R(\tau)^{b}\tau^{-\nu}\rho^{2-a}.
%$$
%Thus if we fix $\tau_0$ sufficiently large
%we get that for any $\tau > 4\tau_0$ and $|y|\le  3 \gamma R (\tau)$
%$$
%(1+ |y|)\, |\nn_y \phi(y , \tau  )|  \ \lesssim  \   \  M\,  R(\tau)^{b} \tau^{-\nu} (1+|y|)^{2-a}.
%$$
%We obtain that these bounds are as well valid for $\tau < 4\tau_0$  by the use of the similar parabolic estimates up to the initial time (with condition 0). Local estimates on bounded sets near the origin give the remaining region of $\DD_{ 3 \gamma R}$. %and
%%boundary estimates at points of $\pp B_R$ using the $0$ Dirichlet boundary condition. This yields the full estimate \equ{jk}.
%The proof is concluded.

In the case $\tau_1\geq \rho^2$ the argument is similar, but the equation for $\tilde \phi$ holds in $B_1(0)\times (-\frac{\tau_1}{\rho^2},0]$ and has initial condition 0 at $s=-\frac{\tau_1}{\rho^2}$.
Finally, for the last assertion we argue in similar way but using boundary rather that interior gradient estimates.
\qed

\bigskip
In addition to estimate \equ{MMM} we have a H\"older gradient estimate which is more natural to express using the variable $\tau$ defined in \eqref{tauu}
%\begin{align}
%\label{tauu}
%\tau(t) = \int_0^t \frac{ds}{\lambda(s)^2} ,
%\end{align}
as follows. We denote
$$ \mathcal B_\ell (y,\tau) =\{  (y',\tau') \ / \  |y-y'|^2 + |\tau' -\tau| < \ell^2 \}.$$
For a function $g(y,\tau)$, a number $0<\alpha<1$, and  a set $A$  we let
$$
[ g ]_{\alpha, A} :=  \sup \Big  \{  \frac { |f(y,\tau) - f(y',\tau') |} {(|y-y'|^2 + |\tau' -\tau|)^{\frac \alpha 2}}\ /\  (y,\tau) ,\, (y',\tau')\in A    \Big \}.
$$

\begin{corollary}
%\label{corHolder}
Let $\phi$ be a solution of the equation \eqref{cv} with $h(y,\tau) = \div H(y,\tau)$.
Given  $\alpha \in (0,1)$ and constants $a$, $b$, $\gamma $  there is $C$ such that if
\[
|\phi (y,\tau)|    +  (1+ |y|)  |H(y,\tau)|
+   (1+ |y|)^{1+\alpha} [  H ]_{\mathcal B_\ell(y) (y,\tau)\cap \DD_{ 4\gamma  R} }
\le  M\,  \lambda_*(\tau)^b  (1+|y|)^{-a}
\]
in $ \DD_{4\gamma R}$,
where $\ell(y) =  1+ \frac{|y|} 4 $, then
\begin{align}
\label{MMM2}
(1+ |y|)   |\nn_y \phi (y , \tau )|
+   (1+ |y|)^{1+\alpha} [\nn_y \phi ]_{\mathcal B_{\ell(y)} (y,\tau)\cap \DD_{ 4\gamma  R} }
\le  C \, M  \lambda_*(t)^b  (1+|y|)^{-a}
\end{align}
in $ \DD_{ 3\gamma  R} $.
If in addition we know that $\phi$ satisfies the boundary condition
$\phi(\cdot ,t) = 0$ on $ \pp B_{4\gamma R(t)} $ for all $ t \in (0,T)$ then estimate \equ{MMM2} holds in the entire region $\DD_{ 4\gamma  R}$.
\end{corollary}

\bigskip
Our next goal is to construct an inverse for modes $k=-1,0,1$ with a better control when subject to a certain solvability condition.

\bigskip

\subsection{Mode \texorpdfstring{$k=0$}{k=0}}
Let us consider again equation \equ{eqL00k} for $k=0$ and the functions $Z_{0j}(y)$ defined in \eqref{ZZ} .  %or its equivalent version \equ{Llx}
 We have the following result.

\begin{lemma}\label{step2}
Let assume that $2<a<3$, $k=0$ and
\be\label{ortogonal1}
\int_{\R^2} h_0(y,t) \cdot  Z_{0j}(y) \, dy \ =\ 0 \foral t\in [0,T)
\ee
for $j=1,2$.
%\be\label{ortogonal}
%\int_0^R \ttt h_k(\rho,\tau) \, Z_k(\rho) \rho \, d\rho \ =\ 0 \foral \tau\in (\tau_0,+\infty).
%\ee
Then there exist  a solution $\phi_0$ to equation $\equ{eqL00k}$ for $k=0$ that defines a linear operator of $ h_0 $ and satisfies the estimate in $\DD_{3R}$,
\be\label{modo0}
|\phi_0 (y, t)|  \lesssim  \|h_0\|_{a,\nu} R^{\frac{5-a} 2} \lambda_*^\nu  (1+|y|)^{-1} \,
 \min\{1,  R^{\frac{5-a}2} |y|^{-2} \}  \ .
\ee
\end{lemma}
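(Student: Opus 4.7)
My plan is to exploit the orthogonality \equ{ortogonal1} to eliminate the bounded kernel direction of $\mathcal{L}_0$ spanned by $Z_0(\rho)=\rho/(1+\rho^2)$, which is precisely the element responsible for the wasteful $R^2$ growth appearing in the $k=0$ case of Lemma~\ref{step1}. Writing $\phi_0(y,\tau) = \mathrm{Re}(\varphi_0(\rho,\tau))E_1(y) + \mathrm{Im}(\varphi_0(\rho,\tau))E_2(y)$, the problem reduces to a scalar complex parabolic equation
\[
\partial_\tau \varphi_0 = \mathcal{L}_0[\varphi_0] + \tilde h_0 \quad \text{in } \tilde D_{2R}, \qquad \varphi_0(2R,\tau)=0, \quad \varphi_0(\rho,\tau_0)=0,
\]
and the orthogonality \equ{ortogonal1} translates into the scalar condition $\int_0^{2R} \tilde h_0(\rho,\tau) Z_0(\rho)\rho\,d\rho = 0$ for every $\tau$.

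The starting point for the elliptic analysis is the representation already used in Lemma~\ref{step1},
\[
\varphi(\rho) = Z_0(\rho)\int_\rho^{2R} \frac{dr}{r Z_0(r)^2}\int_0^r g(s) Z_0(s)\, s\, ds,
\]
coupled now with the identity $\int_0^r g Z_0 s\,ds = -\int_r^{2R} g Z_0 s\,ds$ valid under the orthogonality. Using whichever side of this identity decays faster, for $|g(s)|\le K(1+s)^{-a}$ with $a>1$ the right-hand side yields $\bigl|\int_0^r g Z_0 s\,ds\bigr|\le C K r^{1-a}$ for $r\in (1,2R)$. Substituting back and using the large-$r$ asymptotics $Z_0(r)\sim 1/r$ and $1/(r Z_0(r)^2)\sim r$, one obtains the interior elliptic bound $|\varphi(\rho)|\le C K R^{3-a}/\rho$ on $\rho\in (1,R)$, which for $1<a<2$ is already strictly better than the target interior bound $R^{(5-a)/2}/\rho$. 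Near the boundary, the Dirichlet condition forces decay like $K R^{2-a}$ at $\rho=2R$, matching the outer factor $R^{5-a}/((1+\rho)\rho^{2})$ of the stated bound at $\rho=R$.

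To pass from this elliptic estimate to the parabolic one, I would use the elliptic bound as a template for a supersolution. Defining
\[
\bar\varphi(\rho,\tau) := C\|h_0\|_{a,\nu}\,\lambda_0(\tau)^{\nu}\, R(\tau)^{(5-a)/2}(1+\rho)^{-1}\min\bigl\{1,\, R(\tau)^{(5-a)/2}\rho^{-2}\bigr\},
\]
and splitting the domain at the matching radius $\rho_\ast:=R(\tau)^{(5-a)/4}$, I would verify $(\partial_\tau - \mathcal{L}_0)\bar\varphi \ge C K \lambda_0^{\nu}(1+\rho)^{-a}$ separately on $\{\rho\le\rho_\ast\}$ and $\{\rho_\ast\le\rho\le 2R\}$. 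The scalar maximum principle applied to the real and imaginary parts of $\varphi_0$ then delivers the pointwise estimate \equ{modo0}. The orthogonality is compatible with the maximum principle in the sense that the boundary contribution appearing in $\frac{d}{d\tau}\int_0^{2R}\varphi_0 Z_0 \rho\,d\rho$ is controlled by the Dirichlet condition at $\rho=2R$ and can be absorbed in the constant $C$.

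The main obstacle is the verification that $\bar\varphi$ is a supersolution across the matching radius. This requires a delicate balance between the $\rho$-decay of $\bar\varphi$, the loss coming from the negative potential $\cos(2w)/\rho^{2}$ inside $\mathcal{L}_0$, and the parabolic time-derivative term $\partial_\tau\bar\varphi$. The choice $\rho_\ast = R^{(5-a)/4}$ is precisely the geometric mean scale at which the inner profile $R^{(5-a)/2}(1+\rho)^{-1}$ and the outer boundary-layer profile $R^{5-a}[(1+\rho)\rho^{2}]^{-1}$ match continuously, a matching dictated by the Dirichlet condition $\varphi_0(2R,\tau)=0$ together with the interior elliptic estimate. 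As a byproduct, the exponent $(5-a)/2$ is not sharp in the interior but is precisely what the barrier method produces once matched with the decay enforced at the boundary.
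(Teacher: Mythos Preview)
Your elliptic computation using the orthogonality is correct, but the passage to the parabolic problem via a direct barrier has a genuine gap. The maximum principle is a pointwise tool and cannot see the nonlocal constraint \equ{ortogonal1}: a function $\bar\varphi$ satisfying $(\partial_\tau - \mathcal{L}_0)\bar\varphi \ge K\lambda_0^\nu(1+\rho)^{-a}$ pointwise would bound the solution for \emph{every} $h_0$ with that pointwise decay, orthogonal or not, and Lemma~\ref{step1} shows this forces at least $R^2/(1+\rho)$. Concretely, your inner profile $(1+\rho)^{-1}$ is asymptotic to the kernel element $Z_0$, so $|\mathcal{L}_0[(1+\rho)^{-1}]|$ decays like $\rho^{-4}$, much faster than the forcing $\rho^{-a}$; your outer profile $(1+\rho)^{-1}\rho^{-2}\sim\rho^{-3}$ gives $\mathcal{L}_0[\rho^{-3}]\sim +8\rho^{-5}$, the wrong sign. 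The inequality $(\partial_\tau-\mathcal{L}_0)\bar\varphi \ge K\lambda_0^\nu(1+\rho)^{-a}$ therefore fails throughout the intermediate range, and the time derivative is far too small to rescue it.

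The paper exploits the orthogonality through a different mechanism. It first uses \equ{ortogonal1} to build an elliptic antiderivative $H_0$ with $L_\omega[H_0] = h_0$ and $|H_0| \lesssim \tau^{-\nu}(1+|y|)^{2-a}$; the orthogonality is precisely what makes the outer integral in the variation-of-parameters formula converge. It then solves the auxiliary parabolic problem $\partial_\tau\Phi = L_\omega[\Phi] + H_0$ on $B_{4R}$ via Lemma~\ref{step1} (now with exponent $a-2<1$, giving $|\Phi| \lesssim R^{5-a}\tau^{-\nu}(1+|y|)^{-1}$), and sets $\phi_0 := L_\omega[\Phi]$. This solves the original equation, and the second-derivative estimates of Lemma~\ref{gradient} yield $|\phi_0| \lesssim R^{5-a}\tau^{-\nu}(1+|y|)^{-3}$. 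Only then is a standard barrier applied, on the smaller ball $|y|\le R' := R^{(5-a)/4}$, where the boundary data supplied by the $(1+|y|)^{-3}$ bound is small enough that the ordinary (non-orthogonal) barrier from \equ{superk} gives $(R')^2/(1+|y|)=R^{(5-a)/2}/(1+|y|)$ in the interior. The orthogonality is spent entirely in constructing $H_0$; the final barrier step requires none.
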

A central feature of  estimate \equ{modo0} %and \equ{modo1}
is that it matches the size of the solutions  obtained in
Lemma \ref{step1} for $k\ne 0,1$ when $|y| \sim  R$.

\begin{proof}
We observe that conditions \equ{ortogonal1} %for $k=0$
can be written as
\be\label{ortogonal}
\int_0^{2R} \ttt h_0(\rho,t) \, Z_0(\rho) \rho \, d\rho \ =\ 0 \foral \tau\in (0,T).
\ee
Let us consider the complex valued functions
$$
\ttt H_0(\rho ,t ) : =  -Z_0(\rho ) \int_{\rho}^\infty   \frac 1{ sZ_0(s)^2} \int_s^\infty   \ttt h_0(\zeta,t ) Z_0(\zeta) \zeta\, d\zeta , \quad k=0,1.
$$
They are well-defined thanks to \equ{ortogonal}.
Then the function
$$ H_0(y,t) :=  {\rm Re }\, (\ttt H_0(\rho ,\tau )  ) E_1 (y)+ {\rm Re }\, (\ttt H_0(\rho ,t ))   E_2 (y) $$
solves
\[
L_ W  [ H_0(y,\tau)  ] = h_0(y,\tau)\inn \DD_{4R}
\]
and satisfies
$$
|H_0(y,t)|\ \lesssim\ \la_*(t)^{\nu} (1+|y|)^{2-a}   \| h_0 \|_{a,\nu} \inn \DD_{4R}.
$$
Moreover, elliptic gradient estimates yield
$$
|\nn_y H_0(y,\tau)| \lesssim  \lambda_*(t)^\nu (1+|y|)^{1-a}   \| h_0 \|_{a,\nu} \inn \DD_{3R}.
$$
Let us consider the problem
\begin{align}
\label{eqL00kk}
\la^2 \Phi_t & =    L_ W  [\Phi]  + H_0(y,t)   \inn \DD_{4R},   \\
\Phi(y ,0) & =   0 \inn B_{4R}(0)   \nonumber\\ \nonumber
\Phi(y,t) & = 0 \quad \hbox{for all }  t\in (0,T), \quad y\in \pp B_{4R(0)}(0)
\end{align}
According to Lemma \ref{step1}, this problem has unique solution $\Phi= \Phi_0$ that satisfies the estimates
\begin{align*}
|\Phi_0(y,t) | \ \le  & \  C \|H_0\|_{a-2,\nu}   \lambda_*(\tau)^{\nu } (1+ |y|)^{-1}
\ \ R^{5-a } \inn \DD_{4R}  .
\end{align*}
Applying Lemma \ref{gradient} we deduce that, also,
\begin{align*}
| \nn_y \Phi_0(y,t) |  \lesssim    \|H_0\|_{a-2,\nu}   \lambda_*(\tau)^{\nu } (1+ |y|)^{-2}
\ \ R^{5-a } \inn \DD_{3R}
\end{align*}
Let us write
$$
\Phi_{0j} :=  \pp_{y_j} \Phi_0 , \quad   H_{0j} :=   \pp_{y_j} H_0
$$
Then we have
\begin{align*}
\la^2\pp_t {\Phi_{0j} } & =  L_ W  [\Phi_{0j} ]  +   \pp_{y_j} |\nn  W  |^2\Phi_0  + 2\nn \pp_{y_j} W  \nn \Phi_0  + H_{0j} (y,\tau)
\\
& \quad +   2(\nn \Phi_0 \pp_{y_j} \nn W) W+  2(\nn \Phi_0  \nn W)\pp_{y_j}W
 \inn \DD_{3R},
 \\
\Phi_{0j}(y,0) & =0 \quad \hbox{for all }  y\in  B_{3R(0)}(0)
\end{align*}
According to Lemma \ref{gradient} and the above estimates we obtain that
\begin{align*}
(1+ |y|) |\nn \Phi_{0j}(y,t)|  & \lesssim  \|h_0\|_{a,\nu}   \la_*(t)^{\nu } (1+ |y|)^{-2} R^{5-a } \\  &  \quad +
\|h_0\|_{a,\nu}   \la_*(t)^{\nu } (1+ |y|)^{4-a}\inn \DD_{3R} .
\end{align*}
Then we define
$$
\phi_0  := L_ W  [\Phi_0]
$$
so that $\phi = \phi_0$ solves
\begin{align*} %\label{eqL00kk}
\la^2\phi_t &  =    L_ W  [\phi ]  + h_0(y,t)   \inn \DD_{3R},   \\
\phi(y,0) &  = 0 \quad \hbox{for all }  \quad y\in  B_{3R(0)}(0)
\end{align*}
and defines a linear operator of the function $h_0$.
Moreover, observing that
$$
\left|L_ W  [\Phi_0]\right |  \lesssim  \left|D^2_y\Phi_0 \right| + O(\rho^{-4}) \left|\Phi_0 \right| + O(\rho^{-2}) \left|D_y \Phi_0 \right|
$$
we then get the  estimate
\be\label{Xxx0}
|\phi_0 (y, t)|  \lesssim      \|h_0\|_{a,\nu} R^{{5-a}} \la_*(t)^{\nu}(1+|y|)^{-3} \, .
\ee
To complete the proof of estimate \equ{modo0}, we let $\vp_0$ be the complex valued function defined as
$$\phi_0(y,t)\ =\  {\rm Re}\, (\vp_0(\rho,t))\,E_1 +  {\rm Im}\, (\vp_0(\rho,t))\,E_2$$
so that
letting $R'= R^{\frac{5-a}4}\ll R $,  using the notation in \equ{Llxx},  $\vp_0$ satisfies the equation
\begin{align}  \
\la^2 \pp_t \vp_{0}  &= \mathcal L_{0}[\vp_{0}]  + \ttt h_{0}  (\rho, t)  \inn  \ttt D_{R'} ,
\label{ppi} \\
\nonumber  \vp_0(0,\rho)  & = 0 \inn (0, R'), \nonumber
\end{align}
and from \equ{Xxx0},
we can find an explicit supersolution for the real and imaginary parts of equation \equ{ppi}, which also
dominates their boundary values at $R'$, which yields
$$
|\vp_0 ( y, t)|  \lesssim  \|h_0\|_{a,\nu}  \la_*^{\nu}\,  |R'|^2 (1+ |y|)^{-1} \, , \quad |y|< R' .
$$
Combining this estimate and \equ{Xxx0} yields the validity of \equ{modo0}. \end{proof}

We mention next a variant of Lemma~\ref{step2}, in which we weaken the hypothesis on the right hand side, allowing it to be a divergence of H\"older continuous function. This will be needed when analyzing estimates of the derivative with respect to $\lambda$ of operator $\mathcal T_{\lambda,2}$ (Proposition~\ref{prop02}).
\begin{lemma}
%\label{step2-modified}
Let assume that $2<a<3$, $\nu>0$, and $k=0$.
Let $h_0$ have the form
\[
h_0(y,\tau) = \div H_0(y,\tau)
\]
such that
\[
(1+ |y|)  |H_0(y,\tau)|
+   (1+ |y|)^{1+\alpha} [  H_0 ]_{\mathcal B_\ell(y) (y,\tau)\cap \DD_{ 4  R} }
\le   \lambda_*(\tau)^\nu  (1+|y|)^{-a} ,
\]
in $ \DD_{4 R}$, where $\alpha \in (0,1)$ and  $\ell(y) =  1+ \frac{|y|} 4 $.
Assume also that

\begin{align*}
\int_{\R^2} h_0(y,t) \cdot  Z_{0j}(y) \, dy \ =\ 0 \foral t\in [0,T)
\end{align*}
for $j=1,2$.
Then there exist  a solution $\phi_0$ to equation $\equ{eqL00k}$ for $k=0$ that defines a linear operator of $ h_0 $ and satisfies
\begin{align*}
|\phi_0 (y, t)|  \lesssim  \|h_0\|_{a,\nu} R^{\frac{5-a} 2} \la_0^\nu  (1+|y|)^{-1}
\, \min\{1,  R^{\frac{5-a}2} |y|^{-2} \}  ,
\end{align*}
in $\DD_{3R}$.
\end{lemma}

\subsection{Mode \texorpdfstring{$k=-1$}{k=-1}}
Let us consider  equation \equ{eqL00k} for $k=-1$ and the functions $Z_{-1j}(y)$ defined in \eqref{Zmode-1} .
 We have the following result.

\begin{lemma}\label{step2b}
Let assume that $2<a<3$, $k=0$ and
\begin{align}
%\label{ortogonal1-m-1}
\nonumber
\int_{\R^2} h_{-1}(y,t) \cdot  Z_{-1j}(y) \, dy = 0 \foral t\in [0,T)
\end{align}
for $j=1,2$.
Then there exist  a solution $\phi_{-1}$ to equation $\equ{eqL00k}$ for $k=-1$ that defines a linear operator of $ h_0 $ and satisfies the estimate in $\DD_{3R}$,
\begin{align}
%\label{cota-phi-1}
\nonumber
|\phi_{-1} (y, t)|  \lesssim  \|h_{-1} \|_{a,\nu}  \lambda_*^\nu  \,
 \min\{\log R,  R^{4-a} |y|^{-2} \}  .
\end{align}
\end{lemma}
\begin{proof}
The proof is essentially the same as that of Lemma~\ref{step2}.
\end{proof}

\subsection{Mode \texorpdfstring{$k=1$}{k=1}}
Now we deal with  \equ{eqL00k} for $k=1$.
For convenience we give the result for a right hand side more general than strictly need for the proof of Proposition~\ref{prop2}.
Let us assume that   $h_1$ is defined in entire $\R^2\times (0,T)$ and  that
\begin{align}
\label{formH}
h_1(y,t)  = {\rm div}_y\, G(y,t)
\end{align}
where
\begin{align}
\label{boundG}
|G(y,t)|\leq  \frac{\lambda_*(t)^\nu}{1+|y|^{a-1}} , \quad y\in \R^2, \  t\in (0,T),
\end{align}
for some $\nu>0$, $a \in (2,3)$. Then the following result holds.

\begin{lemma}\label{step3}
Let assume that $2<a<3$, $k=1$, $h_1$ has the form \eqref{formH} so that \eqref{boundG} holds and
%\be\label{ortogonal11}
\[
\int_{\R^2} h_1(y,t) \cdot  Z_1^j(y) \, dy \ =\ 0 \foral t\in (0,T)
\]
%\ee
for $j=1,2$.
%\be\label{ortogonal}
%\int_0^R \ttt h_k(\rho,\tau) \, Z_k(\rho) \rho \, d\rho \ =\ 0 \foral \tau\in (\tau_0,+\infty).
%\ee
Then there exist  a solution $\phi_1$ to equation \equ{eqL00k} for $k=1$ that defines a linear operator of $ h_1 $ and
satisfies the estimate in $\DD_{3R}$,
%\be\label{modo01}
\[
|\phi_1 (y, t)|  \lesssim \la_*(t)^\nu  (1+|y|)^{2-a }  .
 %\left \{  \begin{matrix} 1 & \hbox{ if } |y| < R^{\frac{5-a}4}   \\   \frac { R^{\frac{5-a}2}} {|y|^2}   &   \hbox{ if } |y| \ge  R^{\frac{5-a}4} \end{matrix} \right .    %\inn  D_{3R} .
 \]
%\ee
%\be\label{modo1}
%|\phi_1 (y, \tau)| \ \lesssim  \       \frac { \la_0^\nu  R^{6-a}} {1+ |y|^4} \, \|h_1\|_{a,\nu} \inn  D_{3R}.
%\ee
\end{lemma}

From this we get directly the next result.

\begin{corollary}
	%\label{step3cor}
Let assume that $2<a<3$, $k=1$ and
%\be\label{ortogonal11}
\[
\int_{B_{2R}} h_1(y,t) \cdot  Z_1^j(y) \, dy \ =\ 0 \foral t\in (0,T)
\]
%\ee
for $j=1,2$.
%\be\label{ortogonal}
%\int_0^R \ttt h_k(\rho,\tau) \, Z_k(\rho) \rho \, d\rho \ =\ 0 \foral \tau\in (\tau_0,+\infty).
%\ee
Then there exist  a solution $\phi_1$ to equation \equ{eqL00k} for $k=1$ that defines a linear operator of $ h_1 $ and
satisfies the estimate in $\DD_{3R}$,
%\be\label{modo01}
\[
|\phi_1 (y, t)| \ \lesssim \    \, \|h_1\|_{a,\nu} \la_*(t)^\nu  (1+|y|)^{2-a } \, \ .
 %\left \{  \begin{matrix} 1 & \hbox{ if } |y| < R^{\frac{5-a}4}   \\   \frac { R^{\frac{5-a}2}} {|y|^2}   &   \hbox{ if } |y| \ge  R^{\frac{5-a}4} \end{matrix} \right .    %\inn  D_{3R} .
 \]
%\ee
%\be\label{modo1}
%|\phi_1 (y, \tau)| \ \lesssim  \       \frac { \la_0^\nu  R^{6-a}} {1+ |y|^4} \, \|h_1\|_{a,\nu} \inn  D_{3R}.
%\ee
\end{corollary}

Let us do the same change of the time variable as in \eqref{tauu}
so that \eqref{eqL00k} for $k=1$ in entire $\R^2$ becomes in the variables $(y,\tau)$
\begin{align}
\pp_\tau  \phi  \  &=   \   L_{ W  }  [\phi]       + h\inn  \R^2 \times (0, \infty ) ,
 \label{eqLL0011} \\ \nonumber
 \phi(\cdot,0 )\  &=  \  0 \inn \R^2  .
 \end{align}
%{\crr
%We will prove Lemma~\ref{step3} by solving this equation in entire $\R^2$ after extending $h$ as zero outside the considered region. }
Thus, we consider a function $h(y,\tau)$ defined in entire $\R^2\times (0,+\infty)$ of the form
\be  \label{modo1}
  h    =  {\rm Re}\, ( \ttt h e^{i\theta} ) \, E_1  +  {\rm Im}\, ( \ttt h e^{i\theta} ) \, E_2,
 \ee
 that satisfies the orthogonality conditions for $j=1,2$
\be \label{ortii}
\int_{\R^2}   h(\cdot, \tau )\cdot Z_{1j} \  =\  0  \foral \tau \in (0,\infty)
\ee
and such that $h(y,\tau) =0$ for $|y|\geq 2 R(\tau)$.

By standard parabolic theory, this problem has a unique solution, which is therefore of the form
\be  \label{modo11}
 \phi  =  {\rm Re}\, ( \vp  e^{i\theta} ) \, E_1  +  {\rm Im}\, ( \vp e^{i\theta} ) \, E_2,
 \ee
where the complex valued function $\vp(\rho, \tau)$  solves the initial value problem
\begin{align}  \label{Llxx1}
\pp_\tau \vp  \ =& \ \mathcal L_{1}[\vp]  + \ttt h  (\rho, \tau)  \inn  (0,\infty)\times (0,\infty),
\\ \vp(\rho,0)  \ = &\ 0 \inn (0, \infty ), \nonumber
\end{align}
$$
\mathcal L_1[\vp]   = \pp^2_\rho \vp   + \frac{ \pp_\rho \vp}{\rho}  -  (1 + 2\cos w  + \cos(2w) ) \frac{\vp} {\rho^2}.  $$

We have the validity of the following result.

\begin{lemma}
\label{lemmaAprioriMode1div}
Let $0<\sigma <1$, $\nu>0$.
Assume  that $h$ is mode 1, that is, has the form \equ{modo1},  satisfies the orthogonality conditions \equ{ortii}, and can be written as in \eqref{formH} with $g_j$ satisfying \eqref{boundG} where $b=1+\sigma$.
Then there exists a constant $C>0$ such that the solution $\phi$ of problem \equ{eqLL0011}  satisfies the estimate
\begin{align}
\label{boundPhiMode1div}
|\phi(y , t)| \leq C  \frac{\lambda_*(t)^\nu}{1+|y|^\sigma}.
\end{align}
\end{lemma}

%\begin{lemma}
%\label{apriori}
%Let $0<\sigma <1$, $\nu>0$. There exists a constant $C>0$ such that the following holds. Assume that
% $ \| h\|_{2+\sigma,\tau_1} <+\infty$, that $h$ has the form $\equ{modo1}$ and satisfies the orthogonality conditions $\equ{ortii}$.
%Then for any sufficiently large $\tau_1>0$  the solution $\phi$ of  Problem $\equ{eqLL0011} $  satisfies the estimate
%\be
% \| \phi\|_{\sigma,\tau_1} \ \le\ C\, \| h\|_{2+\sigma,\tau_1}.
%\label{esti00}\ee
% \end{lemma}

For the proof of this result we will use the following Liouville type result.
\begin{lemma}
\label{lemmaLiouvilleMode1}
Let $0<\sigma<1$.
Suppose $ \ttt \phi$ satisfies
\begin{align*}
\ttt\phi_\tau    &=     L_{ W  }  [\ttt\phi]    \inn  \R^2 \times (-\infty,0] ,
\\
\int_{\R^2}  \ttt  \phi(\cdot, \tau )\cdot Z_{1j} & =  0  \foral \tau \in (-\infty,0],
\\
|\ttt \phi (y, \tau)| & \leq \frac 1{ 1+ |y|^\sigma }  \inn \R^2\times (-\infty , 0], \quad j=1,2,
\\
\ttt \phi(y, \tau) & =  {\rm Re}\, ( \ttt \vp(\rho, \tau)  e^{i\theta} ) \, E_1  +  {\rm Im}\, ( \ttt \vp (\rho , \tau)e^{i\theta} ) \, E_2 .
\end{align*}
Then  $\ttt \phi =0$.
\end{lemma}
\begin{proof}
By standard parabolic regularity  $\ttt \phi(y,\tau)$ is a smooth function.
A scaling argument shows that
\[
  (1+|y|)^{-1}|D_y \ttt\phi|  +  |\ttt \phi_\tau|  +  |D^2_y \ttt \phi|   \le C(1+|y|)^{-2-\sigma}.
\]
Differentiating the equation in $\tau$, we also get $ \pp_\tau \phi_\tau  = L_ W  [\phi_\tau ] $
and we find the estimates
\[
 (1+|y|)^{-1}|D_y \ttt\phi_\tau | \, + \,  |\ttt \phi_{\tau\tau}|\, + \, |D^2_y \ttt \phi_\tau |  \ \le \ C(1+|y|)^{-3-\sigma}.
\]
Testing suitably the equations (taking into account the asymptotic behaviors in $y$ in integrations by parts)  we find
\[
 \frac 12 { \pp_\tau}  \int_{\R^2}  |\ttt \phi_\tau|^2   + B( \ttt \phi_\tau, \ttt \phi_\tau) = 0 ,
\]
where
\[
B(\ttt \phi, \ttt \phi) =  -\int_{\R^2}    L_ W [\ttt \phi]\cdot \ttt \phi =   \int_{\R^2}   |\nn \ttt \phi|^2 -   |\nn  W  |^2 |\ttt \phi|^2 .
\]
It is useful to observe the following: since
\[
  \ttt \phi(y, \tau) \  =\   {\rm Re}\, ( \ttt \vp(\rho, \tau)  e^{i\theta} ) \, E_1  +  {\rm Im}\, ( \ttt \vp (\rho , \tau)e^{i\theta} ) \, E_2
\]
then we compute, using that $\mathcal L_1 [w_\rho]=0$,
$$
B(\ttt \phi, \ttt \phi)  =  - \int_0^\infty  \mathcal L_1 [\vp] \bar \vp \rho d\rho  \ = \     \int_0^\infty  |( w_\rho ^{-1}\ttt \vp  )_\rho |^2 w_\rho^2\rho\, d\rho \ \ge \ 0 .  \quad
$$
We also get
$$
  \int_{\R^2} |\ttt \phi_\tau |^2  = -  \frac 12  { \pp_\tau}  B(\ttt \phi, \ttt \phi).
$$
From these relations we find
$$
  \pp_\tau  \int_{\R^2}  | \ttt  \phi_\tau|^2  \le 0 , \quad   \int_{-\infty}^0  d\tau\int_{\R^2} |\ttt \phi_\tau |^2 \ < \ +\infty
$$
and hence
$
\ttt \phi_\tau = 0.
$
Thus $\ttt \phi $ is independent of $\tau$ and therefore  $L_ W  [\ttt \phi] = 0 $. Since $\ttt\phi$ is at mode 1, this implies that
$\ttt \phi $ is a linear combination of $Z_{1j}$, $j=1,2$.  Since
$\int_{\R^2} \ttt \phi\cdot  Z_{1j} = 0 $, $j=1,2$ we conclude that $\ttt \phi=0$, a contradiction.
\end{proof}

\medskip

\begin{proof}[Proof of Lemma~\ref{lemmaAprioriMode1div}]
%Let us use the norm $\| \ \|_{b,\tau_1} $ defined in \eqref{normBtau}.
Let us write
\begin{align*}
%\label{normBtau}
\| \phi \|_{b,\tau_1}  := \sup_{\tau \in (0,\tau_1)} \lambda_*(\tau)^{-\nu} \|  (1+|y|^{b})\phi \|_{L^\infty(\R^2)} .
\end{align*}
%and $\| h\|_{b}$ the corresponding number for $\tau_1 = +\infty$.

We claim that for any $\tau_1>0$  we have that
\begin{align}
\label{localBound}
\|\phi\|_{2+\sigma, \tau_1 } < + \infty.
\end{align}
%Indeed,  by standard linear parabolic theory $\phi(y,\tau)$ is
%locally bounded in time and space. More precisely, given  $R>0$ there is a $K= K(R,\tau_1)$ such that
%$$
%|\phi(y,\tau)| \le  K\inn B_R(0) \times (0, \tau_1].
%$$
Let us recall that with the transformations \eqref{modo11}
%\begin{align*}
%\phi  =  {\rm Re}\, ( \vp  e^{i\theta} ) \, E_1  +  {\rm Im}\, ( \vp e^{i\theta} ) \, E_2,
%\quad
%  h    =  {\rm Re}\, ( \ttt h e^{i\theta} ) \, E_1  +  {\rm Im}\, ( \ttt h e^{i\theta} ) \, E_2,
%\end{align*}
we have that the complex valued function $\vp(y, \tau)$  is radial in $y$ and solves the initial value problem
\begin{align*}
\pp_\tau \vp  &=  \Delta_{\R^2} \varphi  -  (1 + 2\cos w  + \cos(2w) ) \frac{\vp} {\rho^2}
 + \ttt h  (\rho, \tau) \quad  \text{in }  \R^2 \times (0,\infty),
\\
\vp(\cdot,0)  &=  0 \quad  \text{in }  \R^2
\end{align*}
where $\rho = |y|$, $y\in \R^2$ and $\tilde h$ is related to $h$ by \eqref{modo1}.
%By considering $\tilde \varphi_j = \varphi_j e^{-B t}$ for some $B$ large enough,
%where $\varphi_j$ are denote the real and imaginary parts of $\varphi$,
%we have
%\[
%\partial_t \tilde \varphi_j \leq \Delta \tilde \varphi_j + \tilde h_j  \quad  \text{in }  \R^2 \times (1,\infty) .
%\]
%By using the heat kernel, we get that the bounded solution $s$ to
%\begin{align*}
%\partial_t s &= \Delta  s + \tilde h_j  \quad  \text{in }  \R^2 \times (1,\infty) .
%\\
%s(\cdot,1)&=0
%\end{align*}
%satisfies  $\| s \|_{\sigma,\tau_1 } < +\infty $ and hence $\| \varphi \|_{\sigma,\tau_1 } < +\infty $.
%As a conclusion, we find that  $\|\phi\|_{\sigma,\tau_1 } < +\infty $ .
Let us write $\varphi = \varphi_a + \varphi_b$ where $\varphi_a$ is the unique solution to
\begin{align*}
\partial_\tau \varphi_a  &=  \Delta_{\R^2} \varphi_a +  \ttt h  (\rho, \tau) \quad  \text{in }  \R^2 \times (0,\infty),
\\
\varphi_a(\cdot,0)  &=  0 \quad  \text{in }  \R^2
\end{align*}
given by Duhamel's formula.
Using the heat kernel in $\R^2$ one readily shows that $\| \varphi_a \|_{2+\sigma, \tau_1 } < + \infty$.
Let
\begin{align*}
\partial_\tau \varphi_b  &=  \Delta_{\R^2} \varphi_b  -  (1 + 2\cos w  + \cos(2w) ) \frac{1} {\rho^2}
(\varphi_a+\varphi_b)  \quad \text{in } \R^2 \times (0,\infty),
\\
\varphi_b(\cdot,0)  &=  0 \quad  \text{in }  \R^2 .
\end{align*}
By standard linear parabolic theory $\phi_b(y,\tau)$ is
 locally bounded in time and space. More precisely, given  $R>0$ there is a $K= K(R,\tau_1)$ such that
 $$
 |\phi_b(y,\tau)| \le  K\inn B_R(0) \times (0, \tau_1].
 $$
 If we fix $R$ large and take $K_1$ sufficiently large, we see that  $K_1 \rho^{-\sigma}$ is  a supersolution for the real and imaginary parts of
 the equivalent complex valued equation \equ{Llxx1} in the region $\rho>R$. As a conclusion, we find that $|\phi_b| \le 2K_1\rho^{-\sigma}$, and therefore $\|\phi_b\|_{\sigma,\tau_1 } < +\infty $ for any $\tau_1 > 0$. This proves \eqref{localBound}.

Next we claim that
\be\label{blaa}
\int_{\R^2}   \phi(\cdot, \tau )\cdot Z_{1j}  = 0  \foral \tau \in (1,\tau_1),\quad j=1,2.
\ee
Indeed, let us test the equation against
$$
Z_{1j} \eta, \quad \eta(y)=  \eta_0 (R^{-1}|y|)
$$ where $\eta_0$ is a smooth cut-off function with $\eta_0(r)=1$ for $r<1$ and $=0$ for $r>2$ and
$R$ is an arbitrary large constant. We find that
\be\label{blabla}
\int_{\R^2} \phi(\cdot, \tau )\cdot Z_{1j}\eta \  =\
 \int_0^\tau ds \int_{\R^2}  \phi(\cdot,s) \cdot  ( L_{ W } [\eta Z_{1j}] + h \cdot Z_{1j} \eta ) .
\ee
On the other hand,
\begin{align*}
\int_{\R^2}  \phi\cdot   (L_{ W } [\eta Z_{1j}]  + h \cdot Z_{1j} \eta_R  ) \
& =
\int_{\R^2} \phi\cdot (Z_{1j} \Delta \eta + 2\nn \eta \cdot \nn Z_{1j} )  - h \cdot Z_{1j}(1- \eta_R)
\\
&=  O( R^{-2-\sigma})
\end{align*}
uniformly on $\tau\in (0,\tau_1)$.
Letting $R\to +\infty$ in \equ{blabla} we get that \equ{blaa} holds.

\medskip

Now we claim that there exists a constant $C$ such that for all $\tau_1>0$
we have the validity of the estimate
\begin{equation}
\label{esti01b}
\| \phi\|_{\sigma,\tau_1} \leq C  ,
\end{equation}
so that in particular estimate \equ{boundPhiMode1div} holds.

To prove \equ{esti01b}
we assume by contradiction the existence of sequences $\tau_1^n \to +\infty$ and $\phi_n$, $h_n$ of the form \equ{modo1}, \equ{modo11} satisfying
\begin{align*} \pp_\tau  \phi_n   &=     L_{ W  }  [\phi_n ]       + h_n \inn  \R^2 \times (1, \tau_1^n) , \\
\int_{\R^2}   \phi_n (\cdot, \tau )\cdot Z_{1j} & = 0  \foral \tau \in (1,\tau_1^n), \\
 \phi_n (\cdot,1 ) &=   0 \inn \R^2  ,
 \end{align*}
so that
\begin{align}
\label{normalization}
\| \phi_n\|_{\sigma,\tau_1^n} = 1
\end{align}
but
\[
h_n =\sum_{j=1}^2 \partial_{y_j} g_{j,n} ,
\quad
\| g_{j,n} \|_{1+\sigma,\tau_1^n}\to 0 , \quad \text{as }n\to\infty.
\]
We claim first that
\begin{align}
\label{to0uniformlyCompact}
\sup_{ 1<\tau < \tau_1^n } \tau^\nu |\phi_n(y,\tau)| \to 0
\end{align}
uniformly on compact subsets of $y \in \R^2$. If not, for some $M>0$ there are $|y_n|\le M$  and $1< \tau_2^n< \tau_1^n $
so that
\[
 (\tau_2^n)^\nu (1+|y_n|^\sigma)   |\phi( y_n , \tau_2^n )| \ge \frac 12  .
\]
Clearly we must have $\tau_2^n\to +\infty$. Let us define
\[
\ttt \phi_n (y, \tau) =   (\tau_2^n)^\nu  \phi_n ( y , \tau_2^n+ \tau ).
\]
Then
\[
\pp_\tau \ttt \phi_n  = L_ W   [\ttt \phi_n]  + \ttt h_n  \inn \R^2\times (1- \tau_2^n, 0]
\]
where $\tilde h_n\to 0$ has the form
\[
\tilde h_n = \sum_{j=1}^2 \partial_{y_j} \tilde g_{j,n} , \quad
|\tilde g_{j,n} (y,\tau) | \leq o(1) \frac{ ( \tau_2^n)^\nu}{(\tau_2^n + \tau)^\nu} \frac{1}{1+|y|^{1+\sigma}}
\]
and
\[
|\ttt \phi_n (y, \tau)| \le \frac 1{ 1+ |y|^\sigma }  \inn \R^2\times (1- \tau_2^n, 0] .
\]
From standard parabolic estimates, we find that passing to a subsequence,  $\ttt \phi_n\to \ttt \phi$ uniformly on compact subsets of $\R^2\times (-\infty , 0]$ where $\ttt \phi \ne 0$ and
\begin{align*}
\ttt\phi_\tau    &=     L_{ W  }  [\ttt\phi]    \inn  \R^2 \times (-\infty,0] ,
\\
\int_{\R^2}  \ttt  \phi(\cdot, \tau )\cdot Z_{1j} & =  0  \foral \tau \in (-\infty,0],
\\
|\ttt \phi (y, \tau)| & \leq \frac 1{ 1+ |y|^\sigma }  \inn \R^2\times (-\infty , 0], \quad j=1,2,
\\
\ttt \phi(y, \tau) & =  {\rm Re}\, ( \ttt \vp(\rho, \tau)  e^{i\theta} ) \, E_1  +  {\rm Im}\, ( \ttt \vp (\rho , \tau)e^{i\theta} ) \, E_2 .
\end{align*}
But then Lemma~\ref{lemmaLiouvilleMode1} implies that $\tilde \phi\equiv0$, which is a contradiction, and we conclude that \eqref {to0uniformlyCompact} indeed holds.

From \equ{normalization}, we have that for a certain $y_n$ with $|y_n|\to \infty $ and $\tau_2^n > 0$,
\[
 (\tau_2^n)^\nu  |y_n|^\sigma   |\phi_n( y_n , \tau_2^n )|  \ge  \frac 12  .
\]
Now we let
\[
\ttt \phi_n (z, \tau) :=   (\tau_2^n)^\nu |y_n|^\sigma \,\phi_n(   |y_n|^{-1} z    ,  |y_n|^{-2} \tau+ \tau_2^n )
\]
so that
\[
\pp_\tau \ttt \phi_n  = \Delta_z \ttt \phi_n  +  a_n \cdot \nn_z\ttt \phi_n + b_n \ttt \phi_n  +  \ttt h_n(z,\tau)
\]
where
\[
\ttt h_n(z,\tau) =  (\tau_2^n)^\nu |y_n|^{2+\sigma} \, h_n(    |y_n|^{-1} z    ,  |y_n|^{-2} \tau+ \tau_2^n ),
\]
and $|a_n|+|b_n|\to 0$ uniformly on compact sets of $\R^2\setminus \{0\}$.

Note that
\[
\ttt h_n = \sum_{j=1}^2 \partial_{z_j} \tilde g_{j,n}
\]
where
\[
 \tilde g_{j,n} (z,\tau) =  (\tau_2^n)^\nu |y_n|^{1+\sigma} \, g_{j,_n}(    |y_n|^{-1} z    ,  |y_n|^{-2} \tau+ \tau_2^n ),
\]
By assumption on $g_{j,n}$ we find that $ \tilde g_{j,n} \to 0$ uniformly on compact sets of $(\R^2 \setminus \{0\} ) \times (-\infty,0]$.
Besides $|\ttt \phi_n ( \frac {y_n}{|y_n|}, 0)| \ge \frac 12$
and
\[
|\ttt \phi_n (z, \tau)| \le     |   z|^{-\sigma} ( (\tau_2^n)^{-1} |y_n|^{-2} \tau+  1 )^{-\nu} .
\]
As a conclusion,  we may assume that  $\ttt \phi_n\to \ttt \phi\ne 0$ uniformly over compact subsets of $\R^2 \setminus\{0\}\times (-\infty ,0]$
where
\[
\ttt\phi_\tau  = \Delta_z \ttt\phi \inn \R^2 \setminus\{ 0 \}\times (-\infty ,0] .
\]
and
\[
|\ttt \phi (z, \tau)| \le    \,  |z  |^{-\sigma}  \inn \R^2 \setminus\{ 0 \}\times (-\infty ,0].
\]
Moreover, the mode 1 assumption for $\phi_n $ translates for $\ttt \phi$ into
\[
\ttt \phi (z, \tau) =  \left[\begin{matrix} \vp(\rho,\tau) e^{2i\theta}\\ 0    \end{matrix}  \right ] ,\quad   z=  \rho e^{i\theta}
\]
for a complex valued function $\vp$ that solves
\begin{equation}
\label{dss}
\vp_\tau  = \vp_{\rho\rho} + \frac{\vp_\rho}{\rho} - \frac {4\vp}{\rho^2} \inn (0,\infty)\times (-\infty, 0],
\end{equation}
\[
|\vp(\rho,\tau)| \ \le \ \rho^{-\sigma} \inn (0,\infty)\times (-\infty, 0].
\]
Let us set
$$
u(\rho ,t) =  (\rho^2 + t)^{-\sigma/2}     +\frac \ve {\rho^2}
$$
Then
$$
-u_t +  \Delta u -\frac {4u}{r^2}
<
    (\rho^2 + t)^{-\sigma/2 -1}  [  \sigma (\sigma +2)    - 4 +\frac \sigma 2] <0.
$$
It follows that the function $u(x,\tau +M)$ is a positive supersolution for the real and imaginary parts of  equation \equ{dss} in
$  (0,\infty)  \times [-M, 0] $. We find then that
$
|\vp(\rho,\tau)| \le 2 u(\rho,\tau +M)
$. Letting $M\to +\infty$ we find
$$
|\vp(\rho,\tau)| \le  \frac {2\ve}{\rho^2}
$$
and since $\ve$ is arbitrary we conclude $\vp=0$. Hence $\ttt\phi=0$, a contradiction that concludes the proof of the lemma.
\end{proof}

\medskip

\begin{proof}[Proof of Lemma~\ref{step3}]
We take $h$ to be the extension as zero of the function $h_1$ as in the statement of the lemma.
Then we let $\phi$ be the unique solution of the initial value problem \equ{eqLL0011}, which clearly defines a linear operator of $h_1$.
From Lemma \ref{lemmaAprioriMode1div}, expressing the resulting estimate in the variables $(y,t)$,  we have that for any $ t_1 \in (0,T)$
$$
|\phi (y, t)| \le  C \lambda_*(t)^\nu  (1+ |y|)^{-\sigma} \| h\|_{2+\sigma,t_1 }  \foral t \in (0,t_1),
\quad y\in \R^2.
$$
%By assumption we have that $ \|h_1\|_{a,\nu}<+\infty$, and hence
%$$
%|h(y,\tau)| \ \le\ C \tau^{-\nu} (1+|y|)^{-a} \|h_1\|_{a,\nu}
%$$
%and we find
%$$
%\| h\|_{2+\sigma,\tau_1 }\  \le \   R(\tau_1) ^{2+\sigma-a}  \|h_1\|_{a,\nu}
%$$
%for any $\tau_1>0$. It follows that
%$$
%|\phi (y, \tau)| \le   C \la_0(\tau)^\nu  (1+ |y|)^{-\sigma}  R(\tau_1) ^{2+\sigma-a}  \|h_1\|_{a,\nu}  \foral \tau \in (0,\tau_1), \quad y\in \R^2.
%$$
%and thus
%$$
%|\phi (y, \tau)| \le   C \tau^{-\nu}  (1+ |y|)^{-\sigma}  R(\tau) ^{2+\sigma-a}  \|h_1\|_{a,\nu}  \foral (\tau, y)\in (1,\infty) \times \R^2.
%$$
Then letting $\phi_1 := \phi \big|_{\DD_{3R}}$ and letting $t_1 \uparrow	T$ the result follows.
\end{proof}

\subsection{Proof of Proposition~\ref{prop2}}

We let $h$ be defined in $\DD_{2R}$ with $\|h\|_{a,\nu}<+\infty$, with $ a \in (2,3)$, $\nu>0$.
We consider  the problem
\begin{align*}
\lambda^2 \pp_t  \phi   &=      L_{ W  }  [\phi]       + h\inn  \DD_{4R}
\phi(\cdot,0 )  &  \inn B_{4R(0)} ,
\end{align*}
(recall that $h$ is assumed to be defined in $\R^2 \times (0,T)$.
Let $\phi_k$ be the solution estimated in Lemma \ref{step1}
of
\begin{align*}
\lambda^2 \pp_t  \phi_k    &=      L_{ W  }  [\phi_k]       + h_k\inn  \DD_{4R} \\
 \phi(\cdot,t )   &=    0 \onn \pp B_{4R} \foral t\in (0,T) ,\\
 \phi(\cdot,0 )  &=    0 \inn B_{4 R(0)} .
 \end{align*}
 In addition we let $\phi_{01}$, $\phi_{11}$, $\phi_{-11}$ solve
\begin{align*}
\lambda^2 \pp_t  \phi_{k1}    &=     L_{ W  }  [\phi_{k1}]       +  \bar h_k\inn  \DD_{4R} \\
 \phi_{k1}(\cdot,t )  &=    0 \onn \pp B_{4R} \foral t\in (0,T) ,\\
 \phi_{k1}(\cdot,0 )  &=    0 \inn B_{4 R(0)}
 \end{align*}
 for $k=0,1,-1$.
Let us consider the functions $\phi_{02}$ constructed in Lemma \ref{step2},
$\phi_{-1,2}$ constructed in Lemma \ref{step2b},
and
$\phi_{12}$ constructed in Lemma \ref{step3},
 that solve for $k=0,1,-1$
\begin{align*}
\lambda^2 \pp_t  \phi_{k2}    &=      L_{ W  }  [\phi_{k2}]       + h_k-  \bar h_k\inn  \DD_{3R} \\
 \phi_{k2}(\cdot,0 )  &=    0 \inn B_{3 R(0)} .
 \end{align*}
We define
\[
\phi  : =   \sum_{k=0,1,-1} (\phi_{k1} +\phi_{k2})  +  \sum_{k\ne 0,1,-1}  \phi_k
\]
which is a bounded solution of the equation
\[
\lambda^2 \phi_t  =     L_ W  [\phi ]  + h (y,t)\inn \DD_{3R}
\]
that defines a linear operator of $h$.  Applying the estimates for the components in Lemmas \ref{step1}, \ref{step2}, \ref{step2b}, and \ref{step3}
we obtain
\begin{align*}
 |\phi(y,t)| &  \lesssim       \frac{ \lambda_*(t)^{\nu}  \log R(t) }{1+|y|^{a-2}}\,   \|h^\perp \|_{a,\nu}
\\
&  \quad
+   \frac{ \lambda_*(t)^{\nu} }{ 1+ |y|^{a-2} }\, \left \| h_1 - \bar h_1\right  \|_{a,\nu}     +   \frac{ \lambda_*(t)^{\nu} R^{4}} {1+ |y|^2} \left \| \bar h_{1} \right  \|_{\nu,a}  \\
& \quad
+
  \frac {  \lambda_*(t)^{\nu}  R^{\frac{5-a}2} } { 1+|y| } \,
 \min\{1,  R^{\frac{5-a}2} |y|^{-2} \}
 \, \| h_0 -\bar h_{0} \|_{a,\nu} +     \frac{   \lambda_*(t)^{\nu}   R^{2}} {1+ |y|}  \|\bar h_0\|_{a,\nu}
 \\
& \quad
+
\lambda_*^\nu  \min \{ \log R , R^{4-a} |y|^{-2} \}
 \, \| h_{-1} -\bar h_{-1} \|_{a,\nu} +     \lambda_*(t)^\nu  \log R \|\bar h_{-1}\|_{a,\nu}  ,
\end{align*}
in $\DD_{3R}$. Finally, Lemma \ref{gradient} yields that the same bound is valid for
$ (1+|y|)  |\nn_y \phi | $
in $\DD_{2R}$.
The function $\phi\big |_{\DD_{2R}}$ solves \equ{eqL00}, it defines a linear operator of $h$
and satisfies the required estimates.
\qed

\subsection{Modified theory for mode 0}
Let us consider the problem
\begin{align}
\label{linear1}
\left\{
\begin{aligned}
& \lambda^2 \varphi_t =  L_{W} \varphi  + h(y,t) + \sum_{j=1,2} \tilde c_{0j} Z_{0j} w_\rho^2
\quad \text{in } \mathcal D_{2R}
\\
& \varphi \cdot W = 0 \quad \text{in } \mathcal D_{2R} \\
& \varphi =0 \quad \text{on } \partial B_{2R}\times (0,T)\\
& \varphi(\cdot,0)= 0 \quad \text{in }   B_{2R(0)}  ,
\end{aligned}
\right.
\end{align}
in mode 0.
The result here is the following.

\begin{prop}
\label{prop1-modified}
Let $\sigma\in (0,1)$, $\delta \in (0,1)$, $\nu>0$.
Assume $\| h \|_{\nu,2+\sigma}<\infty$. Then there is a solution $\phi$, $\tilde c_{0j}$  of \eqref{linear1}, which is  linear in $h$, such that
\begin{align*}
| \varphi(y,t) |
+ (1+|y|) |\nabla_y \varphi(y,t)|
\leq C \lambda_*^{\nu} \| h \|_{\nu,2+\sigma}
\begin{cases}
\frac{R^{\delta(3-\sigma)}}{(1+|y|)^3}  & |y| \leq 2 R^\delta \\
\frac{1}{(1+|y|)^\sigma } &   2 R^\delta \leq |y| \leq R ,
\end{cases}
\end{align*}
and such that
\[
\tilde c_{0j}[h] =  -\frac{\int_{B_{\R^2}} h \cdot Z_{0j} }{\int_{\R^2} w_\rho^2 |Z_{0j}|^2} - G[h]
\]
where $G$ is a linear operator of $h$ satisfying the estimate
\begin{align}
\label{est-G}
|G[h]| \leq
C  \lambda_*^{\nu}  R^{-\delta\sigma'} \| h \|_{\nu,2+\sigma} ,
\end{align}
with $0<\sigma'<\sigma$.
\end{prop}

%\begin{remark}
%For simplicity we would like to have the orthogonality condition defined as an integral in $\R^2$. Note that
%\begin{align*}
%\left|\int_{ ( B_{2R^\delta})^c}  h\cdot Z_{0j}\right|
%&\leq
%C \|h\|_{\nu,2+\sigma} \lambda_*^\nu
%\int_{ ( B_{2R^\delta})^c}  \frac{1}{(1+|y|)^{3+\sigma}} \,dy
%\\
%&\leq
%C \|h\|_{\nu,2+\sigma} \lambda_*^\nu
%R^{-\delta (1+\sigma)} .
%\end{align*}
%Then
%\[
%c_{0j} [h]  = - \frac{\int_{\R^2} h \cdot Z_{0j} }{\int_{B_1} |Z_{0j}|^2} - \tilde G[h]
%\]
%where $\tilde G$ is a linear operator of $h$ satisfying the estimate
%\[
%|\tilde G[h]| \leq
%C  \tau^{-\nu}  R^{-\delta\sigma'} \| h \|_{\nu,2+\sigma}.
%\]
%with $0<\sigma'<\sigma$.
%
%\end{remark}

%
%Let us recall that $\varphi$ and $h$ are in mode 0 means
%\begin{align*}
%\varphi = \Re( \tilde \varphi e^{i\theta} ) E_1 + \Im( \tilde \varphi e^{i\theta} ) E_3
%\end{align*}
%and the equation for $\varphi$ becomes
%\begin{align*}
%\partial_\tau \tilde\varphi = \mathcal L_0 \tilde \varphi + \tilde h
%\end{align*}
%where
%\begin{align*}
%\mathcal L_0[\varphi]   := \partial^2_\rho \varphi   + \frac{ \partial_\rho \varphi}{\rho}  -  (k^2 + 2k\cos w  + \cos(2w) ) \frac{\varphi} {\rho^2}
%\end{align*}
%
%This operator at $\rho=\infty$ is given by
%\begin{align*}
%\mathcal L_{0,\infty}[\varphi]   := \partial^2_\rho \varphi   + \frac{ \partial_\rho \varphi}{\rho}  -   \frac{\varphi} {\rho^2}
%\end{align*}

We are using the terminology {\em mode 0} from \S\ref{sectLinearTheory}, which means that $\varphi$ has the form
\begin{align*}
\varphi= \Re( \tilde \varphi e^{i\theta} ) E_1 + \Im( \tilde \varphi e^{i\theta} ) E_2
\end{align*}
where $\tilde \varphi$ is a complex valued  function of $\rho$ and $t$.
The equation $ \lambda^2 \varphi_t =  L_{W} \varphi  + h(y,t) $ (wit $h$ also in mode 0) becomes
\begin{align*}
\lambda^2 \partial_t \tilde\varphi = \mathcal L_0 \tilde \varphi + \tilde h ,
\quad
\text{where}
\quad
\mathcal L_0[\tilde \varphi]   := \partial^2_\rho \tilde \varphi
+ \frac{ 1}{\rho} \partial_\rho \tilde \varphi
-  \frac{ \cos(2w) } {\rho^2} \tilde \varphi ,
\end{align*}
and we have a similar definition for $\tilde h$.
Note that the operator $\mathcal L_0$  at $\rho = 0$ and $\rho=\infty$ is given by
$
%\mathcal L_{0,\infty}[\tilde \varphi]   :=
\partial^2_\rho \tilde \varphi
+ \frac{ 1}{\rho} \partial_\rho \tilde \varphi -   \frac{1} {\rho^2} \tilde \varphi.
$
The last equation can be written as a regular parabolic PDE by setting $\hat\varphi (y,t)  =  \tilde \varphi(\rho, t)e^{-i\theta} $,  $y= \rho e^{i\theta}$,
\begin{align*}
\lambda^2 \partial_t
\hat \phi &= \Delta_y \hat \varphi+ \frac {16\hat \varphi}{(1+ |y|^2)^2} + \hat h(y,t) .
\end{align*}
Thus, instead of \eqref{linear1} we will construct a solution to (changing the notation to $\varphi$ and $h$)
\begin{align}
\label{linear1-b}
\left\{
\begin{aligned}
& \lambda^2
\varphi_t =   \Delta_y  \varphi+ \frac {16}{(1+ |y|^2)^2} \varphi + h(y,t)  +  \tilde c_0 \rho w_\rho^3
\quad \text{in } \mathcal D_{2R}
\\
& \varphi =0 \quad \text{on } \partial B_{2R}\times (0,T)\\
& \varphi(\cdot,0)= 0 \quad \text{in }   B_{2R(0)}  ,
\end{aligned}
\right.
\end{align}
with $\varphi $ complex valued of the form $\varphi (y)= e^{i\theta} \tilde \varphi(\rho,t)$ (and the same for $h$).
Here $\tilde c_0$ is complex and related to $\tilde c_{0j}$ in \eqref{linear1} by $\tilde c_0 = \tilde c_{01} + i \tilde c_{02}$.

We will construct $\varphi$ solving  \eqref{linear1-b} of the form
\[
\varphi = \eta \phi  + \psi
\]
where $
\eta(y,t) = \eta_1\Bigl( \frac{|y|}{R_1} \Bigr) $ and $\eta_1(r) = 1$ for $r\leq 1$, $\eta_1(r)=0$ for $r\geq 2$.
Here $R_1 = R^\delta$.
We find a solution to \eqref{linear1-b} if we get $\phi $, $\psi$ solving  the system
\begin{align}
\label{eqPhi}
\left\{
\begin{aligned}
\lambda^2 \partial_t \phi &=  \Delta  \phi  + B \phi  +B \psi
+ h(y,t)
+ c_{0} \rho w_\rho^3
\quad \text{in } \mathcal D_{2 R_1}
\\
\phi( \cdot ,0) & = 0 \quad \text{in } B_{2R_1(0)}  ,
\end{aligned}
\right.
\end{align}
\begin{align}
\label{eqPsi}
\left\{
\begin{aligned}
\lambda^2 \partial_t \psi & =  \Delta \psi + (1-\eta) B \psi + A \phi + (1-\eta) h(y,t)
\quad \text{in } \mathcal D_{2 R}
\\
\psi  & = 0 \quad \text{on } \partial B_{2R}\times (0,T)
\\
\psi (\cdot ,0) & = 0  \quad \text{on } B_{2R(0)}  ,
\end{aligned}
\right.
\end{align}
where
\begin{align*}
B & =  \frac {16}{(1+ |y|^2)^2} , \quad
A\phi  =\phi \Delta \eta + 2 \nabla \phi \nabla \eta - \phi \eta_t .
\end{align*}

Consider
\begin{align}
\label{eqPsi2}
\left\{
\begin{aligned}
\lambda^2 \partial_t \psi & =  \Delta \psi + (1-\eta) B \psi + h(y,t)
\quad \text{in } \mathcal D_{2 R}
\\
\psi  & = 0  \quad \text{on } \partial B_{2R}\times (0,T) ,
\\
\psi (y,0) & = 0 \quad \forall y\in  B_{2R(0)}  ,
\end{aligned}
\right.
\end{align}
with $\psi$ and $h$ of the form $\psi = \tilde \psi(\rho,t)e^{i\theta}$.
Let
\[
\| \psi \|_{\nu,\sigma}^{(1)}
=
\sup_{\mathcal D_{2R}}
\bigl\{
\lambda_*^{-\nu}(t) (1+|y|)^{\sigma}\,
\bigl[
\,
| \psi (y,t)| + (1+|y|)  |\nabla_y \psi (y,t)|
\,
\bigr]
\bigr\} .
\]
\begin{lemma}
\label{lemma1-psi}
Let $\sigma \in (0,1)$, $\nu>0$ and let $\psi$ solve \eqref{eqPsi2}.
If $R_1$ is sufficiently large, then
\begin{align}
\label{estLemma1}
\| \psi \|_{\nu,\sigma}^{(1)} \leq C \| h \|_{\nu,2+\sigma}  .
\end{align}
If in \eqref{eqPsi2} $h$ is replaced by $(1-\eta)h$ we get the additional estimate
\begin{align*}
|\psi(y,t)| + R_1 |\nabla \psi(y,t)|
\leq
C \lambda_*^\nu \frac{1}{R_1^{\sigma} } ,
\quad |y| \leq 2 R_1 .
\end{align*}
\end{lemma}
\begin{proof}
To prove this lemma, we first claim that for the equation
\begin{align*}
%\label{eqPsi3}
\left\{
\begin{aligned}
\lambda^2 \partial_t \psi & =  \Delta \psi +  h(y,t)
\quad \text{in } \mathcal D_{2 R}
\\
\psi  & = 0  \quad \text{on } \partial B_{2R}\times (0,T) ,
\\
\psi (y,0) & = 0 \quad \forall y\in  B_{2R(0)}  ,
\end{aligned}
\right.
\end{align*}
with $\psi$ and $h$ of the form $\psi = \tilde \psi(\rho,t)e^{i\theta}$.
we have
\begin{align}
\label{est1}
\|\psi\|_{\nu,\sigma}^{(1)} \leq C \|h\|_{\nu,2+\sigma} .
\end{align}
This is obtained using a barrier for the real and imaginary parts of $\tilde \psi$, which satisfies
\[
\lambda^2
\partial_t \tilde \psi = \partial_{\rho\rho}\tilde \psi + \frac{1}{\rho} \partial_\rho \tilde \psi -\frac{1}{\rho^2} \tilde \psi + \tilde h.
\]

To find the estimate for the solution of \eqref{eqPsi2} we need to estimate $\| (1-\eta) B \psi \|_{\nu,2+\sigma}$.
We have that
\begin{align*}
(1-\eta) B \, |\psi |
& \leq
(1-\eta)
\lambda_*^{\nu} (1+|y|)^{-4-\sigma}  \|\psi\|_{\nu,\sigma}^{(1)}
\\
& \leq
R_1(0)^{-2} \lambda_*^{\nu} (1+|y|)^{-2-\sigma}  \|\psi\|_{\nu,\sigma}^{(1)} ,
\end{align*}
%and therefore
%\[
%\| (1-\eta)  |\nabla W|^2 \, |\psi |  \|_{\nu,2+\sigma} \leq C R_1^{-2} \|\psi\|_{\nu,\sigma}^{(1)}.
%\]
%Similarly
%\begin{align*}
%(1-\eta) | \nabla W \cdot \nabla \psi ) W |
%& \leq
%(1-\eta) \tau^{-\nu}  (1+|y|)^{-3-\sigma}  \|\psi\|_{\nu,\sigma}^{(1)}
%\\
%& \leq
%R_1^{-1}\tau^{-\nu}  (1+|y|)^{-2-\sigma}  \|\psi\|_{\nu,\sigma}^{(1)} .
%\end{align*}
and  therefore
\[
\| (1-\eta) B \psi \|_{\nu,2+\sigma} \leq C R_1(0)^{-2} \|\psi\|_{\nu,\sigma}^{(1)}.
\]
Then, if $\psi$ satisfies \eqref{eqPsi2}, using \eqref{est1} we get
\begin{align*}
\|\psi\|_{\nu,\sigma}^{(1)}
& \leq C \| (1-\eta) B \psi + h \|_{\nu,2+\sigma}
\leq C R_1(0)^{-1} \|\psi\|_{\nu,\sigma}^{(1)}  + C  \| h \|_{\nu,2+\sigma} .
\end{align*}
If $R_1(0)$ is large enough, we obtain \eqref{estLemma1}.
\end{proof}

\begin{proof}[Proof of Proposition~\ref{prop1-modified}]
We use Lemma~\ref{lemma1-psi} to find a solution
$\psi[\phi]$ of \eqref{eqPsi2} with $h$ replaced by $A\phi$,
%\begin{align*}
%\left\{
%\begin{aligned}
%\partial_\tau \psi & =  \Delta \psi + (1-\eta) B \psi + A \phi
%\quad \text{in } \mathcal D_{2 R}
%\\
%\psi \cdot W & = 0  \quad \text{in }  \mathcal D_{2 R}
%\\
%\psi  & = 0 \quad \text{on } \partial B_{2R}\times (\tau_0,\infty)
%\\
%\psi (\cdot ,\tau_0) & = 0  \quad \text{on } B_{2R(\tau_0)}  ,
%\end{aligned}
%\right.
%\end{align*}
and a solution $\psi[h]$ of \eqref{eqPsi2} with $h$ replaced by $(1-\eta) h$,
%\begin{align*}
%\left\{
%\begin{aligned}
%\partial_\tau \psi & =  \Delta \psi + (1-\eta) B \psi + (1-\eta) h(y,\tau)
%\quad \text{in } \mathcal D_{2 R}
%\\
%\psi \cdot W & = 0  \quad \text{in }  \mathcal D_{2 R}
%\\
%\psi  & = 0 \quad \text{on } \partial B_{2R}\times (\tau_0,\infty)
%\\
%\psi (\cdot ,\tau_0) & = 0  \quad \text{on } B_{2R(\tau_0)}  ,
%\end{aligned}
%\right.
%\end{align*}
so that  $\psi[\phi] + \psi[h]$ is the solution of \eqref{eqPsi}.

Let $\sigma_1 \in (0,1)$.
We also get the estimate
\begin{align}
\label{estPsi1}
\|\psi[\phi]\|_{\nu,\sigma_1}^{(1)} \leq C \|A \phi\|_{\nu,2+\sigma_1} .
\end{align}

We take $R_1 = R^\delta$ and  construct a solution of the system \eqref{eqPhi}, \eqref{eqPsi}. For this it suffices to find $\phi$ such that
\begin{align}
\label{eqPhi3}
\left\{
\begin{aligned}
\lambda^2
\partial_t \phi &=  \Delta \phi  + B\phi + B\psi[\phi] + B\psi[h]
+ h(y,t)
+ c_0 \rho w_\rho^3
\quad \text{in } \mathcal D_{2 R_1}
\\
\phi( \cdot ,0) & = 0 \quad \text{in } B_{2R_1(0)}  .
\end{aligned}
\right.
\end{align}
%We regard this problem as a fixed point problem in $\phi$.
%A%A
Let $\mathcal T$ denote the linear operator given by Lemma~\ref{step2},
Applied in $\mathcal D_{2 R_1}$.
Then to solve \eqref{eqPhi3} we consider the fixed point problem
\[
\phi  = \mathcal T[ B \psi[\phi] + B\psi[h] + h ] .
\]

Let $\sigma \in (0,1)$.
By Lemma~\ref{step2},
\begin{align}
\label{estMode0Main}
\| \mathcal T [ g ] \|_{*,\nu,2+\sigma} \leq \|g\|_{\nu,2+\sigma},
\end{align}
where
\begin{align*}
\|\phi\|_{*,\nu,\sigma} &= \sup \,  \frac{ \lambda_*^{-\nu} (1+|y|)^{3}}{R_1^{3-\sigma}}
\left[ | \phi(y,t)| +(1+|y|) |\nabla_y \phi(y,t)|
\right] .
\end{align*}

We claim that if $ \sigma_1< \sigma $
then
\begin{align}
\label{estAphi}
\| A \phi \|_{\nu,2+\sigma_1}  \leq C R_1(0)^{\sigma_1-\sigma} \|\phi\|_{*,\nu,\sigma}.
\end{align}
Indeed, we have
\begin{align*}
|\phi \Delta \eta|
& \leq
\frac{1}{R_1^2} \lambda_*^{\nu} \frac{R_1^{3-\sigma}}{(1+|y|)^3} |\Delta \eta_1| \|\phi\|_{*,\nu,\sigma}
\leq
C \lambda_*^{\nu} \frac{R_1^{ \sigma_1  - \sigma }}{(1+|y|)^{2+\sigma_1}}  \|\phi\|_{*,\nu, \sigma }
\\
& \leq
C R_1(0)^{ \sigma_1  -  \sigma  }\lambda_*^{\nu} \frac{1}{(1+|y|)^{2+\sigma_1}}  \|\phi\|_{*,\nu, \sigma } .
\end{align*}
Similarly
\begin{align*}
|\nabla \phi \nabla \eta |
&\leq
\frac{1}{R_1} \lambda_*^{\nu} \frac{R_1^{3-\sigma}}{(1+|y|)^4} |\nabla \eta_1|
\|\phi\|_{*,\nu,\sigma}
\leq
C \lambda_*^{\nu} \frac{R_1^{ \sigma_1-\sigma }}{(1+|y|)^{2+\sigma_1}}
\|\phi\|_{*,\nu,\sigma} .
\end{align*}
Similar estimates for the remaining terms in $A$ prove \eqref{estAphi}.

From \eqref{estPsi1} and  \eqref{estAphi} we find
\begin{align}
\label{estPsi2}
\|\psi [\phi] \|_{\nu,\sigma_1}^{(1)}
\leq   C
R_1( 0 )^{\sigma_1 - \sigma}\|\phi\|_{*,\nu,\sigma} .
\end{align}

\medskip

Now we claim that
\begin{align}
\label{estBpsi}
\| B \psi \|_{\nu,2+\sigma} \leq C \| \psi \|_{\nu,\sigma_1}^{(1)}.
\end{align}
Indeed,
\begin{align*}
B \, |\psi |
& \leq C \frac{ \lambda_*^{\nu}}{ (1+|y|)^{4+\sigma_1 } } \| \psi \|_{\nu,\sigma_1}^{(1)}
\leq
C \frac{ \lambda_*^{\nu}}{ (1+|y|)^{2+ \sigma } }  \| \psi \|_{\nu,\sigma_1}^{(1)}
\end{align*}
%Also
%\begin{align*}
%|(\nabla W \cdot \nabla \psi ) W|
%& \leq C \frac { \tau^{-\nu} }{ (1+|y|)^{3+ \sigma_1} }  \| \psi \|_{\nu,\sigma_1}^{(1)}
%\leq C \frac { \tau^{-\nu} }{ (1+|y|)^{2+ \sigma} } \| \psi \|_{\nu,\sigma_1}^{(1)}
%\end{align*}
so \eqref{estBpsi} follows.
Combining \eqref{estBpsi} and \eqref{estPsi2} we get
\begin{align}
\nonumber
\| B \psi[\phi \|_{\nu,2+ \sigma }
& \leq
C
\|\psi [\phi] \|_{\nu,\sigma_1}^{(1)}
 \leq   C R_1( 0 )^{\sigma_1- \sigma }  \|\phi\|_{*,\nu, \sigma } .
\end{align}
From the above inequality and  \eqref{estMode0Main} we then get
\[
\| \mathcal T [ B \psi[\phi]  ] \|_{*,\nu, \sigma }
\leq C  R_1(0)^{\sigma_1-\sigma }  \|\phi\|_{*,\nu, \sigma } ,
\]
which shows that the operator $\phi \mapsto \mathcal T [ B \psi[\phi]  + B\psi[h] + h ] $ is a contraction if $R_1(0)$ is sufficiently large, and we find a unique fixed point, which satisfies the estimate
\begin{align}
%\label{estPhi3}
\nonumber
\|\phi\|_{*,\nu, \sigma } \leq C  \| \mathcal T [ B\psi[h] + h ]  \|_{*,\nu,\sigma} .
\end{align}

Next we estimate $ \| \mathcal T [ B\psi[h] + h ]  \|_{*,\nu,\sigma} $.
We have by \eqref{estMode0Main}
\begin{align*}
\| \mathcal T [ B\psi[h] + h ]  \|_{*,\nu,\sigma}
&\leq
C \| B\psi[h] + h \|_{\nu,2+\sigma}
\\
&\leq
C \| \psi[h] \|_{\nu,\sigma}^{(1)} + \| h \|_{\nu,2+\sigma}
\leq
C \| h \|_{\nu,2+\sigma}  ,
\end{align*}
and hence
\begin{align}
\label{estPhi3}
\|\phi\|_{*,\nu,\sigma} \leq C \| h \|_{\nu,2+\sigma}  .
\end{align}

Similar to \eqref{estPsi2} we have
\begin{align*}
\|\psi [\phi] \|_{\nu,\sigma}^{(1)} \leq   C \|\phi\|_{*,\nu,\sigma}  \leq C \| h \|_{\nu,2+\sigma}
\end{align*}
and
\begin{align*}
\|\psi [h] \|_{\nu,\sigma}^{(1)}  \leq C \| h \|_{\nu,2+\sigma}  .
\end{align*}

Recalling that  $\varphi = \eta \phi + \psi$ and $R_1 = R^\delta$, we get
\begin{align*}
| \varphi(y,t) |
+ (1+|y|) |\nabla_y \varphi(y,t)|
\leq C \lambda_*^{\nu} \| h \|_{\nu,2+\sigma}
\begin{cases}
\frac{R^{\delta(3-\sigma)}}{(1+|y|)^3}  & |y| \leq 2 R^\delta \\
\frac{1}{(1+|y|)^\sigma } &   2 R^\delta \leq |y| \leq R .
\end{cases}
\end{align*}

Finally, thanks to Lemma~\ref{step2}, we have that
\[
c_{0j}[h] =  -
\frac{1}{\int_{B_1} |Z_{0j}|^2}
\left[
\int_{B_{2R_1}} h \rho w_\rho
+
\int_{B_{2R_1}} ( B \psi[\phi] + B\psi[h] ) \rho w_\rho
\right]
\]
The last term is a linear operator of $h$, which we estimate next.
A similar computation as in \eqref{estAphi} shows that
\begin{align*}
\| A \phi \|_{\nu+\delta(\sigma-\sigma_1) , 2+\sigma_1}
\leq C \|\phi \|_{*,\nu,\sigma} .
\end{align*}
This implies
\[
\| \psi[\phi] \|_{\nu+\delta(\sigma-\sigma_1) , \sigma_1 }\leq C\|\phi \|_{*,\nu,\sigma}
\]
and therefore
\begin{align*}
\left|
\int_{B_{2R_1}} B \psi[\phi] \cdot Z_{0j}
\right|
&\leq C \lambda_*^{\nu }  R^{\sigma_1-\sigma}
\|\phi \|_{*,\nu,\sigma}
\end{align*}
and using \eqref{estPhi3}
\begin{align*}
\left|
\int_{B_{2R_1}} B \psi[\phi] \cdot Z_{0j}
\right|
C \lambda_*^{\nu }  R^{\sigma_1-\sigma}
 \| h \|_{\nu,2+\sigma} .
\end{align*}

We have
for $|y|\leq 2 R^\delta$
\[
|\psi[h](y,t)| + (1+|y|) |\nabla_y \psi[h](y,t) |
\leq C R_1^{-\sigma} \| h \|_{\nu,2+\sigma}.
\]
Then for $|y|\leq 2 R^\delta$ we have
\begin{align*}
| B \, |\psi[h] |
& \leq C \lambda_*^{\nu} (1+|y|)^{-4}  R_1^{- \sigma} \| h \|_{\nu,2+\sigma} ,
\end{align*}
%and
%\begin{align*}
%|(\nabla W \cdot \nabla \psi ) W|
%& \leq C \tau^{-\nu} (1+|y|)^{-4}  R_1^{-\sigma}
%\| h \|_{\nu,2+\sigma} .
%\end{align*}
%As a consequence we get
%\begin{align*}
%| B \psi [h] |
%\leq  C \tau^{-\nu} (1+|y|)^{-4}  R_1^{ -\sigma} \| h \|_{\nu,2+\sigma} .
%\end{align*}
and hence
\begin{align*}
\left|
\int_{B_{2 R_1}} B \psi[h] \rho w_\rho
\right|
&\leq C \lambda_*^{\nu} R_1^{-\sigma} \| h\|_{\nu,2+\sigma}.
\end{align*}
We would like to have the orthogonality condition defined as an integral in $\R^2$. Note that
\begin{align*}
\left|\int_{ ( B_{2R^\delta})^c}  h \rho w_\rho \right|
&\leq
C \|h\|_{\nu,2+\sigma} \lambda_*^\nu
\int_{ ( B_{2R^\delta})^c}  \frac{1}{(1+|y|)^{3+\sigma}} \,dy
\\
&\leq
C \|h\|_{\nu,2+\sigma} \lambda_*^\nu
R^{-\delta (1+\sigma)} .
\end{align*}
Then, going back to the original notation, we get
\[
c_{0j} [h]  = - \frac{\int_{\R^2} h \cdot Z_{0j} }{\int_{\R^2} w_\rho^2 |Z_{0j}|^2} - G[h]
\]
where $G$ satisfies \eqref{est-G}.
\end{proof}

\subsection{Lipschitz bounds with respect to \texorpdfstring{$\lambda$}{lambda}}
\label{lips}
Let us consider the linear operator we constructed  in Proposition \ref{prop2} as a solution $\phi[h]= \mathcal T_{\la,1} [h]$
of problem \eqref{eqL00},
\begin{align*}
%\label{eqL00b}
\lambda^2 \pp_t  \phi   &=     L_{ W  }  [\phi]       + h(y,t)\inn  \DD_{2R}
\\
\nonumber
 \phi(\cdot,0 ) &=   0 \inn B_{2R( 0)}
\\
\nonumber
\phi \cdot  W  &=  0 \inn   \DD_{2R}
\end{align*}
where $ \DD_{2R} =  \{ (y,t) \ /\  t \in ({0}, T) ,\  y \in  B_{2R(t)}(0) \} $,
and we assume $  h \cdot  W  =   0$ in  $\DD_{2R}$.
The purpose in this section is find estimates for directional derivatives of the operator $\mathcal T_{\la,1} [h]$ with respect
 to the parameter function  $\lambda$. Examining the construction of $\mathcal T_{\la,1} [h]$ as the superposition of the unique solutions of
 different problems, it is not hard to see that the directional derivative
\begin{align*}
%\label{defPhiLambda}
\phi_\la :=  (\pp_\la \mathcal T_{\la,1} )[h] [\la_1 ] = \frac{d}{ds}  \mathcal T_{\la + s\la_1,1}[h]\big |_{s=0} \end{align*}
satisfies the equation
\begin{align*}
%\label{derPhiLambda}
\lambda^2 \pp_t  \phi_\la    &=     L_{ W  }  [\phi_\la ]       - 2\frac{\la_1}{\la} ( L_{ W  }  [\phi ] +  h(y,t))\inn  \DD_{2R}
\\
\nonumber
\phi_\lambda(\cdot,0) &=0 \inn B_{2R(0)}
\end{align*}
with $\phi = \mathcal T_{\la,1} [h]$. We will find estimates for this quantity inherited from those we have already established for $\phi$.
We assume that for some positive numbers $a,b,c$  independent of $T$
we have that
$$
a\la_* (t)  \le \la (t)\le b \la_* (t) , \quad   |\la_1(t)| \le c \la_*(t) \foral t\in (0,T).
$$
The following estimate holds.

\begin{prop}
\label{propDerPhiLambda}
The function $\phi_\la$ is well defined and satisfies the estimate
\begin{align*}
& (1+|y|)\left |\nn_y \phi_\lambda(y,t)\right |  +  |\phi_\lambda(y,t)|
\\
& \lesssim\
\la_*^\nu
\frac{ R^{1+ \frac{5-a}2} \log R  } {1 + |y|}
\min \Bigl\{  \frac{ R^{1+ \frac{5-a}2}   } { |y|^2} , 1 \Bigr\}\,
\|h\|_{a,\nu}  \left \|\frac {\la_1}{\la}  \right \|_\infty  \inn \DD_{2R}.
\end{align*}
\end{prop}

\noindent
{\bf Proof of Proposition \ref{propDerPhiLambda}.}
  We recall that $\phi[h]= \mathcal T_{\la,1} [h]$ was constructed mode by mode. According to the decomposition \eqref{hh1}, \eqref{fourierH}, \eqref{hh3}, \eqref{hh4}, we can write
\begin{align}
%\label{decompH}
\nonumber
\phi  = \phi_0  + \phi_1 + \phi_{-1} + \phi^\perp , \quad   h=  h_0 + h_1+h_{-1}+  h^\perp,
\end{align}
where we can assume for $k=0,1$, $j=1,2$,
$$
\int_{B_{2R}} h_k(y,t) \cdot Z_{kj}(y) dy \ =\ 0.
$$
We will give the estimates for $\phi_\lambda$ in each mode separately, writing
$$
\phi_\la  = \phi_{0\la}  + \phi_{1\la} + \phi_{-1\lambda} +  \phi^\perp_\la .
$$
We will estimate each of the terms $\phi_{0\la}$, $ \phi_{1\la}$,
$ \phi_{-1\lambda} $, $ \phi^\perp_\la $ separately.

First we give some estimates for the equation in entire space with some suitable right hand side.

\begin{lemma}
\label{lemmaDuhamelDiv}
Let $\phi$ be the solution of
$$ \begin{aligned}
\partial_\tau \phi &= \Delta_y \phi + g(y,\tau)   %\text{\rm div}_y G(y,\tau)
\quad \text{in }\R^2 \times (0,\infty) \\
\phi (\cdot,0) &= 0
\quad \text{in }\R^2
\end{aligned}$$
given by Duhamel's formula. The following holds: let $\nu \in (0,1)$, $a\in (2,3)$.
Assume that  $g(y,\tau)= \text{\rm div}_y G(y,\tau)$
 where
$
| G(y,\tau) | \leq \frac{1}{(1+\tau^\nu) (1+|y|^{a-1})}.
$
Then
\[
|\phi(y,\tau) |\leq \frac{C (1+\log_+\tau))}{(1+\tau^\nu) (1+|y|^{a-2})}
\]
where $\log_+\tau = \max( 0  , \log\tau)$. If instead, $g$ satisfies
$
| g(y,\tau) | \leq \frac{1}{(1+\tau^\nu) (1+|y|^a)}
$,
then
\[
|\phi(y,\tau) |\leq \frac{C (1+\log_+\tau))}{1+\tau^\nu}.
\]

\end{lemma}

\begin{proof}
The proof of the first estimate directly follows from the representation formula
\begin{align*}
\phi(y,\tau)
&= \frac{1}{4\pi} \int_0^\tau \frac{1}{\tau-s} \int_{\R^2}
e^{-\frac{|y-z|^2}{4(\tau-s)}}  \text{\rm div}_z  G(z,s)\, d z \, d s
\\
&= C \int_0^\tau \frac{1}{\tau-s} \int_{\R^2}
e^{-\frac{|y-z|^2}{4(\tau-s)}} \frac{y-z}{\tau-s} \cdot  G(z,s)\, d z \, d s
\end{align*}
The second estimate is treated similarly.
\end{proof}

\medskip
\subsection{Mode 0.  Estimate of  \texorpdfstring{$\phi_{0\lambda}$}{phi0lambda}}
We claim that
\begin{align}
\label{derPhi0Lambda}
& (1+|y|)|\nn \phi_{0\la}(y,t)| + |\phi_{0\la}(y,t)|
\\
\nonumber
&  \quad \lesssim \la_*^\nu \left \|\frac{\la_1}{\la} \right \|_\infty \|h_0\|_{a,\nu}\frac{ R^{1 + \frac {5-a}2 }\log R } {|y|+1 } \begin{cases}
 1 & \hbox{ if } |y|< R^{\frac 12} \\ \frac{ R} { |y|^2} & \hbox{ if } |y|> R^{\frac 12} .  \end{cases}
\end{align}

\begin{proof}
We refer to the notation in the proof of Lemma \ref{step2} on the construction of $\phi_0$.
We recall that
$\phi_0= L_W[\Phi_0]$ where $\Phi_0$ is the unique solution of the problem \equ{eqL00kk},
\begin{align*}
%\label{eqL00kk1}
  \la^2 \Phi_t  & =    L_ W  [\Phi]  + H_0(y,t)   \inn \DD_{4R},   \\
  \Phi(y ,0) & =   0 \inn B_{4R}(0)   \nonumber\\ \nonumber
   \Phi(y,\tau) &  =  0 \quad \hbox{for all }  t\in (0,T), \quad y\in \pp B_{4R(0)}(0).
\end{align*}
Then $\phi_{0\la} = L_W[\Phi_{0\la}]$ where $\Phi_{0\la}$ solves
\begin{align}\label{eqL00kk2}
\la^2 \pp_t \Phi_{0\la}  & =    L_ W  [\Phi_{0\la}]  - 2\frac{\la_1}{\la} (\phi_0  + H_0(y,t) )  \inn \DD_{4R},
\\
\Phi_{0\la}(y ,0) & =   0 \inn B_{4R}(0)
\nonumber
\\
\nonumber
\Phi_{0\la}(y,\tau) &  = 0 \quad \hbox{for all }  t\in (0,T), \quad y\in \pp B_{4R(0)}(0).
\end{align}

We recall that we obtained
\[
%\label{Xxx}
|\phi_0 (y, t)| \lesssim \|h_0\|_{a,\nu} R^{{5-a}} \la_*(t)^{\nu}(1+|y|)^{-3} ,
\]
and a posteriori the better estimate
\[
|\phi_0 (y, t)|  \lesssim  \|h_0\|_{a,\nu} \frac{ R^{\frac {5-a}2 }\la_*^{\nu}}{1+ |y|} \begin{cases}    1 & \quad \hbox{ if }  |y| \le  R^{\frac {5-a}4 }, \\ \frac {R^{\frac {5-a}2 }}{ |y|^2}  & \quad \hbox{ if }  |y| > R^{\frac {5-a}4 } .
\end{cases}
\]
The use of an explicit barrier in \equ{eqL00kk2} then yields
$$
|\Phi_{0\la}|  \lesssim     \la_*^\nu \|h_0\|_{a,\nu} \left \|\frac{\la_1}{\la} \right \|_\infty \frac{ R^{\frac {5-a}2 +2 }\log R} {1+ |y|}
$$
and then, arguing similarly as in the construction of $\phi_0$ we obtain the estimate for $\phi_{0\la}= L_W[\Phi_{0\la}]$,
\begin{equation}
\label{jk01}
|\phi_{0\la}(y,t)|
\lesssim  \lambda_*^\nu \|h_0\|_{a,\nu} \left \|\frac{\la_1}{\la} \right \|_\infty
\frac{ R^{\frac {5-a}2 +2 }\log R} {1+ |y|^3} .
\end{equation}

Next we want to improve this estimate, as was done in Lemma~\ref{step2}.
We have that $\phi_{0\la}$ satisfies the equation
$$
\lambda^2 \partial_t \phi_{0\la}  =   L_ W  [\phi_{0\la}]  + g(y,t)
$$
where
\begin{equation}
\label{defG111}
g=- 2\frac{\la_1}{\la} (L_W[\phi_0]  + h_0(y,t) ) .
\end{equation}
We have that
$ g(y,t) =  {\rm div}_y\,  G_0(y, t)  + G_1(y,t)$ in $ \DD_{4R}$, where
\begin{align}
\label{boungsG1111}
&  (1+|y|)|G_1(y,t) | + (1+|y|)^\alpha [  G_0 ] _{ B_\ell (y,\tau)\cap \DD_{4R}} +
|G_0(y,t) |
\\
\nonumber
& \quad
\lesssim
\left \|\frac{\la_1}{\la} \right \|_\infty \|h_0\|_{a,\nu}
\frac{ R^{\frac {5-a}2 }\la_*^{\nu}}{1+ |y|^2}
\begin{cases}    1 & \quad \hbox{ if }  |y| \le  R^{\frac {5-a}4 }, \\
\frac {R^{\frac {5-a}2 }}{ |y|^2}  & \quad \hbox{ if }  |y| > R^{\frac {5-a}4 } .\end{cases}
\end{align}
We write
\[
\phi_{0\lambda} = \phi_b + \phi_c
\]
where $\phi_b$ is given by the Duhamel formula
\begin{equation}
%\label{DuhamelPhiB}
\nonumber
\phi_b(y,t)
=  \int_0^\tau  \frac 1{4\pi (\tau-s) } ds\, \int_{\R^2}  e^{-\frac {|y-z|^2 }{4(\tau -s)} } g(z,t_\la(s)) \,dz
\end{equation}
with $g$ given by \eqref{defG111} and   $\tau$ by \eqref{tauu},
and  let $\phi_c$ solve
\begin{equation}
\nonumber
%\label{eqPhiC}
\left\{
\begin{aligned}
\lambda^2 \pp_t \phi_c   &=      L_W  [\phi_c]
+|\nabla W|^2 \phi_b + 2 (\nabla W \cdot \nabla \phi_b) W
\quad \text{in }  \DD_{4R}
\\
 \phi_c (\cdot,t )   &=    -\phi_b \onn \pp B_{4R} \foral t\in (0,T) ,
\\
 \phi_c (\cdot,0 )  &=    0 \inn B_{4 R(0)} .
\end{aligned}
\right.
\end{equation}
Using Lemma~\ref{lemmaDuhamelDiv} we find that
\begin{equation}
\label{jk03}
|\phi_b(y,t)|   + (1+|y|) |\nn \phi_b(y,t)|
\lesssim
\left \|\frac{\la_1}{\la} \right \|_\infty \|h_0\|_{a,\nu}
\, \la_*^{\nu} R^{\frac {5-a}2 } \log R
%\la_*(t)^\nu R^{a-2} \log R (1+|y|)^{2-a} \|h^\perp\|_{a,\nu}
%\left \|\frac{\la_1}{\la} \right \|_\infty ,
\end{equation}
for $|y|\leq 5R$.
The above estimate implies that
\begin{equation}
\label{RHSphiC}
|\nabla W|^2 |\phi_b| + 2\left| (\nabla W \cdot \nabla \phi_b) W\right|
\lesssim
\left \|\frac{\la_1}{\la} \right \|_\infty \|h_0\|_{a,\nu}
\, \la_*^{\nu} R^{\frac {5-a}2 } \log R  ( 1+|y|)^{-3}
%\lesssim  \la_*(t)^\nu  \log R  ( 1+|y|)^{-3}  \|h^\perp\|_{a,\nu}
%\left \|\frac{\la_1}{\la} \right \|_\infty .
\end{equation}
Let $\varphi_c$ be the complex valued function defined by
\[
\phi_c(y,t)\ =\  {\rm Re}\, (\varphi_c(\rho,t))\,E_1 +  {\rm Im}\, (\varphi_c(\rho,t))\,E_2
\]
so that using the notation in \equ{Llxx},  $\varphi_c$ satisfies the equation
\begin{align}
\label{ppi2}
\left\{
\begin{aligned}
\lambda^2 \pp_t \varphi_c  & = \mathcal L_{0}[ \varphi_c ]  + \tilde g_c  (\rho, t)  \inn  \ttt D_{4 R} ,
\\
\varphi_c(0,\rho)  & =  0 \inn (0, 4 R),
\end{aligned}
\right.
\end{align}
where by \eqref{RHSphiC} $ \tilde g_c $ satisfies
\[
|\tilde g_c | \lesssim
\left \|\frac{\la_1}{\la} \right \|_\infty \|h_0\|_{a,\nu}
\, \la_*^{\nu} R^{\frac {5-a}2 } \log R  ( 1+|y|)^{-3}.
\]
We can find an explicit supersolution for the real and imaginary parts of equation \eqref{ppi2} in $\tilde D_{R^{1/2}}$ of the form
\[
\bar \varphi_c =
d(t)
Z_0(\rho) \int_\rho^{2 R^{1/2}} \frac{1}{Z_0(r)^2 r } \int_0^r Z_0(s) (1+s)^{-3} s\,dsdr
\]
where $d(t) = \left \|\frac{\la_1}{\la} \right \|_\infty \|h_0\|_{a,\nu}
\, \la_*^{\nu} R^{\frac {5-a}2 } \log R $ and $Z_0$ is defined in \eqref{Z000}.
We note that at $\rho  = R^{1/2}$ the value of $\phi_c$
satisfies, by \eqref{jk01} and \eqref{jk03}
\begin{align*}
|\phi_c(R^{1/2},t) |
&  \leq |\phi_{0\lambda} (R^{1/2},t) | +  |\phi_b(R^{1/2},t) |
\\
& \lesssim
\lambda_*^\nu \|h_0\|_{a,\nu} \left \|\frac{\la_1}{\la} \right \|_\infty
 R^{\frac {6-a}2  }\log R
\end{align*}
and on the other hand
\[
|\bar\phi_c(R^{1/2},t) |   \geq c  \left \|\frac{\la_1}{\la} \right \|_\infty \|h_0\|_{a,\nu}
\, \la_*^{\nu} R^{\frac {5-a}2 } \log R R^{1/2}
\]
for some $c>0$.
This yields
\begin{align*}
|\phi_c ( y, t)|
\lesssim
 \left \|\frac{\la_1}{\la} \right \|_\infty \|h_0\|_{a,\nu}
\, \la_*^{\nu} R^{\frac {5-a}2 +1} \log R (1+|y|)^{-1}
\quad |y|< R^{1/2} .
\end{align*}
and combining with \eqref{jk03} we get
\begin{align*}
|\phi_{0\lambda} ( y, t)|
\lesssim
 \left \|\frac{\la_1}{\la} \right \|_\infty \|h_0\|_{a,\nu}
\, \la_*^{\nu} R^{\frac {5-a}2 +1} \log R (1+|y|)^{-1}
\quad |y|< R^{1/2} .
\end{align*}
Using Schauder estimates together with \eqref{boungsG1111} we obtain \eqref{derPhi0Lambda}.

%
%
%{\crr
%Then, using  the same supersolutions as in Lemma~\ref{step2}, for $R'\ll R$ we get
%\be
%|\phi_{0\la}(y,t)| \lesssim        \left \|\frac{\la_1}{\la} \right \|_\infty  \frac { R^{\frac {5-a}2 }}{|y|+1} \la_*^\nu \|h_0\|_{a,\nu}[  \frac{ R^2 \log R} {|R'|^2}    + |R'|^2 ] ,
%\label{jk02}\ee
%for $|y|< R'$.
%We choose $R'=  R^{\frac 12}$. Combining \equ{jk01}, \equ{jk02} and arguing as in the previous case, we get
%
%The estimate is concluded.
%}

\end{proof}

\subsection{Mode 1. Estimate of  \texorpdfstring{$\phi_{1\lambda}$}{phi1lambda}}
%Let $\phi_{\lambda,1}$ denote mode one of $\phi_\lambda$ defined in \eqref{defPhiLambda}.
From a similar argument we obtain the following estimate.
\[
(1+|y|)|\nabla_y\phi_{1\lambda}(y , t)| + |\phi_{1\lambda}(y , t)|
\leq C  \lambda_*(t)^\nu  (1+|y|)^{2-a }
\|h\|_{a,\nu}  \left \|  \frac {\la_1}{\la}  \right \|_\infty  \inn \DD_R.
\]

\subsection{Estimate of \texorpdfstring{$\phi^\perp_\lambda$}{phiPerpLambda} and \texorpdfstring{$\phi_{-1\lambda}$}{}}
%\label{sectionPhiLambdaPerp}
We claim that for any $\sigma\in (0,1)$ we have
\begin{align}
\nonumber
%\label{estDerPhiPerp}
(1+|y|) |\nabla \phi^\perp_\lambda(y,t)|
+
|\phi^\perp_\lambda(y,t)|
& \lesssim  \lambda_*(t)^\nu
R^{a-2} \log R
(1+|y|)^{2-a} \|h^\perp\|_{a,\nu}
\left \|\frac{\lambda_1}{\lambda} \right \|_\infty
\\
%\label{estDerPhiPneg1}
\nonumber
(1+|y|) |\nabla \phi_{-1\lambda}(y,t)|
+
|\phi^\perp_{-1\lambda}(y,t)|
&
\lesssim  \lambda_*(t)^\nu
R^{a-2+\sigma}
(1+|y|)^{2-a} \|h_{-1} \|_{a,\nu}
\left \|\frac{\lambda_1}{\lambda} \right \|_\infty .
\end{align}

\section{The \texorpdfstring{$\lambda$-$\omega$}{lambda-omega} system}
\label{secLambda}

In this section we prove Proposition~\ref{propIntegralOp}, on approximate solvability of the equation
\[
\mathcal B_0[p] (t) = a(t) , \quad t\in [0,T),
\]
where $\mathcal B_0$ is the operator defined in \eqref{defB0-new} and $a:[0,T] \to \C$ is a given continuous function.
We will also derive Lipschitz estimates that will be crucial in solving for the final adjustment of parameters $p,\xi$ by a fixed point argument in the next section.

\medskip
Consistently with the discussion in section~\ref{informal}, we assume that $\frac{1}{C_1} \leq   | a(T)|  \leq C_1$ for some $C_1$ independent of $T$.
We will construct an operator $\mathcal P$ that to a function $a$ in a suitable class assigns $p = \mathcal P[a]$ such that
\begin{align}
\label{modifiedEq}
\mathcal B_0[p ](t)  = a(t) + \Rem[a](t) , \quad \text{in } [0,T).
\end{align}
so that $\Rem[a](t)$ is a suitably small.

\noanot{ % vvvvvvvvvvvvvvvv
\cb
From \eqref{holderA2}
\begin{align*}
|a(t)-a(s)|
&\leq C
\lambda_*(t)^{\mu+2\beta\gamma}
(t-s)^\gamma
\\
&\leq C (t-s)^\gamma
\left( \frac{|\log T| (T-t)}{|\log(T-t)|^2}\right)^{\mu-2\gamma+2\beta\gamma}
\end{align*}
We want to take
\begin{align*}
m = \mu-2\gamma+2\beta\gamma
\end{align*}
and then $m\leq \mu - \gamma$ is equivalent to
\begin{align*}
2 \beta  \leq 1
\end{align*}
which is ok.
	
We also check that
\begin{align*}
m>0
\Leftrightarrow
\gamma 2 (1-\beta)  < \mu
\end{align*}
With $\beta\approx \frac{1}{4}$, $a \approx 2$,
\[
\gamma < \frac{\nu-1 + \beta( a-1)}{ 2 (1-\beta) } \approx
\]
	
Then
\begin{align*}
& \| a(\cdot) - a(T) \|_{\gamma,m,l-1}
\\
& = \sup_{} (T-t)^{-m} |\log(T-t)|^{l-1} \frac{|a(t)-a(s)|}{(t-s)^\gamma}
\\
& \leq
C |\log T|^{m}
\sup_{} |\log(T-t)|^{l-1-2m}
\end{align*}
For this sup to be finite we need
\begin{align*}
l-1-2m<0
\end{align*}
Choosing $l$ such that $l < 1 + 2m $ we get
\begin{align*}
\| a(\cdot) - a(T) \|_{\gamma,m,l-1}
& = \sup_{} (T-t)^{-m} |\log(T-t)|^{l-1} \frac{|a(t)-a(s)|}{(t-s)^\gamma}
\\
& \leq
C
|\log T|^{l-1 -m}
\end{align*}
} % anot ^^^^^^^^^^^^^^^^

%Finally, we rewrite the  conclusion of  Proposition~\ref{propLambdaAlpha}, namely the  estimate \eqref{ineqL1b}.
%We have $m= \mu -2\gamma(1-\beta)$ and so
%\begin{align*}
%m+(1+\alpha)\gamma
%% &= \mu -2\gamma(1-\beta)+(1+\alpha)\gamma
%%\\
%&= \mu + \gamma (     \alpha  - 1 + 2\beta) .
%\end{align*}
%We want this constant to be greater than $\mu$, and this happens provided
%\begin{align}
%\label{alphaBeta}
%\alpha  - 1 + 2\beta>0.
%\end{align}
%But we have the restriction that $\alpha<\frac{1}{2}$. We see that it is possible to find $\alpha<\frac{1}{2}$ such that \eqref{alphaBeta} holds if
%\[
%\beta>\frac{1}{4}.
%\]

%The conclusion is that with the above choice of parameters $m$, $\gamma$, $l$, under the assumptions \eqref{holderA1}, \eqref{holderA2} we obtain from \eqref{ineqL1b}, that
%\begin{align*}
%%\label{ineqL1b2}
%|\Rem[a](t) |\leq
%C \lambda_*(t)^{\mu+\sigma_1}
%( \|  a(\cdot) - a(T) \|_{\mu,l-1} +  [ a ]_{\gamma,m,l-1} )  .
%\end{align*}
%for some $C$ and $\sigma_1>0$ fixed.

%\medskip
%The rest of this section is devoted to the proof of Proposition~\ref{propLambdaAlpha}.

%\subsection{Preliminaries}

We construct the function $p $ in Proposition~\ref{propIntegralOp} by linearization, and the first approximation is a function $p_\kappa$ that deals with the case of  constant $a$.

First we  introduce some notation.
We work with $\kappa\in \C$ and let $p_{0,\kappa}$ be the function
\begin{align}
\label{p0kappa}
p_{0,\kappa}(t) =
\kappa |\log T|
\int_t^T \frac{1}{|\log(T-s)|^2}
\,ds
, \quad  t\leq T  ,
\end{align}
so that
\begin{align}
\label{derP0Kappa}
\dot p_{0,\kappa} (t) = - \frac{\kappa|\log T|}{|\log(T-t)|^2}.
\end{align}
We will always assume that for a large, fixed constant $C_1$ we have
\begin{align}
\label{condKappa}
\frac{1}{C_1}\leq |\kappa| \leq C_1,
\end{align}
so that we also have $
{\tilde  C}^{-1} \lambda_*\leq | p_{0,\kappa}| \leq {\tilde C}\lambda_* .$
The first term in the function $p$ constructed in Proposition~\ref{propIntegralOp} is a function close to $p_{0,\kappa}$ that actually more or less solves \eqref{modifiedEq} in the case that $a$ is constant.

\begin{lemma}
	\label{lemma1}
Given $\kappa \in \C$ satisfying \eqref{condKappa}, there is a function $p_\kappa :[-T,T]\to \C$, a  constant $c(\kappa) \in \C$, and $\mathcal R_1(\kappa)(t)$ such that
\begin{align}
\label{estLemma01}
\mathcal B_0[p_\kappa](t) = c(\kappa) + \mathcal R_1(\kappa)(t)
\end{align}
for $t\in [0,T]$,
where $\mathcal R_1(\kappa)(t)$ satisfies
\begin{align}
\label{est-rem1}
| \mathcal R_1(\kappa)(t) | \leq C \lambda_*^{\alpha_0}
\end{align}
for some $ \alpha_0 > 0$ .
\end{lemma}
We have additional estimates for  $p_\kappa$ and the remainder $\mathcal R_1(\kappa)$  constructed above. The function $p_\kappa$ can be decomposed as
\begin{align*}
%\label{decomposition-pk1}
p_\kappa = p_{0,\kappa} + p_{1,\kappa} .
\end{align*}
Here $p_{0,\kappa}$ is defined in \eqref{p0kappa}. The function $p_{1,\kappa}$ satisfies: given $k\in (1,2)$ there is $C$ such that
\begin{align}
\label{est-p1}
\| p_{1,\kappa} \|_{*,k+1} \leq C |\log T|^{k-1} \log^2(|\log T|)
\end{align}
and
\begin{align}
\label{lip-p1}
\| p_{1,\kappa_1} - p_{1,\kappa_2} \|_{*,k+1} \leq C |\log T|^{k-1} \log^2(|\log T|)
|\kappa_1-\kappa_2|
\end{align}
for $\kappa_1$, $\kappa_2$ satisfying \eqref{condKappa}, where the norm
$\| \  \|_{*,k}$ is defined for $g \in C([-T,T];\C)\cap C^1([-T,T);\C)$ with
$$
g(T) = 0
$$
and $k>0$ by
\begin{align}
\label{norm g}
\|g\|_{*,k} = \sup_{t\in [-T,T]}  |\log(T-t)|^{k} |\dot g(t)|,
\end{align}
(here $\dot g = \frac{d}{dt} g$).

%This norm controls
%$$
%\sup_{t\in [-T,T]}  (T-t)^{-1} |\log(T-t)|^{k}| g(t)|
%$$
%For $ f\in C([-T,T];\C)$ with $f(T)=0$ and $k>0$ we define also
%\begin{align}
%\label{norm f}
%\| f \|_{**,k} = \sup_{t\in [-T,T]} |\log(T-t)|^k | f(t)  | .
%\end{align}

The remainder, satisfies together with \eqref{est-rem1} the estimate for the derivative in $t$:
\begin{align}
\label{est-der-R1}
\left|
\frac{d}{dt} \mathcal R_1(\kappa)(t)
\right| \leq C \lambda_*^{\alpha_0-1}
\end{align}
and Lipschitz estimates
\begin{align}
\label{lip-R1}
|\mathcal R_1(\kappa_1)(t)- \mathcal R_1(\kappa_2)(t) |
& \leq C \lambda_*^{\alpha_0} |\kappa_1-\kappa_2|
\\
\label{lip-ddt-R1}
\left|
\frac{d}{dt} \mathcal R_1(\kappa_1)(t)
-
\frac{d}{dt} \mathcal R_1(\kappa_2)(t)
\right|
& \leq C \lambda_*^{\alpha_0-1}
|\kappa_1-\kappa_2|,
\end{align}
for $\kappa_1$, $\kappa_2$ satisfying \eqref{condKappa}.
The proof of Lemma~\ref{lemma1} and estimates
\eqref{est-p1}, \eqref{lip-p1}, \eqref{est-der-R1}, \eqref{lip-R1}, and \eqref{lip-ddt-R1} are in section~\ref{appProofLemma01}.

\medskip

For the proof of Proposition~\ref{propIntegralOp} and Lemma~\ref{lemma1}
it will be useful to isolate the main part of the operator $\mathcal B_0$, defined in \eqref{defB0-new}.
Given the asymptotic expansion of $\Gamma_l$ in \eqref{GammaNear0} we write
\begin{align*}
\mathcal B_0[p]
=
\mathcal I[p] + \tilde{\mathcal B}[p]  ,
\end{align*}
where
\begin{align}
\label{defI}
\mathcal I[p] & := \int_{-T}^{t-\lambda_*(t)^2} \frac{  \dot p(s)   }{t-s}	
\,ds ,
%\\
%\nonumber
%%\label{defTildeB}
\qquad
\tilde{\mathcal B}[p] :=
\tilde{\mathcal B}_1[p] + \tilde{\mathcal B}_2[p]
%+ \tilde{\mathcal B}_3[p] + \tilde{\mathcal B}_4[p] + \tilde{\mathcal B}_5[p] ,
-\Re (\dot p(t)),
\end{align}
where
\begin{align*}
\tilde B_1[p](t) &=
 e^{i \omega(t)}
\left[
\int_{-T}^{t-\lambda_*(t)^2}
\frac{\Re( \dot p (s) e^{-i\omega(t)}  )}{t-s} \Bigl( \Gamma_1( \frac{\lambda(t)^2} {t-s}) -1\Bigr)\,ds
+
\int_{t-\lambda_*(t)^2}^t
\frac{\Re( \dot p (s) e^{-i\omega(t)}  )}{t-s} \Gamma_1( \frac{\lambda(t)^2} {t-s}) \,ds\right]
\\
\tilde B_2[p](t) &=
i  e^{i \omega(t)}
\left[
\int_{-T}^{t-\lambda_*(t)^2}
\frac{\Im( \dot p (s) e^{-i\omega(t)}  )}{t-s} \Bigl( \Gamma_2( \frac{\lambda(t)^2} {t-s}) -1\Bigr)\,ds
+\int_{t-\lambda_*(t)^2}^t
\frac{\Im( \dot p (s) e^{-i\omega(t)}  )}{t-s} \Gamma_2( \frac{\lambda(t)^2} {t-s}) \,ds
\right]
\end{align*}
%
%and
%\begin{align}
%\label{BtildeJ}
%\left\{
%\begin{aligned}
%\tilde B_1[p](t) &=
% e^{i \omega(t)}
%\int_{-T}^{t-\lambda_*(t)^2}
%\frac{\Re( \dot p (s) e^{-i\omega(t)}  )}{t-s} \Bigl( \Gamma_1( \frac{\lambda(t)^2} {t-s}) -1\Bigr)\,ds
%\\
%\tilde B_2[p](t) &=
%i e^{i \omega(t)}
%\int_{-T}^{t-\lambda_*(t)^2}
%\frac{\Im( \dot p (s) e^{-i\omega(t)}  )}{t-s} \Bigl( \Gamma_2( \frac{\lambda(t)^2} {t-s}) -1\Bigr)\,ds
%\\
%\tilde B_3[p](t) &=
% e^{i \omega(t)}
%\int_{t-\lambda_*(t)^2}^t
%\frac{\Re( \dot p (s) e^{-i\omega(t)}  )}{t-s} \Gamma_1( \frac{\lambda(t)^2} {t-s}) \,ds
%\\
%\tilde B_4[p](t) &=
%i e^{i \omega(t)}
%\int_{t-\lambda_*(t)^2}^t
%\frac{\Im( \dot p (s) e^{-i\omega(t)}  )}{t-s} \Gamma_2( \frac{\lambda(t)^2} {t-s}) \,ds
%\\
%\tilde B_5[p](t) &= - \dot\lambda(t) e^{i\omega(t)}  = -\Re (\dot p(t)),
%\end{aligned}
%\right.
%\end{align}
and we use the notation $p(t) = \lambda(t) e^{i \omega(t)}$.
To prove  Proposition~\ref{propIntegralOp}, we take $p$ of the form
\[
p = p_\kappa + p_2,
\]
where $p_\kappa$ is the function constructed in Lemma~\ref{lemma1}, for some $\kappa \in \C$ to be determined.
The function $p_2(t)$ will have the property
$
p_2(t) = o(p_\kappa(t)),
$
as $t\to T$.
We would like that
\begin{align}
\label{goal1a}
&
\mathcal I[p_{\kappa}](t)
+
\mathcal I[ p_2](t) +
 \tilde{\mathcal B}[p_{\kappa} + p_2 ](t)
\approx a(t)  .
\end{align}
Given $\alpha >0$,  let us decompose
$
%\label{decompB0}
\mathcal I [p] = S_\alpha[\dot p] + R_\alpha[\dot p]
$
where $S_\alpha $, $R_\alpha $ are defined as in \eqref{defSalpha}, \eqref{defRem}, that is
The idea is to replace $ \mathcal I[  p_2] $  by $S_{\alpha }[ \dot p_2]$
in \eqref{goal1a} to make this equation more manageable, that is, we consider
$
S_\alpha[\dot p_2] +
\tilde {\mathcal B}[p_\kappa+p_2] -
\tilde{\mathcal B}[p_{\kappa} ] +
\mathcal R_1(\kappa) = a(t) , \  t\in [0,T],
$
where we have used \eqref{estLemma01}.
We introduce one more modification, so as to have a more convenient problem to treat.
Let us split
$
%\label{decompSalpha1}
S_\alpha [g]
= L_0[g] + L_1[g]
$
where
\begin{align}
%\label{defL0}
\nonumber
L_0[g]
&= (1-\alpha )|\log(T-t)| g(t)
\\
%\label{defL1}
\nonumber
L_1[g]
&=
%\int_{t- \frac{1}{2} (T-t)}^{t-(T-t)^{1+\alpha }} \frac{g(s)}{t-s}\,ds
%+  \int_{-T}^t \frac{g(s)}{T-s}\,ds
%- \int_{t-(T-t)}^t \frac{g(s)}{T-s}\, ds
%\\
%& \quad
%+\int_{-T}^{t-(T-t)} g(s)\left( \frac{1}{t-s} - \frac{1}{T-s}\right)\, ds\\
%& \quad
( 4\log(|\log(T-t)|) -2\log(\kappa) - 2 \log(|\log(T)|) ) g(t)
+
\int_{-T}^{t-(T-t)^{1+\alpha }} \frac{g(s)}{t-s}\,ds.
\end{align}

We actually introduce one more modification to \eqref{goal1a}.
%The next step is to use the linear operator just constructed to find a solution to a small modification of  \eqref{eqP1a}.
For this, it is convenient that $a$ is defined in $[-T,T]$. So, given a function $a:[0,T]\to \C$ satisfying the hypotheses of Proposition~\ref{propIntegralOp}, we extend $a$ continuously by constant for $t\leq 0$.
%, that is,
% $\Re(a(T))<0$, $\frac{1}{C_1} \leq \Re(a(T)) \leq C_1$
%\[
%\|  a(\cdot) - a(T) \|_{\mu,l-1} +  [ a ]_{\gamma,m,l-1} \leq C_1 .
%\]
%where $C_1>1$ is a fixed constant.

Let $\eta$ be a smooth cut-off function such that
%\begin{align}
%\label{cutOff1a}
$\eta(s) =1\quad \text{for } s\geq 0$,
$\eta(s) =0\quad \text{for } s\leq -\frac{1}{4}$.
%\end{align}
The equation that we are going to solve is the following one:
\begin{align}
\label{eqP1aModified00}
L_0[\dot p_2]  + \eta(\frac{t}{T})  L_1[\dot p_2]
+\tilde{\mathcal B}[ p_\kappa +  p_2]-\tilde{\mathcal B}[ p_\kappa ]
= a(t) - \mathcal R_1(\kappa) + c \quad \text{in }[-T,T]
\end{align}
for some constant $c$. Later on we shall show that it is possible to adjust $\kappa$ so that $c=0$.

\subsection{Construction of a solution to  \texorpdfstring{\eqref{eqP1aModified00}}{eqref:eqP1aModified00}}

Since in \eqref{eqP1aModified00}  the terms $ a(t) $ and $ \mathcal R_1(\kappa)$ have similar behavior, we will consider just
\begin{align}
\label{eqP1aModified}
L_0[\dot p_2]  + \eta(\frac{t}{T})  L_1[\dot p_2]
+\tilde{\mathcal B}[ p_\kappa +  p_2]-\tilde{\mathcal B}[ p_\kappa ]
= a(t) + c \quad \text{in }[-T,T]
\end{align}

Consider the norm $\| \ \|_{\mu,l}$ defined in \eqref{normG0}.
\begin{lemma}
\label{lemma03}
Let $\mu, \alpha  \in (0,\frac{1}{2}) $ and $l\in \R$.
Assume that $\frac{1}{C_1} \leq | a(T) | \leq C_1$ and
\begin{align}
\label{hypA000}
T^{\mu} |\log T|^{1+\sigma-l}
\| a(\cdot) - a(T) \|_{\mu,l-1} \leq C_1 ,
\end{align}
for some $\sigma>0$ fixed.
Then if $T>0$ is small there is a solution $p_2$ to \eqref{eqP1aModified}
for some $c\in \C$. Moreover this solution satisfies
\begin{align}
\label{estP1}
\| \dot p_2 \|_{\mu,l} \leq C  \| a(\cdot) - a(T) \|_{\mu,l-1} .
\end{align}
\end{lemma}

For the proof of this lemma
we consider the  linear equation
\begin{align}
\label{eq3modified0}
L_0[g]  + \eta(\frac{t}{T})  L_1[g] =   f + c \quad \text{in } [-T,T].
\end{align}
We will assume that $f(T) = 0$, and hence $c=L_1[g](T)$  because all other terms in the equation vanish at $T$.
Thanks to the cut-off function $\eta(\frac{t}{T}) $, we need only to consider the values of $L_1[g](t)$
for $t\geq -\frac{T}{4}$. Then in the definition of $L_1[g]$,
$t-(T-t)^{1+\alpha } \geq t - \frac{1}{2}(T-t) \geq -T$ of $T>0$ is small.

For the right hand  side of \eqref{eq3modified0} we take the space $C([-T,T];\C)$ with $f(T)=0$ and the norm $\| f \|_{\mu,l-1}$.
%\begin{align}
%\label{norm f}
%\| f \|_{\mu} = \sup_{t\in [-T,T]} (T-t)^{-\mu} | f(t)  | .
%\end{align}

The next lemma asserts the solvability of \eqref{eq3modified0}  in the weighted spaces introduced above.
\begin{lemma}
\label{lemma02}
Let  $\alpha  \in (0,\frac{1}{2})$ and $T>0$ be sufficiently small.
Assume $\|f\|_{\mu,l-1}<\infty$ where $\mu \in (0,1)$, $l \in \R$.
Then for $T>0$ small there is a solution $S[f]$ of \eqref{eq3modified0}
that defines a linear operator of $f$ and such that
\begin{align}
\label{estLemma02}
\| S[f] \|_{\mu,l} \leq C \|f\|_{\mu,l-1} .
\end{align}
\end{lemma}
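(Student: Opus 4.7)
The strategy is a fixed-point argument: invert the diagonal operator $L_0$ and treat $\eta\,L_1$ together with the constant $c$ as a perturbation. Evaluating \eqref{eq3modified0} at $t=T$ forces the compatibility condition
\begin{equation*}
c \;=\; L_1[g](T) \;=\; \int_{-T}^{T}\frac{g(s)}{T-s}\,ds ,
\end{equation*}
since $g(T)=0$ together with $\mu>0$ makes both $L_0[g](T)$ and the pointwise contribution to $L_1[g](T)$ vanish, while $\eta(1)=1$ and $f(T)=0$. Inverting $L_0[g]=(1-\delta)|\log(T-t)|\,g(t)$ in \eqref{eq3modified0} converts the problem into the fixed-point equation
\begin{equation*}
g(t) \;=\; \mathcal T(g)(t) \;:=\; \frac{f(t) + L_1[g](T) - \eta(t/T)\,L_1[g](t)}{(1-\delta)\,|\log(T-t)|} ,
\end{equation*}
which is affine in $g$. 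I will solve it by Banach contraction on the space $X=\{g\in C([-T,T];\C):\,g(T)=0,\ \|g\|_{\mu,l}<\infty\}$.

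Writing $\mathcal T=\mathcal T_0+\mathcal T_1$ with $\mathcal T_1:=f/[(1-\delta)|\log(T-t)|]$ the inhomogeneous part and $\mathcal T_0$ the homogeneous linear part, the core of the proof is the estimate
\begin{equation*}
\|\mathcal T_0(g)\|_{\mu,l}\;\le\;\Bigl(\tfrac{\delta}{1-\delta}+o_T(1)\Bigr)\|g\|_{\mu,l} .
\end{equation*}
The decisive range is $t$ close to $T$, where $\eta(t/T)=1$ and the numerator of $\mathcal T_0(g)(t)$ equals $L_1[g](T)-L_1[g](t)$. This splits into a pointwise piece $-[4\log|\log(T-t)|-2\log\kappa-2\log|\log T|]g(t)$, whose size $\log|\log(T-t)|\cdot(T-t)^\mu|\log(T-t)|^{-l}\|g\|_{\mu,l}$ is $o_T(1)\cdot|\log(T-t)|(T-t)^\mu|\log(T-t)|^{-l}$, and the non-local piece
\begin{equation*}
\int_{t-(T-t)^{1+\delta}}^{T}\frac{g(s)\,ds}{T-s} \;-\; (T-t)\!\int_{-T}^{t-(T-t)^{1+\delta}}\frac{g(s)\,ds}{(T-s)(t-s)} ,
\end{equation*}
coming from $\tfrac{1}{T-s}-\tfrac{1}{t-s}=-\tfrac{T-t}{(T-s)(t-s)}$. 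Using $|g(s)|\le\|g\|_{\mu,l}(T-s)^\mu|\log(T-s)|^{-l}$ and splitting the second integral at $s=t-(T-t)/2$, the near contribution ($T-s\sim T-t$) produces the dominant term
\begin{equation*}
(T-t)\cdot(T-t)^{\mu-1}|\log(T-t)|^{-l}\int_{(T-t)^{1+\delta}}^{T-t}\frac{du}{u}
\;=\;\delta\,(T-t)^{\mu}|\log(T-t)|^{1-l}\|g\|_{\mu,l} ,
\end{equation*}
while the first integral and the far part of the second are both of order $(T-t)^\mu|\log(T-t)|^{-l}\|g\|_{\mu,l}$ (here $\mu>0$ is essential to control the boundary behavior at $s=T$ and at $s=-T$), smaller by one logarithmic factor. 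Dividing by $(1-\delta)|\log(T-t)|$ yields the claimed ratio $\delta/(1-\delta)+o_T(1)$. In the range $t\in[-T,0]$ where $\eta(t/T)$ departs from $1$, one uses instead that $|\log(T-t)|\sim|\log T|$ is large and that $|L_1[g](T)|+|L_1[g](t)|\le C\|g\|_{\mu,l}$ directly, so the contribution is $O(1/|\log T|)\|g\|_{\mu,l}$.

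Since $\delta<\tfrac12$ gives $\delta/(1-\delta)<1$ and the $o_T(1)$ remainder can be made arbitrarily small, for $T$ sufficiently small $\mathcal T$ is a strict contraction on $X$. Banach's theorem yields a unique $g\in X$ with $g=\mathcal T(g)$, and by construction $g$ depends linearly on $f$ and, together with $c=L_1[g](T)$, solves \eqref{eq3modified0}. The estimate \eqref{estLemma02} follows from applying the geometric-series bound to the inhomogeneous term $\mathcal T_1$, whose norm in $X$ is at most $(1-\delta)^{-1}\|f\|_{\mu,l-1}$. The main obstacle in the argument is the delicate matching between the cut-off exponent $1+\delta$ in the definition of $L_1$ and the prefactor $1-\delta$ in $L_0$: they are rigged precisely so that the dominant logarithmic mass of the non-local integral is $\delta|\log(T-t)|$, strictly smaller than the mass $(1-\delta)|\log(T-t)|$ of $L_0$, producing a contraction exactly when $\delta<\tfrac12$, which is why the restriction on $\delta$ appears in the hypothesis.
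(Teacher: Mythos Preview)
Your approach is essentially the same as the paper's: invert $L_0$ pointwise, treat the cut-off $L_1$ as a perturbation, and run a contraction in the $\|\cdot\|_{\mu,l}$ norm, with the key observation that the non-local integral produces the dominant contribution $\delta|\log(T-t)|$, strictly smaller than the $(1-\delta)|\log(T-t)|$ in $L_0$. Your decomposition of the integral difference (splitting off $\int_{t-(T-t)^{1+\delta}}^T g(s)/(T-s)\,ds$ and writing the rest via $\tfrac{1}{T-s}-\tfrac{1}{t-s}=-\tfrac{T-t}{(T-s)(t-s)}$) is equivalent to the paper's $I_1+I_2+I_3$ splitting and leads to the same $\delta/(1-\delta)+o_T(1)$ contraction constant.

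There is one genuine imprecision in your treatment of $t\in[-T,0]$. The bound $|L_1[g](t)|\le C\|g\|_{\mu,l}$ is true pointwise (for $T$ small), but after dividing by $|\log(T-t)|\sim|\log T|$ you must still multiply by the weight $(T-t)^{-\mu}|\log(T-t)|^{l}\sim T^{-\mu}|\log T|^{l}$ to measure the $\|\cdot\|_{\mu,l}$ norm; this weight blows up as $T\to 0$, so the resulting contribution is not $O(1/|\log T|)$ but only $O(1)$ (or worse) in that norm. The fix is simply to note that your main estimate for $L_1[g](T)-L_1[g](t)$ is not specific to $t$ near $T$: the computation giving $(\delta+o_T(1))(T-t)^{\mu}|\log(T-t)|^{1-l}\|g\|_{\mu,l}$ works uniformly for all $t\ge -T/4$. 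Then writing $L_1[g](T)-\eta L_1[g](t)=\eta\bigl(L_1[g](T)-L_1[g](t)\bigr)+(1-\eta)L_1[g](T)$, the first term is controlled by the main estimate (since $0\le\eta\le 1$), and the second is a constant $L_1[g](T)=O(T^{\mu}|\log T|^{-l})\|g\|_{\mu,l}$ supported where $(T-t)\sim T$, which after weighting gives the $O(1/|\log T|)$ you claimed. The paper avoids this extra step by writing its fixed-point directly with the factor $\eta(t/T)\bigl(L_1[g](t)-L_1[g](T)\bigr)$, so a single uniform bound on the difference suffices.
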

\begin{proof}
We consider \eqref{eq3modified0} as a fixed point problem of the form
\begin{align}
%\label{fixedPt10}
\nonumber
g  = L_0^{-1}\left[ f  -\eta(\frac{t}{T}) \left( L_1[g](t) - L_1[g](T) \right) \right] ,
\end{align}
where $L_0^{-1}$ is defined the formula
\begin{align}
\nonumber
%\label{defL0inv}
L_0^{-1}[f](t) = \frac{ 1 }{(1-\alpha )} \frac{ f(t) }{ |\log(T-t)|}  .
\end{align}
	
It is clear that
\begin{align}
\label{estL0inv}
\| L_0^{-1}[f] \|_{\mu,l} \leq \frac{1}{1-\alpha } \| f\| _{\mu,l-1}.
\end{align}
and a calculation shows that
\begin{align}
\label{estmu100}
\| L_1[g](\cdot) - L_1[g](T)\|_{\mu,l-1}
\leq \Bigl(\alpha  + \frac{C \log|\log T|}{|\log T |} \Bigr) \|g\|_{\mu,l}.
\end{align}
	
%Indeed consider the term
%\begin{align*}
%& \left|
%( 4\log(|\log(T-t)|) -2\log(\kappa) - 2 \log(|\log(T)|) ) g(t)
%\right|
%\\
%& \leq C \log|\log(T-t)| \frac{ (T-t)^\mu  }{ |\log(T-t)|^l} \|g\|_{\mu,l} ,
%\end{align*}
%and this gives
%\begin{align*}
%\| ( 4\log(|\log(T-t)|) -2\log(\kappa) - 2 \log(|\log(T)|) ) g(t)  \|_{\mu,l-1}
%\leq  \frac{C \log|\log T|}{|\log T |} \|g\|_{\mu,l}.
%\end{align*}
To estimate the integral term we decompose
\begin{align*}
\int_{-T}^{t-(T-t)^{1+\alpha }}
\frac{g(s)}{t-s}\,ds
-
\int_{-T}^T
\frac{g(s)}{T-s}\,ds
&= I_1 + I_2 +I_3
\end{align*}
where
\begin{align*}
I_1 &= \int_{t-(T-t)/2}^{t-(T-t)^{1+\alpha }} \frac{g(s)}{t-s}\,ds, \quad
I_2= \int_{-T}^{t-(T-t)/2} g(s) \left( \frac{1}{t-s}-\frac{1}{T-s}\right)\,ds , \quad
I_3 = \int_{t-(T-t)/2}^T \frac{g(s)}{T-s} \, ds .
\end{align*}
Then
\begin{align*}
| I_1 |
& \leq
\|g\|_{\mu,l}
\int_{t-(T-t)/2}^{t-(T-t)^{1+\alpha }} \frac{(T-s)^\mu}{|\log(T-s)|^l (t-s)}\,ds
%\\
%&=
%\|g\|_{\mu,l}
%\int_{(T-t)^{1+\alpha }}^{(T-t)/2} \frac{(T-t+r)^\mu}{|\log(T-t+r)|^l \, r}\,dr
%\\
%&=
%\|g\|_{\mu,l}
%\frac{(T-t)^\mu}{|\log (T-t)|^l}
%\int_{(T-t)^{1+\alpha }}^{(T-t)/2} \frac{1+ O(\frac{r}{T-t})}{ r}\,dr
%\\
\leq
\|g\|_{\mu,l}
\frac{(T-t)^\mu}{|\log (T-t)|^l}
( \alpha  |\log(T-t)| + C).
\end{align*}
and similarly
\begin{align*}
|I_2|
%& \leq
%\|g\|_{\mu,l}
%(T-t)
%\int_{-T}^{t-(T-t)/2}  \frac{(T-s)^\mu}{|\log(T - s)|^l (T-s)(t-s)}\,ds
%\\
&\leq
C \|g\|_{\mu,l}
\frac{(T-t)^\mu}{|\log(T - t)|^l  }
%\end{align*}
%and
%\begin{align*}
, \qquad
| I_3 |
%& \leq
%\|g\|_{\mu,l}
%\int_{t-(T-t)/2}^T \frac{(T-s)^{\mu-1}}{ |\log(T_s)|^l } \, ds
%\\
\leq C \|g\|_{\mu,l} \frac{(T-t)^\mu}{|\log(T - t)|^l  }.
\end{align*}
These estimates imply \eqref{estmu100}.
Then this inequality combined with  \eqref{estL0inv} shows that
\[
\left\|
L_0^{-1}\left[ \eta(\frac{t}{T}) \left( L_1[g](t) - L_1[g](T) \right) \right]
\right\|_{\mu,l}
\leq
\frac{1}{1-\alpha }
\Bigl(\alpha  + \frac{C \log|\log T|}{|\log T |} \Bigr) \|g\|_{\mu,l}.
\]
Then for $\alpha  \in (0,\frac{1}{2})$ and $T>0$ sufficiently small this operator is a contraction and we obtain the conclusion of the lemma.
\end{proof}

\begin{proof}[Proof of Lemma~\ref{lemma03}]
Let $S$ denote the linear operator constructed in Lemma~\ref{lemma02}.

Then to find a solution to \eqref{eqP1aModified} it is sufficient to find a solution $p_2$ of the fixed point problem
\begin{align}
\label{fixedP10}
p_2 = \mathcal A[p_2]
\end{align}
where $\tilde p = \mathcal A[p_2]$ is defined by  $\tilde p(T)=0$ and
\begin{align*}
\frac{d \tilde p }{dt}
=
S\left[
- \big( \tilde{\mathcal B}[  p_\kappa +  p_2]
-\tilde{\mathcal B}[ p_\kappa ] \big)
+   a(t) -a(T)
\right] .
\end{align*}

Let $ M_1  = C_0  \|  a(\cdot ) -a(T)  \|_{\mu,l-1}$, where $C_0$ is a sufficiently large fixed constant.
We claim that if $T>0$ is sufficiently small then $\mathcal A$ is a contraction in ball $\overline B_{M_1}$ of the space of complex valued functions $p_2 \in C^1([-T,T])$  with $p_2(T) = 0$ and with the norm $\| \dot p_2 \|_{\mu,l}$. Note that with this norm we have
\begin{align}
%\label{estNormP}
\nonumber
|\p_2(t) |\leq C\| \dot \p_2 \|_{\mu,l} \frac{(T-t)^{\mu+1}}{|\log(T-t)|^l}.
\end{align}
In particular,  thanks to \eqref{hypA000},  if $\|\dot\p_2\|_{\mu,l}\leq M_1$,  then
\begin{align*}
%\label{smallness}
\left|   \frac{\p_2}{\lambda_*} \right|
+
\left|   \frac{\dot\p_2}{\dot\lambda_*} \right| \ll 1
\end{align*}
for $T>0$ small.
	
Let us verify that $\mathcal A$ maps $\overline B_{M_1}$ into itself.
Let $\p_2 \in \overline B_{M_1}$.
By \eqref{estLemma02} we have
\begin{align}
\label{Aestimate}
\| \mathcal A[\p_2] \|_{\mu,l}
& \leq C
\Bigl(
\| \tilde{\mathcal B}[ \p_\kappa +  \p_2] - \tilde{\mathcal B}[ \p_\kappa ] \|_{\mu,l-1}
+  \|  a(\cdot ) -a(T)  \|_{\mu,l-1}
\Bigr) .
\end{align}

After some computations,
we can check the validity of the following estimate:
 for $p_1,p_2 \in $ $\overline B_{M_1}$ we have
\begin{align}
\label{lipTildeB}
\| \tilde{\mathcal B}[ \p_\kappa +  p_1] - \tilde{\mathcal B}[ \p_\kappa  + \p_2] \|_{\mu,l-1}
\leq
C
\frac{1}{|\log T|}
\| \dot p_1 -\dot p_2 \|_{\mu,l} .
\end{align}
	
Assuming  for now this estimate  let us continue with proving that $\mathcal A$ maps $\overline B_{M_1}$ into itself.
Let $p_1 \in \overline B_{M_1}$.
By \eqref{Aestimate}
%, %\eqref{estLemma02}, %\eqref{estA},
%\eqref{estE},
and \eqref{lipTildeB}
\begin{align*}
\| \mathcal A[p_2] \|_{\mu,l}
& \leq
C \frac{M_1}{|\log T|}
+ C  \|  a(\cdot ) -a(T)  \|_{\mu,l-1}
\leq M_1,
\end{align*}
if $T>0$ is small.
Also thanks to \eqref{estLemma02} and \eqref{lipTildeB} we see that $\mathcal A$ is a contraction in  $\overline B_{M_1}$.
This finishes the proof of the lemma.
\end{proof}

We also have a Lipschitz property of the solution constructed in Lemma~\ref{lemma03}.
\begin{lemma}
%\label{lemma-lip-p2-linfty}
Let $\mu, \alpha  \in (0,\frac{1}{2}) $ and $l\in \R$.
Assume that for $j=1,2$, $a_j$ satisfies  $\frac{1}{C_1} \leq | a_j(T) | \leq C_1$  and \eqref{hypA000}, and let $\kappa_1$, $\kappa_2$ satisfy \eqref{condKappa}.
Then for $T>0$ is small the solution $p_2[a,\kappa]$ to \eqref{eqP1aModified} constructed in Lemma~\ref{lemma03} satisfies
\begin{align}
\nonumber
%\label{lip-p2-a-linfty}
\| \dot p_2[a_1,\kappa_1] - \dot p_2[a_2,\kappa_1] \|_{\mu,l}
&  \leq C  \| a_1(\cdot) - a_1(T)
-
( a_2(\cdot) - a_2(T) )
\|_{\mu,l-1}
\\
\label{lip-p2-kappa-linfty}
\| \dot p_2[a_1,\kappa_1] - \dot p_2[a_1,\kappa_2] \|_{\mu,l}
& \leq C  \| a_1(\cdot) - a_1(T)  \|_{\mu,l-1} |\kappa_1 - \kappa_2|.
\end{align}
\end{lemma}

\subsection{H\"older estimate of the solution}
We will  show in this section that the solution constructed in Lemma~\ref{lemma03} has some H\"older regularity inherited from the one of $a$.

We then have the following result, where the H\"older semi norm $[ \ ]_{\gamma,m,l}$ is defined in \eqref{normG1}.
\begin{lemma}
\label{lemma05}
Let $\alpha  \in (0,\frac{1}{2})$,  $\mu  , \gamma \in (0,1)$, $m \leq   \mu - \gamma$, $l\in \R$.
Assume that  $\frac{1}{C_1} \leq | a(T) | \leq C_1$   and
\begin{align}
\nonumber
%\label{lemma05-hyp-a}
T^\mu |\log T|^{1+\sigma-l} \| a(\cdot) - a(T) \|_{\mu,l-1}
+ [a]_{\gamma,m,l-1}
\leq C_1 ,
\end{align}
for some $\sigma>0$.
Then the solution $p_2$ constructed in Lemma~\ref{lemma03} satisfies
\begin{align}
\nonumber
[ \dot p_2 ]_{\gamma,m	,l}
&\lesssim
\frac{T^\mu}{|\log T|}
\left(
T^{-\gamma-m}  +  \log |\log T|  \right)
\| a(\cdot) - a(T) \|_{\mu,l-1}
\\
\nonumber
%\label{est-dotP2-holder}
& \quad
+  [ a(\cdot ) -a(T)   ]_{\gamma,m,l-1} .
\end{align}
\end{lemma}

%{\crr
%WE LEAVE THE EXPRESSION \eqref{est-dotP2-holder} AS IS BECAUSE IT THE SIGN OF $-\gamma-m$ is not clear. It can be actually positive depending on $m$.
%
%}

The proof follows from the fixed point representation \eqref{fixedP10} and estimates in the weighted H\"older norms for the operators involved there.

We will also need a Lipschitz estimate of $p_2$ as a function of $\kappa$ and $a(t)$ in the semi norm  $[ \ ]_{\gamma,m,l}$.

\begin{lemma}
Let $\alpha  \in (0,\frac{1}{2})$,  $\mu  , \gamma \in (0,1)$, $m \leq   \mu - \gamma$, $l\in \R$.
Assume that for $j=1,2$, we have $\frac{1}{C_1} \leq | a_j(T) | \leq C_1$ and
\[
T^\mu |\log T|^{1+\sigma-l} \| a_j(\cdot) - a_j(T) \|_{\mu,l-1}
+ [a_j]_{\gamma,m,l-1}
\leq C_1 ,
\]
for some $\sigma>0$, and that $\kappa_1$, $\kappa_2$ satisfy \eqref{condKappa}.
Then the solution $p_2 = p_2[a,\kappa]$ constructed in Lemma~\ref{lemma03} satisfies
\begin{align}
\nonumber
&
[ \dot p_2[a_1,\kappa_1] - \dot p_2[a_2,\kappa_1] ]_{\gamma,m,l}
\\
\nonumber
&  \quad \lesssim
[ a_1 - a_2]_{\gamma,m,l-1}
\\
%\label{lip-p2-a-holder}
\nonumber
& \qquad
+    T^{\mu-m-\gamma} \frac{\log |\log T|}{|\log T|}
\| a_1(\cdot) - a_1(T)
-
( a_2(\cdot) - a_2(T) )\|_{\mu,l-1}  ,
\end{align}
and
\begin{align}
%\label{lip-p2-kappa-holder}
\nonumber
[ \dot p_2[a_1,\kappa_1] - \dot p_2[a_1,\kappa_2] ]_{\gamma,m,l}
\leq
C
\frac{T^{\mu-\gamma-m}}{|\log T|}  \|a_1(\cdot)-a_1(T)\|_{\mu,l-1}
|\kappa_1 - \kappa_2|.
\end{align}
\end{lemma}

\begin{proof}[Proof of Proposition~\ref{propIntegralOp}]
By Lemma~\ref{lemma03} there is $\p_2$ satisfying  \eqref{eqP1aModified00}, where we have used this lemma with $a$ replaced by $a- \mathcal R_1(\kappa)$,  with $ \mathcal R_1(\kappa)$ being the remainder  appearing in \eqref{estLemma01}.

Note that by \eqref{est-rem1} and using the assumption $\Theta<\alpha_0$, we have
\begin{align}
\label{weighted-norm-R1}
\| \mathcal R_1(\kappa) \|_{\Theta,l-1} \leq T^{\alpha_0-\Theta} |\log T|^{l-1} .
\end{align}
Therefore from \eqref{estP1} we find
\begin{align}
%\crr
%\| \dot\p_1 \|_{\mu,l} \leq C |\log T|^{\beta(a-1)}
%\\
%\label{estDotP1}
\nonumber
\| \dot p_2 \|_{\Theta,l} \leq C \big( T^{\alpha_0-\Theta} |\log T|^{l-1} +  \| a(\cdot) - a(T) \|_{\Theta,l-1} ) .
\end{align}

In equation \eqref{eqP1aModified00} the constant $c$ depends on $\kappa$ and we claim that it is possible to choose $\kappa$ satisfying \eqref{condKappa} such that $c=0$. Evaluating \eqref{eqP1aModified00} at $t=T$ we find
\begin{align*}
%\label{eqC}
\int_{-T} ^T \frac {\dot \p_\kappa(s) + \dot p_2(s)}{T-s}  \, ds
= a(T) + c .
\end{align*}

We consider then the equation $c=0$ with $\kappa$ as an un known, that is, we look for $\kappa$ satisfying
\begin{align}
\label{equ-kappa}
\int_{-T} ^T \frac {\dot \p_\kappa(s) + \dot p_2(s)}{T-s}  \, ds
= a(T) .
\end{align}

Using \eqref{p0kappa},  \eqref{lip-p1} and \eqref{lip-p2-kappa-linfty} we see that
\begin{align*}
\int_{-T} ^T \frac {\dot \p_\kappa(s) + \dot p_2(s)}{T-s}  \, ds =
\kappa + \tilde f(\kappa)
\end{align*}
where $\tilde f$ satisfies
\begin{align}
%\label{lip-tilde-f}
\nonumber
|\tilde f(\kappa_1) - \tilde f(\kappa_2)|\leq \frac{C}{|\log T|}|\kappa_1-\kappa_2|
\end{align}
for $\kappa_1$, $\kappa_2$ satisfying \eqref{condKappa}.
%To prove this note that
%\begin{align*}
%f(\kappa) = \kappa \int_{-T} ^T \frac {\dot \p_{0,1}(s)}{T-s}  \, ds
%+ \int_{-T} ^T \frac {\dot \p_{1,\kappa}(s)}{T-s}  \, ds
%+ \int_{-T} ^T \frac {\dot \p_{2,\kappa}(s)}{T-s}  \, ds  .
%\end{align*}
%By the explicit formula \eqref{p0kappa}
%\begin{align*}
%\int_{-T} ^T \frac {\dot \p_{0,1}(s)}{T-s}  \, ds
%= 1 + \tilde f_0(\kappa),
%\end{align*}
%where $\tilde f_0$ satisfies \eqref{lip-tilde-f}.
%The functions
%\begin{align*}
%\tilde f_j(\kappa) =  \int_{-T} ^T \frac {\dot \p_{j,\kappa}(s)}{T-s}  \, ds
%\end{align*}
%for $j=1,2$ also satisfy \eqref{lip-tilde-f}, which follows from the Lipschitz estimates \eqref{lip-p1} and \eqref{lip-p2-kappa-linfty}.
%This proves \eqref{form-f}.
It follows that there exists a unique $\kappa$ so that
\eqref{equ-kappa} holds. Moreover
\begin{align}
%\label{estCkappa}
\nonumber
\kappa= a(T) \Bigl(1+O(\frac{1}{|\log T|}) \Bigr)
\end{align}
as $T\to 0$.

Now let us prove the estimate \eqref{ineqL1b-1}.
For this we note that what we left out in \eqref{eqP1aModified00} is $R_\alpha[\dot p_2]$. In other words, the remainder $\Rem [a]$ is just $R_\alpha[\dot p_2]$.
By Lemma~\ref{lemma05} we have
\begin{align*}
[  \dot\p_2 ]_{\gamma,m	,l}
& \leq
C \frac{T^\Theta}{|\log T|}
\left(
T^{-\gamma-m}  +  \log |\log T|  \right)
\| a(\cdot) - a(T) \|_{\Theta,l-1}
\\
& \quad
+ C \frac{T^\Theta}{|\log T|}
\left(
T^{-\gamma-m}  +  \log |\log T|  \right)
\| \mathcal R_1(\kappa) \|_{\Theta,l-1}
\\
& \quad
+ C  [ a(\cdot ) -a(T)   ]_{\gamma,m,l-1}
+ C  [  \mathcal R_1(\kappa)   ]_{\gamma,m,l-1} .
\end{align*}

Using \eqref{est-der-R1} we see that for $s \leq t$ in $ [0,T]$  such that $t-s \leq \frac{1}{10}(T-t)$ we have
\begin{align*}
\frac{|  \mathcal R_1(t)- \mathcal R_1(s)|}{(t-s)^\gamma}
& \leq \lambda_*(t)^{\alpha_0-\gamma}
\end{align*}
and since $m \leq \Theta-\gamma$, $\Theta<\alpha_0$ by hypothesis we get
\begin{align*}
[ \mathcal R_1(\kappa) ]_{\gamma,m,l-1}
\leq  C \lambda_*(0)^{\sigma}
\end{align*}
for some $\sigma>0$.
From  this and \eqref{weighted-norm-R1} we obtain
\begin{align*}
[  \dot\p_2 ]_{\gamma,m	,l}
& \lesssim
T^\sigma
+ C \frac{T^\Theta}{|\log T|}
\left( T^{-\gamma-m}  +  \log |\log T|  \right)
\| a(\cdot) - a(T) \|_{\Theta,l-1}
+ [ a]_{\gamma,m,l-1} ,
\end{align*}
for some $\sigma>0$.
Then
\begin{align*}
| R_{\alpha }[\dot \p_2]  |
& \leq
\int_{t-(T-t)^{1+\alpha }}^{t-\lambda_*(t)^2}
\frac{ | \dot\p_2(t) - \dot\p_2(s) | }{t-s}\, ds
\\
%& \leq
%C
%[  \dot\p_2 ]_{\gamma,m,l}
%\int_{t-(T-t)^{1+\alpha }}^{t-\lambda_*(t)^2}
%(T-s)^m |\log(T-s)|^{-l}
%\frac{ (t-s)^\gamma }{t-s}\, ds
%\\
%& \leq
%C
%[  \dot\p_2 ]_{\gamma,m,l}
%\frac{(T-t)^{m+(1+\alpha ) \gamma}}{  |\log(T-t)|^{l}}
%\\
& \leq
C
\Bigl(
T^\sigma
+ C \frac{T^\Theta}{|\log T|}
\left( T^{-\gamma-m}  +  \log |\log T|  \right)
\| a(\cdot) - a(T) \|_{\Theta,l-1}
+ [ a]_{\gamma,m,l-1}
\Bigr)
\\
& \qquad
\cdot
\frac{(T-t)^{m+(1+\alpha ) \gamma}}{  |\log(T-t)|^{l}} .
\end{align*}
\end{proof}

\subsection{Proof of Lemma~\ref{lemma1}}
\label{appProofLemma01}

To do this we look for  $p_\kappa$ of the form
\[
p_\kappa = p_{0,\kappa} + p_1,
\]
where $p_{0,\kappa}$ is defined in \eqref{p0kappa},
and we would like
\begin{align}
\label{goal1}
&
\mathcal I [  p_{0,\kappa}]
+
\mathcal I[p_1]
+\tilde{\mathcal B}[p_{0,\kappa} + p_1 ](t)
- c(\kappa)
%\\
%&
=O((T-t)^{\alpha_0 }) \quad \text{for } t\in [0,T] .
\end{align}
The idea is to replace in \eqref{goal1} the operator $ \mathcal I[  p_1] $  by $S_{\alpha _0}[ \dot p_1]$ defined in \eqref{defSalpha}  and try to solve the corresponding equation.
We claim that if $\alpha _0>0$ is small, then we can find $p_1$ such that
\begin{align}
\label{eqP1}
\mathcal I[  p_{0,\kappa}]
+
S_{\alpha _0}[ \dot p_1]
+\tilde{\mathcal B}[ p_{0,\kappa} +  p_1](t)
- c(\kappa)
=0
\quad
\text{in } [0,T],
\end{align}
for some $c(\kappa)$.
This means that instead of \eqref{goal1} we have obtained
\[
%\mathcal I[  p_{0,\kappa}]
%+
%\mathcal I [  p_1]
%+\tilde{\mathcal B}[ p_{0,\kappa} + p_1]
\mathcal B_0[p_{0,,\kappa}+ p_1]
- c(\kappa)  = R_{\alpha _0}[\dot p_1] \quad \text{in } [0,T].
\]

The second step is to prove that there is $\kappa$ such that $c(\kappa) = A$.
The final step is to show that
\[
|R_{\alpha _0}[\dot p_1]| \leq C ( T-t)^{\alpha _0} ,
\]
and this implies \eqref{goal1}.

\subsection*{Construction of a solution to \texorpdfstring{\eqref{eqP1}}{eqP1}}

To obtain a function $\p$ satisfying \eqref{eqP1} we formulate a fixed point problem as follows.

We decompose
\begin{align*}
S_{\alpha _0}[g] =  \tilde L_0[g] + \tilde L_1[g]
\end{align*}
where
\begin{align*}
\tilde L_0[g](t) &= (1-{\alpha _0})|\log(T-t)| g (t)  + \int_{-T}^t \frac{g(s)}{T-s}\,ds
\end{align*}
and $\tilde L_1$ contains all other terms, that is,
\begin{align*}
\tilde L_1[g](t) &=\int_{t-(T-t)}^{t-(T-t)^{1+{\alpha _0}}} \frac{g(s)}{t-s}\,ds
- \int_{t-(T-t)}^t \frac{g}{T-s}\, ds
\\
& \quad
+\int_{-T}^{t-(T-t)} g(s)\left( \frac{1}{t-s} - \frac{1}{T-s}\right)\, ds\\
& \quad
+( 4\log(|\log(T-t)|)  - 2 \log(|\log(T)|) ) g(t) .
\end{align*}

Given a continuous function $f$ in $[-T,T]$ with a certain modulus of continuity at $T$, we would like to find $g$ such that
\[
S_{\alpha _0}[ \dot g]  = f   \quad \text{in } [-T,T].
\]
We will not quite obtain this, but we will  solve a modified version of this equation.
%, but we will be able to find a good function $g$ such that
%\begin{align*}
%%\label{eq3}
%S_{\alpha _0}[\dot g] = f + c , \quad \text{in } [0,T].
%\end{align*}
%for some constant $c$.
Let $\eta$ be a smooth cut-off function such that
\begin{align}
\label{cutOff1}
\eta(s) =1\quad \text{for } s\geq 0, \quad
\eta(s) =0\quad \text{for } s\leq -\frac{1}{4}.
\end{align}
We will be able to find a function $g$ such that
\begin{align}
\label{eq3modified}
\tilde L_0[\dot g]  + \eta(\frac{t}{T}) \tilde L_1[\dot g] =   f + c \quad \text{in } [-T,T].
\end{align}
%We note that by solving \eqref{eq3modified} we are

We use the norm $\| \ \|_{*,k}$ defined in \eqref{norm g} for the solution $g $ of
the above equation.
%We do this in suitable weighted spaces.
%Let $k>1$.
%We consider $g$ in the space $C([-T,T];\C)\cap C^1([-T,T);\C)$ with
%$$
%g(T) = 0
%$$
%and the norm
%\begin{align}
%\label{norm g}
%\|g\|_{*,k} = \sup_{t\in [-T,T]}  |\log(T-t)|^{k} |\dot g(t)|.
%\end{align}
%This norm controls
%$$
%\sup_{t\in [-T,T]}  (T-t)^{-1} |\log(T-t)|^{k}| g(t)|
%$$
For the right hand  side of \eqref{eq3modified} we take the space $C([-T,T];\C)$ with $f(T)=0$ and the norm
\begin{align}
\label{norm f}
\| f \|_{**,k} = \sup_{t\in [-T,T]} |\log(T-t)|^k | f(t)  | .
\end{align}

\elim{Note that in \eqref{eq3modified} the expression
 $\eta(\frac{t}{T})\tilde L_1[\dot g](t)$ is well defined for $g$ of class $C^1$ in $[-T,T)$. Indeed, because of the cut-off function,  $\tilde L_1[\dot g](t)$ needs to be computed only for $t\geq -\frac{T}{4}$, and for  $t\geq -\frac{T}{4}$ the integrals appearing in $L_1[\dot g]$ are well defined, since they start at either at $-T$ or  $t-(T-t) = 2t-T \geq -\frac{T}{2}$.}
% \comment{\crr Cambiar integrales $t-(T-t)$ por $t-\frac{1}{2}(T-t)$}

Note that in \eqref{eq3modified} the expression
 $\eta(\frac{t}{T})\tilde L_1[\dot g](t)$ is well defined for $g$ of class $C^1$ in $[-T,T)$. Indeed, because of the cut-off function,  $\tilde L_1[\dot g](t)$ needs to be computed only for $t\geq -\frac{T}{4}$, and for  $t\geq -\frac{T}{4}$ the integrals appearing in $L_1[\dot g]$ are well defined, since they start at either at $-T$ or $t-\frac{1}{2}(T-t) = \frac{3}{2}	t-\frac{1}{2}T \geq -T$.

\medskip

The next lemma gives the solvability of \eqref{eq3modified}  in the weighted spaces introduced above.
Let
\[
\Upsilon=\frac{2-{\alpha _0}}{1-{\alpha _0}}
\]
\begin{lemma}
\label{lemma L inverse}
Let $C_2>1$ be fixed, $\kappa$ satisfying \eqref{condKappa}, and assume that $k>\Upsilon-1$.
Then, there is $\bar\alpha _0>0$,  so that for $0<\alpha _0\leq\bar\alpha _0$, and  $T>0$ small,  there is a linear operator $T_1$ such that $g = T_1[f]$ satisfies \eqref{eq3modified} for some constant $c$ and
\begin{align}
\label{est L inverse}
\|g\|_{*,k+1} + |c| \leq \frac{C}{k+1-\Upsilon} \|f\|_{**,k}.
\end{align}
The constant $C$ is independent of $T$, ${\alpha _0}$.
\end{lemma}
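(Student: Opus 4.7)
The strategy follows the same template as Lemma~\ref{lemma02}: first construct a right inverse for the principal operator $\tilde L_0$, and then treat $\eta(t/T)\tilde L_1$ as a small perturbation via the Banach fixed point theorem. The new difficulty compared with Lemma~\ref{lemma02} is that $\tilde L_0$ no longer consists of pure multiplication by $(1-\delta_0)|\log(T-t)|$; it carries in addition the nonlocal term $\int_{-T}^{t}\frac{\cdot}{T-s}\,ds$, so inverting it is not simply pointwise division.

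To invert $\tilde L_0$, my plan is to convert the equation $\tilde L_0[\dot g]=F$ (with $g(T)=0$) into a first-order linear ODE. Introducing the auxiliary unknown $I(t):=\int_{-T}^t\frac{\dot g(s)}{T-s}\,ds$, so that $\dot g(t)=(T-t)I'(t)$, the equation becomes
\[
(1-\delta_0)(T-t)|\log(T-t)|\,I'(t)\,+\,I(t)\,=\,F(t),\qquad I(-T)=0,
\]
which I would solve explicitly with the integrating factor $|\log(T-t)|^{1/(1-\delta_0)}$, producing
\[
I(t)\ =\ \frac{1}{(1-\delta_0)\,|\log(T-t)|^{1/(1-\delta_0)}}\int_{-T}^t\!\frac{F(s)\,|\log(T-s)|^{1/(1-\delta_0)-1}}{T-s}\,ds,
\]
and $\dot g=(F-I)/\bigl((1-\delta_0)|\log(T-t)|\bigr)$. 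The change of variables $u=|\log(T-s)|$ turns the inner integral into one of the form $\int u^{a}\,du$, and a careful asymptotic analysis near $t=T$ then yields the weighted bound $\|g\|_{*,k+1}\le\frac{C}{k+1-\beta}\|F\|_{**,k}$, the divisor $k+1-\beta$ arising from the balance between the exponent of the integrating factor and the decay rate of $F$. The perturbative step for $\tilde L_1$ parallels the derivation of \eqref{estmu100}: splitting $\tilde L_1[\dot g]$ into its four explicit pieces and testing each against the profile $|\log(T-t)|^{-(k+1)}$, I would obtain
\[
\|\eta(\cdot/T)\,(\tilde L_1[\dot g](\cdot)-\tilde L_1[\dot g](T))\|_{**,k}\,\le\,\Bigl(\delta_0+\frac{C\log|\log T|}{|\log T|}\Bigr)\|g\|_{*,k+1},
\]
the factor $\delta_0$ coming from the truncation at $(T-t)^{1+\delta_0}$ and the logarithmic remainder from the $\log|\log(T-t)|$ multiplier together with the smooth piece.

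With these ingredients I would close the argument by solving the fixed-point equation
\[
g\ =\ \tilde L_0^{-1}\bigl[\,f-\eta(t/T)\,(\tilde L_1[\dot g](t)-\tilde L_1[\dot g](T))\,\bigr]
\]
in the ball of the space equipped with the norm $\|\cdot\|_{*,k+1}$. For $\bar\delta_0$ (hence $\delta_0$) and $T$ small enough, the right-hand side is a contraction thanks to the perturbation estimate combined with the control on $\tilde L_0^{-1}$; the constant $c$ is forced upon us by $c=\tilde L_1[\dot g](T)$, since both $\tilde L_0[\dot g](t)$ and the bracketed correction vanish at $t=T$ and $f(T)=0$. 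Then \eqref{est L inverse} follows from the preceding estimates. The main obstacle is obtaining the sharp divisor $k+1-\beta$ in Step~2: this requires tracking the asymptotics of $\int_{-T}^{t}|\log(T-s)|^{a}(T-s)^{-1}\,ds$ near $t=T$ in three regimes (the exponent $a$ positive, zero, or negative), verifying that the matched expression produces the claimed quantitative gain uniformly in $k$ and $\delta_0$, and checking that the contraction constant remains bounded away from $1$ in this sharp form.
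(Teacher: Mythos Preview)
Your overall strategy matches the paper's: invert $\tilde L_0$ explicitly, then treat $\eta(t/T)\tilde L_1$ as a contraction-size perturbation. The perturbation step is fine and essentially identical to what the paper does, estimating the four pieces $\tilde L_{1j}$ separately to produce the factor $\delta_0+O(\log|\log T|/|\log T|)$.

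The gap is in your inversion of $\tilde L_0$. Your substitution $I(t)=\int_{-T}^t\frac{\dot g}{T-s}\,ds$ is equivalent to the paper's device of differentiating the equation, but you then pick the wrong particular solution of the resulting first-order ODE. Imposing $I(-T)=0$ and integrating from $-T$ to $t$ gives, with $\beta-1=1/(1-\delta_0)$,
\[
|I(t)|\;\le\;\frac{\|F\|_{**,k}}{(1-\delta_0)|\log(T-t)|^{\beta-1}}\int_{|\log 2T|}^{|\log(T-t)|}u^{\beta-2-k}\,du\;\sim\;\frac{C}{|\log(T-t)|^{\beta-1}}\quad(t\to T),
\]
since the integral converges to a nonzero constant. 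Hence $|\dot g|=|F-I|/((1-\delta_0)|\log(T-t)|)$ picks up a term of order $|\log(T-t)|^{-\beta}$, which is strictly slower than $|\log(T-t)|^{-(k+1)}$ because $k>\beta-1$. So your $g$ does not lie in the space with norm $\|\cdot\|_{*,k+1}$, and the claimed bound fails.

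The paper fixes this by integrating \emph{from $t$ to $T$} in the variation-of-constants formula (equivalently, choosing the other branch of the homogeneous solution): then the integral is $\int_{|\log(T-t)|}^\infty u^{\beta-2-k}\,du=\frac{1}{k+1-\beta}|\log(T-t)|^{\beta-1-k}$, which after division by $|\log(T-t)|^{\beta-1}$ gives exactly the $|\log(T-t)|^{-k}$ decay with the sharp divisor $k+1-\beta$. The price is that $I(-T)\neq 0$, so one only obtains $\tilde L_0[\dot g]=F+c$ with $c=-I(-T)$. This is where the constant $c$ in \eqref{eq3modified} actually originates; it is \emph{not} $\tilde L_1[\dot g](T)$ as you assert. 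Correspondingly, the fixed point should be set up as $g=T_0[f-\eta(t/T)\tilde L_1[\dot g]]$ (no subtraction of $\tilde L_1[\dot g](T)$), and the constant emerges from $T_0$ itself.
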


Let
\begin{align}
\label{defE}
E(t) & := \mathcal I[p_{0,\kappa}](t) , \\
\nonumber
\tilde E(t) &= E(t) - E(T) ,
\end{align}
where $ \mathcal I$  is given by \eqref{defI},
and consider the fixed point problem
\begin{align}
\label{fixedP1}
p_1
= \mathcal A[p_1]
\end{align}
where
\begin{align}
\label{defA}
\mathcal A [p_1] =
T_1
\bigl[
-\eta \tilde E
-  \tilde{\mathcal B}[ p_{0,\kappa} + p_1 ]
\bigr] ,
\end{align}
where  $\eta$ is the cut-off function defined in \eqref{cutOff1}.

Note  that if $p_1$ is a solution of \eqref{fixedP1} then $p_1$ satisfies
\begin{align*}
\tilde L_0[\dot p_1]  + \eta(\frac{t}{T}) \tilde L_1[\dot p_1] =
\eta \tilde E -   \tilde{\mathcal B}[ p_{0,\kappa} + p_1 ](t)
 + c
\end{align*}
in $[-T,T]$ for some constant $c$.
This implies that $p_1$ satisfies
\begin{align*}
S_{\alpha_0}[\dot p_1] +  \tilde{\mathcal B}[ p_{0,\kappa} + p_1 ] -E =   c
\end{align*}
in $[0,T]$ for some possibly different constant $c$.
This is precisely the equation \eqref{eqP1}.

%Note that it is not necessary to subtract  $ \tilde{\mathcal B}[ p_{0,\kappa} + p_1 ](T)$ in the argument of $T_1$ in \eqref{fixedP1},  because for the class of functions $p_1$ that we consider we have $\tilde{\mathcal B}[ p_{0,\kappa} + p_1 ](T)=0$, see \eqref{estTildeBiab} later on.

\begin{prop}
\label{prop1b}
Let $k>0$, $k<2$ close to 2 and ${\alpha _0}>0$ small. Then for $T>0$ small
there is a function $ p_1$ satisfying  \eqref{fixedP1} and moreover
\begin{align}
%\label{estLambda1c}
\nonumber
\| p_1 \|_{*,k+1} \leq M
\end{align}
where
\begin{align}
\label{def-M}
M = C_0  |\log(T)|^{k-1} \log(|\log(T)|)^2 ,
\end{align}
with $C_0$ a fixed large constant.

Moreover, if we denote by $p_1(\kappa)$ the solution just constructed, we have, for $\kappa_1,\kappa_2$ satisfying \eqref{condKappa}
\begin{align}
\label{p1-lipschitz}
\| p_1(\kappa_1) - p_1(\kappa_2) \|_{*,k+1}
\leq
C |\log T|^{k-1} \log(|\log T |)^2
\, |\kappa_1 - \kappa_2 |.
\end{align}
\end{prop}

\medskip

The rest of the subsection is devoted to the proof of Proposition~\ref{prop1b}.

We start with the construction of the linear operator $T_1$ in Lemma~\ref{lemma L inverse}.
We want to find an inverse for $\tilde L_0$, namely given $f$ find $g$ such that
$
\tilde L_0[ \dot g]  = f  $.
To do this, we differentiate this  equation and  we get
%$$
%-\ddot g(t) (1-{\alpha _0}) \log(T-t) + (4-{\alpha _0})\frac{\dot g(t)}{T-t}  + (2-{\alpha _0})\frac{ g(t)}{(T-t)^2} = \dot f(t)
%$$
%which we write as
\begin{align}
\label{ode1}
\ddot g(t) + \frac{2-{\alpha _0}}{1-{\alpha _0}}\frac{ \dot g(t)}{(T-t)|\log(T-t)|}  = \frac{1}{1-{\alpha _0}}\frac{\dot f(t)}{|\log(T-t)|} .
\end{align}

Then we can write a particular solution for $\dot g$ to \eqref{ode1} as
\begin{align}
\label{dot-g}
\dot g(t) = \frac{f(t)}{(1-{\alpha _0}) |\log(T-t)|}
+\frac{\Upsilon-1}{1-{\alpha _0}} |\log(T-t)|^{-\Upsilon}
\int_t^T \frac{|\log(T-s)|^{\Upsilon-2}}{T-s}f(s)\,ds,
\end{align}
where $ \Upsilon = \frac{2-{\alpha _0}}{1-{\alpha _0}}$ and where we have assumed that
$ \frac{|\log(T-s)|^{\Upsilon-2}}{T-s}f(s) $
is integrable near $T$ (for example $f(s) = O(|\log(T-s)|^{-k})$ with $k>\Upsilon-1$ suffices).

Define the operator
\begin{align}
\label{a0-1}
T_0[f] &=g ,
\end{align}
where $g$ is such that $\dot g$ is given by \eqref{dot-g} and $g(T)=0$. Note that  $g = T_0[f]$ solves \eqref{ode1} and therefore
$$
\tilde L_0[\dot g] = f + c ,
$$
for some constant $c$.

\begin{lemma}
\label{lemma L0 inverse}
Assume $ k>\Upsilon-1$.
Then for $f\in C([-T,T];\C)$ with $f(T)=0$
$$
\|T_0[f]\|_{*,k+1}\leq \frac{C}{k+1-\Upsilon} \|f\|_{**,k}.
$$
The constant is independent of $\Upsilon$ (if $\Upsilon$ is bounded), $k$, $T$.
\end{lemma}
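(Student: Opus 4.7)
The plan is to estimate $|\dot g(t)|$ directly from the explicit variation-of-parameters formula \eqref{dot-g}, exploiting the change of variables $u = |\log(T-s)|$ to evaluate the tail integral. By definition of $T_0[f]$, we must control
\[
|\log(T-t)|^{k+1}|\dot g(t)| \leq A(t) + B(t),
\]
where $A(t)$ comes from the pointwise term $\frac{f(t)}{(1-\delta_0)|\log(T-t)|}$ and $B(t)$ from the integral $\frac{\beta-1}{1-\delta_0}|\log(T-t)|^{-\beta}\int_t^T \frac{|\log(T-s)|^{\beta-2}}{T-s}f(s)\,ds$.

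For $A(t)$ the estimate is immediate: $|f(t)|\leq \|f\|_{**,k}|\log(T-t)|^{-k}$ so $A(t) \leq \frac{1}{1-\delta_0}\|f\|_{**,k}$. The main work is on $B(t)$. Bounding $|f(s)|\leq \|f\|_{**,k}|\log(T-s)|^{-k}$ reduces matters to
\[
B(t) \leq \frac{\beta-1}{1-\delta_0}|\log(T-t)|^{k+1-\beta}\|f\|_{**,k}\int_t^T \frac{|\log(T-s)|^{\beta-2-k}}{T-s}\,ds.
\]
The substitution $u = |\log(T-s)|$, $du = \frac{ds}{T-s}$ (with a sign change from the orientation) converts the integral to $\int_{|\log(T-t)|}^{\infty} u^{\beta-2-k}\,du$. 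The hypothesis $k>\beta-1$ is exactly what makes $\beta-2-k < -1$, so this integral converges and equals $\frac{|\log(T-t)|^{\beta-1-k}}{k+1-\beta}$. Substituting back yields
\[
B(t) \leq \frac{\beta-1}{(1-\delta_0)(k+1-\beta)}\|f\|_{**,k},
\]
and combining with the bound on $A(t)$ gives \eqref{est L inverse} with a constant independent of $\beta$ (provided $\beta$ stays bounded, which holds since $\delta_0$ is small).

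Finally I would note that the bound $|\dot g(t)|\leq C(k+1-\beta)^{-1}\|f\|_{**,k}|\log(T-t)|^{-k-1}$ is integrable near $T$ (since $k+1>\beta>1$), so $g(t):= -\int_t^T \dot g$ is well-defined, continuous on $[-T,T]$ with $g(T)=0$, confirming that the operator $T_0$ is indeed well-defined on the stated space. There is no real obstacle here; the whole argument is a one-line integral after the logarithmic substitution, and the sharpness of the constant $(k+1-\beta)^{-1}$ comes directly from the divergence of $\int_1^\infty u^{\beta-2-k}\,du$ as $k\downarrow \beta-1$. The only thing to watch is that $T_0$ is linear in $f$ by construction, so the resulting operator bound is linear, as required.
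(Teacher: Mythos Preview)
Your proof is correct and is exactly what the paper has in mind: the paper's own proof is the single line ``This is direct from \eqref{dot-g},'' and your argument simply spells out that directness via the substitution $u=|\log(T-s)|$ to compute the tail integral and extract the factor $(k+1-\beta)^{-1}$.
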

\begin{proof}
This is direct from  \eqref{dot-g}.
\end{proof}

\begin{proof}[Proof of Lemma~\ref{lemma L inverse}]

We construct $g $ as a solution of the fixed point problem
\begin{align*}
g = T_0\left[f- \eta\bigl(\frac{t}{T}\bigr) \tilde L_1[g] \right] .
\end{align*}
where $T_0$ is the operator constructed in \eqref{a0-1}
and $\eta$ is the cut-off function \eqref{cutOff1}.

By Lemma~\ref{lemma L0 inverse}
$$
\| T_0[\tilde L_1[g]] \|_{*,k+1} \leq \frac{C }{k+1-\Upsilon} \|\tilde L_1[g]\|_{**,k}.
$$
A computations shows that
\begin{align*}
\| T_0[\tilde L_1[g]] \|_{*,k+1}
%& \leq  \frac{C }{k+1-\Upsilon}  \|\tilde L_1[g]\|_{**,k} \\
&\leq  \frac{C }{k+1-\Upsilon}   \left( {\alpha _0} + \frac{1}{|\log T|}  +  \frac{\log|\log T|}{|\log T|} \right) \|g\|_{*,k+1} .
\end{align*}
we get a contraction if ${\alpha _0}>0$ is fixed small and then $T>0$ is sufficiently small.	
\end{proof}

Next we need an estimate for the error $E$ defined in \eqref{defE}.

\begin{lemma}
\label{lemma-est-E}
Let $p_{0,\kappa}$ be given by \eqref{p0kappa} and assume $\kappa \in \C$ satisfies \eqref{condKappa}.
Then
\begin{align}
\label{est-E}
|E(t) -E(T)|\leq C
\frac{ |\log T|  \log |\log(T-t)| }{|\log(T-t)|^2} ,
\quad
-\frac{T}{4} \leq t \leq T.
\end{align}
\end{lemma}
\begin{proof}
By definition we have
\[
E(t)
=
\int_{-T}^{t-\lambda_*(t)^2} \frac{\dot p_{0,\kappa}(s)}{t-s}\, ds .
\]
Let $t\in [-\frac{T}{4},T]$ and  let us write
\begin{align*}
E(t)
%&=
%\int_{-T}^{t-(T-t)/5} \frac{\dot p_{0,\kappa}(s)}{t-s}\, ds
%+ \int_{t-(T-t)/5}^{t-p_0(t)^2} \frac{\dot p_{0,\kappa}(s)}{t-s}\, ds
%\\
&=\int_{-T}^{t} \frac{\dot p_{0,\kappa}(s)}{T-s}\, ds  - \int_{t-(T-t)/5}^t  \frac{\dot p_{0,\kappa}(s)}{T-s}\, ds
+ \int_{-T}^{t-(T-t)/5} \dot p_{0,\kappa}(s)\left( \frac{1}{t-s}- \frac{1}{T-s}\right) \, ds \\
& \quad +  \int_{t-(T-t)/5}^{t- \lambda_*(t)^2} \frac{\dot p_{0,\kappa}(s)}{t-s}\, ds .
\end{align*}
We estimate
\begin{align*}
\left| \int_{t-(T-t)/5}^{t} \frac{\dot p_{0,\kappa}(s)}{T-s}\, ds \right|
%&\leq  C \kappa |\log T| \int_{t-(T-t)/5}^{t} \frac{1}{(T-s)|\log(T-s)|^2}\, ds \\
%& \leq  \frac{C \kappa|\log T |}{(T-t)|\log(T-t)|^2}\int_{t-(T-t)/5}^{t} ds\\
&\leq \frac{C \kappa |\log T |}{|\log(T-t)|^2} ,
\end{align*}
and
\begin{align*}
\left| \int_{-T}^{t-(T-t)/5} \dot p_{0,\kappa}(s)\left( \frac{1}{t-s}- \frac{1}{T-s}\right) \, ds \right|
%& \qquad \leq C \kappa |\log(T)| (T-t) \int_{-T}^{t-(T-t)/5} \frac{1}{|\log(T-s)|^2 (t-s) (T-s) }\, ds \\
%& \qquad \leq C\kappa |\log(T)| (T-t)
%\int_{-T}^{t-(T-t)/5} \frac{1}{ |\log(T-s)|^2 (T-s)^2 }\, ds \\
\leq \frac{C\kappa |\log(T)|}{|\log(T-t)|^2}.
\end{align*}
With the fourth  term in $E$ we proceed as follows
\begin{align*}
\int_{t-(T-t)/5}^{t-\lambda_*(t)^2} \frac{\dot  p_{0,\kappa}(s)}{t-s}\, ds
%\\&=
%\dot  p_{0,\kappa}(t)\int_{t-(T-t)/5}^{t-\lambda_*(t)^2} \frac{1}{t-s}
%\, ds - \int_{t-(T-t)/5}^{t-\lambda_*(t)^2} \frac{\dot  p_{0,\kappa}(t)-\dot  p_{0,\kappa}(s)}{t-s}\, ds
%\\
&= \dot  p_{0,\kappa}(t) (  \log(T-t) - 2\log(\lambda_*)) - \int_{t-(T-t)/5}^{t- \lambda_*(t)^2} \frac{\dot  p_{0,\kappa}(t)-\dot  p_{0,\kappa}(s)}{t-s}\, ds  .
\end{align*}
But
\begin{align*}
\left|\int_{t-(T-t)/5}^{t-\lambda_*(t)^2} \frac{\dot p_{0,\kappa}(t)-\dot  p_{0,\kappa}(s)}{t-s}\, ds \right|
%& \leq \sup_{s \leq t} |\ddot  p_{0,\kappa}(s)| (T-t) \\
& \leq \frac{C\kappa |\log(T)|}{|\log(T-t)|^3} ,
\end{align*}
and therefore
\begin{align*}
E =  \int_{-T}^{t} \frac{\dot  p_{0,\kappa}(s)}{T-s}\, ds  + \dot  p_{0,\kappa}(t)  ( \log(T-t)  -2\log(\lambda_*))+O(\frac{\kappa|\log(T)| }{|\log(T-t)|^2}).
\end{align*}

We note that
\[
\dot p_{0,\kappa}(t)| \log(T-t)| +  \int_{0}^{t} \frac{\dot  p_{0,\kappa}(s)}{T-s}\, ds  = c
\]
for some constant $c$. Indeed, by \eqref{derP0Kappa}
\begin{align*}
\frac{d}{dt}
\left(
 \dot  p_{0,\kappa}(t)| \log(T-t)| +  \int_{0}^{t} \frac{\dot  p_{0,\kappa}(s)}{T-s}\, ds
\right)
%&=
%\ddot p_{0,\kappa}(t)|\log(T-t)|+2\frac{\dot p_{0,\kappa}(t)}{T-t}
%\\
&= \frac{\frac{d}{dt}(\dot p_{0,\kappa}(t) |\log(T-t)|^2)}{|\log(T-t)|}
\\
&= 0 .
\end{align*}
This shows that
\begin{align*}
%\label{est-E0}
E(t) = E(T)
+O(\frac{ |\log T | [ \log(|\log T |) + \log(|\log(T-t)|)] }{|\log(T-t)|^2})  ,
\end{align*}
which implies the estimate \eqref{est-E}.
\end{proof}

\begin{proof}[Proof of Proposition~\ref{prop1b}]
Let $T_1$ be the operator constructed in Lemma~\ref{lemma L inverse} for $T>0$, ${\alpha _0}>0$ small and $\mathcal A$ defined in \eqref{defA}.

We will apply inequality  \eqref{est L inverse} with $k<2$ close to 2.
The constant in this inequality remains bounded  as ${\alpha _0}\to0^+$, because $\Upsilon=\frac{2-{\alpha _0}}{1-{\alpha _0}}\to 2$ as  ${\alpha _0}\to0^+$.

For the poof we  use the norm \eqref{norm f} with $k<2$, $k$ close to 2 so $k+1<3$ is close to 3.
We work with $p_1$ in the space $X=C([-T,T];\C)\cap C^1([-T,T);\C)$ with the norm $\|\cdot\|_{*,k+1}$ defined in \eqref{norm g}.
By Lemma~\ref{lemma L inverse}
\begin{align}
\label{abc1}
\|\mathcal A[p_1] \|_{*,k+1}
 \leq C
\Big(
\|  \eta \tilde  E  \|_{**,k}
+ \|
\tilde{\mathcal B}[ p_{0,\kappa} + \p	_1 ](t)  - \tilde{\mathcal B}[  p_{0,\kappa} + \p	_1 ](T)  \|_{**,k}
\big) ,
\end{align}
and  by Lemma~\ref{lemma-est-E}
\begin{align}
\label{estE1}
\|  \eta \tilde  E  \|_{**,k}
\leq C_E |\log T|^{k-1} \log(|\log T |) ,
\end{align}
for some $C_E>0$.
We take in $X$ the closed ball $\overline B_M(0)$ of center 0 and radius $M$ given by \eqref{def-M} with $C_0>0$ suitably large.
The proof of Proposition~\ref{prop1b} consists in showing that $\mathcal A : \overline B_M(0) \to \overline B_M(0)$ is a contraction.
The estimates required for this are the following:
for $\| p_1\|_{*,k+1}\leq M$
we have
\begin{align}
\label{estM1-1}
\| \tilde{ \mathcal B}[  p_{0,\kappa} + \p	_1 ]
\|_{**,k}
\leq   C |\log(T)|^{k-1},
\end{align}
and for $\| p_i \|_{*,k+1}\leq M$, $ i=1,2$ we have
\begin{align}
\label{estM1-2}
\|
\tilde{\mathcal B}[  p_{0,\kappa}+ p_1]
-
\tilde{\mathcal B}[  p_{0,\kappa}+ p_2]
\|_{**,k}
\leq \frac{C}{|\log T |}
\|p_1-p_2\|_{*,k+1} .
\end{align}
These inequalities are proved in a straightforward way. We omit the details

\medskip

Form these estimates we see that $\mathcal A$ is a contraction in the ball $\overline{B}_M$.
Indeed, from \eqref{abc1}, \eqref{estE1} and \eqref{estM1-1} we have
\begin{align*}
\|\mathcal A[p_1] \|_{*,k+1}
&\leq C \cdot C_E |\log T|^{k-1} \log(|\log T |) +
  C |\log(T)|^{k-1} \\
& \leq C_0  |\log T|^{k-1} \log(|\log T |)^2
\end{align*}
by fixing $C_0$ large. Therefore  $\mathcal A : \overline B_M(0) \to \overline B_M(0)$.

Next, for  $\| p_i \|_{*,k+1}\leq M$, $ i=1,2$, by Lemma~\ref{lemma L inverse} and \eqref{estM1-2} we get
\begin{align*}
\|\mathcal A[p_1] - \mathcal A[p_2] \|_{*,k+1}
&\leq C
\|
\tilde{\mathcal B}[  p_{0,\kappa}+ p_1]
-
\tilde{\mathcal B}[  p_{0,\kappa}+ p_2]
\|_{**,k}
\leq \frac{C}{|\log T |}
\|p_1-p_2\|_{*,k+1} .
\end{align*}

The proof of \eqref{p1-lipschitz} will be given in Corollary~\ref{coroLambda1} below.
%
%let us the notation
%\[
%\mathcal A[p_1,\kappa] =
%T_1
%\bigl[
%-\eta \tilde E
%-  \tilde{\mathcal B}[ p_{0,\kappa} + p_1 ]
%\bigr] ,
%\]
%where  $T_1$ is the operator constructed in Lemma~\ref{lemma L inverse}.
%We observe that if $\kappa_1$, $\kappa_2$ satisfy \eqref{condKappa} then
%\begin{align*}
%\| \mathcal A[p_1,\kappa_1] - \mathcal A[p_1,\kappa_2]  \|_{*,k+1}
%&\leq
%C \|
% \tilde{\mathcal B}[ p_{0,\kappa_1} + p_1]
%-
% \tilde{\mathcal B}[ p_{0,\kappa_2} + p_1]
%\|_{**,k} .
%\end{align*}
%
\end{proof}

We also have the following estimates
\begin{lemma}
%\label{lemmaEstDotLambda1c}
Let $p_1$ be the solution constructed in Proposition~\ref{prop1b}. Then
\begin{align}
%\label{estDotLambda1b}
\nonumber
|\ddot p_1(t)| &\leq  C \frac{  |\log T| }{|\log(T-t)|^{3} (T-t)}
\\
%\label{estDotLambda1c}
\nonumber
\Bigl|\frac{d^3}{dt^3}\p_1(t) \Bigr| & \leq  C \frac{  |\log T | }{|\log(T-t)|^{3} (T-t)^2}  .
\end{align}
\end{lemma}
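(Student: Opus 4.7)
My approach is to differentiate the integro-differential equation satisfied by $\p_1$ and to estimate the resulting quantities by bootstrapping from Lemma~\ref{lemma1b}. Recall that $\p_1$ solves
\[
\tilde L_0[\dot\p_1] + \eta(t/T)\tilde L_1[\dot\p_1] = F(t) + c,
\]
with $F(t) = -\tilde E(t) + 2\frac{d}{dt}|\p(t)| - (\tilde{\mathcal B}[\p](t) - \tilde{\mathcal B}[\p](T))$ and $\p = \p_{0,\kappa}+\p_1$. Writing this as $\tilde L_0[\dot\p_1] = G(t) + c$ with $G := F - \eta(t/T)\tilde L_1[\dot\p_1]$ and differentiating exactly as in the derivation of equation \eqref{ode1}, one arrives at the pointwise ODE
\[
\ddot\p_1 + \frac{2-\delta_0}{(1-\delta_0)(T-t)|\log(T-t)|}\,\dot\p_1 = \frac{\dot G(t)}{(1-\delta_0)|\log(T-t)|} .
\]
The ``damping'' term $|\dot\p_1|/((T-t)|\log(T-t)|)$ is already under control: by Lemma~\ref{lemma1b}, $|\dot\p_1|\leq M/|\log(T-t)|^{k+1}$ with $M = C_0|\log T|^{k-1}(\log|\log T|)^2$ and $k<2$ close to $2$, so this quantity is bounded by $CM/((T-t)|\log(T-t)|^{k+2})\leq C|\log T|/((T-t)|\log(T-t)|^3)$ since $|\log(T-t)|\gtrsim|\log T|$ uniformly on $[-T,T]$.

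The main task is therefore to prove $|\dot G(t)| \leq C|\log T|/((T-t)|\log(T-t)|^2)$. I will compute $\dot G$ term by term. For $\dot{\tilde E}$, a direct differentiation of the closed-form expression derived inside the proof of Lemma~\ref{lemma-est-E} gives the desired bound. For $\frac{d^2}{dt^2}|\p|$, we use $|\ddot\p_{0,\kappa}|\lesssim |\log T|/((T-t)|\log(T-t)|^3)$ combined with the a priori bound on $|\dot\p_1|$. For $\frac{d}{dt}\tilde{\mathcal B}[\p]$, one differentiates the integral formulas \eqref{defBia}; the interior parts gain a power $(t-s)^{-1}$ and are estimated by Lemma~\ref{lemmaIntegral1} (with $a=2$ replaced by $a=3$), yielding the extra factor $(T-t)^{-1}$ compared to the estimate \eqref{estTildeBiab}. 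Boundary contributions at the endpoint $s = t-\lambda_0(t)^2$ of the split integrals $\tilde{\mathcal B}_{i,a}$, $\tilde{\mathcal B}_{i,b}$ are evaluated using the identities $\Gamma_i(\zeta) = \tfrac{1}{2}+O(\zeta)$ near $0$ and $\Gamma_i(\zeta) = O(\zeta^{-1})$ at infinity, and do not cancel exactly but are of the correct order. Finally, the contribution $\frac{d}{dt}(\eta\tilde L_1[\dot\p_1])$ produces some terms involving $\ddot\p_1$ itself (for instance from $\tilde L_{14} = (4\log|\log(T-t)|-2\log|\log T|)\dot\p_1$), but with a multiplier of size $\log|\log T|/|\log(T-t)|$; these terms are absorbed into the left-hand side by the same contraction mechanism as in Lemma~\ref{lemma L inverse}. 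This closes the estimate and gives the first inequality.

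For the bound on $\frac{d^3}{dt^3}\p_1$, differentiate the ODE once more to obtain
\[
\frac{d^3\p_1}{dt^3} + \frac{2-\delta_0}{(1-\delta_0)(T-t)|\log(T-t)|}\,\ddot\p_1 = \frac{\ddot G(t)}{(1-\delta_0)|\log(T-t)|} + H(t),
\]
where $H$ collects terms coming from differentiating the coefficient $1/((T-t)|\log(T-t)|)$ times $\dot\p_1$. Using the already-known bound $|\dot\p_1|\lesssim |\log T|(\log|\log T|)^2/|\log(T-t)|^3$ gives $|H(t)|\leq C|\log T|/((T-t)^2|\log(T-t)|^3)$, and the bound on $\ddot\p_1$ just established controls the second term on the left in the same way. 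The estimate $|\ddot G(t)|\leq C|\log T|/((T-t)^2|\log(T-t)|^2)$ is proved by a second differentiation of each piece making up $\dot G$, each time-differentiation of the integrals defining $\tilde{\mathcal B}_{i,a/b}$ producing a further power of $(t-s)^{-1}$ handled via Lemma~\ref{lemmaIntegral1} with $a=4$; the terms involving $\frac{d^3}{dt^3}\p_1$ that come from $\frac{d^2}{dt^2}(\eta\tilde L_1[\dot\p_1])$ again carry a small coefficient and are absorbed.

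The principal obstacle is the careful bookkeeping when differentiating the nonlocal operator $\tilde{\mathcal B}$: the integrals $\tilde{\mathcal B}_{i,a}$ and $\tilde{\mathcal B}_{i,b}$ have limits of integration depending on $t$ through $\lambda_0(t)^2$, so every $t$-differentiation produces both boundary and interior contributions whose orders must be tracked carefully, using $\dot\lambda_0(t) = O(|\log T|/|\log(T-t)|^2)$ and the explicit asymptotics of $\Gamma_i$ and $\Gamma_i'$ near $0$ and $\infty$. The calculation is lengthy but mechanical once the reduction to the ODE and the iteration scheme is set up as above.
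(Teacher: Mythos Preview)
Your approach is essentially the same as the paper's: differentiate the fixed-point equation \eqref{eqLambda1b}, isolate the leading term $(1-\delta_0)|\log(T-t)|\,\ddot\p_1$, bound the forcing by $C|\log T|/((T-t)|\log(T-t)|^2)$, and absorb the residual terms that are linear in $\ddot\p_1$ (with small coefficients of size $\delta_0+O(\log|\log T|/|\log T|)$) via the contraction mechanism of Lemma~\ref{lemma L inverse}. The paper makes this absorption explicit by introducing the weighted norm $\|f\|_{*,j,m}=\sup(T-t)^j|\log(T-t)|^m|f(t)|$ and proving $\|\tilde L_1[\,\cdot\,]\|_{*,j,m}\le(\delta_0+C/|\log T|)\|\cdot\|_{*,j,m+1}$; note also that $\tfrac{d^2}{dt^2}|\p|$ and $\tfrac{d}{dt}\tilde{\mathcal B}[\p]$ each contribute further $\ddot\p_1$-terms (through $\p\cdot\ddot\p_1/|\p|$ and $D\tilde{\mathcal B}[\p;\dot\p_1]$ respectively) that must likewise be absorbed, but these carry coefficients of order $1/|\log T|$ and fit into the same scheme.
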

The proof is done by formally differentiating the equation and using suitable estimates on the operators involved. We omit the details.

Proposition~\ref{prop1b} defines a function
that to $\kappa $ satisfying \eqref{condKappa} associates  $p_1(\kappa)$, which is the unique fixed point of $\mathcal A$ in the ball $\{ \| p_1 \|_{*,k+1} \leq M \}$, $M=C_0  |\log(T)|^{k-1} \log(|\log(T)|)^2 $.

The next result gives several Lipschitz estimates of this map.
\begin{corollary}
Let $k\in (0,2)$.
\label{coroLambda1}
For $\kappa_1, \kappa_2$ satisfying \eqref{condKappa} we have
\begin{align}
%\label{LipP1}
\nonumber
\| p_1(\kappa_1) - p_1(\kappa_2) \|_{*,k+1}
\leq
C |\log T|^{k-1} \log(|\log T |)^2
\, |\kappa_1 - \kappa_2 |.
\end{align}
\end{corollary}

%From the previous corollary we can now show that given $A\in \C$ satisfying \eqref{restrictionsA} there is $\kappa$ so that $c(\kappa)=A$ where $c(\kappa)$ is the constant appearing in \eqref{eqP1}.
%\begin{corollary}
%Let $k\in (0,2)$.
%\label{coroSolvingKappa}
%Given $C_1>1$ in \eqref{restrictionsA} there is $C_2>1$ such that if  $A\in \C$ satisfies  \eqref{restrictionsA} there is a unique $\kappa$ satisfying \eqref{condKappa} such that $c(\kappa)=A$.
%Moreover $\kappa(A)$ is Lipschitz with respect to $A$, that is,
%\begin{align*}
%|\kappa(A_1)-\kappa(A_2)|\leq C |A_1-A_2|
%\end{align*}
%for $A_1$, $A_2$ satisfying \eqref{restrictionsA}.
%\end{corollary}

We will also need a Lipschitz estimate for $\ddot p_1$ in the norm $\| \ \|_{-1,3}$ and $\frac{d^3}{d t^3} p_1$ in the norm $\| \ \|_{-2,3}$.

\begin{lemma}
For $\kappa_1, \kappa_2$ satisfying \eqref{condKappa} we have
\begin{align}
%\label{LipP1-n-12}
\nonumber
\| \ddot p_1(\kappa_1) - \ddot p_1(\kappa_2) \|_{-1,3}
\leq
C |\log T|
\, |\kappa_1 - \kappa_2 |
\\
\nonumber
%\label{Lip-ddt3-P1--n-23}
\Bigl\| \frac{d^3}{d t^3} p_1(\kappa_1) - \frac{d^3}{d t^3}  p_1(\kappa_2) \Bigr\|_{-2,3}
\leq
C |\log T|
\, |\kappa_1 - \kappa_2 |
\end{align}

\end{lemma}

Next we use the previous results on $p_1$ to obtain an estimate of $R_{\alpha _0}[\dot p_1]$.

\begin{lemma}
Let $p_1$ be the solution constructed in Proposition~\ref{prop1b}.
Then
\begin{align}
%\label{estRLambda1}
\nonumber
|R_{\alpha _0}[\dot p_1](t)|
& \leq C
\frac{|\log T| }{|\log(T-t)|^{3}} (T-t)^{\alpha _0} ,
%\\
%\label{estDRLambda1}
%| \frac{d}{dt}		R_{\alpha _0}[\dotp_1](t)|
%& \leq C
%\frac{|\log T| }{|\log(T-t)|^{3}} (T-t)^{\alpha _0-1} .
\end{align}
and for $\kappa_1$, $\kappa_2$ satisfying \eqref{condKappa} we have
\begin{align}
%\label{lip-Ralpha0}
\nonumber
|R_{\alpha_0}[\dot p_1(\kappa_1)]
-
R_{\alpha_0}[\dot p_1(\kappa_2)]|
\leq C\frac{|\log T| }{|\log(T-t)|^{3}} (T-t)^{\alpha _0}
|\kappa_1-\kappa_2|.
\end{align}
\end{lemma}

\begin{lemma}
Let $p_1$ be the solution constructed in Proposition~\ref{prop1b}.
Then
\begin{align}
%\label{estDRLambda1}
\nonumber
\Bigl| \frac{d}{dt}	R_{\alpha _0}[\dot p_1](t) \Bigr|
& \leq C
\frac{|\log T| }{|\log(T-t)|^{3}} (T-t)^{\alpha _0-1}  ,
\\
%\label{estDRLambda1-lip}
\nonumber
\Bigl|
\frac{d}{dt}	R_{\alpha _0}[\dot p_1(\kappa_1)](t)
-
\frac{d}{dt}	R_{\alpha _0}[\dot p_1(\kappa_2)](t)
\Bigr|
& \leq C
\frac{|\log T| }{|\log(T-t)|^{3}} (T-t)^{\alpha _0-1}
| \kappa_1 - \kappa_2|.
\end{align}
\end{lemma}

\section{Final adjustment of the parameters $p$ and $\xi$}
\label{sectSolPar}

In this section we prove that the last equations of the gluing system \eqref{eq-psi}--\eqref{1.4} can be solved, by adjusting the parameter functions $p = \lambda e^{i\omega}$ and $\xi$, as stated in Proposition~\ref{propSolPar}, thus concluding the proof of Theorem \ref{teo1}.

\begin{proof}[Proof of Proposition~\ref{propSolPar}]
Let  $ \Psi (p,\xi,\Phi,Z_0^*)$ be the solution to equation~\equ{eq-psi}  constructed in Proposition \ref{propi1}.
Let $\Phi(p,\xi,Z_0^*)$ denote the solution of \eqref{ptofijo} constructed in Proposition~\ref{innnerSystem}.
In \eqref{1.3}-\eqref{1.4} we replace $\Psi^*$ by $\Psi^*(p,\xi,\Phi(p,\xi,Z_0^*),Z_0^*)$.
Then to find a solution of the full system \eqref{eq-psi}--\eqref{1.4} it is sufficient to find $p$, $\xi$ such that
\begin{align}
\label{1.3b}
c_{0j}[h(p,\xi, \Psi^*(p,\xi,\Phi(p,\xi,Z_0^*),Z_0^*))](t) - c_{0j}^*[p,\xi,\Psi^*(p,\xi,\Phi(p,\xi,Z_0^*),Z_0^*)] (t) &= 0
\\
\label{1.4b}
c_{1j}[h(p,\xi, \Psi^*(p,\xi,\Phi(p,\xi,Z_0^*),Z_0^*))](t)  &=  0
\end{align}
for all $t\in (0,T)$, $j=1,2$.

We recall from Section~\ref{sectInnerOuter} that \eqref{1.3b} is equivalent
to
\begin{align}
\label{1.3bb}
\mathcal B_0[p] = a_0^{(0)}[p,\xi,\Psi^*]  + \mathcal R_0 \left[
 a_0^{(0)}[p,\xi,\Psi^*]
\right], \quad t \in [0,T]
\end{align}
where $\Psi^*=\Psi^*(p,\xi,\Phi(p,\xi,Z_0^*),Z_0^*)$.
We recall that $\mathcal B_0$ is the integral operator defined in \eqref{defB0-new} which has the approximate form
\begin{align*}
\mathcal B_0[p ]
=   \int_{-T} ^{t-\la^2}    \frac{\dot p(s)}{t-s}ds\, + O\big( \|\dot p\|_\infty \big).
\end{align*}
In  Proposition~\ref{propIntegralOp} we constructed an approximate inverse $\mathcal P$  of the operator $\mathcal B_0$, so that given $a$ satisfying \eqref{hypA00},  $ p := \mathcal P  \left[ a \right] $,
satisfies the equation
\[
\mathcal B_0[ p ]   = a +\Rem[ a] , \quad \text{in }[0,T],
\]
for a small remainder $\Rem[ a]$.
The proof of that proposition gives the decomposition
\begin{align*}
\mathcal P[a] = p_{0,\kappa} + \mathcal P_1[a],
\end{align*}
where $p_{0,\kappa}$ is defined in \eqref{p0kappa},  $\kappa = \kappa[a] \in \C$  and the function $ p_1 = \mathcal P_1[a]$ has the estimate
\[
\| p_1 \|_{*,3-\sigma} \leq C |\log T|^{1-\sigma} \log^2(|\log T|) ,
\]
where $\| \ \|_{^*,3-\sigma}$ is defined in \eqref{norm g} and $\sigma \in (0,1)$.
This leads us to define the space $
X_1 := \C \times \tilde X_1
$
where
\[
\tilde X_1 := \{  p_1 \in C([-T,T;\C]) \cap C^1([-T,T;\C]) \ | \ p_1(T) = 0 , \ \|p_1\|_{*,3-\sigma}<\infty \} .
\]
Let us rewrite equation \eqref{1.4b} as follows.
By \eqref{defCij}, \eqref{1.4b} is equivalent to
\begin{align*}
\int_{\R^2}
h[p,\xi, \Psi^*]
\cdot Z_{1j}(y)\, dy =0 , \quad t\in (0,T), \ j=1,2,
\end{align*}
and recalling \eqref{HH2}, this is equivalent to
\begin{align*}
\lambda \int_{B_{2R}} Q_{-\omega} \tilde L_U[\Psi^*] \cdot Z_{1j}
+ \lambda  \int_{B_{2R}} \KK_1[p,\xi] \cdot Z_{1j}=0 ,
\end{align*}
which yields the following equation
\begin{align}
\label{1.4bb}
\dot \xi_j = \frac{1}{4\pi}(1+(2R)^{-2}) \int_{B_{2R}} Q_{-\omega} \tilde L_U[\Psi^*] \cdot Z_{1j},
\quad j=1,2.
\end{align}
We reformulate \eqref{1.3bb}-\eqref{1.4bb} as the fixed point problem
\begin{align}
\label{fixedPar}
[p,\xi] = \mathcal A[p,\xi]  \inn \mathcal B
\end{align}
where the space $\mathcal B$ will be introduced below and
the operator  $\mathcal A = [\mathcal A_1 , \mathcal A_2]$ is defined by
\\
\begin{align*}
\mathcal A_1[p,\xi]
&= \mathcal P  \left[ a_0^{(0)}[p,\xi,\Psi^*(p,\xi,\Phi(p,\xi,Z_0^*),Z_0^*)]\right]
\\
\mathcal A_2 [p,\xi]
&=
q -  \int_t^T b[p,\xi] (s) \,ds
\end{align*}
with
\begin{align*}
b_{1j}[p,\xi](t)&=
 \frac{1}{4\pi}(1+(2R)^{-2}) \int_{B_{2R}} Q_{-\omega} \tilde L_U[
 \Psi^*(p,\xi,\Phi(p,\xi,Z_0^*),Z_0^*)
] \cdot Z_{1j}  .
\end{align*}
%We will consider the fixed point problem \eqref{fixedPar} introducing  appropriate spaces.
To define $\mathcal B$ consider the closed ball
\begin{align*}
\mathcal B_1 &= \overline B_{l_1}(\kappa_0 ) \times \overline B_{l_2}(0) \subset X_1,
\end{align*}
where  $\kappa_0 =  \div z_0^{*0}(q) + i \curl z_0^{*0}(q)  $
with $z_0^{*0}$ so that
\begin{align*}
Z_0^{*0}(x) =  \left [ \begin{matrix}z_0^{*0}(x) \\  z_{03}^{*0} (x) \end{matrix}   \right ] , \quad z_0^{*0}(x) = z^{*0}_{01}(x)  + i z^{*0}_{02}(x)  ,
\end{align*}
and $Z_0^* = Z_0^{*0} + Z_0^{*1}$ is the initial condition as described in \eqref{condZ0}. Here
the numbers $l_1$, $l_2$ are given by
\begin{align*}
l_1 = T^\sigma, \quad l_2 = C_0 |\log T|^{1-\sigma} \log^2( |\log T| ) ,
\end{align*}
with $\sigma>0$ small and  and $C_0>0$ is a fixed large constant.
We consider  $\xi$ in the space
\begin{align*}
X_2 &= \{  \xi \in C^{1}([0,T];\R^2) \ : \ \dot \xi(T) = 0\}
\end{align*}
endowed with the norm
\[
\| \xi \|_{X_2} = \|\xi\|_{L^\infty(0,T)} + \sup_{t\in (0,T)} \lambda_*(t)^{-\sigma} |\dot \xi(t)|
\]
where $\sigma \in (0,1)$ is fixed.
In $X_2$ we consider the closed ball
$
\mathcal B_2 := \overline B_{1}( \xi^* ) ,
$
where  $\xi^* \equiv q \in \Omega$.
We consider the Banach space
$
X := X_1 \times X_2
$
and its  closed ball
$
\mathcal B := \mathcal B_1 \times \mathcal B_2 .
$
We formulate the fixed point problem \eqref{fixedPar} in $\mathcal B$.
We claim that $\mathcal A ( \mathcal B) \subset  \mathcal B$ and that  $\mathcal A$ is a contraction mapping on $\mathcal B$ for
the norm $\| \ \|_X$.
This is consequence of the various bounds and Lipschitz estimates derived in \S \ref{secLambda} for the operator $\mathcal P$ and in \S \ref{sectInnerOuter} for the operators $\Psi^*$ and $\Phi$.

\end{proof}

\section{Stability of blow-up}
\label{Stability}

In this section we discuss the stability of the blow-up phenomenon  predicted in Theorem \ref{teo1} and prove
Theorem~\ref{stability}. We consider the class of initial conditions that lead to blow-up at a given point as described in \S \ref{asss}.
The solution has the form
$$
u(x,t)  =  U_{\la(t), \omega(t) , \xi(t)}   +   \vp +  a( |\vp|^2 ) U_{\la(t), \omega(t) , \xi(t)}
$$
where
$ a(s) = \sqrt{1-s} -1$
 and
$$
\vp(x,t)=  \Pi_{U_{\la(t), \omega(t) , \xi(t)}^\perp } \left [ \ttt Z^*(x,t)  +  \Phi(\la, \omega, \xi)(x,t) + \psi(x,t)  + \eta \phi(x,t)  \right ] ,
$$
where the point  $\xi(T)  \in \Omega$ is prescribed.
Changing slightly the proof we can achieve that the value $\xi(0)=q$ be prescribed. Let us denote
$
\varepsilon = \lambda(0).
$
A simple application of implicit function theorem to the system of equations determining $(\la,\omega, \xi)$  leads to the fact that
the blow-up time  $T$ and the final point $\xi(T)$ can be regarded as functions of arbitrary small values $\ve>0$ and points $q\in \Omega$.

%with
%$$
%\la(0) = \ve, \quad \xi(0) = q, \quad \ttt Z^*(x,0) = Z^*(x),\quad  \phi(x,0) = 0 $$
\medskip
The functions $(\la, \omega, \xi)$ as well as $\psi$ and $\phi$   have Lipschitz dependence in $ p:= (\ve,q)$ and $Z^*$ in suitable topologies. We relabel
$$\omega (p) := \omega(0), \quad  U_p :=  U_{\ve, \omega(p), q}, \quad \ttt \Phi (p)(x) =   \Phi(\la, \omega, \xi)(x,0) + \psi(x,0)  $$
so that the initial condition of the solution above becomes
$$
u_0(p) =    U_{p}   +   \Pi_{ U_{p}^\perp }[ Z^* + \ttt \Phi (p) ]  + a ( |\Pi_{U_{p}^\perp }[ Z^* + \ttt \Phi (p) ]|^2 )  U_p.
$$
%$$a\ =\  a \big ( | \Pi_{ Q_\omega U_{p}^\perp }[ Z^* + \ttt \Phi (p, Z^*) ]|^2\big ) $$
%Let us consider such an initial condition for $p_0= (\ve_0, q_0,Z_0^*)$, with $\ve_0 < \ve_*$.
%$$
%\|Z^*_0\|_{C^1(\Omega)} \le \frac \alpha  2 ,\quad
%{\rm div }\, z^*  (q) \ge  2\alpha ^2, \quad \dist(q_0,\pp\Omega) \ge 2\alpha ^2, \quad
%$$
%and
%$$
%|\curl z^*(q) | +
 %|Z_0^*(q_0)| \ \le \ \frac {\ve_0} 2.
%$$
%Here $\alpha $ is the number in \equ{ch1}.
A generic initial condition close to
$$
  U_{p_0}   +   \Pi_{  U_{p_0}^\perp }[ Z^*_0 + \ttt \Phi (p_0) ]  + a( |\Pi_{ U_{p_0}^\perp }[ Z^*_0 + \ttt \Phi (p_0) ]   |^2)  U_{p_0}
$$
with values in $S^2$ can be written in the form
$$
v(x; \vp_1 ) :=   U_{p_0}  +   \Pi_{  U_{p_0}^\perp }[ Z^*_0 + \ttt \Phi (p_0) + \vp_1]  + a  (|\Pi_{ U_{p_0}^\perp }[ Z^*_0 + \ttt \Phi (p_0) + \vp_1]   |^2)  U_{p_0}
$$
%and
%$$ a_1 =  a( |\Pi_{ Q_{\omega_0} U_{p_0}^\perp }[ Z^*_0 + \ttt \Phi (p_0, Z_0^*) + \vp_1] |^2) $$
where $\vp_1$ is a small function, otherwise arbitrary. We shall show that if $\vp_1$  is sufficiently small in $C^2$-topology and it lies on a certain codimension-1 manifold, then problem \equ{har map flow}
with initial condition $u_0 (x)= v(x ; \vp_1 ) $ has  blow-up as predicted. Thus what we need is that for
suitable $$\zeta = (\ve, q, Z^*) =  \zeta_0 + \zeta_1,\quad  \zeta_1= (\ve_1,q_1, Z_1^*) $$
we have   that
\be\label{eqb}
v(\cdot ; \vp_1 ) = u_0(p).
\ee
It is convenient to measure the size of $\zeta_1$ with respect to the norm (see \equ{normZ0}),
$$
\|p_1\|:=  |q_1|  +  |\ve_1|  +  \| Z_1^*\|_*.
$$
We expand $u_0(p)$ around $p=p_0$ and get
\begin{align*}
u_0(\zeta)=  &   U_{\zeta_0} +  \vp (\zeta) + a(|\vp(\zeta)|^2 ) )\,  U_{\zeta_0},
\end{align*}
where
\begin{align*}
\vp(\zeta) = &  \Pi_{  U_{\zeta_0}^\perp }[ Z^* + \ttt \Phi (\zeta)  +  ( U_{\zeta} - U_{\zeta_0}) (1 -\gamma(\zeta)  +   a(p)    ) ],\\ \gamma(\zeta) = & U_p\cdot ( Z^* + \ttt \Phi (\zeta)   ) \\
a(p)= & a(|\Pi_{  U_{\zeta}^\perp }[ Z^* + \ttt \Phi (\zeta) ] |^2   ).
\end{align*}
Therefore, equation \equ{eqb} becomes
$$
\Pi_{  U_{\zeta_0}^\perp }[ Z^*_0 + \ttt \Phi (\zeta_0) + \vp_1] =   \Pi_{  U_{\zeta_0}^\perp }[
Z^* + \ttt \Phi (\zeta)  +  (U_{\zeta} -  U_{p_0}) (1 -\gamma  +   a    )  ]
$$
or, equivalently
$$
\Pi_{ U_{\zeta_0}^\perp }[Z^*_1 + \ttt \Phi (\zeta)- \ttt \Phi (\zeta_0) +
( U_{\zeta} - U_{\zeta_0}) (1 -\gamma (\zeta) +   a (\zeta)   ) -\vp_1 ] \ = \ 0.
$$
We will get a solution to  this equation if we find a constant $c_0$ such that
$$
Z^*_1 + \ttt \Phi (\zeta_0+ \zeta_1)- \ttt \Phi (\zeta_0) +
(U_{\zeta} -  U_{\zeta_0}) (1 -\gamma(\zeta)  +   a (\zeta)   ) = \vp_1 + c_0 U_{\zeta_0}
$$
Let us consider the functions $Z_{lj}(y)$ defined in \eqref{ZZ}, $l=0,1$, $j=1,2$, with $y= \frac{x-q}{\ve}$.
We introduce the following intermediate problem: we want to find a function $Z_1^*$ and five constants
$c_0$, $c_{lj}$ such that
\be
Z^*_1 + \ttt \Phi (\zeta_0+ p_1)- \ttt \Phi (\zeta_0) +
(U_{\zeta} -  U_{\zeta_0}) (1 -\gamma(\zeta)  +   a (\zeta)   ) = \vp_1 + c_0 U_{\zeta_0} +  c_{lj} Z_{lj}
\label{eqc}\ee
 and the following  five real constraints  hold for the function $Z_1^*(x)$:
\be
\div z^*_1(\zeta_0) = 0, \quad   \curl  Z^*_1(q_0)=0 , \quad Z^*_1(q_0) = 0  .
\label{eqd}\ee
Summation convention is used in \equ{eqc}.

\medskip
To make the argument more transparent, we consider a simplified linearized version of \eqref{eqc}-\eqref{eqd}, in which lower order terms are neglected, and only the constants associated to mode 0 (associated to dilations and rotations) are considered.
Thus we consider the model equation for $Z_1^*$,
\be\label{eqZ}\left\{
\begin{aligned}
Z_1^* + \Phi_0[Z_1^*] = \varphi_1 + \sum_{j=1}^2 c_{0j} Z_{0j},\\
\div z_1^*(q_0,0) = 0, \quad \curl z_1^*(q_0,0) = 0.
\end{aligned}\right.
\ee
where
\[
\Phi_0[Z_1^*] (r) =
\left( \begin{matrix}
\phi_0[Z_1^*](r,t) \\ 0
\end{matrix}\right)
\]
with
\begin{align}
\label{phi0}
\phi_0[Z_1^*] (r) = r e^{i\theta}
\int_{-T}^{0}
\dot p(s)
k(r^2+\varepsilon^2 , -s)
\,ds , \quad k( \zeta ,t)= 2\frac{1-e^{-\frac{ \zeta }{4 t}}}{\zeta} ,
\end{align}
where
$p(t) = \lambda(t) e^{i \omega(t)}$,
$r = |x-q_0|$, $\varepsilon = \lambda(0)$,
and $p=p[Z_1^*]$ is such that the following equation is satisfied
\begin{align}
\label{eqP}
\left\{
\begin{aligned}
\dot p(t) |\log(T-t)| + \int_{-T}^t \frac{\dot p(s)}{T-s}\,ds
& =
\div \tilde z_1(q,t) + i \curl \tilde z_1(q,t) ,
\quad t\in [0,T].
\\
p(T) &=0 ,
\end{aligned}
\right.
\end{align}
where
\begin{align}
\label{heatZ1-new}
\left\{
\begin{aligned}
& \partial_t \tilde Z_1 (x,t) = \Delta \tilde Z_1(x,t) \quad \text{in } \Omega\times (0,T)
\\
&
\tilde Z_1(x,0) = Z_1^*(x) \quad x\in\Omega
\\
& \tilde Z_1 (x,t) = 0 \quad (x,t)\in \partial \Omega\times (0,T) ,
\end{aligned}
\right.
\end{align}
and we use the notation
\[
\tilde Z_1 = \left( \begin{matrix}
\tilde z_1 \\  \tilde z_{1,3}
\end{matrix} \right) ,
\quad
Z_1^* = \left( \begin{matrix}
z_1^* \\  z_{1,3}^*
\end{matrix} \right) .
\]

%\begin{remark}
%We note that equation \eqref{eqP} does not determine $p$ and is an approximation of the correct equation, which we solve up to some error.
%Therefore we construct a function $p$, which defines an operator of the right hand side of \eqref{eqP}, that satisfies \eqref{eqP}. The construction is done elsewhere.
%\end{remark}

%Recall the norm $\| \ \|_*$ defined in \eqref{normZ0}:
%\begin{align*}
%\| Z_1^* \|_* & =
%\sup_\Omega |Z_1^*(x) |
%+  \frac{1}{|\log \varepsilon|} \sup_\Omega | \nabla_x Z_1^* (x)|
%\\
%& \quad
%+  \frac{1}{|\log \varepsilon|^{1/2}} \sup_\Omega \left(  |x-q_0| +\varepsilon \right)  |D^2_x Z_1^*(x)| .
%\end{align*}
%
%We assume, as before,
%\[
%\frac{1}{C} \frac{T}{|\log T|} \leq \varepsilon \leq C  \frac{T}{|\log T|}  ,
%\]
%for some $C>1$.

The main result here is the solvability of \eqref{eqZ}.
\begin{prop}
\label{propZ1}
Assume $\| \varphi_1 \|_*$ is finite. Then for $T>0$ sufficiently small equation \eqref{eqZ} has a unique solution $Z_1^*$, $c_{01}$, $c_{0,2}$ and moreover
\[
\| Z_1^* \|_* + |c_{01}| + |c_{02}| \leq C \| \varphi_1 \|_*  .
\]
\end{prop}
We can obtain a similar result if all constraints and constants are considered, with essentially the same proof as that below. On the other hand, to derive the corresponding result to the full problem \equ{eqc}-\equ{eqd}, we need to use the linearized
version and contraction mapping principle. For that  we need to use the  precise  Lipschitz estimates of the solution of the inner-outer gluing system on the parameters involved as done in \S  \ref{sectInnerOuter} and \S \ref{secLambda}. % \ref{sectInnerOuter}.
The $C^1$ character of the manifold predicted in Theorem~\ref{stability} follows from the fixed point characterization and the implicit function theorem.

\medskip
We devote the rest of this section to the proof of the proposition, whose
main step is the following estimate.
\begin{lemma}
\label{lemmaD1}
Assume that
\begin{align}
%\label{div0}
\nonumber
\div z_1^*(q_0) = 0, \quad  \curl z_1^*(q_0) = 0.
\end{align}
Then
\[
\| \Phi_0[Z_1^*] \|_* \leq \frac{C}{|\log T|} \|Z_1^* \|_*.
\]
\end{lemma}

%With this estimate we can solve equation \eqref{eqZ}.

%
%
%To prove Lemma~\ref{lemmaD1} we need first
%
%

To prove this we need a corollary  of Lemma~\ref{lemmaGradEst-b} adapted to the norm $\| \ \|_*$ defined in \eqref{normZ0} is the following.
\begin{lemma}
\label{lemmaGradEst}
Suppose $Z_1^*\in C^2(\overline \Omega)$ satisfies
\begin{align*}
|\nabla_x Z_1^* (x) | & \leq |\log \varepsilon| , \quad x \in \Omega
\\
|D^2_x Z_1^*(x)| &  \leq \frac{|\log \varepsilon|^{ \frac{1}{2}}}{|x-q_0|+\varepsilon }\quad x \in \Omega.
\end{align*}
Then the solution $\tilde Z_1$ of \eqref{heatZ1-new} satisfies
\begin{align}
\label{estGrad}
| \nabla_x \tilde Z_1(x,t) |
\leq   |\log \varepsilon| , \quad t\geq 0,
\end{align}
and
\[
|\nabla_x \tilde Z_1(x,t) - \nabla_x \tilde Z_1(x,T)| \leq C
\begin{cases}
|\log \varepsilon| & \text{if }0\leq t \leq \varepsilon^2 \\
|\log \varepsilon|^{ \frac{1}{2} }
\frac{T-t}{T} (1+\log(\frac{T}{t})\, )& \text{if }  \varepsilon^2 \leq t \leq T.
\end{cases}
\]
\end{lemma}
\begin{proof}
As in Lemma~\ref{lemmaGradEst-b}
we consider the function given by Duhamel's formula in $\R^2$ and then  decompose the solution as a sum of the one in $\R^2$ and a smooth one in $\Omega$ with zero initial condition.

From \eqref{formulaGrad-b} and $| \nabla_x Z_1^*(x) |\leq |\log \varepsilon| $ we get \eqref{estGrad}.

For $0\leq t \leq \varepsilon^2$ we get
\[
|\nabla_x \tilde Z_1(x,t) - \nabla_x \tilde Z_1(x,T)| \leq C
|\log \varepsilon|
\]
from  \eqref{estGrad}.
For $ \varepsilon^2 \leq t \leq T$ from Lemma~\ref{lemmaGradEst-b} we obtain
\begin{align*}
|\nabla_x \tilde Z_1(x,t)  - \nabla_x \tilde Z_1(x,T)  |
& \leq C  |\log\varepsilon|^{1/2}\frac{\sqrt T - \sqrt t}{\sqrt T }
\left( 1+ \log\Bigl(\frac{T}{t} \Bigr) \right).
\end{align*}

\end{proof}

\begin{proof}[Proof of Lemma~\ref{lemmaD1}]
Let $f(t) = \div \tilde z_1(q,t) + i \curl \tilde z_1(q,t) $.
Differentiating \eqref{eqP}  we find
\[
\frac{d}{dt}\left( \dot p(t) |\log(T-t)|^2\right) = |\log(T-t)| \dot f(t).
\]
This can be integrated explicitly and we get
\[
\dot p(t) = -\frac{1}{|\log(T-t)|^2}\int_t^T |\log(T-s)| \dot f(s) \,ds + \frac{c |\log T|}{|\log(T-t)|^2}
\]
for some constant $c$ to be determined.
Integrating by parts we find that
\[
\dot p(t) = \frac{f(t)-f(T)}{|\log(T-t)|}
+\frac{1}{|\log(T-t)|^2} \int_t^T \frac{f(s)-f(T)}{T-s}\,ds + \frac{c |\log T|}{|\log(T-t)|^2}.
\]
This function is defined for $t\in [0,T]$ and we need to extend it to $[-T,T]$ to make sense of \eqref{eqP}. A possible extension is $\dot p(t) = \dot p(0)$ for $t \in [-T,0]$ but this makes this lemma too simple and not useful to adapt to the real situation. For this reason we make the analysis with the following extension.
Define
\begin{align}
\label{defP1}
\dot p_1(t) = \frac{f(t)-f(T)}{|\log(T-t)|}
+\frac{1}{|\log(T-t)|^2} \int_t^T \frac{f(s)-f(T)}{T-s}\,ds
\end{align}
so that
\[
\dot p(t) = \dot p_1(t) + \frac{c |\log T|}{|\log(T-t)|^2} \quad \text{for } t\in [0,T]
\]
Then define
\begin{align}
\label{extension}
\dot p(t) = \dot p_1(0) +  \frac{c |\log T|}{|\log(T-t)|^2}, \quad t\in [-T,0].
\end{align}

%We note that the extended function $\dot p(t)$ satisfies \eqref{eqP} up to constant, that is, it satisfies
%\begin{align}
%\label{eqP2}
%\dot p(t) |\log(T-t)| + \int_{-T}^t \frac{\dot p(s)}{T-s}\,ds
%& =
%f(t)  + a_0
%\quad t\in [0,T].
%\end{align}
%for some constant $a_0$.
%
%The condition $a_0=0$  and the extension of $\dot p(t)$ for $t\in[-T,0]$ determine the value of $c$.
%

\medskip

We want to estimate
\[
\phi_0[Z_1^*] (r) = r e^{i\theta}
\int_{-T}^{0}
\dot p(s)
k(r^2+\varepsilon^2 , -s)
\,ds , \quad k( \zeta ,t)= 2\frac{1-e^{-\frac{\zeta}{4 t}}}{\zeta} ,
\]
which, thanks to \eqref{extension} depends only on $\dot p_1(0)$ and $c$. Therefore we need to estimate these quantities.
We claim that
\begin{align}
\label{eq1d}
\dot p_1(0)
&= \frac{f(0)-f(T) + O(\frac{\log(|\log T|)}{|\log T|^{1/2}  }) \|Z_1^*\|_*}{|\log T|}
(1+O(\frac{1}{|\log T|}))
\\
\label{eq2d}
c &= f(T)(1+O(\frac{1}{|\log T|}) ) + O( \frac{1}{|\log T|}) f(0)+ O(\frac{\log(|\log T|)}{|\log T|^{1/2} }) \|Z_1^*\|_* .
\end{align}

To obtain these estimates we note that evaluating equation \eqref{eqP} at $t=0$ we get
\begin{align}
\label{int0}
\dot p_1(0)(  |\log T|  + \log 2)
+ c (1 + O(\frac{1}{|\log T|}))
= f(0)
\end{align}
and evaluating equation \eqref{eqP} at $t=T$ we get
\begin{align}
\label{intTT}
\int_{-T}^T \frac{\dot p_1(s)}{T-s}\,ds
+c (1+O(\frac{1}{|\log T|})) = f(T).
\end{align}

Thus we need to estimate $\int_{-T}^T \frac{\dot p_1(s)}{T-s}\,ds$
where $p_1$ is given  \eqref{defP1}.
We have
\begin{align*}
\int_{-T}^T \frac{\dot p_1(s)}{T-s}\,ds
&= \int_{-T}^0  \frac{\dot p_1(s)}{T-s}\,ds
+ \int_0^T  \frac{\dot p_1(s)}{T-s}\,ds
\\
&= \dot p_1(0)  \log 2 + \int_0^T  \frac{\dot p_1(s)}{T-s}\,ds .
\end{align*}

To estimate $ \int_{0}^T \frac{\dot p_1(s)}{T-s}\,ds$  we write
\[
\dot p_1 = \dot p_{1a} + \dot p_{1b}
\]
with
\begin{align*}
\dot p_{1a}(t)  & =
\frac{f(t)-f(T)}{|\log(T-t)|}
\\
\dot p_{1b}(t)  &= \frac{1}{|\log(T-t)|^2} \int_t^T \frac{f(s)-f(T)}{T-s}\,ds.
\end{align*}
We compute
\begin{align*}
\int_{0}^T \frac{\dot p_{1a}(s)}{T-s} \,ds
=  \int_{0}^{\frac{T}{|\log T|}}...
 +\int_{\frac{T}{|\log T|}}^T... .
\end{align*}
By Lemma~\ref{lemmaGradEst} we have that
\begin{align}
\label{boundsTildeF}
| f(t) - f(T) |
\leq C  \| Z_1^* \|_*
\left\{
\begin{aligned}
&  \log( |\log T|) |\log T|^{1/2}
\frac{T-t}{T} , &&  \frac{T}{|\log T|} \leq t \leq T
\\
&  |\log T|, &&  0 \leq  t \leq \frac{T}{|\log T|}  .
\end{aligned}
\right.
\end{align}
which in particular implies
\begin{align*}
|\dot p_{1a}(t) |
\leq  C \frac{ \| Z_1^* \|_* }{|\log (T-t)|}
\left\{
\begin{aligned}
&  \log( |\log T|) |\log T|^{1/2}
\frac{T-t}{T} , &&  \frac{T}{|\log T|} \leq t \leq T
\\
&  |\log T|, &&  0 \leq  t \leq \frac{T}{|\log T|}  .
\end{aligned}
\right.
\end{align*}
Therefore
\[
\int_{0}^{\frac{T}{|\log T|}}
\frac{|\dot p_{1a}(s)|}{T-s} \,ds
\leq \frac{C}{|\log T|} \| Z_1^* \|_*,
\]
and
\begin{align*}
&
\int_{\frac{T}{|\log T|}}^T
\frac{ | \dot p_{1a}(s)  | }{T-s} \,ds
%\\
%& \leq
%C  \log( |\log T|  |\log T|^{1/2} ) \| Z_1^* \|_*
%\int_{\frac{T}{|\log T|}}^T \frac{1}{T-s} \frac{T-s}{T} \frac{1}{|\log(T-s)|}\,ds
%\\
%&
\leq C  \frac{\log( |\log T|)   |\log T|^{1/2}  }{|\log T| }\| Z_1^* \|_* .
\end{align*}
It follows that
\begin{align}
\label{intTT1}
\int_{0}^T
\frac{ | \dot p_{1a}(s) | }{T-s} \,ds
\leq  C \frac{\log( |\log T| )     }{|\log T|^{1/2}}  \| Z_1^* \|_* .
\end{align}

By \eqref{boundsTildeF}, we find that
\[
|\dot p_{1b}(t)|\leq
C  \| Z_1^* \|_*
\left\{
\begin{aligned}
& \frac{\log( |\log T|)   |\log T|^{1/2}   }{ |\log(T-t)|^2} \frac{T-t}{T}, && \frac{T}{|\log T|}\leq t \leq T
\\
&  \frac{\log( |\log T|)}{|\log T|^{3/2}}, && 0 \leq t \leq  \frac{T}{|\log T|}.
\end{aligned}
\right.
\]
This implies that
\begin{align}
\label{intTT2}
\int_{0}^T \frac{| \dot p_{1b}(s)|}{T-s}\,ds  \leq \frac{\log( |\log T|)}{|\log T|^{3/2}} \| Z_1^* \|_*.
\end{align}

From \eqref{intTT1} and \eqref{intTT2} we find that
\[
\left|
 \int_0^T  \frac{\dot p_1(s)}{T-s}\,ds
\right|
\leq \frac{\log(|\log T|)}{|\log T|^{1/2}} \|Z_1^*\|_*.
\]
Therefore \eqref{intTT} gives
\begin{align}
\label{int000}
\dot p_1(0) \log 2 + O(\frac{\log(|\log T|)}{|\log T|^{1/2}}) \|Z_1^*\|_*
+ c (1 + O(\frac{1}{|\log T|}))
= f(T).
\end{align}
Equations \eqref{int0} and \eqref{int000} form a system
%\begin{align*}
%\dot p_1(0)(  |\log T|  + \log 2)
%+ c (1 + O(\frac{1}{|\log T|}))
%&= f(0)
%\\
%\dot p_1(0) \log 2 + O(\frac{\log(|\log T|)}{|\log T|^{1/2} }) \|Z_1^*\|_*
%+ c (1 + O(\frac{1}{|\log T|}))
%&= f(T),
%\end{align*}
\begin{align*}
\left[
\begin{matrix}
 |\log T|  + \log 2 & 1 + O(\frac{1}{|\log T|}) \\
 \log 2 & 1 + O(\frac{1}{|\log T|})
\end{matrix}
\right]
\left[
\begin{matrix}
\dot p_1(0) \\ c
\end{matrix}
\right]
=
\left[
\begin{matrix}
f(0) \\ f(T) + O(\frac{\log(|\log T|)}{|\log T|^{1/2} }) \|Z_1^*\|_*
\end{matrix}
\right]
\end{align*}
for  $\dot p_1(0)$ and $c$, and solving we get
\eqref{eq1d}, \eqref{eq2d}.

\medskip

We use \eqref{eq1d}, \eqref{eq2d} to estimate $\phi_0$ given by \eqref{phi0}:
%\[
%\phi_0[Z_1^*] (r) = r e^{i\theta}
%\int_{-T}^{0}
%\dot p(s)
%k(r^2+\varepsilon^2 , -s)
%\,ds , \quad k( \zeta ,t)= 2\frac{1-e^{-\frac{\zeta}{4 t}}}{\zeta} ,
%\]
and obtain
\[
\| \Phi_0[Z_1^*] \|_* \leq \frac{C}{|\log T|} \|Z_1^* \|_* .
\]

\end{proof}

\begin{proof}[Proof of Proposition~\ref{propZ1}]
We look for a solution of \eqref{eqZ} in the space of functions
\[
\mathcal Z = \{ Z_1^* \in C^2(\overline\Omega) : \|Z_1^*\|_* < \infty, \
\div z_1^*(q_0) = 0, \  \curl z_1^*(q_0) = 0 \} .
\]
To determinte $c_{0j}$ we apply divergence and curl \eqref{eqZ} at $q_0$ to obtain
%\begin{align*}
%\div \phi_0[Z_1^*] (q_0,0) &= \div \varphi_1(q_0) +  \frac{ c_{01} }{\varepsilon}
%\\
%\curl \phi_0[Z_1^*] (q_0,0) &= \curl \varphi_1(q_0) +  \frac{ c_{02} }{\varepsilon} .
%\end{align*}
%From these formulas we obtain
\begin{align*}
c_{01} = \varepsilon \left( \div \phi_0[Z_1^*] (q_0,0) -   \div \varphi_1(q_0) \right), \quad
c_{02} = \varepsilon \left( \curl \phi_0[Z_1^*] (q_0,0) -   \curl \varphi_1(q_0) \right)  .
\end{align*}
With this equation \eqref{eqZ} becomes the fixed point problem
\begin{align}
\label{fixedZ1}
Z_1^* = \mathcal F[Z_1^*] + \varphi_1
+  \div \varphi_1(q_0) \varepsilon  Z_{01}
+  \curl \varphi_1(q_0)    \varepsilon Z_{02}.
\end{align}
where
\begin{align*}
\mathcal F[Z_1^*]
&=
- \Phi_0[Z_1^*]
-  \div \phi_0[Z_1^*] (q_0,0)  \varepsilon Z_{01}
- \curl \phi_0[Z_1^*] (q_0,0)  \varepsilon Z_{02}
\end{align*}

%Next we note that
%\[
%\int_{-T}^{0}
%\dot p(s)
%k(\varepsilon^2 , -s)
%\,ds
%=
%\div \phi_0[Z_1^*] (q_0,0) + i \curl \phi_0[Z_1^*] (q_0,0)
%\]
By Lemma~\ref{lemmaD1} we get
\begin{align*}
\left|\
\div \phi_0[Z_1^*] (q_0,0) + i \curl \phi_0[Z_1^*] (q_0,0)
\right|
\leq C |\log \varepsilon|  \| \Phi_0[Z_1^*] \|_*
\leq C \|Z_1^*\|_*.
\end{align*}
But
\[
\| \varepsilon Z_{0j } \|_* \leq \frac{C}{|\log T|^{1/2}} .
\]
This and Lemma~\ref{lemmaD1} shows that
\[
\| \mathcal F[Z_1^*] \|_*  \leq \frac{C}{|\log T|^{1/2}} \|Z_1^*\|_*.
\]
By the contraction mapping principle, equation \eqref{fixedZ1} has a unique fixed point in $\mathcal Z$.
\end{proof}

%%%%%%%%%%%%%%%%%%%%%%%%%%%%%%%%%%%%%%%%

%%%%%%%%%%%%%%%%%%%%%%%%%%%%%%%%%%%%%%%%
%
% file: derT.tex
%

\section{Reverse bubbling}\label{reverse}

The proofs of Theorems \ref{teo2} and \ref{teo3}  follow very similar lines to those of Theorem \ref{teo1}, with a ``backwards'' construction.  In Theorem \ref{teo2}
we consider the exact ansatz as in \equ{upa}  for $u(x,t)$ in $(0, 2T)$, extended for $T<t<2T$ in the form
\begin{align*}
u(x,t) =  \bar U +  \Pi_{\bar U^\perp} [ \Phi^0 + \Psi^* + \eta Q_\omega \phi ] + a(\Pi_{\bar U^\perp} [ \Phi^0 + \Psi^* + \eta Q_\omega \phi ] ) \bar U
\end{align*}
where $\la(t)$ is defined in the interval $(-T,2T) $ and satisfies $\la(T)=0$, while
\begin{align*}
\bar U (x,t) = Q_{\omega(t)} (t) \bar W (\frac{x-\xi(t)}{\lambda(t)})
\end{align*}
and  $\bar W(y)$ is the reverse bubble as in \equ{barW}.
A main point is that the linear theory
for the inner problem, corresponding to $\phi(y,t)$  has to be performed for $t>T$ for ``ancient solutions'', which exactly mirror the forward theory of Section \ref{sectLinearTheory}. More precisely, we need to consider a problem of the form
 We consider the linear equation
\begin{align*}
\la^2 \pp_t  \phi   &=     L_{ W  }  [\phi]       + h(y,t)\inn  \ttt\DD_{2R}
\\
 \phi(\cdot,0 ) &=   0 \inn B_{2R( 0)}
\nonumber
\\
\nonumber
\phi \cdot  W  &=  0 \inn   \DD_{2R}
\end{align*}
where
\[
\ttt\DD_{2R} =  \{ (y,t) \ /\  t \in (T, 2T) ,\  y \in  B_{2R(t)}(0) \} .
\]
We assume that $h(y,t)$ is defined for all $(y,t)\in \R^2 \times (0,T)$ and   satisfies
\[
 h \cdot  W  =   0  , \quad
|h(y,t)| \leq C \frac{\ttt \lambda_*(t)^\nu}{(1+|y|)^a} ,
\]
%where $\nu>0$ and $a\in (2,3)$ (so that $\|h\|_{a,\nu}<\infty$ with the norm defined in \eqref{normah}).
where we extend the definition of $\la_*(t)$ for $t>T$ as  $\la_*(t) = \frac {|\log T| |t-T|)}{\log^2 |t-T|}$. Inverses for the linear problem with right bounds (which vanish as $t\downarrow T$) are found as before.

\medskip
In the full construction a major ingredient is the adjustment of the parameter $\la(t)$ for times $t>T$.
  %$\lambda(t)$ is at main order given by
The main term in the error (the one due to the effect of dilations) has now the extended form
\begin{align*}
\begin{cases}
\frac{\dot\lambda}{\lambda} \rho w_\rho
\sim -2\frac{\dot\lambda}{r} & \text{if } 0\leq t < T \\
\frac{\dot\lambda}{\lambda} \rho \tilde w_\rho
\sim 2\frac{\dot\lambda}{r}
& \text{if } T < t \leq 2 T .
\end{cases}
\end{align*}
Therefore we extend $\Phi^0$ by considering the function  $\Phi^0[\omega, b \lambda,\xi]$, where
%$$
%\Phi^0[\dot\lambda](r,t) = \int_{-T}^t b(s) \dot\lambda(s)\frac{1}{\sqrt{t-s}} h(\frac{r}{\sqrt{t-s}})\,ds
%$$
%where
$$
b(s) = \begin{cases}
1 & s\leq T\\
-1 & s>T.
\end{cases}
$$
$\Psi^*(x,t)$ has $Z^*(x,t)$ as its main term.
Testing the error as before by the generator of dilations, we get the approximate equation
\be\label{jj}
\int_0^{t-\lambda(t)^2} b(s)\frac{\dot\lambda(s)}{t-s}\,ds =
-|\, [ \div \Psi^*+  i\curl \Psi^*](q,t )|.
\ee
We would like to find a solution such that $\la(T)=0$,
$
\dot\lambda(t)<0 \text{ if } t<T,\
\dot\lambda(t)>0 \text{ if } t>T .
$
The solution for $t <T$ is the one of the forward bubbling of Theorem \ref{teo1}, which we recall is given at main order by
\begin{align*}
%\label{lambda.before.T}
\lambda(t) = \kappa_* \frac{|\log T|(T-t)}{|\log(T-t)|^2}, \quad
t<T,
\end{align*}
For $t>T$ the approximate equation reads
\begin{align*}
\int_{-T}^{t-\lambda^2} b(s)\frac{\dot\lambda(s)}{t-s}\,ds
%&=
%\int_0^T \frac{\dot\lambda(s)}{t-s}\,ds -
%\int_T^{t-\lambda^2} \frac{\dot\lambda(s)}{t-s}\,ds \\
%&=
%\int_0^T \frac{\dot\lambda(s)}{T-s}\,ds
%+  \int_0^T \dot\lambda(s) \left( \frac{1}{t-s}-\frac{1}{T-s} \right)\,ds
%- \int_T^{t-\lambda^2} \frac{\dot\lambda(s)}{t-s}\,ds
%\\
&=
-|[ \div \psi^*+  i\curl \psi^*](q,T )|
+ \int_0^T  \dot\lambda(s) \left( \frac{1}{t-s}-\frac{1}{T-s} \right)\,ds
- \int_T^{t-\lambda^2} \frac{\dot\lambda(s)}{t-s}\,ds . \end{align*}
Equation \equ{jj} then approximately reads
\begin{align*}
\int_T^{t-\lambda^2} \frac{\dot\lambda(s)}{t-s}\,ds
=  \int_0^T  \dot\lambda(s) \left( \frac{1}{t-s}-\frac{1}{T-s} \right)\,ds \quad \text{for } t\geq T,
\end{align*}
%Note that the right hand side of this equation is positive for $t>T$ (since $\dot\lambda(s)<0$).
The integral in the left hand side is approximately
$
\dot\lambda(t) |\log(t-T)|
$,
while
\begin{align*}
\int_0^T  \dot\lambda(s) \left( \frac{1}{t-s}-\frac{1}{T-s} \right)\,ds
&=
(t-T) \kappa_*|\log T|
\int_0^T   \frac{1}{(t-s)(T-s)|\log(T-s)|^2 }\,ds .
\end{align*}
Arguing as before we get
\begin{align*}
\int_0^T   \frac{1}{(t-s)(T-s)|\log(T-s)|^2 }\,ds
=\frac{1}{t-T}\frac{1}{|\log(t-T)|}
+ O(\frac{1}{|\log(t-T)|^2}) .
\end{align*}
%\begin{align*}
%\int_0^T   \frac{1}{(t-s)(T-s)|\log(T-s)|^2 }\,ds
%=
%\int_0^{T-(t-T)}   ... \,ds
%+
%\int_{T-(t-T)}^T   ... \,ds .
%\end{align*}
%The last integral is approximated by
%\begin{align}
%\label{last.integral}
%\int_{T-(t-T)}^T    \frac{1}{(t-s)(T-s)|\log(T-s)|^2 }\,ds
%%&\sim
%%\frac{1}{t-T}
%%\int_{T-(t-T)}^T   \frac{1}{(T-s)|\log(T-s)|^2 }\,ds
%%\\
%%\nonumber
%%&\sim
%%\frac{1}{t-T}
%%\int_{T-(t-T)}^T   \frac{1}{(T-s)|\log(T-s)|^2 }\,ds
%%\\
%%\nonumber
%&\sim
%\frac{1}{t-T}\frac{1}{|\log(t-T)|}
%\end{align}
%The first integral is estimated by
%\begin{align*}
%\int_0^{T-(t-T)}    \frac{1}{(t-s)(T-s)|\log(T-s)|^2 }\,ds
%%& \lesssim
%%\int_0^{T-(t-T)}    \frac{1}{(T-s)^2|\log(T-s)|^2 }\,ds
%%\\
%& \lesssim
%\frac{1}{|\log(T-t)|^2}.
%\end{align*}
Hence, for $t>T$ we get the apprximate equation
$$
\dot\lambda(t)|\log(t-T)| = \kappa_* \frac{1}{|\log(t-T)|}
$$
which gives
$$
\lambda(t) = \kappa_* \frac{(t-T)|\log T|}{|\log(t-T)|^2} ,\quad \text{for }t>T
$$
as desired.
This computation can be made fully rigorous with the same arguments already employed in the forward bubbling construction, leading to the proof of Theorem \ref{teo2}.

\medskip
For the proof of Theorem \ref{teo3} we proceed in exactly the same way however, now with an ansatz that does not include a bubble for $t<T$. In that case the approximate equation for $\lambda$ takes the form
\begin{align*}
\int_{T}^{t-\lambda^2} \frac{\dot \la(s)}{t-s}\,ds
&=
- | [\div \psi^*+  i\curl \psi^*](q,T )|
\end{align*}
for $t>T$.
From this equation we get
\begin{align*}
\lambda(t) = \kappa_* \frac{T-t}{|\log (t-T)|} ,
\end{align*}
as desired.

\bigskip
It is interesting to notice that this continuation, even at the level of the parameter $\la(t)$, does not seem to exhibit analyticity near $t=T$, even one-sided, in terms of $(T-t)$ or the natural parameter $s= \frac1{ \log (T-t)}$. It is not hard to check for instance that, even though formal improvements of approximation in powers of $s$ are possible for $\la(t)$, they so not lead to a power series with positive convergence radius.

\bigskip

%A%A

%
%The integral \eqref{last.integral} can be estimated more precisely (to get the right constant)
%\begin{align*}
%\int_{T-(t-T)/2}^T   &
%\frac{1}{(t-s)(T-s)|\log(T-s)|^2 }\,ds
%\\
%&=
%\int_{T-(t-T)/2}^T   \frac{1}{(t-T+T-s)(T-s)|\log(T-s)|^2 }\,ds
%\\
%&=
%\frac{1}{t-T}
%\int

\appendix

\section{The heat equation with right hand side}
\label{lips1}

We are going to measure the solution to \eqref{heat-eq0}
in the norm $\| \ \|_{\sharp,\Theta,\gamma}$, (c.f. \eqref{normPsi})
with $\Theta$ and $\beta$ (recall that $R= \lambda_*^{-\beta}$) satisfying:
\begin{align}
\label{assumpPar1}
\beta \in \Bigl( 0,\frac{1}{2} \Bigr)  ,
\quad
\Theta \in
%\Bigl(0,\frac{1}{2}-\beta \Bigr) .
(0,\beta)
\end{align}
Our main result in this section is the following, where we use the norm $ \| \ \|_{\sharp, \Theta,\gamma} $ defined in \eqref{normPsi}.
\begin{prop}
\label{prop3}
Assume  \eqref{assumpPar1}.
For $T$, $\varepsilon>0$ small there is a linear operator that maps a function $f:\Omega \times (0,T) \to \R^3$ with  $\|f\|_{**}<\infty$ into $\psi$, $c_1,c_2,c_3$ so that \eqref{heat-eq0} is satisfied.
Moreover the following estimate holds
\begin{align}
\label{estPsi}
\| \psi\|_{\sharp, \Theta ,\gamma}
%+ \frac{\lambda_*(0)^{-\nu} R(0)^{a-2} }{ |\log T| }( |c_1| +|c_2| +|c_3| )  \leq C \|f\|_{**} ,
+ \frac{\lambda_*(0)^{-\Theta}  ( \lambda_*(0) R(0) )^{-1} }{ |\log T| }( |c_1| +|c_2| +|c_3| )  \leq C \|f\|_{**} ,
\end{align}
where $\gamma\in(0,\frac{1}{2})$.
\end{prop}

\begin{remark}
%Let us explain the assumptions \eqref{assumpPar1} on the parameters that we are making in this section.
The condition $\beta \in (0,\frac{1}{2})$ is a basic assumption to have the singularity appear inside the self-similar region.
%Later on we will take $\beta = \frac{1}{4}+\sigma$ with $\sigma>0$ small.
%
%The condition $a\in (2,3)$ is not really needed here, but is important for the gluing.
%
%The condition $\nu> 1 - \beta(a-1)$ is needed for Lemma~\ref{lemma-heat1} and it is equivalent to the condition $\mu>0$.
The condition $\Theta>0$ is needed for Lemma~\ref{lemma-heat1}.
The assumption $\Theta <  \beta $ is so that the estimates provided by Lemma~\ref{lemmaHeat6} are stronger than the ones of Lemma~\ref{lemma-heat1}.
\end{remark}

To prove Proposition~\ref{prop3} we consider
\begin{align}
\label{heatEqOmega}
\left\{
\begin{aligned}
\psi_t &= \Delta \psi + f \quad \text{in }\Omega \times (0,T)
\\
\psi(x,0) &= 0 , \quad x \in \Omega
\\
\psi(x,t) &= 0 ,\quad x \in \partial\Omega, t \in  (0,T) ,
\end{aligned}
\right.
\end{align}
and let $q$ be a point in $\Omega$.

We always assume that $R$ is given by \eqref{RRR}.

\begin{lemma}
\label{lemma-heat1}
Assume $\beta\in (0,\frac{1}{2})$ and $\Theta>0 $.
Let $\psi$ solve \eqref{heatEqOmega} with $f$ such that
\[
|f(x,t)|\leq \lambda_*(t)^{\Theta} (\lambda_* (t) R(t))^{-1}
\chi_{ \{  |x-q| \leq 3 \lambda_*(t) R(t) \}} .
\]
Then
\begin{align}
\label{heat1-size}
|\psi(x,t)| & \leq  C \lambda_*(0)^\Theta \lambda_*(0) R(0) |\log T|,
\\
\label{heat1-mod-cont}
|\psi(x,t)-\psi(x,T)| & \leq  \lambda_*(t)^\Theta  \lambda_*(t) R(t)|\log(T-t)|,
\\
\label{heat1-size-grad}
| \nabla \psi(x,t)|  &  \leq C \lambda_*(0)^\Theta,
\\
\label{heat1-mod-contA-grad}
| \nabla \psi(x,t) - \psi(x,T)|  &  \leq C \lambda_*(t)^\Theta,
\end{align}
and for any $\gamma \in (0,\frac{1}{2})$,
\begin{align}
\label{heat1GradHolderT}
\frac{|\nabla \psi(x,t)-\nabla\psi(x,t')|}{ |t-t'|^{\gamma}}
\leq
C
\frac{ \lambda_*(t)^\Theta }{  ( \lambda_*(t) R(t) )^{2 \gamma} }
\end{align}
for any $x,$ and $0\leq t'\leq t\leq T$ such that $t-t'\leq\frac{1}{10}(T-t)$,
and
\begin{align}
\label{heat1GradHolderX}
\frac{|\nabla \psi(x,t)-\nabla\psi(x',t')|}{ |x-x'|^{2\gamma}}
\leq
C
\frac{ \lambda_*(t)^\Theta }{  ( \lambda_*(t) R(t) )^{2\gamma} }
\end{align}
for any $|x - x'|\leq 2 \lambda_*(t) R(t)$ and $0\leq t\leq T$.
\end{lemma}
The proof is in section~\ref{appendixHeatEqEst1}.

\begin{lemma}
\label{lemmaHeat6}
Assume $\beta \in (0,\frac{1}{2})$  and $ m \in (\frac{1}{2},1)$.
Let $\psi$ solve \eqref{heatEqOmega} with $f$ such that
\[
|f(x,t)|\leq \frac{\lambda_*(t)^m}{|z-q|^2}  \chi_{ \{  |x-q| \geq  \lambda_*(t) R(t) \}}  .
\]
Then
\begin{align}
%\label{heat6size}
\nonumber
|\psi(x,t)|
& \leq C T^m |\log T|^{2-m} ,
\\
%\label{heat6vanishing}
\nonumber
|\psi(x,t) - \psi(x,T)|
& \leq
C
|\log T|^m (T-t)^m |\log(T-t)|^{2-2m} ,
\\
\nonumber
%\label{heat6gradSize}
|\nabla\psi(x,t)|
& \leq
C \frac{ T^{m-1}  |\log T|^{2-m} }{ R( T )}  ,
\\
%&
%{\crr C T^{m-\frac{1}{2}} |\log T|^{2-2m}}
%,
%\\
%\label{heat6gradVanishing}
\nonumber
|\nabla \psi(x,t)-\nabla\psi(x,T)|
&
\leq
C \frac{\lambda_*(t)^{m-1} |\log(T-t)|}{ R(t) }
\end{align}
and for any $\gamma \in (0,\frac{1}{2})$:
\begin{align*}
%\label{heat6holder}
\frac{|\nabla \psi(x,t)-\nabla\psi(x',t')|}{(|x-x'|^2 + |t-t'|)^{\gamma}}
\leq C
\frac{1}{(\lambda_*(t) R(t) )^{2\gamma} }
\frac{\lambda_*(t)^{m-1} |\log(T-t)|}{ R(t) }
\end{align*}
for any  $|x - x'|\leq 2 \lambda_*(t) R(t)$ and $0\leq t'\leq t\leq T$ such that $t-t'\leq\frac{1}{10}(T-t)$.
\end{lemma}
The proof is in section~\ref{appendixHeatEqEst1}.

%{\color{red} Faltan las demostraciones de la cota holder}

\begin{lemma}	
\label{lemma-heat4}
Let $\psi$ solve \eqref{heatEqOmega} with $f$ such that
\[
|f(x,t)|\leq 1 ,
\]
Then
\begin{align}
%\label{heat4-size}
\nonumber
|\psi(x,t) |
&\leq  C t ,
\\
%\label{heat4-mod-cont}
\nonumber
|\psi(x,t)-\psi(x,T)|
& \leq C (T-t) |\log(T-t)|,
\\
%\label{heat4-size-grad}
\nonumber
|\nabla \psi (x,t) | & \leq T^{1/2}
\end{align}
\begin{align}
%\label{heat4-modA-cont-grad}
\nonumber
|\nabla \psi(x,t) - \nabla \psi(x,T) |
\leq C  (T-t)^{1/2}
\end{align}
\begin{align}
%\label{heat4-mod-cont-grad}
\nonumber
|\nabla \psi(x,t_2) - \nabla \psi(x,t_1) |
\leq C  |t_2-t_1|^{1/2} .
\end{align}
\begin{align*}
|\nabla \psi(x_1,t) - \nabla \psi(x_2,t) |
\leq C |x_1-x_2| |\log(|x_1-x_2|) .
\end{align*}
\end{lemma}	
The proof is in section~\ref{appendixHeatEqEst1}.

%{\color{red}Falta la demostracion de la cota holder en $x$}

\begin{proof}[Proof of Proposition~\ref{prop3}]
Let $\psi_0[f]$ denote the solution of \eqref{heatEqOmega} where $f$ satisfies  $\|f\|_{**}<\infty$.

We claim that  $\|\psi_0[f]\|_*\leq C \|f\|_{**}$.
Indeed, given  $f$ with $\| f\|_{**}<\infty$ we decompose $f = \sum_{i=1}^3 f_i$ with $|f_i|\leq C\| f \|_{**} \varrho_i$. By linearity it is sufficient to prove that when $f$ is each of the $\varrho_i$, the corresponding $\psi$ has finite $\| \ \|_{**}$ norm.

The case $f = \varrho_1$ is direct from Lemma~\ref{lemma-heat1}.
Using the hypothesis $\Theta<\beta$ we can find $\sigma_0$ small so that the case
$f = \varrho_2$ follows from Lemma~\ref{lemmaHeat6}.
The case $f = \varrho_3$ follows from Lemma~\ref{lemma-heat4}.

Finally,  let us show that in problem \eqref{heat-eq0} we can choose $c_i$ so that  that $\psi(q,T)=0$.
To do this we let $\psi_i$ the solution
\begin{align*}
\left\{
\begin{aligned}
\partial_t \psi_i  & = \Delta_x \psi_i  \inn \Omega \times (0,T) \\
\psi_i  & = 0 \onn \pp \Omega \times (0,T) \\
\ \psi_i(x,0)  & =   \mathbf{e_i}   \eta_1    \inn  \Omega
\end{aligned}
\right.
\end{align*}
Let
\[
\psi  = \psi_0 + \sum_{i=1}^3 c_i \psi_i .
\]
Then for $T>0$ small there is unique choice of $c_i$ such that $\psi(q,T)=0$. Moreover $|c_i|\leq C \lambda_*(0)^\nu R(0)^{2-a} |\log T| \|f\|_{**}$ and hence $\psi$ satisfies \eqref{estPsi}.
\end{proof}

\subsection{Proof of Lemmas~\ref{lemma-heat1}, \ref{lemmaHeat6}, and \ref{lemma-heat4}}
\label{appendixHeatEqEst1}
The proof of the estimates is done by analyzing the solution $\psi$ of
\begin{align}
\label{heatEqR2}
\left\{
\begin{aligned}
\partial_t \psi_0 & = \Delta \psi_0  + f \quad \text{in }\R^2\times (0,T) ,
\\
\psi_0(x,0) &=0 \quad x\in \R^2 ,
\end{aligned}
\right.
\end{align}
defined by Duhamel's formula
\[
\psi_0(x,t ) = \int_0^t \int_{\R^2} \frac{e^{-\frac{|x-y|^2}{4(t-s)}}}{4\pi (t-s)} f(y,s) \, d y ds
\]
assuming
\begin{align}
%\label{defF}
\nonumber
|f(x,t) | \leq  \chi_{\{ |y| \leq 2 \lambda_*(s) R(s)  \} } \lambda_*(t)^{\nu-2} R(t)^{-a} .
\end{align}
The solution to \eqref{heatEqOmega} is then given by $ \psi = \psi_0 + \psi_1$ where $\psi_1$ solves the homogeneous heat equation in $\Omega\times (0,T)$ with boundary condition given by $-\psi_0$. In the sequel we prove that the estimates \eqref{heat1-size}--\eqref{heat1-mod-cont} are valid for $\psi_0$. Then the conclusion for $\psi_1$ follows from standard parabolic estimates.
In what follows we denote by $\psi$ the solution to \eqref{heatEqR2} given by Duhamel's formula.

\begin{proof}[Proof of  \eqref{heat1-size}]
We have, using the heat kernel,
\begin{align*}
\psi(x,t)
&= C
\int_0^t
\frac{\lambda_*(s)^{\nu-2} R(s)^{-a}}{t-s}
\int_{|y|\leq 2 \lambda_*(s) R(s)}
e^{-\frac{|x-y|^2}{4(t-s)}}
\,dyds
\\
&=
C
\int_0^t
\lambda_*(s)^{\nu-2} R(s)^{-a}
\int_{|z|\leq 2 \lambda_*(s) R(s) (t-s)^{-1/2}}
e^{-|\tilde x-z|^2}
\,dzds
\end{align*}
where $\tilde x = x (t-s)^{-1/2}$.
First we estimate
\begin{align}
\nonumber
&
\int_{0}^{t-(T-t)}
\lambda_*(s)^{\nu-2} R(s)^{-a}
\int_{|z|\leq 2 \lambda_*(s) R(s) (t-s)^{-1/2}}
e^{-|\tilde x-z|^2}
\,dzds
\\
& \leq
C
\int_0^{t-(T-t)}
\frac{\lambda_*(s)^{\nu} R(s)^{2-a}}{t-s}
ds
%\\
%\nonumber
%& \leq
%C
%\int_0^{t-(T-t)}
%\frac{\lambda_*(s)^{\nu} R(s)^{2-a}}{T-s}
%ds
\label{e003}
\leq
C \lambda_*(0)^{\nu} R(0)^{2-a} .
\end{align}

Consider the integrals $\int_{t-(T-t)}^{t-\lambda_*(t)^2}$ and  $\int_{t-\lambda_*(t)^2}^t$.
We have
\begin{align}
\nonumber
&
\int_{t-(T-t)}^{t-\lambda_*(t)^2}
\lambda_*(s)^{\nu-2} R(s)^{-a}
\int_{|z|\leq 2 \lambda_*(s) R(s)(t-s)^{-1/2}}
e^{-|\tilde x - z|^2/4 }
\,dzds
%\\
%\nonumber
%& \leq
%C
%\int_{t-(T-t)}^{t-\lambda_*(t)^2}
%\frac{\lambda_*(s)^{\nu} R(s)^{2-a} }{t-s}
%\,ds
\\
\label{e001}
& \leq
C
\lambda_*(t)^{\nu}  R(t)^{2-a} |\log(T-t)| .
\end{align}
For the second part we have
\begin{align}
\nonumber
&
\int_{t-\lambda_*(t)^2}^t
\lambda_*(s)^{\nu-2} R(s)^{-a}
\int_{|z|\leq 2 \lambda_*(s) R(s)(t-s)^{-1/2}}
e^{-|\tilde x - z|^2/4 }
\,dzds
\\
\label{e002}
&
\leq C
\int_{t-\lambda_*(t)^2}^t
\lambda_*(s)^{\nu-2} R(s)^{-a}
\,ds
\leq
C \lambda_*(t)^{\nu}  R(t)^{-a} .
\end{align}

From  \eqref{e003}, \eqref{e001}, \eqref{e002},we deduce
\[
|\psi(x,t)|\leq C \lambda_*(0)^{\nu} R(0) |\log T|.
\]
which is the desired estimate. Estimates \eqref{heat1-size}, \eqref{heat1-mod-cont}, \eqref{heat1-size-grad}, \eqref{heat1-mod-contA-grad}, \eqref{heat1GradHolderT}, \eqref{heat1GradHolderX} follow in similar manner.

\end{proof}

The proofs of Lemmas \ref{lemmaHeat6} and \ref{lemma-heat4} follow similar lines to those above, and we omit them.

\section{The heat equation with initial condition}

In this section we consider the heat equation
\begin{align}
\label{heatZ1}
\left\{
\begin{aligned}
& \partial_t \tilde Z_1 (x,t) = \Delta \tilde Z_1(x,t) \quad \text{in } \Omega\times (0,T)
\\
&
\tilde Z_1(x,0) = Z_1^*(x) \quad x\in\Omega
\\
& \tilde Z_1 (x,t) = 0 \quad (x,t)\in \partial \Omega\times (0,T) ,
\end{aligned}
\right.
\end{align}
and derive estimates assuming, roughly speaking, that $Z_1^*$ behaves like $(r+\varepsilon) |\log(r+\varepsilon)|$.

\begin{lemma}
\label{lemmaGradEst-b}
Suppose $Z_1^*\in C^2(\overline \Omega)$ satisfies
\begin{align*}
|D^2_x Z_1^*(x)| &  \leq \frac{1}{|x-q_0|+\varepsilon }\quad x \in \Omega.
\end{align*}
Then the solution $\tilde Z_1$ of \eqref{heatZ1} satisfies
\begin{align}
%\label{vanishingGrad}
\nonumber
|\nabla_x \tilde Z_1(x,t) - \nabla_x \tilde Z_1(x,T)| \leq
C \frac{T-t}{T} \Bigl(1+\log(\frac{T}{t})\, \Bigr) \quad
\text{if }  \varepsilon^2 \leq t \leq T.
\end{align}
\end{lemma}
\begin{proof}
We do the computation when $\Omega$ is $\R^2$ and we deal with the solution given by Duhamel's formula.
The general case follows by the decomposing the solution as a sum of the one in $\R^2$ and a smooth one in $\Omega$.
Then
\begin{align}
\label{formulaGrad-b}
\nabla_x \tilde Z_1(x,t)
&= \frac{1}{4\pi t }\int_{\R^2} e^{-\frac{|y|^2}{4 t}} \nabla_x Z_1^*(x-y)\,dy .
%&= \frac{1}{4\pi  }\int_{\R^2} e^{-\frac{|y|^2}{4 }} \nabla_x Z_1^*(x-\sqrt t y)\,dy
\end{align}
Assume $\varepsilon^2\leq t \leq T $.
Then, using \eqref{formulaGrad-b}, we have
\begin{align*}
& |\nabla_x \tilde Z_1(0,t)  - \nabla_x \tilde Z_1(0,T)  | \\
&= \frac{1}{4\pi}
\left|
\int_{\R^2}
e^{-\frac{|y|^2}{4}}
\int_0^1
\nabla_x \tilde Z_1^*\bigl(-s \sqrt T y  + (1-s) \sqrt t y \bigr)
\,ds dy
\right|
\\
&\leq C
\int_{\R^2} e^{-\frac{|y|^2}{4 }}
\int_0^1
|D^2 Z_1^*\bigl(-s \sqrt T y  + (1-s) \sqrt t y \bigr)   |
( \sqrt T - \sqrt t)|y|
\, ds
dy
\\
&\leq C  \int_{\R^2} e^{-\frac{|y|^2}{4 }}
\int_0^1
\frac{( \sqrt T - \sqrt t)|y| }{s (\sqrt T- \sqrt t) |y|+ \sqrt t   |y| + \varepsilon }
\, ds  dy \\
& \leq
C \int_{\R^2} e^{-\frac{|y|^2}{4 }}
\log\Bigl( \frac{\sqrt T |y|+\varepsilon}{\sqrt t |y|  + \varepsilon} \Bigr) \,dy
\\
&=C
\int_0^\infty e^{-\frac{\rho^2}{4}}\log\Bigl( \frac{\sqrt T \rho+\varepsilon}{\sqrt t \rho  + \varepsilon} \Bigr)\rho \,d\rho\\
&=
C (\sqrt T - \sqrt t) \varepsilon \int_0^\infty e^{-\frac{\rho^2}{4}}
\frac{1}{(\sqrt T  \rho + \varepsilon)(\sqrt t \rho + \varepsilon)}\,d\rho .
\end{align*}
Using this representation, after some computation the desired result follows.

\end{proof}

Similar computations leads us to the following estimates.
\begin{lemma}
%\label{lemmaGradEst-c}
Suppose $Z_1^*\in C^2(\overline \Omega)$ satisfies
\begin{align*}
|D^2_x Z_1^*(x)| &  \leq \frac{1}{|x-q_0|+\varepsilon }\quad x \in \Omega.
\end{align*}
Then the solution $\tilde Z_1$ of \eqref{heatZ1} satisfies
\begin{align}
%\label{estD2Z1}
\nonumber
|D^2_x \tilde Z_1(x,t)  \leq
\frac{C}{\varepsilon + \sqrt t}
%\begin{cases}
%\frac{1}{\varepsilon} & 0 \leq t \leq \varepsilon^2
%\\
%\frac{1}{\sqrt t} &  t \geq \varepsilon^2 .
%\end{cases}
\end{align}
\end{lemma}

\begin{lemma}
%\label{lemmaGradEst-d}
Suppose $Z_1^*\in C^2(\overline \Omega)$ satisfies
\begin{align*}
|D^2_x Z_1^*(x)| &  \leq \frac{1}{|x-q_0|+\varepsilon }\quad x \in \Omega.
\end{align*}
Then the solution $\tilde Z_1$ of \eqref{heatZ1} satisfies for $0\leq t_0 \leq t_1$:
\begin{align*}
|\nabla_x \tilde Z_1(x,t_1)  - \nabla_x \tilde Z_1(x,t_0)  |
& \leq
C
\begin{cases}
\frac{\sqrt{t_1}-\sqrt{t_0}}{\sqrt{t_1}} \log(2\frac{t_1}{t_0})
& \text{if } t_0 \geq  \varepsilon^2
\\
\frac{\sqrt{t_1}-\sqrt{t_0}}{\sqrt{t_1}}  \log(2\frac{t_1}{\varepsilon^2})
&  \text{if } t_0 \leq  \varepsilon^2, \ t_1 \geq  \varepsilon^2
\\
\frac{\sqrt{t_1}-\sqrt{t_0}}{\varepsilon}
& \text{if } t_1 \leq  \varepsilon^2
\end{cases}
\end{align*}
\end{lemma}

Let us recall the norm $\| \ \|_*$ defined in \eqref{normZ0}.
As a corollary of the previous estimates we have.

\begin{lemma}
%\label{lemmaGradEst00}
Suppose $Z_0^*\in C^2(\overline \Omega)$.
Then the solution $\tilde Z^*$ of \eqref{heatZ1} satisfies
\begin{align*}
%\label{estGrad00}
| \nabla_x \tilde Z^*(x,t) |
\leq   |\log \varepsilon|  \, \| Z_0^*\|_*
, \quad t \geq 0,
\end{align*}
\[
|Z^*(x,t)-Z^*(x,T)|\leq C |\log T| \frac{T-t}{\sqrt T} \|Z_0^*\|_*,
\]
\[
|\nabla_x \tilde Z_1(x,t) - \nabla_x \tilde Z_1(x,T)| \leq C
\| Z_0^*\|_*
\begin{cases}
|\log \varepsilon| & \text{if }0\leq t \leq \varepsilon^2 \\
|\log \varepsilon|^{1/2}
\frac{T-t}{T} (1+\log(\frac{T}{t})\, )& \text{if }  \varepsilon^2 \leq t \leq T.
\end{cases}
\]
\end{lemma}

\section{Derivatives for the exterior problem}
\label{derExterior}

\begin{corollary}
\label{coroLipOuter1}
Let  $ \Psi (p,\xi,\Phi,Z_0^*)$ be the solution to equation \eqref{eq-psi}  constructed in Proposition \ref{propi1}.
Let  $p_l,\xi$ satisfy \eqref{cotin}, \eqref{cotin1} and $p_l= \la e^{i\omega_l}$, $\|\Phi_l\|_E \le 1$,   and $\| Z_{0l}^* \|_*<\infty$, $l=1,2$.
Then
% $$
%\| \Psi (p_1 ,\xi,\phi_1)- \Psi (p_2,\xi,\phi_2) \|_{\sharp, a,\nu,\gamma} \le  C\,\big[ \, \|\phi_1 -\phi_2\|_{*,a,\nu}  +  \|\la_* (\dot \omega_1 - \dot \omega_2)\|_\infty ].
% $$
% Moreover, for fixed $p$, $\xi$, this solution is Lipschitz with respect to $\phi$ and $Z_0^*$:
\begin{align*}
& \|  \Psi (p_1, \xi, \Phi_1 ,Z_{01}^* )
- \Psi (p_2, \xi, \Phi_2 ,Z_{02}^* )\|_{\sharp, \Theta,\gamma}
\\
& \quad \le  C T^\sigma
(  \|\Phi_1-\Phi_2\|_E
+  \|\lambda_* (\dot \omega_1 - \dot \omega_2)\|_\infty
+ \| Z_{01}^* - Z_{02}^*\|_*
) .
\end{align*}
\end{corollary}

Corollary~\ref{coroLipOuter1} gives a partial Lipschitz property of the exterior solution $ \Psi (p,\xi,\phi)$ of \eqref{eq-psi} with respect to $p$, namely it only considers variations of $p  = \lambda e^{i \omega}$ with respect to $\omega$.
We will need Lipschitz estimates for variations of $p  = \lambda e^{i \omega}$ in  $\lambda$ and also variations with respect to $\xi$.
These estimates are obtained for  $ \Psi (p,\xi,\phi)$ when considered as a function of the inner variable $(y,t)\in \DD_{2R}$.

For this let us introduce some notation.
Suppose that $\psi(x,t)$ is defined in $\Omega\times (0,T)$.
We let
\[
\tilde \psi  (y,t) =  \psi  (\xi(t) + \lambda(t) y ,t) , \quad
(y,t) \in \DD_{2R} .
\]
The following expression is $\| \psi \|_{\sharp,\Theta,\gamma}$ expressed in terms of $\tilde \psi$ (and restricted to $\DD_{2R}$):
\begin{align*}
\|  \tilde \psi \|_{\sharp\sharp, \Theta,\gamma}
&:=
\lambda_*(0)^{-\Theta}
\frac{1}{|\log T|  \lambda_*(0) R(0) }\|\tilde\psi\|_{L^\infty(\DD_{2R} )}
+ \lambda_*(0)^{-\Theta-1} \|\nabla_y \psi\|_{L^\infty(\DD_{2R} )}
\\
\nonumber
&\quad
+
\sup_{ \DD_{2R} }   \lambda_*(t)^{-\Theta-1} R(t)^{-1}
\frac{1}{|\log(T-t)|} |\tilde \psi(y,t)-\tilde \psi(y,T)|
\\
& \quad + \sup_{(y,t)\in \DD_{2R}}
 \, \lambda_*(t)^{-\Theta-1}
|\nabla_y  \tilde \psi(y,t)-\nabla_y  \tilde \psi(y,T) |
\\
\nonumber
& \quad
+ \sup_{(y,t) , (y',t)\in \DD_{2R}}
\lambda_*(t)^{-\Theta-1}
R(t)^{2\gamma}
\frac{|\nabla_y  \tilde \psi(y,t) -\nabla_y \tilde \psi(y',t) |}{  |y-y'|^{2\gamma   }}
\\
\nonumber
& \quad
+ \sup_{}
\lambda_*(t)^{-\Theta-1}
(\lambda_*(t) R(t))^{2\gamma}  \frac {|\nabla_y  \tilde \psi(y,t) -\nabla_y \tilde \psi(x',t') |}{  |t-t'|^{\gamma   }} ,
\end{align*}
where the last supremum is taken in the region
\[
(y,t) , (y,t') ,  \in \DD_{2R} , \quad |t-t'|\leq \frac{1}{10}(T-t) .
\]

\begin{corollary}
\label{coroLipOuter2}
Let
$ \Psi (p,\xi,\phi)$ be the solution to equation \eqref{eq-psi}  in Proposition \ref{propi1}.
Let  $p_l = \la_l e^{i\omega}$, $\xi_l$ satisfy  \eqref{cotin}, \eqref{cotin1} and $\|\phi\|_{*,a,\nu}\le 1$. Then for $\tilde \Theta \in (0,\Theta)$ we have
\begin{align*}
& \| \tilde \Psi(p_1,\xi_1,\phi) - \tilde \Psi(p_2,\xi_2,\phi) \|_{\sharp\sharp,\tilde \Theta,\gamma}
\\
& \quad \leq C
\Bigl[ \,
\Bigl\| \frac{\lambda_1-\lambda_2}{\lambda_*} \Bigr\|_{L^\infty}
+
\| \dot\lambda_1 - \dot\lambda_2\|_{L^\infty}
+
\Bigl\| \frac{\xi_1-\xi_2}{\lambda_* R} \Bigr\|_{L^\infty}
+
\Bigl\| \frac{ \dot\xi_1 - \dot\xi_2}{R} \Bigr\|_{L^\infty}
\Bigr] .
\end{align*}
\end{corollary}

Let $f(y,t)$ be a function satisfying
\[
|f(y,t)|\leq \lambda_*(t)^{\nu} R(t)^{-a} \chi_{B_R(t)}  ,
\]
and let  $   \psi[ \lambda,\xi ] $ be the solution of
\[
\left\{
\begin{aligned}
\psi_t &= \Delta_x \psi + \frac{1}{\lambda(t)^2}	f(\frac{x-\xi}{\lambda},t)  \quad \text{in } \R^2\times (0,T)
\\
\psi(x,0)&=0 \quad x\in \R^2 ,
\end{aligned}
\right.
\]
given by Duhamel's formula.

Let
\[
\tilde \psi [ \lambda,\xi ] (y,t) = \psi[\lambda,\xi](\xi(t) + \lambda(t) y ,t).
\]

We consider the directional derivative with respect to $\lambda$ of $\tilde \psi$ in the direction of $\lambda_1$, defined by
\[
D_\lambda \tilde \psi[\lambda,\xi][\lambda_1]
= \lim_{s\to 0} \frac{1}{s}\left(
\tilde \psi[\lambda+s\lambda_1,\xi]-\tilde \psi[\lambda,\xi]
\right)
\]
and also the directional derivative with respect to $\xi$ of $\tilde \psi$ in the direction of $\xi_1$, defined by
\[
D_\xi  \tilde \psi[\lambda,\xi][\xi_1]
= \lim_{s\to 0} \frac{1}{s}\left(
\tilde \psi[\lambda,\xi+ s\xi_1]-\tilde \psi[\lambda,\xi]
\right) .
\]

\subsection{Derivative with respect to \texorpdfstring{$\lambda$}{}}

The proofs of the estimates below are based on Duhamel's formula for the solution:
\[
\psi(x,t) = \int_0^t \int_{\R^2} \frac{\exp(-\frac{|x-x'|^2}{4(t-s)})}{t-s}
\frac{1}{\lambda(s)^2}
f(\frac{x'-\xi(s)}{\lambda(s)},s)\,dx' d s .
\]
We change variables writing $x= \xi(t) + \lambda(t) y$ and $x'= \xi(s) + \lambda(s) y'$. Then
\[
\tilde \psi(y,t)
=
\int_0^t
\int_{\R^2} \frac{\exp(-\frac{|\xi(t) - \xi(s) +  \lambda(t) y  - \lambda(s) y'|^2}{4(t-s)})}{t-s}
f(y',s)\,dy' d s  ,
\]
and we obtain the following formula for the directional derivative:
$$\begin{aligned}
&D_\lambda \tilde \psi[\lambda_1]
(y,t)
= \\ &
-
\frac{1}{2}
\int_0^t
\int_{\R^2}
\frac{\exp ( -\frac{|\xi(t) - \xi(s) +  \lambda(t) y  - \lambda(s) y'|^2}{4(t-s)} ) }{(t-s)^2}
\cdot
( \xi(t) - \xi(s) +  \lambda(t) y  - \lambda(s) y')\cdot (\lambda_1(t) y - \lambda_1(s)y' )
f(y',s)\,dy' d s  .
\end{aligned}
$$

Lengthy but direct computations show the validity of the following estimates.
\begin{lemma}
We have
\begin{align*}
|D_\lambda \tilde \psi[\lambda,\xi](\lambda_1)(y,t)  |
\leq C
 \Bigl(
\Big\| \frac{\lambda_1}{\lambda} \Big\|_{L^\infty} + \| \dot\lambda_1\|_{L^\infty}
\Bigr) \lambda_*(0)^{\nu} R(0)^{2-a},
\end{align*}
and
\begin{align*}
 |D_\lambda \tilde \psi[\lambda,\xi](\lambda_1)(y,t)
-
D_\lambda \tilde \psi[\lambda,\xi](\lambda_1)(y,T) |
\leq C
 \Bigl(
\Big\| \frac{\lambda_1}{\lambda} \Big\|_{L^\infty} + \| \dot\lambda_1\|_{L^\infty}
\Bigr) \lambda_*(t)^{\nu} R(t)^{2-a},
\end{align*}
for $ |y|\leq R(t) $, $ t\in (0,T) $. On the other hand,
for any $\sigma>0$, $\gamma\in (0,\frac 12 )$ there is a $C$   such that
\begin{align*}
|\nabla_x D_\lambda \tilde \psi[\lambda,\xi](\lambda_1)(y,t)
-
\nabla_x D_\lambda \tilde \psi[\lambda,\xi](\lambda_1)(y,T) |
 \leq C
 \Bigl(
\Big\| \frac{\lambda_1}{\lambda} \Big\|_{L^\infty} + \| \dot\lambda_1\|_{L^\infty}
\Bigr) \lambda_*(t)^{\nu-1-\sigma} R(t)^{1-a},
\end{align*}
\begin{align}
\nonumber
 |\nabla_x D_\lambda \tilde \psi[\lambda,\xi](\lambda_1)(y_1,t)
-
\nabla_x D_\lambda \tilde \psi[\lambda,\xi](\lambda_1)(y_2,t) |
 \leq C
\Bigl( \frac{|y_1-y_2|}{R(t)} \Bigr)^\gamma
 \Bigl(
\Big\| \frac{\lambda_1}{\lambda} \Big\|_{L^\infty} + \| \dot\lambda_1\|_{L^\infty}
\Bigr) \lambda_*(t)^{\nu-1-\sigma} R(t)^{1-a},
\end{align}
\begin{align}
\nonumber
 |\nabla_x D_\lambda \tilde \psi[\lambda,\xi](\lambda_1)(y,t_2)
-
\nabla_x D_\lambda \tilde \psi[\lambda,\xi](\lambda_1)(y,t_1) |
\nonumber
\leq C
\frac{(t_2-t_1)^{\gamma}}{(\lambda_*(t_2) R(t_2) )^{2\gamma}}
\left(
\Bigl\|
\frac{\lambda_1}{\lambda}
\Bigr\|_{L^\infty}
+ \| \dot \lambda_1\|_{L^\infty}
\right)
\lambda_*(t_2)^{\nu-1-\sigma} R(t_2)^{1-a}
\end{align}
for $t_1$, $t_2$ in $[0,T]$ with $0 \leq t_2 - t_1 \leq \frac{1}{10}(T-t_2)$
and  $ |y|\leq R(t_2) $.
\end{lemma}

We can derive a similar expression  for the {derivative with respect to \texorpdfstring{$\xi$}{}}
and obtain the following estimates.

\begin{lemma}
Assume that$
|\dot \xi(t)|  \leq C$, $
|\dot \lambda(t)|  \leq C $, $
C_1  \lambda_*(t) \leq \lambda(t) \leq C_2 \lambda_*(t) ,$  $\text{in } (0,T)$
and let $
R(t) = \lambda_*(t)^{-\beta} $, $\beta < \frac{1}{2}$, $C, C_1,C_2>0$.
Then  there is $C$ such that
\begin{align}
\nonumber
 |\nabla_x D_\xi \tilde \psi[\lambda,\xi](\xi_1) (y,t)
-
\nabla_x D_\xi \tilde \psi[\lambda,\xi](\xi_1) (y,T) |
%\label{DpsiTildeDXi-grad-vanishing}
\leq C
\Bigl(
\Bigl\| \frac{\xi_1(\cdot)-\xi_1(T)}{\lambda R} \Big\|_{L^\infty}
+ \Bigl\| \frac{\dot\xi_1}{R} \Big\|_{L^\infty}
\Bigr)
 \lambda_*(t)^{\nu-1} R(t)^{1-a},
\end{align}
for $ |y|\leq R(t) $, $ t\in (0,T) $.
\end{lemma}

\bigskip\noindent
{\bf Acknowledgements:}
%We thank Professor Fanghua Lin for useful discussions.
The  research  of J.~Wei is partially supported by NSERC of Canada. J.~D\'avila has been supported  by grants  Fondecyt  1130360 and
 PAI AFB-170001, Chile. M. del Pino
 M.~del Pino has been supported by a UK Royal Society Research Professorship and Grant  PAI AFB-170001, Chile.

%%%%%%%%%%%%%%%%%%%%%%%%%%%%%%%%%%%%%%%%

\end{document}